\documentclass[11pt]{preprint}
\usepackage[full]{textcomp}
\usepackage[osf]{newtxtext} 
\usepackage[cal=boondoxo]{mathalfa}
\usepackage{colortbl}
\usepackage{comment}

\usepackage{amssymb}
\usepackage{mathtools}
\newcommand{\brsq}[1]{\left[#1\right]}
\usepackage{hyperref}
\usepackage{breakurl}
\usepackage{mhenvs}
\usepackage{mhequ} 
\usepackage{mhsymb}
\usepackage{booktabs}
\usepackage{tikz}
\usepackage{tcolorbox}
\usepackage{mathrsfs}
\usepackage[utf8]{inputenc}
\usepackage{longtable}
\usepackage{wrapfig}

\usepackage{microtype}
\usepackage{comment}
\usepackage{wasysym}
\usepackage{centernot}
\usepackage{enumitem}
\usepackage{bm}
\usepackage{stackrel}
\usepackage{graphicx}

\makeatletter
\newcommand{\globalcolor}[1]{%
  \color{#1}\global\let\default@color\current@color
}
\makeatother

\usetikzlibrary{calc}
\usetikzlibrary{decorations}
\usetikzlibrary{positioning}
\usetikzlibrary{shapes}
\usetikzlibrary{external}

\newif\ifdark
\darkfalse

\ifdark

\definecolor{darkred}{rgb}{0.9,0.2,0.2}
\definecolor{darkblue}{rgb}{0.7,0.3,1}
\definecolor{darkgreen}{rgb}{0.1,0.9,0.1}
\definecolor{franck}{rgb}{0,0.8,1}
\definecolor{pagebackground}{rgb}{.15,.21,.18}
\definecolor{pageforeground}{rgb}{.84,.84,.85}
\pagecolor{pagebackground}
\AtBeginDocument{\globalcolor{pageforeground}}
\definecolor{symbols}{rgb}{0,0.7,1}
\colorlet{connection}{red!80!black}
\colorlet{boxcolor}{blue!50}

\else

\definecolor{darkred}{rgb}{0.7,0.1,0.1}
\definecolor{darkblue}{rgb}{0.4,0.1,0.8}
\definecolor{darkgreen}{rgb}{0.1,0.7,0.1}
\definecolor{franck}{rgb}{0,0,1}
\definecolor{pagebackground}{rgb}{1,1,1}
\definecolor{pageforeground}{rgb}{0,0,0}
\colorlet{symbols}{blue!90!black}
\colorlet{connection}{red!30!black}
\colorlet{boxcolor}{blue!50!black}

\fi

\def\slash{\leavevmode\unskip\kern0.18em/\penalty\exhyphenpenalty\kern0.18em}
\def\dash{\leavevmode\unskip\kern0.18em--\penalty\exhyphenpenalty\kern0.18em}

\DeclareMathAlphabet{\mathbbm}{U}{bbm}{m}{n}

\DeclareFontFamily{U}{BOONDOX-calo}{\skewchar\font=45 }
\DeclareFontShape{U}{BOONDOX-calo}{m}{n}{
  <-> s*[1.05] BOONDOX-r-calo}{}
\DeclareFontShape{U}{BOONDOX-calo}{b}{n}{
  <-> s*[1.05] BOONDOX-b-calo}{}
\DeclareMathAlphabet{\mcb}{U}{BOONDOX-calo}{m}{n}
\SetMathAlphabet{\mcb}{bold}{U}{BOONDOX-calo}{b}{n}

\setlist{noitemsep,topsep=4pt,leftmargin=1.5em}

\DeclareMathAlphabet{\mathbbm}{U}{bbm}{m}{n}

\DeclareMathAlphabet{\mcb}{U}{BOONDOX-calo}{m}{n}
\SetMathAlphabet{\mcb}{bold}{U}{BOONDOX-calo}{b}{n}
\DeclareFontFamily{U}{mathx}{\hyphenchar\font45}
\DeclareFontShape{U}{mathx}{m}{n}{
      <5> <6> <7> <8> <9> <10>
      <10.95> <12> <14.4> <17.28> <20.74> <24.88>
      mathx10
      }{}
\DeclareSymbolFont{mathx}{U}{mathx}{m}{n}
\DeclareMathSymbol{\bigtimes}{1}{mathx}{"91}

\def\s{\mathfrak{s}}

\setlength{\marginparwidth}{3cm}

\providecommand{\figures}{false}
{ \ifthenelse{\equal{\figures}{false}} {#1}{\[ {\rm Figure \ missing !} \]} }{}

\def\Id{\mathrm{Id}}

\newcommand{\cP}{{\mathcal P}}

\newcommand{\inprod}[2]{\left\langle #1,#2 \right\rangle}

\def\CP{\mathcal{P}}

\def\CC{\mathcal{C}}
\def\cC{\mathcal{C}}
\def\CQ{\mathcal{Q}}
\def\CB{\mathcal{B}}

\def\CT{\mathcal{T}}

\tikzstyle{tinydots}=[dash pattern=on \pgflinewidth off \pgflinewidth]
\tikzstyle{superdense}=[dash pattern=on 4pt off 1pt]


\newcommand{\U}{U}

\newcommand{\mcI}{\mathcal{I}}

\newcommand{\mcK}{\mathcal{K}}

\newcommand{\mcP}{\mathcal{P}}

\newcommand{\mcT}{\mathcal{T}}
\newcommand{\FF}{\hat{\mathbf{F}}}


\newcommand{\mbbN}{\mathbb{N}}
\newcommand{\mbbR}{\mathbb{R}}

\newcommand{\mbbT}{\mathbb{T}}


\newcommand{\mbX}{\mathbf{X}}

\newcommand{\mbn}{\mathbf{n}}

\newcommand\bone{\mathbf{1}}
\newcommand{\beps}{\bar\varepsilon}

\def\eps{\varepsilon}


\newcommand{\mfT}{\mathfrak{T}}
\newcommand{\mfn}{\mathfrak{n}}

\newcommand{\mfe}{\mathfrak{e}}

\newcommand{\mfl}{\mathfrak{l}}

\def\Labe{\mathfrak{e}}

\def\Labn{\mathfrak{n}}

\def\Lab{\mathfrak{L}}

\def\${|\!|\!|}

\def\Co{\mathscr{C}}

\def\scal#1{{\langle#1\rangle}}

\def\SS{\mathfrak{S}}

\def\graft#1{\curvearrowright_{#1}}

\def\Itoo{\mathop{\mathrm{It\hat o}}}

\newenvironment{DIFnomarkup}{}{} 

\newtheorem{assumption}{Assumption}
 
\theorembodyfont{\rmfamily}
\newtheorem{example}[lemma]{Example}

\newfont{\indic}{bbmss12}

\def\PPi{\boldsymbol{\Pi}}

\def\Nabla_#1{\nabla_{\!#1}}

%
%
%
%

\makeatletter
\pgfdeclareshape{crosscircle}
{
  \inheritsavedanchors[from=circle] 
  \inheritanchorborder[from=circle]
  \inheritanchor[from=circle]{north}
  \inheritanchor[from=circle]{north west}
  \inheritanchor[from=circle]{north east}
  \inheritanchor[from=circle]{center}
  \inheritanchor[from=circle]{west}
  \inheritanchor[from=circle]{east}
  \inheritanchor[from=circle]{mid}
  \inheritanchor[from=circle]{mid west}
  \inheritanchor[from=circle]{mid east}
  \inheritanchor[from=circle]{base}
  \inheritanchor[from=circle]{base west}
  \inheritanchor[from=circle]{base east}
  \inheritanchor[from=circle]{south}
  \inheritanchor[from=circle]{south west}
  \inheritanchor[from=circle]{south east}
  \inheritbackgroundpath[from=circle]
  \foregroundpath{
    \centerpoint%
    \pgf@xc=\pgf@x%
    \pgf@yc=\pgf@y%
    \pgfutil@tempdima=\radius%
    \pgfmathsetlength{\pgf@xb}{\pgfkeysvalueof{/pgf/outer xsep}}%
    \pgfmathsetlength{\pgf@yb}{\pgfkeysvalueof{/pgf/outer ysep}}%
    \ifdim\pgf@xb<\pgf@yb%
      \advance\pgfutil@tempdima by-\pgf@yb%
    \else%
      \advance\pgfutil@tempdima by-\pgf@xb%
    \fi%
    \pgfpathmoveto{\pgfpointadd{\pgfqpoint{\pgf@xc}{\pgf@yc}}{\pgfqpoint{-0.707107\pgfutil@tempdima}{-0.707107\pgfutil@tempdima}}}
    \pgfpathlineto{\pgfpointadd{\pgfqpoint{\pgf@xc}{\pgf@yc}}{\pgfqpoint{0.707107\pgfutil@tempdima}{0.707107\pgfutil@tempdima}}}
    \pgfpathmoveto{\pgfpointadd{\pgfqpoint{\pgf@xc}{\pgf@yc}}{\pgfqpoint{-0.707107\pgfutil@tempdima}{0.707107\pgfutil@tempdima}}}
    \pgfpathlineto{\pgfpointadd{\pgfqpoint{\pgf@xc}{\pgf@yc}}{\pgfqpoint{0.707107\pgfutil@tempdima}{-0.707107\pgfutil@tempdima}}}
  }
}
\makeatother

\def\symbol#1{\textcolor{symbols}{#1}}

\def\decorate#1#2{
        \ifnum#2>0
    		\foreach \count in {1,...,#2}{
	       	let
				\p1 = (sourcenode.center),
                \p2 = (sourcenode.east),
				\n1 = {\x2-\x1},
				\n2 = {1mm},
				\n3 = {(1.3+0.6*(\count-1))*\n1},
				\n4 = {0.7*\n1}
			in 
        		node[rectangle,fill=symbols,rotate=30,inner sep=0pt,minimum width=0.2*\n2,minimum height=\n2] at ($(sourcenode.center) + (\n3,\n4)$) {}
				}
		\fi
        \ifnum#1>0
    		\foreach \count in {1,...,#1}{
	       	let
				\p1 = (sourcenode.center),
                \p2 = (sourcenode.east),
				\n1 = {\x2-\x1},
				\n2 = {1mm},
				\n3 = {(1.3+0.6*(\count-1))*\n1},
				\n4 = {0.7*\n1}
			in 
        		node[rectangle,fill=symbols,rotate=-30,inner sep=0pt,minimum width=0.2*\n2,minimum height=\n2] at ($(sourcenode.center) + (-\n3,\n4)$) {}
				}
		\fi
}

\tikzset{
    dectriangle/.style 2 args={
        triangle,
        alias=sourcenode,
        append after command={\decorate{#1}{#2}}
    },
    dectriangle/.default={0}{0},
}

\tikzset{
	cross/.style={path picture={ 
  		\draw[symbols]
			(path picture bounding box.south east) -- (path picture bounding box.north west) (path picture bounding box.south west) -- (path picture bounding box.north east);
		}},
root/.style={circle,fill=green!50!black,inner sep=0pt, minimum size=1.2mm},
        dot/.style={circle,fill=pageforeground,inner sep=0pt, minimum size=1mm},
        blank/.style={circle,fill=white,inner sep=0pt, minimum size=1mm},
        dotred/.style={circle,fill=pageforeground!50!pagebackground,inner sep=0pt, minimum size=2mm},
        var/.style={circle,fill=pageforeground!10!pagebackground,draw=pageforeground,inner sep=0pt, minimum size=3mm},
        sqvar/.style={rectangle,fill=pageforeground!10!pagebackground,draw=pageforeground,inner sep=0pt, minimum size=3mm},
        kernel/.style={semithick,shorten >=2pt,shorten <=2pt},
        kernels/.style={snake=zigzag,shorten >=2pt,shorten <=2pt,segment amplitude=1pt,segment length=4pt,line before snake=2pt,line after snake=5pt,},
        rho/.style={densely dashed,semithick,shorten >=2pt,shorten <=2pt},
           testfcn/.style={dotted,semithick,shorten >=2pt,shorten <=2pt},
        renorm/.style={shape=circle,fill=pagebackground,inner sep=1pt},
        labl/.style={shape=rectangle,fill=pagebackground,inner sep=1pt},
        xic/.style={very thin,circle,draw=symbols,fill=symbols,inner sep=0pt,minimum size=1.2mm},
        g/.style={very thin,rectangle,draw=symbols,fill=symbols!10!pagebackground,inner sep=0pt,minimum width=2.5mm,minimum height=1.2mm},
        xi/.style={very thin,circle,draw=symbols,fill=symbols!10!pagebackground,inner sep=0pt,minimum size=1.2mm},
	xies/.style={very thin,rectangle,fill=green!50!black!25,draw=symbols,inner sep=0pt,minimum size=1.1mm},
	xiesf/.style={very thin,rectangle,fill=green!50!black,draw=symbols,inner sep=0pt,minimum size=1.1mm},
        xix/.style={very thin,crosscircle,fill=symbols!10!pagebackground,draw=symbols,inner sep=0pt,minimum size=1.2mm},
        X/.style={very thin,cross,rectangle,fill=pagebackground,draw=symbols,inner sep=0pt,minimum size=1.2mm},
	xib/.style={thin,circle,fill=symbols!10!pagebackground,draw=symbols,inner sep=0pt,minimum size=1.6mm},
	xie/.style={thin,circle,fill=green!50!black,draw=symbols,inner sep=0pt,minimum size=1.6mm},
	xid/.style={thin,circle,fill=symbols,draw=symbols,inner sep=0pt,minimum size=1.6mm},
	xibx/.style={thin,crosscircle,fill=symbols!10!pagebackground,draw=symbols,inner sep=0pt,minimum size=1.6mm},
	kernels2/.style={very thick,draw=connection,segment length=12pt},
	keps/.style={thin,draw=symbols,->},
	kepspr/.style={thick,draw=connection,->},
	krho/.style={thin,draw=symbols,superdense,->},
	krhopr/.style={thick,draw=connection,superdense,->},
	triangle/.style = { regular polygon, regular polygon sides=3},
	not/.style={thin,circle,draw=connection,fill=connection,inner sep=0pt,minimum size=0.5mm},
	diff/.style = {very thin,draw=symbols,triangle,fill=red!50!black,inner sep=0pt,minimum size=1.6mm},
	diff1/.style = {very thin,dectriangle={1}{0},fill=red!50!black,draw=symbols,inner sep=0pt,minimum size=1.6mm},
	diff2/.style = {very thin,dectriangle={1}{1},fill=red!50!black,draw=symbols,inner sep=0pt,minimum size=1.6mm},
		diffmini/.style = {very thin,rectangle,fill=black,draw=black,inner sep=0pt,minimum size=0.75mm},
	 kernelsmod/.style={very thick,draw=connection,segment length=12pt},
	 rec/.style = {very thin,rectangle,fill=black,draw=black,inner sep=0pt,minimum size=2mm},
	cerc/.style={very thin,circle,draw=black,fill=symbols,inner sep=0pt,minimum size=2mm},
	stars/.style={very thin,star,star points=6,star point ratio=0.5, draw=black,fill=red,inner sep=0pt,minimum size=0.7mm},
	>=stealth,
        }

\makeatletter
\def\DeclareSymbol#1#2#3{%
	\expandafter\gdef\csname MH@symb@#1\endcsname{\tikzsetnextfilename{symbol#1}%
	\tikz[baseline=#2,scale=0.15,draw=symbols,line join=round]{#3}}%
	\expandafter\gdef\csname MH@symb@#1s\endcsname{\scalebox{0.75}{\tikzsetnextfilename{symbol#1}%
	\tikz[baseline=#2,scale=0.15,draw=symbols,line join=round]{#3}}}%
	\expandafter\gdef\csname MH@symb@#1ss\endcsname{\scalebox{0.65}{\tikzsetnextfilename{symbol#1}%
	\tikz[baseline=#2,scale=0.15,draw=symbols,line join=round]{#3}}}%
	}
\def\<#1>{\ifthenelse{\boolean{mmode}}{\mathchoice{\csname MH@symb@#1\endcsname}{\csname MH@symb@#1\endcsname}{\csname MH@symb@#1s\endcsname}{\csname MH@symb@#1ss\endcsname}}{\csname MH@symb@#1\endcsname}}
\makeatother

\DeclareSymbol{Xi22}{0.5}{\draw (0,0) node[xi] {} -- (-1,1) node[not] {} -- (0,2) node[xi] {};} 
\DeclareSymbol{Xi2}{-2}{\draw (-1,-0.25) node[xi] {} -- (0,1) node[xi] {};} 
\DeclareSymbol{Xi2C}{-2}{\draw (-1,-0.25) node[xi] {} -- (0,1) node[xi] {}; \node at (-1,1) (a) {\scriptsize 1};}
\DeclareSymbol{Xi2alpha}{-2}{\draw (-1,-0.25) node[xi] {} -- (0,1) node[xi] {}; \node at (-1.2,1) (a) {\scriptsize $\alpha$};}
\DeclareSymbol{I(Xi)}{0}{\node[xi] at (0,1) (a) {}; \draw (a) -- (0,0);}
\DeclareSymbol{pXi2}{-2}{\draw (-1,-0.25) node[xi] {} -- (0,1) node[xi] {}; \node at (-0.95,0.95) {\tiny $1$};}
\DeclareSymbol{puXi2}{-2}{\draw (-1,-0.25) node[xi] {} -- (0,1) node[xi] {}; \node at (-1.25,0.95) {\tiny $m$};}
\DeclareSymbol{PlantedNoise}{0}{\draw[black] (0,1.5) node[xi] {} -- (0,-0.25) {};}
\DeclareSymbol{PlantedNoiseC}{0}{\draw[black] (0,1.5) node[xi] {} -- (0,-0.25) {}; \node at (-0.75,0.5) (a) {\scriptsize 1};}
\DeclareSymbol{PlantedNoiseCC}{0}{\draw[black] (0,1.5) node[xi] {} -- (0,-0.25) {}; \node at (-0.75,0.5) (a) {\scriptsize 2};}
\DeclareSymbol{PlantedNoiseCX}{0}{\draw[kernels2] (0,1.5) node[xi] {} -- (0,-0.25) {}; \node at (-0.75,0.5) (a) {\scriptsize 1};}
\DeclareSymbol{PlantedNoiseX}{0}{\draw[kernels2] (0,1.5) node[xi] {} -- (0,-0.25) {};}
\DeclareSymbol{PlantedNoisea}{0}{\draw (0,1.5) node[xi] {} -- (0,-0.25) {}; \node at (-0.75,0.5) (a) {\scriptsize $\alpha$};}
\DeclareSymbol{XiIt}{2}{\node[xi] at (-1.5,-0.1) (a) {};
\node[var] at (0,2.25) (b) {\tiny{$\tau$ }};
\node[blank] at (-1.5,1.25) {\tiny{$k$}};
\draw[symbols]  (a) -- (b);}
\DeclareSymbol{Xi1It}{2}{\node[xi] at (-1.5,-0.1) (a) {};
\node[var] at (0,2.25) (b) {\tiny{$\tau$ }};
\node[blank] at (-1.5,1.25) {\tiny{$1$}};
\draw[symbols]  (a) -- (b);}
\DeclareSymbol{Xi0It}{2}{\node[xi] at (-1.5,-0.1) (a) {};
\node[var] at (0,2.25) (b) {\tiny{$\tau$ }};
\node[blank] at (-1.5,1.25) {};
\draw[symbols]  (a) -- (b);}
\DeclareSymbol{It}{2}{\node at (0,-1) (a) {};
\node[var] at (0,2.25) (b) {\tiny{$\tau$ }};
\node[blank] at (-0.75,0.5) {\tiny{$k$}};
\draw[symbols]  (a) -- (b);}
\DeclareSymbol{It0}{2}{\node at (0,-1) (a) {};
\node[var] at (0,2.25) (b) {\tiny{$\tau$ }};
\draw[symbols]  (a) -- (b);}
\DeclareSymbol{1It}{2}{\node at (0,-1) (a) {};
\node[var] at (0,2.25) (b) {\tiny{$\tau$ }};
\node[blank] at (-0.75,0.5) {\tiny{$1$}};
\draw[symbols]  (a) -- (b);}
\DeclareSymbol{1I1t1}{2}{\node at (0,-1) (a) {};
\node[var] at (0,2.25) (b) {\tiny{$\tau_1$ }};
\node[blank] at (-0.75,0.5) {\tiny{$1$}};
\draw[kernels2]  (a) -- (b);}
\DeclareSymbol{1I1t2}{2}{\node at (0,-1) (a) {};
\node[var] at (0,2.25) (b) {\tiny{$\tau_2$ }};
\node[blank] at (-0.75,0.5) {\tiny{$1$}};
\draw[kernels2]  (a) -- (b);}
\DeclareSymbol{1I1t}{2}{\node at (0,-1) (a) {};
\node[var] at (0,2.25) (b) {\tiny{$\tau$ }};
\node[blank] at (-0.75,0.5) {\tiny{$1$}};
\draw[kernels2]  (a) -- (b);}
\DeclareSymbol{I1t}{2}{\node at (0,-1) (a) {};
\node[var] at (0,2.25) (b) {\tiny{$\tau$ }};
\node[blank] at (-0.75,0.5) {\tiny{$1$}};
\draw[kernels]  (a) -- (b);}
\DeclareSymbol{I1t1}{2}{\node at (0,-1) (a) {};
\node[var] at (0,2.25) (b) {\tiny{$\tau_1$}};
\node[blank] at (-0.75,0.5) {};
\draw[kernels2]  (a) -- (b);}
\DeclareSymbol{Ita}{2}{\node at (0,-1) (a) {};
\node[var] at (0,2.25) (b) {\tiny{$\tau$ }};
\node[blank] at (-0.75,0.5) {\tiny{$\alpha$}};
\draw[symbols]  (a) -- (b);}
\DeclareSymbol{I1t1I1t2}{0}{\node[var] at (-2,2) (a) {\tiny{$\tau_1$}}; 
\node[var] at (2,2) (b) {\tiny{$\tau_2$}}; 
\draw[kernels2] (0,0) -- (a); 
\draw[kernels2] (0,0) -- (b); 
\node at (-1.5,-0.05) {{\tiny $\ell$}}; 
\node at (1.5,-0.25) {{\tiny $m$}};}
\DeclareSymbol{0I1t10I1t2}{0}{\node[var] at (-2,2) (a) {\tiny{$\tau_1$}}; 
\node[var] at (2,2) (b) {\tiny{$\tau_2$}}; 
\draw[kernels2] (0,0) -- (a); 
\draw[kernels2] (0,0) -- (b);}
\DeclareSymbol{0I1t11I1t2}{0}{\node[var] at (-2,2) (a) {\tiny{$\tau_1$}}; 
\node[var] at (2,2) (b) {\tiny{$\tau_2$}}; 
\draw[kernels2] (0,0) -- (a); 
\draw[kernels2] (0,0) -- (b); 
\node at (-1.5,-0.05) {{\tiny $0$}}; 
\node at (1.5,-0.05) {{\tiny $1$}};}
\DeclareSymbol{I1XiItI1Xi}{0}{\node[xi] at (-2,2) (a) {}; 
\node[xi] at (2,2) (b) {}; 
\node[var] at (0,3) (c) {\tiny{$\tau$}};
\draw[kernels2] (0,0) -- (a); 
\draw[kernels2] (0,0) -- (b);
\draw[] (0,0) -- (c);
\node at (-1.5,-0.05) {{\tiny $\ell$}}; 
\node at (1.5,-0.25) {{\tiny $m$}};}
\DeclareSymbol{I1t1I1t2It3}{0}{\node[var] at (-3,2) (a) {\tiny{$\tau_1$}}; 
\node[var] at (3,2) (b) {\tiny{$\tau_3$}}; 
\node[var] at (0,4) (c) {\tiny{$\tau_2$}};
\draw[kernels2] (0,0) -- (a); 
\draw (0,0) -- (b);
\draw[kernels2] (0,0) -- (c);
\node at (-1.5,-0.05) {{\tiny $\ell$}};
\node at (-1,2) {\tiny{$m$}}; 
\node at (1.5,-0.25) {{\tiny $n$}};}
\DeclareSymbol{XiIt1It2It3}{0}{\node[var] at (-3,2) (a) {\tiny{$\tau_1$}}; 
\node[var] at (3,2) (b) {\tiny{$\tau_3$}}; 
\node[var] at (0,4) (c) {\tiny{$\tau_2$}};
\node[xi] at (0,0) (d) {};
\draw (d) -- (a); 
\draw (d) -- (b);
\draw (d) -- (c);
\node at (-1.5,-0.05) {{\tiny $\ell$}};
\node at (-1,2) {\tiny{$m$}}; 
\node at (1.5,-0.25) {{\tiny $n$}};}
\DeclareSymbol{It1It2It3}{0}{\node[var] at (-3,2) (a) {\tiny{$\tau_1$}}; 
\node[var] at (3,2) (b) {\tiny{$\tau_3$}}; 
\node[var] at (0,4) (c) {\tiny{$\tau_2$}};
\draw (0,0) -- (a); 
\draw (0,0) -- (b);
\draw (0,0) -- (c);
\node at (-1.5,-0.05) {{\tiny $\ell$}};
\node at (-1,2) {\tiny{$m$}}; 
\node at (1.5,-0.25) {{\tiny $n$}};}
\DeclareSymbol{It1It2I1t3}{0}{\node[var] at (-3,2) (a) {\tiny{$\tau_1$}}; 
\node[var] at (3,2) (b) {\tiny{$\tau_3$}}; 
\node[var] at (0,4) (c) {\tiny{$\tau_2$}};
\draw (0,0) -- (a); 
\draw[kernels2] (0,0) -- (b);
\draw (0,0) -- (c);
\node at (-1.5,-0.05) {{\tiny $\ell$}};
\node at (-1,2) {\tiny{$m$}}; 
\node at (1.5,-0.25) {{\tiny $n$}};}
\DeclareSymbol{It1I1t2It3}{0}{\node[var] at (-3,2) (a) {\tiny{$\tau_1$}}; 
\node[var] at (3,2) (b) {\tiny{$\tau_3$}}; 
\node[var] at (0,4) (c) {\tiny{$\tau_2$}};
\draw (0,0) -- (a); 
\draw (0,0) -- (b);
\draw[kernels2] (0,0) -- (c);
\node at (-1.5,-0.05) {{\tiny $\ell$}};
\node at (-1,2) {\tiny{$m$}}; 
\node at (1.5,-0.25) {{\tiny $n$}};}
\DeclareSymbol{It1It2It3It4}{0}{\node[var] at (-4,2) (a) {\tiny{$\tau_1$}}; 
\node[var] at (4,2) (b) {\tiny{$\tau_3$}}; 
\node[var] at (-1.5,4) (c) {\tiny{$\tau_2$}};
\node[var] at (1.5,4) (d) {\tiny{$\tau_2$}};
\draw (0,0) -- (a); 
\draw (0,0) -- (b);
\draw (0,0) -- (c);
\draw (0,0) -- (d);
\node at (-1.5,-0.05) {{\tiny $k$}};
\node at (-1.5,1.75) {\tiny{$\ell$}}; 
\node at (1.5,-0.25) {{\tiny $m$}};
\node at (1.5, 1.75) {{\tiny $n$}};}
\DeclareSymbol{It1I1t2It3It4}{0}{\node[var] at (-4,2) (a) {\tiny{$\tau_1$}}; 
\node[var] at (4,2) (b) {\tiny{$\tau_3$}}; 
\node[var] at (-1.5,4) (c) {\tiny{$\tau_2$}};
\node[var] at (1.5,4) (d) {\tiny{$\tau_2$}};
\draw (0,0) -- (a); 
\draw (0,0) -- (b);
\draw[kernels2] (0,0) -- (c);
\draw (0,0) -- (d);
\node at (-1.5,-0.05) {{\tiny $k$}};
\node at (-1.5,1.75) {\tiny{$\ell$}}; 
\node at (1.5,-0.25) {{\tiny $m$}};
\node at (1.5, 1.75) {{\tiny $n$}};}
\DeclareSymbol{It1I1t2I1t3}{0}{\node[var] at (-3,2) (a) {\tiny{$\tau_1$}}; 
\node[var] at (3,2) (b) {\tiny{$\tau_3$}}; 
\node[var] at (0,4) (c) {\tiny{$\tau_2$}};
\draw (0,0) -- (a); 
\draw[kernels2] (0,0) -- (b);
\draw[kernels2] (0,0) -- (c);
\node at (-1.5,-0.05) {{\tiny $\ell$}};
\node at (-1,2) {\tiny{$m$}}; 
\node at (1.5,-0.25) {{\tiny $n$}};}
\DeclareSymbol{It1It2I1t3I1t4}{0}{\node[var] at (-4,2) (a) {\tiny{$\tau_1$}}; 
\node[var] at (4,2) (b) {\tiny{$\tau_4$}}; 
\node[var] at (2,4) (c) {\tiny{$\tau_3$}};
\node[var] at (-2,4) (d) {\tiny{$\tau_2$}};
\draw (0,0) -- (a); 
\draw[kernels2] (0,0) -- (b);
\draw[kernels2] (0,0) -- (c);
\draw (0,0) -- (d);
\node at (-1.75,-0.1) {{\tiny $k$}};
\node at (-2.15,2) {\tiny{$\ell$}};
\node at (2.15,2) {\tiny{$m$}}; 
\node at (1.75,-0.3) {{\tiny $n$}};}
\DeclareSymbol{XiItIXi}{0}{\node[var] at (-2,2) (a) {\tiny{$\tau$}}; 
\node[xi] at (2,2) (b) {}; 
\draw (0,0) -- (a); 
\draw (0,0) -- (b); 
\node at (-1.5,-0.05) {{\tiny $\ell$}}; 
\node at (1.5,-0.25) {{\tiny $m$}};
\node[xi] at (0,0) (c) {};}
\DeclareSymbol{It1It2}{0}{\node[var] at (-2,2) (a) {\tiny{$\tau_1$}}; 
\node[var] at (2,2) (b) {\tiny{$\tau_2$}}; 
\draw (0,0) -- (a); 
\draw (0,0) -- (b); 
\node at (-1.5,-0.05) {{\tiny $\ell$}}; 
\node at (1.5,-0.25) {{\tiny $m$}};}
\DeclareSymbol{It1I1t2}{0}{\node[var] at (-2,2) (a) {\tiny{$\tau_1$}}; 
\node[var] at (2,2) (b) {\tiny{$\tau_2$}}; 
\draw (0,0) -- (a); 
\draw[kernels2] (0,0) -- (b); 
\node at (-1.5,-0.05) {{\tiny $\ell$}}; 
\node at (1.5,-0.25) {{\tiny $m$}};}
\DeclareSymbol{Xi2b}{-2}{\draw (-1,-0.25) node[xic] {} -- (0,1) node[xic] {};} 
\DeclareSymbol{Xi2g}{-2}{\draw (-1,-0.25) node[xies] {} -- (0,1) node[xi] {};} 
\DeclareSymbol{Xi2g2}{-2}{\draw (-1,-0.25) node[xi] {} -- (0,1) node[xies] {};} 
\DeclareSymbol{cXi2}{-2}{\draw (0,-0.25) node[xi] {} -- (-1,1) node[xic] {};}
\DeclareSymbol{Xi3}{0}{\draw (0,0) node[xi] {} -- (-1,1) node[xi] {} -- (0,2) node[xi] {};}
\DeclareSymbol{XiIIXi}{0}{\draw (0,0) node[xi] {} -- (-1,1); \draw[kernels2] (-1,1) node[not] {} -- (0,2) node[xi] {};}

\DeclareSymbol{Xi4}{2}{\draw (0,0) node[xi] {} -- (-1,1) node[xi] {} -- (0,2) node[xi] {} -- (-1,3) node[xi] {};}
\DeclareSymbol{Xi4_1}{2}{\draw (0,0) node[xic] {} -- (-1,1) node[xic] {} -- (0,2) node[xi] {} -- (-1,3) node[xi] {};}
\DeclareSymbol{Xi4_2}{2}{\draw (0,0) node[xic] {} -- (-1,1) node[xi] {} -- (0,2) node[xi] {} -- (-1,3) node[xic] {};}
\DeclareSymbol{Xi2X}{-2}{\draw (0,-0.25) node[xi] {} -- (-1,1) node[xix] {};}
\DeclareSymbol{XXi2}{-2}{\draw (0,-0.25) node[xix] {} -- (-1,1) node[xi] {};}
\DeclareSymbol{IIXi}{0}{\draw (0,-0.25) node[not] {} -- (-1,1) node[xi] {} -- (0,2) node[xi] {};}
\DeclareSymbol{IXi^2}{-1}{\draw (-1,1) node[xi] {} -- (0,0) node[not] {} -- (1,1) node[xi] {};}
\DeclareSymbol{IIXi^2}{-4}{\draw (0,-1.5) node[not] {} -- (0,0);
\draw[kernels2] (-1,1) node[xi] {} -- (0,0) node[not] {} -- (1,1) node[xi] {};}
\DeclareSymbol{XiX}{-2.8}{\node[xibx] {};}
\DeclareSymbol{tauX}{-2.8}{ \node[X] {};}
\DeclareSymbol{Xi}{-2.8}{\node[xib] {};}

\DeclareSymbol{IXiX}{-1}{\draw (0,-0.25) node[not] {} -- (-1,1) node[xix] {};}
\DeclareSymbol{IXi3}{2}{\draw (0,-0.25) node[not] {} -- (-1,1) node[xi] {} -- (0,2) node[xi] {} -- (-1,3) node[xi] {};}
\DeclareSymbol{IXi}{-2}{\draw (0,-0.25) node[not] {} -- (-1,1) node[xi] {};}
\DeclareSymbol{XiI}{-2}{\draw (0,-0.25) node[xi] {} -- (-1,1) node[not] {};}

\DeclareSymbol{Xi4b}{0}{\draw(0,1.5) node[xi] {} -- (0,0); \draw (-1,1) node[xi] {} -- (0,0) node[xi] {} -- (1,1) node[xi] {};}
\DeclareSymbol{Xi4b'}{0}{\draw(0,1.5) node[xi] {} -- (0,-0.2); \draw (-1,1) node[xi] {} -- (0,-0.2) node[not] {} -- (1,1) node[xi] {};}
\DeclareSymbol{Xi4c}{0}{\draw (0,1) -- (0.8,2.2) node[xi] {};\draw (0,-0.25) node[xi] {} -- (0,1) node[xi] {} -- (-0.8,2.2) node[xi] {};}
\DeclareSymbol{Xi4d}{-4.5}{\draw (0,-1.5) node[not] {} -- (0,0); \draw (-1,1) node[xi] {} -- (0,0) node[xi] {} -- (1,1) node[xi] {};}
\DeclareSymbol{Xi4e}{0}{\draw (0,2) node[xi] {} -- (-1,1) node[xi] {} -- (0,0) node[xi] {} -- (1,1) node[xi] {};}
\DeclareSymbol{Xi4e'}{0}{\draw (0,2) node[xi] {} -- (-1,1) node[xi] {} -- (0,-0.2) node[not] {} -- (1,1) node[xi] {};}

\DeclareSymbol{Xitwo}
{0}{\draw[kernels2] (0,0) node[not] {} -- (-1,1) node[not] {}
-- (-2,2) node[not]{} -- (-3,3) node[xi]  {};
\draw[kernels2] (0,0) -- (1,1) node[xi] {};
\draw[kernels2] (-1,1) -- (0,2) node[xi] {};
\draw[kernels2] (-2,2) -- (-1,3) node[xi] {};}

\DeclareSymbol{IXitwo}
{0}{\draw (-.7,1.2) node[xi] {} -- (0,-0.2) -- (.7,1.2) node[xi] {};}
\DeclareSymbol{I1Xitwo}
{0}{\draw[kernels2] (0,0) node[not] {} -- (-1,1) node[xi] {};
\draw[kernels2] (0,0) -- (1,1) node[xi] {};}
\DeclareSymbol{I1Xitwou}
{0}{\draw[kernels2] (0,0) node[not] {} -- (-1,1) node[xi] {};
\draw[kernels2] (0,0) -- (1,1) node[xi] {}; \node at (-0.85,-0.2) {{\tiny $1$}}; \node at (0.9,-0.2) {{\tiny $0$}};}

\DeclareSymbol{I1Xitwoub}
{0}{\draw[kernels2] (0,0) node[not] {} -- (-1,1) node[xi] {};
	\draw[kernels2] (0,0) -- (1,1) node[xi] {}; \node at (-0.85,-0.2) {{\tiny $0$}}; \node at (0.9,-0.2) {{\tiny $1$}};}

\DeclareSymbol{I1Xitwoab}
{0}{\draw[kernels2] (0,0) node[not] {} -- (-1,1) node[xi] {};
\draw[kernels2] (0,0) -- (1,1) node[xi] {}; \node at (-0.85,-0.1) {{\tiny $\alpha$}}; \node at (0.9,-0.2) {{\tiny $\beta$}};}
\DeclareSymbol{I1Xitwoup}
{0}{\draw[kernels2] (0,0) node[not] {} -- (-1,1) node[xi] {};
\draw[kernels2] (0,0) -- (1,1) node[xi] {}; \node at (-0.85,0) {{\tiny $k$}}; \node at (0.9,-0.1) {{\tiny $l$}};}
\DeclareSymbol{I1Xitwobis}
{0}{\draw[kernels2] (0,0) node[not] {} -- (-1,1) node[xies] {};
\draw[kernels2] (0,0) -- (1,1) node[xies] {};}

\DeclareSymbol{I1Xitwog}
{0}{\draw[kernels2] (0,0) node[not] {} -- (-1,1) node[xies] {};
\draw[kernels2] (0,0) -- (1,1) node[xi] {};}

\DeclareSymbol{cI1Xitwo}
{0}{\draw[kernels2] (0,0) node[not] {} -- (-1,1) node[xic] {};
\draw[kernels2] (0,0) -- (1,1) node[xi] {};}

\DeclareSymbol{I1IXi3}{0}{\draw (0,0) node[xi] {} -- (-1,1) ; 
\draw[kernels2] (-1,1) node[not] {} -- (0,2) node[xi] {};
\draw[kernels2] (-1,1) node[not] {} -- (-2,2) node[xi] {};}

\DeclareSymbol{I1Xi3c}{-1}{\draw[kernels2](0,1.5) node[xi] {} -- (0,0) node[not] {}; \draw (-1,1) node[xi] {} -- (0,0) ; \draw[kernels2] (0,0) -- (1,1) node[xi] {};}

\DeclareSymbol{I1Xi3cbis}{-1}{\draw[kernels2](0,1.5) node[xies] {} -- (0,0) node[not] {}; \draw (-1,1) node[xies] {} -- (0,0) ; \draw[kernels2] (0,0) -- (1,1) node[xies] {};}

\DeclareSymbol{I1IXi3b}{0}{\draw[kernels2] (0,0) node[not] {} -- (-1,1) ; \draw[kernels2] (0,0)   -- (1,1) node[xi] {} ;
\draw (-1,1) node[xi] {} -- (0,2) node[xi] {};
}

\DeclareSymbol{I1IXi3c}{0}{\draw[kernels2] (0,0) node[not] {} -- (-1,1) ; \draw[kernels2] (0,0)   -- (1,1) node[xi] {} ;
\draw[kernels2] (-1,1) node[not] {} -- (0,2) node[xi] {};
\draw[kernels2] (-1,1) node[not] {} -- (-2,2) node[xi] {};}

\DeclareSymbol{I1IXi3cbis}{0}{\draw[kernels2] (0,0) node[not] {} -- (-1,1) ; \draw[kernels2] (0,0)   -- (1,1) node[xies] {} ;
\draw[kernels2] (-1,1) node[not] {} -- (0,2) node[xies] {};
\draw[kernels2] (-1,1) node[not] {} -- (-2,2) node[xies] {};}

\DeclareSymbol{I1Xi}{0}{\draw[kernels2] (0,0) node[not] {} -- (-1,1)  node[xi] {} ;}

\DeclareSymbol{I1Xi4a}{2}{\draw[kernels2] (0,0) node[not] {} -- (-1,1) ; \draw[kernels2] (0,0) node[not] {} -- (1,1) node[xi] {} ;
\draw (-1,1) node[xi] {} -- (0,2) node[xi] {} -- (-1,3) node[xi] {};}

\DeclareSymbol{cI1Xi4a}{2}{\draw[kernels2] (0,0) node[not] {} -- (-1,1) ; \draw[kernels2] (0,0) node[not] {} -- (1,1) node[xic] {} ;
\draw (-1,1) node[xic] {} -- (0,2) node[xi] {} -- (-1,3) node[xi] {};}

\DeclareSymbol{I1Xi4b}{2}{\draw (0,0) node[xi] {} -- (-1,1) node[xi] {} -- (0,2) ; \draw[kernels2] (0,2) node[not] {} -- (-1,3) node[xi] {};\draw[kernels2] (0,2)  -- (1,3) node[xi] {};
}

\DeclareSymbol{cI1Xi4b}{2}{\draw (0,0) node[xic] {} -- (-1,1) node[xic] {} -- (0,2) ; \draw[kernels2] (0,2) node[not] {} -- (-1,3) node[xi] {};\draw[kernels2] (0,2)  -- (1,3) node[xi] {};
}

\DeclareSymbol{I1Xi4c}{2}{\draw (0,0) node[xi] {} -- (-1,1) node[not] {}; \draw[kernels2] (-1,1) -- (0,2) ; 
\draw[kernels2] (-1,1) -- (-2,2) node[xi] {} ;
\draw (0,2) node[xi] {} -- (-1,3) node[xi] {};}

\DeclareSymbol{cI1Xi4c}{2}{\draw (0,0) node[xic] {} -- (-1,1) node[not] {}; \draw[kernels2] (-1,1) -- (0,2) ; 
\draw[kernels2] (-1,1) -- (-2,2) node[xic] {} ;
\draw (0,2) node[xi] {} -- (-1,3) node[xi] {};}

\DeclareSymbol{I1Xi4ab}{2}{\draw[kernels2] (0,0) node[not] {} -- (-1,1) ; \draw[kernels2] (0,0) node[not] {} -- (1,1) node[xi] {};\draw (-1,1) node[xi] {} -- (0,2) ; \draw[kernels2] (0,2) node[not] {} -- (-1,3) node[xi] {};\draw[kernels2] (0,2)  -- (1,3) node[xi] {}; }

\DeclareSymbol{cI1Xi4ab}{2}{\draw[kernels2] (0,0) node[not] {} -- (-1,1) ; \draw[kernels2] (0,0) node[not] {} -- (1,1) node[xic] {};\draw (-1,1) node[xic] {} -- (0,2) ; \draw[kernels2] (0,2) node[not] {} -- (-1,3) node[xi] {};\draw[kernels2] (0,2)  -- (1,3) node[xi] {}; }

\DeclareSymbol{I1Xi4bc}{2}{\draw (0,0) node[xi] {} -- (-1,1) node[not] {}; \draw[kernels2] (-1,1) -- (0,2) ; 
\draw[kernels2] (-1,1) -- (-2,2) node[xi] {} ; \draw[kernels2] (0,2) node[not] {} -- (-1,3) node[xi] {};\draw[kernels2] (0,2)  -- (1,3) node[xi] {};
}

\DeclareSymbol{cI1Xi4bc}{2}{\draw (0,0) node[xic] {} -- (-1,1) node[not] {}; \draw[kernels2] (-1,1) -- (0,2) ; 
\draw[kernels2] (-1,1) -- (-2,2) node[xic] {} ; \draw[kernels2] (0,2) node[not] {} -- (-1,3) node[xi] {};\draw[kernels2] (0,2)  -- (1,3) node[xi] {};
}

\DeclareSymbol{I1Xi4abcc1}{2}{\draw[kernels2] (0,0) node[not] {} -- (-1,1) node[not] {}
-- (-2,2) node[not]{} -- (-3,3) node[xic]  {};
\draw[kernels2] (0,0) -- (1,1) node[xic] {};
\draw[kernels2] (-1,1) -- (0,2) node[xi] {};
\draw[kernels2] (-2,2) -- (-1,3) node[xi] {};
}

\DeclareSymbol{I1Xi4abcc1b}{2}{\draw[kernels2] (0,0) node[not] {} -- (-1,1) node[not] {}
-- (-2,2) node[not]{} -- (-3,3) node[xi]  {};
\draw[kernels2] (0,0) -- (1,1) node[xic] {};
\draw[kernels2] (-1,1) -- (0,2) node[xic] {};
\draw[kernels2] (-2,2) -- (-1,3) node[xi] {};
}

\DeclareSymbol{I1Xi4abcc2}{2}{\draw[kernels2] (0,0) node[not] {} -- (-1,1) node[not] {}
-- (-2,2) node[not]{} -- (-3,3) node[xic]  {};
\draw[kernels2] (0,0) -- (1,1) node[xi] {};
\draw[kernels2] (-1,1) -- (0,2) node[xi] {};
\draw[kernels2] (-2,2) -- (-1,3) node[xic] {};
}

\DeclareSymbol{I1Xi4ac}{2}{\draw[kernels2] (0,0) node[not] {} -- (-1,1) ; \draw[kernels2] (0,0) node[not] {} -- (1,1) node[xi] {}; 
\draw[kernels2] (-1,1) node[not] {} -- (0,2) ; 
\draw[kernels2] (-1,1) -- (-2,2) node[xi] {} ;
\draw (0,2) node[xi] {} -- (-1,3) node[xi] {};}

\DeclareSymbol{cI1Xi4ac}{2}{\draw[kernels2] (0,0) node[not] {} -- (-1,1) ; \draw[kernels2] (0,0) node[not] {} -- (1,1) node[xic] {}; 
\draw[kernels2] (-1,1) node[not] {} -- (0,2) ; 
\draw[kernels2] (-1,1) -- (-2,2) node[xic] {} ;
\draw (0,2) node[xi] {} -- (-1,3) node[xi] {};}

\DeclareSymbol{I1Xi4acc1}{2}{\draw[kernels2] (0,0) node[not] {} -- (-1,1) ; \draw[kernels2] (0,0) node[not] {} -- (1,1) node[xic] {}; 
\draw[kernels2] (-1,1) node[not] {} -- (0,2) ; 
\draw[kernels2] (-1,1) -- (-2,2) node[xi] {} ;
\draw (0,2) node[xic] {} -- (-1,3) node[xi] {};}

\DeclareSymbol{I1Xi4acc2}{2}{\draw[kernels2] (0,0) node[not] {} -- (-1,1) ; \draw[kernels2] (0,0) node[not] {} -- (1,1) node[xic] {}; 
\draw[kernels2] (-1,1) node[not] {} -- (0,2) ; 
\draw[kernels2] (-1,1) -- (-2,2) node[xi] {} ;
\draw (0,2) node[xi] {} -- (-1,3) node[xic] {};}

\DeclareSymbol{2I1Xi4}{2}{\draw[kernels2] (0,0) node[not] {} -- (-1,1) node[not] {};
\draw[kernels2] (0,0) -- (1,1) node[not] {};
\draw[kernels2] (-1,1) -- (-1.5,2.5) node[xi] {};
\draw[kernels2] (-1,1) -- (-0.5,2.5) node[xi] {};
\draw[kernels2] (1,1) -- (0.5,2.5) node[xi] {};
\draw[kernels2] (1,1) -- (1.5,2.5) node[xi] {};
}

\DeclareSymbol{2I1Xi4dis}{2}{\draw[kernels2] (0,0) node[not] {} -- (-1,1) node[not] {};
\draw[kernels2] (0,0) -- (1,1) node[not] {};
\draw[kernels2] (-1,1) -- (-1.5,2.5) node[xies] {};
\draw[kernels2] (-1,1) -- (-0.5,2.5) node[xies] {};
\draw[kernels2] (1,1) -- (0.5,2.5) node[xies] {};
\draw[kernels2] (1,1) -- (1.5,2.5) node[xies] {};
}

\DeclareSymbol{2I1Xi4c1}{2}{\draw[kernels2] (0,0) node[not] {} -- (-1,1) node[not] {};
\draw[kernels2] (0,0) -- (1,1) node[not] {};
\draw[kernels2] (-1,1) -- (-1.5,2.5) node[xic] {};
\draw[kernels2] (-1,1) -- (-0.5,2.5) node[xi] {};
\draw[kernels2] (1,1) -- (0.5,2.5) node[xic] {};
\draw[kernels2] (1,1) -- (1.5,2.5) node[xi] {};
}

\DeclareSymbol{2I1Xi4c2}{2}{\draw[kernels2] (0,0) node[not] {} -- (-1,1) node[not] {};
\draw[kernels2] (0,0) -- (1,1) node[not] {};
\draw[kernels2] (-1,1) -- (-1.5,2.5) node[xic] {};
\draw[kernels2] (-1,1) -- (-0.5,2.5) node[xic] {};
\draw[kernels2] (1,1) -- (0.5,2.5) node[xi] {};
\draw[kernels2] (1,1) -- (1.5,2.5) node[xi] {};
}

\DeclareSymbol{2I1Xi4b}{2}{\draw[kernels2] (0,0) node[not] {} -- (-1,1) ;
\draw[kernels2] (0,0) -- (1,1);
\draw (-1,1) node[xi] {} -- (-1,2.5) node[xi] {};
\draw (1,1)  node[xi] {} -- (1,2.5) node[xi] {};
}

\DeclareSymbol{2I1Xi4bb}{2}{\draw[kernels2] (0,0) node[not] {} -- (-1,1) ;
\draw[kernels2] (0,0) -- (1,1);
\draw (-1,1) node[xi] {} -- (-1,2.5) node[xiesf] {};
\draw (1,1)  node[xi] {} -- (1,2.5) node[xic] {};
}

\DeclareSymbol{2I1Xi4c}{2}{\draw[kernels2] (0,0) node[not] {} -- (-1,1);
\draw[kernels2] (0,0) -- (1,1) node[not] {};
\draw (-1,1)  node[xi] {} -- (-1,2.5) node[xi] {};
\draw[kernels2] (1,1) -- (0.4,2.5) node[xi] {};
\draw[kernels2] (1,1) -- (1.6,2.5) node[xi] {};
}

\DeclareSymbol{2I1Xi4cc1}{2}{\draw[kernels2] (0,0) node[not] {} -- (-1,1);
\draw[kernels2] (0,0) -- (1,1) node[not] {};
\draw (-1,1)  node[xic] {} -- (-1,2.5) node[xi] {};
\draw[kernels2] (1,1) -- (0.4,2.5) node[xic] {};
\draw[kernels2] (1,1) -- (1.6,2.5) node[xi] {};
}

\DeclareSymbol{2I1Xi4cc2}{2}{\draw[kernels2] (0,0) node[not] {} -- (-1,1);
\draw[kernels2] (0,0) -- (1,1) node[not] {};
\draw (-1,1)  node[xic] {} -- (-1,2.5) node[xic] {};
\draw[kernels2] (1,1) -- (0.4,2.5) node[xi] {};
\draw[kernels2] (1,1) -- (1.6,2.5) node[xi] {};
}

\DeclareSymbol{Xi4ba}{0}{\draw(-0.5,1.5) node[xi] {} -- (0,0); \draw (-1.5,1) node[xi] {} -- (0,0) node[not] {}; \draw[kernels2] (0,0) -- (1.5,1) node[xi] {};
\draw[kernels2] (0,0) -- (0.5,1.5) node[xi] {} ;}

\DeclareSymbol{Xi4badis}{0}{\draw(-0.5,1.5) node[xies] {} -- (0,0); \draw (-1.5,1) node[xies] {} -- (0,0) node[not] {}; \draw[kernels2] (0,0) -- (1.5,1) node[xies] {};
\draw[kernels2] (0,0) -- (0.5,1.5) node[xies] {} ;}

\DeclareSymbol{Xi4ba1}{0}{\draw(-0.5,1.5) node[xi] {} -- (0,0); \draw (-1.5,1) node[xi] {} -- (0,0) node[not] {}; \draw[kernels2] (0,0) -- (1.5,1) node[xic] {};
\draw[kernels2] (0,0) -- (0.5,1.5) node[xic] {} ;}

\DeclareSymbol{Xi4ba1b}{0}{\draw(-0.5,1.5) node[xic] {} -- (0,0); \draw (-1.5,1) node[xic] {} -- (0,0) node[not] {}; \draw[kernels2] (0,0) -- (1.5,1) node[xi] {};
\draw[kernels2] (0,0) -- (0.5,1.5) node[xi] {} ;}

\DeclareSymbol{Xi4ba1bdiff}{0}{\draw(-0.5,1.5) node[xic] {} -- (0,0); \draw (-1.5,1) node[xic] {} -- (0,0) node[not] {}; \draw (0,0) -- (1.5,1) node[xi] {};
\draw (0,0) -- (0.5,1.5) node[xi] {};
\draw(0,0) node[diff] {};}

\DeclareSymbol{Xi4ba1bb}{0}{\draw(-0.5,1.5) node[xic] {} -- (0,0); \draw (-1.5,1) node[xiesf] {} -- (0,0) node[not] {}; \draw[kernels2] (0,0) -- (1.5,1) node[xi] {};
\draw[kernels2] (0,0) -- (0.5,1.5) node[xi] {} ;}

\DeclareSymbol{Xi4ba2}{0}{\draw(-0.5,1.5) node[xi] {} -- (0,0); \draw (-1.5,1) node[xic] {} -- (0,0) node[not] {}; \draw[kernels2] (0,0) -- (1.5,1) node[xi] {};
\draw[kernels2] (0,0) -- (0.5,1.5) node[xic] {} ;}

\DeclareSymbol{Xi4ba2b}{0}{\draw(-0.5,1.5) node[xi] {} -- (0,0); \draw (-1.5,1) node[xic] {} -- (0,0) node[not] {}; \draw[kernels2] (0,0) -- (1.5,1) node[xi] {};
\draw[kernels2] (0,0) -- (0.5,1.5) node[xiesf] {} ;}


\DeclareSymbol{Xi4ca}{0}{\draw (0,1) -- (-1,2.2) node[xi] {};\draw (0,-0.25) node[xi] {} -- (0,1) ; \draw[kernels2] (0,1) node[not] {} -- (1,2.2) node[xi] {};
\draw[kernels2] (0,1) {} -- (0,2.7) node[xi] {};
}

\DeclareSymbol{Xi4cb}{0}{\draw (-1,1) -- (-2,2) node[xi] {};\draw[kernels2] (0,0)  -- (-1,1) node[xi] {} ; \draw[kernels2] (0,0) node[not] {} -- (1,1) node[xi] {} ; 
\draw (-1,1) node[xi] {} -- (0,2) node[xi] {};}

\DeclareSymbol{Xi4cbb}{0}{\draw (-1,1) -- (-2,2) node[xiesf] {};\draw[kernels2] (0,0)  -- (-1,1) node[xi] {} ; \draw[kernels2] (0,0) node[not] {} -- (1,1) node[xi] {} ; 
\draw (-1,1) node[xi] {} -- (0,2) node[xic] {};}

\DeclareSymbol{Xi4cbc1}{0}{\draw (-1,1) -- (-2,2) node[xic] {};\draw[kernels2] (0,0)  -- (-1,1) node[xic] {} ; \draw[kernels2] (0,0) node[not] {} -- (1,1) node[xi] {} ; 
\draw (-1,1) node[xic] {} -- (0,2) node[xi] {};}

\DeclareSymbol{Xi4cbc2}{0}{\draw (-1,1) -- (-2,2) node[xi] {};\draw[kernels2] (0,0)  -- (-1,1) node[xi] {} ; \draw[kernels2] (0,0) node[not] {} -- (1,1) node[xic] {} ; 
\draw (-1,1) node[xic] {} -- (0,2) node[xi] {};}

\DeclareSymbol{Xi4cab}{0}{\draw (-1,1) -- (-2,2) node[xi] {};\draw[kernels2] (0,0)  -- (-1,1); \draw[kernels2] (0,0) node[not] {} -- (1,1) node[xi] {} ; 
\draw[kernels2] (-1,1)  {} -- (0,2) node[xi] {};
\draw[kernels2] (-1,1) node[not] {} -- (-1,2.5) node[xi] {};
}

\DeclareSymbol{Xi4cabdis}{0}{\draw (-1,1) -- (-2,2) node[xies] {};\draw[kernels2] (0,0)  -- (-1,1); \draw[kernels2] (0,0) node[not] {} -- (1,1) node[xies] {} ; 
\draw[kernels2] (-1,1)  {} -- (0,2) node[xies] {};
\draw[kernels2] (-1,1) node[not] {} -- (-1,2.5) node[xies] {};
}

\DeclareSymbol{Xi4cabc1}{0}{\draw (-1,1) -- (-2,2) node[xi] {};\draw[kernels2] (0,0)  -- (-1,1); \draw[kernels2] (0,0) node[not] {} -- (1,1) node[xic] {} ; 
\draw[kernels2] (-1,1)  {} -- (0,2) node[xic] {};
\draw[kernels2] (-1,1) node[not] {} -- (-1,2.5) node[xi] {};
}

\DeclareSymbol{Xi4cabc2}{0}{\draw (-1,1) -- (-2,2) node[xic] {};\draw[kernels2] (0,0)  -- (-1,1); \draw[kernels2] (0,0) node[not] {} -- (1,1) node[xic] {} ; 
\draw[kernels2] (-1,1)  {} -- (0,2) node[xi] {};
\draw[kernels2] (-1,1) node[not] {} -- (-1,2.5) node[xi] {};
}

\DeclareSymbol{Xi4ea}{1.5}{\draw (-1,2.5) node[xi] {} -- (-1,1) node[xi] {} -- (0,0); 
 \draw[kernels2] (0,0)  -- (1,1) node[xi] {};
\draw[kernels2] (0,0) node[not] {} -- (0,1.5) node[xi] {}; }

\DeclareSymbol{Xi4eac1}{1.5}{\draw (-1,2.5) node[xic] {} -- (-1,1) node[xi] {} -- (0,0); 
 \draw[kernels2] (0,0)  -- (1,1) node[xic] {};
\draw[kernels2] (0,0) node[not] {} -- (0,1.5) node[xi] {}; }

\DeclareSymbol{Xi4eac1b}{1.5}{\draw (-1,2.5) node[xic] {} -- (-1,1) node[xi] {} -- (0,0); 
 \draw[kernels2] (0,0)  -- (1,1) node[xiesf] {};
\draw[kernels2] (0,0) node[not] {} -- (0,1.5) node[xi] {}; }

\DeclareSymbol{Xi4eac2}{1.5}{\draw (-1,2.5) node[xic] {} -- (-1,1) node[xic] {} -- (0,0); 
 \draw[kernels2] (0,0)  -- (1,1) node[xi] {};
\draw[kernels2] (0,0) node[not] {} -- (0,1.5) node[xi] {}; }

\DeclareSymbol{Xi4eact1}{1.5}{\draw (-1,2.5) node[xic] {} -- (-1,1) node[xi] {} -- (0,0); 
 \draw (0,0)  -- (1,1) node[xic] {};
\draw[rho] (0,0) node[not] {} -- (0,1.5) node[xi] {}; }

\DeclareSymbol{Xi4eact2}{1.5}{\draw[rho] (-1,2.5) node[xic] {} -- (-1,1) node[xi] {} -- (0,0); 
 \draw (0,0)  -- (1,1) node[xic] {};
\draw (0,0) node[not] {} -- (0,1.5) node[xi] {}; }

\DeclareSymbol{Xi4eabis}{1.5}{\draw (-1,2.5) node[xi] {} -- (-1,1) ; \draw[kernels2] (-1,1) node[xi] {} -- (0,0); 
 \draw (0,0)  -- (1,1) node[xi] {};
\draw[kernels2] (0,0) node[not] {} -- (0,1.5) node[xi] {}; }

\DeclareSymbol{Xi4eabisc1}{1.5}{\draw (-1,2.5) node[xic] {} -- (-1,1) ; \draw[kernels2] (-1,1) node[xi] {} -- (0,0); 
 \draw (0,0)  -- (1,1) node[xi] {};
\draw[kernels2] (0,0) node[not] {} -- (0,1.5) node[xic] {}; }

\DeclareSymbol{Xi4eabisc1b}{1.5}{\draw (-1,2.5) node[xic] {} -- (-1,1) ; \draw[kernels2] (-1,1) node[xi] {} -- (0,0); 
 \draw (0,0)  -- (1,1) node[xi] {};
\draw[kernels2] (0,0) node[not] {} -- (0,1.5) node[xiesf] {}; }

\DeclareSymbol{Xi4eabisc1bis}{1.5}{\draw (-1,2.5) node[xi] {} -- (-1,1) ; \draw[kernels2] (-1,1) node[xi] {} -- (0,0); 
 \draw (0,0)  -- (1,1) node[xi] {};
\draw[kernels2] (0,0) node[not] {} -- (0,1.5) node[xi] {};
\draw (-2,1) node[] {\tiny{$i$}};
\draw (-2,2.5) node[] {\tiny{$\ell$}};
\draw (2,1) node[] {\tiny{$k$}};
\draw (0,2.5) node[] {\tiny{$j$}};
 }

\DeclareSymbol{Xi4eabisc1tris}{1.5}{\draw (-1,2.5) node[xi] {} -- (-1,1) ; \draw[kernels2] (-1,1) node[xi] {} -- (0,0); 
 \draw (0,0)  -- (1,1) node[xi] {};
\draw[kernels2] (0,0) node[not] {} -- (0,1.5) node[xi] {};
\draw (-2,1) node[] {\tiny{i}};
\draw (-2,2.5) node[] {\tiny{j}};
\draw (2,1) node[] {\tiny{j}};
\draw (0,2.5) node[] {\tiny{i}};
 }

\DeclareSymbol{Xi4eabisc1quater}{1.5}{\draw (-1,2.5) node[xic] {} -- (-1,1) ; \draw[kernels2] (-1,1) node[xi] {} -- (0,0); 
 \draw (0,0)  -- (1,1) node[xic] {};
\draw[kernels2] (0,0) node[not] {} -- (0,1.5) node[xi] {};
 }

\DeclareSymbol{Xi4eabisc2}{1.5}{\draw (-1,2.5) node[xic] {} -- (-1,1) ; \draw[kernels2] (-1,1) node[xi] {} -- (0,0); 
 \draw (0,0)  -- (1,1) node[xic] {};
\draw[kernels2] (0,0) node[not] {} -- (0,1.5) node[xi] {}; }

\DeclareSymbol{Xi4eabisc2l}{1.5}{\draw (-1,2.5) node[xiesf] {} -- (-1,1) ; \draw[kernels2] (-1,1) node[xi] {} -- (0,0); 
 \draw (0,0)  -- (1,1) node[xic] {};
\draw[kernels2] (0,0) node[not] {} -- (0,1.5) node[xi] {}; }

\DeclareSymbol{Xi4eabisc2r}{1.5}{\draw (-1,2.5) node[xic] {} -- (-1,1) ; \draw[kernels2] (-1,1) node[xi] {} -- (0,0); 
 \draw (0,0)  -- (1,1) node[xiesf] {};
\draw[kernels2] (0,0) node[not] {} -- (0,1.5) node[xi] {}; }

\DeclareSymbol{Xi4eabisc3}{1.5}{\draw (-1,2.5) node[xic] {} -- (-1,1) ; \draw[kernels2] (-1,1) node[xic] {} -- (0,0); 
 \draw (0,0)  -- (1,1) node[xi] {};
\draw[kernels2] (0,0) node[not] {} -- (0,1.5) node[xi] {}; }

\DeclareSymbol{Xi4eb}{0}{
\draw[kernels2] (0,2) node[xi] {} -- (-1,1) ; \draw[kernels2] (-2,2)  node[xi] {} -- (-1,1) ; \draw (-1,1)  node[not] {} -- (0,0); 
 \draw (0,0) node[xi] {}  -- (1,1) node[xi] {};
}

\DeclareSymbol{Xi4eab}{1.5}{\draw[kernels2] (-1,2.5) node[xi] {} -- (-1,1) ; \draw[kernels2] (-2,2)  node[xi] {} -- (-1,1) ; \draw (-1,1)  node[not] {} -- (0,0); 
 \draw[kernels2] (0,0)  -- (1,1) node[xi] {};
\draw[kernels2] (0,0) node[not] {} -- (0,1.5) node[xi] {}; 
}

\DeclareSymbol{Xi4eabdis}{1.5}{\draw[kernels2] (-1,2.5) node[xies] {} -- (-1,1) ; \draw[kernels2] (-2,2)  node[xies] {} -- (-1,1) ; \draw (-1,1)  node[not] {} -- (0,0); 
 \draw[kernels2] (0,0)  -- (1,1) node[xies] {};
\draw[kernels2] (0,0) node[not] {} -- (0,1.5) node[xies] {}; 
}

\DeclareSymbol{Xi4eabc1}{1.5}{\draw[kernels2] (-1,2.5) node[xic] {} -- (-1,1) ; \draw[kernels2] (-2,2)  node[xi] {} -- (-1,1) ; \draw (-1,1)  node[not] {} -- (0,0); 
 \draw[kernels2] (0,0)  -- (1,1) node[xic] {};
\draw[kernels2] (0,0) node[not] {} -- (0,1.5) node[xi] {}; 
}

\DeclareSymbol{Xi4eabc2}{1.5}{\draw[kernels2] (-1,2.5) node[xi] {} -- (-1,1) ; \draw[kernels2] (-2,2)  node[xi] {} -- (-1,1) ; \draw (-1,1)  node[not] {} -- (0,0); 
 \draw[kernels2] (0,0)  -- (1,1) node[xic] {};
\draw[kernels2] (0,0) node[not] {} -- (0,1.5) node[xic] {}; 
}

\DeclareSymbol{Xi4eabbis}{1.5}{\draw[kernels2] (-1,2.5) node[xi] {} -- (-1,1) ; \draw[kernels2] (-2,2)  node[xi] {} -- (-1,1) ; \draw[kernels2] (-1,1)  node[not] {} -- (0,0); 
 \draw (0,0)  -- (1,1) node[xi] {};
\draw[kernels2] (0,0) node[not] {} -- (0,1.5) node[xi] {}; 
}

\DeclareSymbol{Xi4eabbisc1}{1.5}{\draw[kernels2] (-1,2.5) node[xic] {} -- (-1,1) ; \draw[kernels2] (-2,2)  node[xi] {} -- (-1,1) ; \draw[kernels2] (-1,1)  node[not] {} -- (0,0); 
 \draw (0,0)  -- (1,1) node[xic] {};
\draw[kernels2] (0,0) node[not] {} -- (0,1.5) node[xi] {}; 
}

\DeclareSymbol{Xi4eabbisc1perm}{1.5}{\draw[kernels2] (-1,2.5) node[xi] {} -- (-1,1) ; \draw[kernels2] (-2,2)  node[xic] {} -- (-1,1) ; \draw[kernels2] (-1,1)  node[not] {} -- (0,0); 
 \draw (0,0)  -- (1,1) node[xic] {};
\draw[kernels2] (0,0) node[not] {} -- (0,1.5) node[xi] {}; 
}

\DeclareSymbol{Xi4eabbisc2}{1.5}{\draw[kernels2] (-1,2.5) node[xi] {} -- (-1,1) ; \draw[kernels2] (-2,2)  node[xi] {} -- (-1,1) ; \draw[kernels2] (-1,1)  node[not] {} -- (0,0); 
 \draw (0,0)  -- (1,1) node[xic] {};
\draw[kernels2] (0,0) node[not] {} -- (0,1.5) node[xic] {}; 
}

\DeclareSymbol{Xi2cbis}{0}{\draw[kernels2] (0,1) -- (0.8,2.2) node[xi] {};\draw[kernels2] (0,-0.25) node[not] {} -- (0,1); \draw[kernels2] (0,1) node[not] {} -- (-0.8,2.2) node[xi] {};}

\DeclareSymbol{Xi2cbis1}{0}{\draw (0,1) -- (-0.8,2.2) node[xi] {};\draw[kernels2] (0,-0.25) node[not] {} -- (0,1) node[xi] {}; }


\DeclareSymbol{Xi2Xbis}{-2}{\draw[kernels2] (0,-0.25)  -- (-1,1) ; \draw (-1,1) node[xix] {};
\draw[kernels2] (0,-0.25) node[not] {} -- (1,1) node[xi] {};}

\DeclareSymbol{XXi2bis}{-2}{\draw[kernels2] (0,-0.25) -- (-1,1) node[xi] {};
\draw[kernels2] (0,-0.25) node[X] {} -- (1,1) node[xi] {};}

\DeclareSymbol{I1XiIXi}{0}{\draw[kernels2] (0,-0.25) -- (1,1) node[xi] {};
\draw (0,-0.25) node[not] {} -- (-1,1) node[xi] {};}

\DeclareSymbol{I1XiIXib}{0}{\draw  (0,-0.25) node[xi] {} -- (0,1) node[not] {};
\draw[kernels2] (0,1) -- (0,2.25) ; \draw (0,2.25) node[xi]{}; }

\DeclareSymbol{I1XiIXic}{0}{
\draw[kernels2] (0,0) -- (1,1) node[xi] {} ; 
\draw[kernels2] (0,0) node[not] {}  -- (-1,1) node[not] {} -- (0,2) node[xi] {};
}

\DeclareSymbol{thin}{1.4}{\draw[pagebackground] (-0.3,0) -- (0.3,0); \draw  (0,0) -- (0,2);}
\DeclareSymbol{thin2}{1.4}{\draw[pagebackground] (-0.3,0) -- (0.3,0); \draw[tinydots]  (0,0) -- (0,2);}

\DeclareSymbol{thick}{1.4}{\draw[pagebackground] (-0.3,0) -- (0.3,0); \draw[kernels2]  (0,0) -- (0,2);}
\DeclareSymbol{thick2}{1.4}{\draw[pagebackground] (-0.3,0) -- (0.3,0); \draw[kernels2,tinydots]  (0,0) -- (0,2);}

\DeclareSymbol{Xi4ind}{2}{\draw (0,0) node[xi,label={[label distance=-0.2em]right: \scriptsize  $ i $}]  { } -- (-1,1) node[xi,label={[label distance=-0.2em]left: \scriptsize  $ j $}] {} -- (0,2) node[xi,label={[label distance=-0.2em]right: \scriptsize  $ k $}] {} -- (-1,3) node[xi,label={[label distance=-0.2em]left: \scriptsize  $ \ell $}] {};}

\DeclareSymbol{Xi4c1}{2}{\draw (0,0) node[xic] {} -- (-1,1) node[xi] {} -- (0,2) node[xic] {} -- (-1,3) node[xi] {};} 
\DeclareSymbol{IXi2ex}{0}{\draw (0,-0.25) node[xie] {} -- (-1,1) node[xi] {} ; \draw (0,-0.25)-- (1,1) node[xi] {};}
\DeclareSymbol{IXi2ex1}{0}{\draw (0,-0.25) node[xie] {} -- (-1,1) node[xi] {} -- (0,2) node[xi] {};}

\DeclareSymbol{Xi4b1}{0}{\draw(0,1.5) node[xic] {} -- (0,0); \draw (-1,1) node[xic] {} -- (0,0) node[xi] {} -- (1,1) node[xi] {};}

\DeclareSymbol{Xi4ec1}{0}{\draw (0,2) node[xi] {} -- (-1,1) node[xic] {} -- (0,0) node[xic] {} -- (1,1) node[xi] {};}
\DeclareSymbol{Xi4ec2}{0}{\draw (0,2) node[xic] {} -- (-1,1) node[xi] {} -- (0,0) node[xic] {} -- (1,1) node[xi] {};}
\DeclareSymbol{Xi4ec3}{0}{\draw (0,2) node[xic] {} -- (-1,1) node[xic] {} -- (0,0) node[xi] {} -- (1,1) node[xi] {};}

\DeclareSymbol{I1Xi4ac1}{2}{\draw[kernels2] (0,0) node[not] {} -- (-1,1) ; \draw[kernels2] (0,0) node[not] {} -- (1,1) node[xic] {} ;
\draw (-1,1) node[xi] {} -- (0,2) node[xic] {} -- (-1,3) node[xi] {};}

\DeclareSymbol{I1Xi4ac2}{2}{\draw[kernels2] (0,0) node[not] {} -- (-1,1) ; \draw[kernels2] (0,0) node[not] {} -- (1,1) node[xic] {} ;
\draw (-1,1) node[xi] {} -- (0,2) node[xi] {} -- (-1,3) node[xic] {};}

\DeclareSymbol{I1Xi4bp}{2}{\draw (0,0) node[not] {} -- (-1,1) node[xi] {} -- (0,2) ; \draw[kernels2] (0,2) node[not] {} -- (-1,3) node[xi] {};\draw[kernels2] (0,2)  -- (1,3) node[xi] {};
}

\DeclareSymbol{I1Xi4bc1}{2}{\draw (0,0) node[xic] {} -- (-1,1) node[xi] {} -- (0,2) ; \draw[kernels2] (0,2) node[not] {} -- (-1,3) node[xi] {};\draw[kernels2] (0,2)  -- (1,3) node[xic] {};
}

\DeclareSymbol{I1Xi4bc2}{2}{\draw (0,0) node[xic] {} -- (-1,1) node[xi] {} -- (0,2) ; \draw[kernels2] (0,2) node[not] {} -- (-1,3) node[xic] {};\draw[kernels2] (0,2)  -- (1,3) node[xi] {};
}

\DeclareSymbol{I1Xi4cp}{2}{\draw (0,0) node[not] {} -- (-1,1) node[not] {}; \draw[kernels2] (-1,1) -- (0,2) ; 
\draw[kernels2] (-1,1) -- (-2,2) node[xi] {} ;
\draw (0,2) node[xi] {} -- (-1,3) node[xi] {};}

\DeclareSymbol{I1Xi4cc1}{2}{\draw (0,0) node[xic] {} -- (-1,1) node[not] {}; \draw[kernels2] (-1,1) -- (0,2) ; 
\draw[kernels2] (-1,1) -- (-2,2) node[xi] {} ;
\draw (0,2) node[xic] {} -- (-1,3) node[xi] {};}

\DeclareSymbol{I1Xi4cc2}{2}{\draw (0,0) node[xic] {} -- (-1,1) node[not] {}; \draw[kernels2] (-1,1) -- (0,2) ; 
\draw[kernels2] (-1,1) -- (-2,2) node[xi] {} ;
\draw (0,2) node[xi] {} -- (-1,3) node[xic] {};}

\DeclareSymbol{I1Xi4abc1}{2}{\draw[kernels2] (0,0) node[not] {} -- (-1,1) ; \draw[kernels2] (0,0) node[not] {} -- (1,1) node[xic] {};\draw (-1,1) node[xi] {} -- (0,2) ; \draw[kernels2] (0,2) node[not] {} -- (-1,3) node[xic] {};\draw[kernels2] (0,2)  -- (1,3) node[xi] {}; }

\DeclareSymbol{I1Xi4abc2}{2}{\draw[kernels2] (0,0) node[not] {} -- (-1,1) ; \draw[kernels2] (0,0) node[not] {} -- (1,1) node[xic] {};\draw (-1,1) node[xi] {} -- (0,2) ; \draw[kernels2] (0,2) node[not] {} -- (-1,3) node[xi] {};\draw[kernels2] (0,2)  -- (1,3) node[xic] {}; }

\DeclareSymbol{R1}{0}{\draw (-1,1) node[xi] {} -- (0,0) node[not] {};
\draw[kernels2] (0,1.5) node[xic] {} -- (0,0) -- (1,1) node[xic] {};}
\DeclareSymbol{R2}{0}{\draw (-1,1) node[xic] {} -- (0,0) node[not] {};
\draw[kernels2] (0,1.5)  {} -- (0,0) -- (1,1)  {};
\draw (0,1.5) node[xi] {};
\draw (1,1) node[xic] {};
}
\DeclareSymbol{R3}{1}{\draw[kernels2] (-1,1.5)  {} -- (0,0) node[not] {} -- (1,1.5);
\draw (-1,1.5) node[xi] {};
\draw[kernels2] (0,3) {} -- (1,1.5) -- (2,3)  {};
\draw  (0,3) node[xic] {} ;
\draw (2,3) node[xic] {};}
\DeclareSymbol{R4}{1}{\draw[kernels2] (-1,1.5) node[xic] {} -- (0,0) node[not] {} -- (1,1.5);
\draw[kernels2] (0,3) {} -- (1,1.5) -- (2,3) node[xic] {};
\draw (0,3) node[xi] {};}

\DeclareSymbol{I1Xi4bcp}{2}{\draw (0,0) node[not] {} -- (-1,1) node[not] {}; \draw[kernels2] (-1,1) -- (0,2) ; 
\draw[kernels2] (-1,1) -- (-2,2) node[xi] {} ; \draw[kernels2] (0,2) node[not] {} -- (-1,3) node[xi] {};\draw[kernels2] (0,2)  -- (1,3) node[xi] {};
}

\DeclareSymbol{I1Xi4bcc1}{2}{\draw (0,0) node[xic] {} -- (-1,1) node[not] {}; \draw[kernels2] (-1,1) -- (0,2) ; 
\draw[kernels2] (-1,1) -- (-2,2) node[xi] {} ; \draw[kernels2] (0,2) node[not] {} -- (-1,3) node[xi] {};\draw[kernels2] (0,2)  -- (1,3) node[xic] {};
}

\DeclareSymbol{I1Xi4bcc2}{2}{\draw (0,0) node[xic] {} -- (-1,1) node[not] {}; \draw[kernels2] (-1,1) -- (0,2) ; 
\draw[kernels2] (-1,1) -- (-2,2) node[xi] {} ; \draw[kernels2] (0,2) node[not] {} -- (-1,3) node[xic] {};\draw[kernels2] (0,2)  -- (1,3) node[xi] {};
} 

\DeclareSymbol{2I1Xi4bc1}{2}{\draw[kernels2] (0,0) node[not] {} -- (-1,1) ;
\draw[kernels2] (0,0) -- (1,1);
\draw (-1,1) node[xic] {} -- (-1,2.5) node[xi] {};
\draw (1,1)  node[xic] {} -- (1,2.5) node[xi] {};
}

\DeclareSymbol{2I1Xi4bc2}{2}{\draw[kernels2] (0,0) node[not] {} -- (-1,1) ;
\draw[kernels2] (0,0) -- (1,1);
\draw (-1,1) node[xi] {} -- (-1,2.5) node[xic] {};
\draw (1,1)  node[xic] {} -- (1,2.5) node[xi] {};
}

\DeclareSymbol{diff2I1Xi4bc2}{2}{\draw (0,0) node[diff] {} -- (-1,1) ;
\draw (0,0) -- (1,1);
\draw (-1,1) node[xi] {} -- (-1,2.5) node[xic] {};
\draw (1,1)  node[xic] {} -- (1,2.5) node[xi] {};
}

\DeclareSymbol{2I1Xi4bc3}{2}{\draw[kernels2] (0,0) node[not] {} -- (-1,1) ;
\draw[kernels2] (0,0) -- (1,1);
\draw (-1,1) node[xic] {} -- (-1,2.5) node[xic] {};
\draw (1,1)  node[xi] {} -- (1,2.5) node[xi] {};
}

\DeclareSymbol{Xi41}{0}{\draw (0,1) -- (0.8,2.2) node[xic] {};\draw (0,-0.25) node[xi] {} -- (0,1) node[xi] {} -- (-0.8,2.2) node[xic] {};} 

\DeclareSymbol{Xi42}{0}{\draw (0,1) -- (0.8,2.2) node[xi] {};\draw (0,-0.25) node[xic] {} -- (0,1) node[xi] {} -- (-0.8,2.2) node[xic] {};}

\DeclareSymbol{Xi4ca1}{0}{\draw (0,1) -- (-1,2.2) node[xic] {};\draw (0,-0.25) node[xi] {} -- (0,1) ; \draw[kernels2] (0,1) node[not] {} -- (1,2.2) node[xic] {};
\draw[kernels2] (0,1) {} -- (0,2.7) node[xi] {};
}

\DeclareSymbol{Xi4ca2}{0}{\draw (0,1) -- (-1,2.2) node[xi] {};\draw (0,-0.25) node[xi] {} -- (0,1) ; \draw[kernels2] (0,1) node[not] {} -- (1,2.2) node[xic] {};
\draw[kernels2] (0,1) {} -- (0,2.7) node[xic] {};
}

\DeclareSymbol{Xi4cap}{0}{\draw (0,1) -- (-1,2.2) node[xi] {};\draw (0,-0.25) node[not] {} -- (0,1) ; \draw[kernels2] (0,1) node[not] {} -- (1,2.2) node[xi] {};
\draw[kernels2] (0,1) {} -- (0,2.7) node[xi] {};
}

\DeclareSymbol{Xi3a}{0}{
 \draw (-1,1)  node[xi] {} -- (0,0); 
 \draw (0,0) node[xi] {}  -- (1,1) node[xi] {};
}

\DeclareSymbol{Xi4ebc1}{0}{
\draw[kernels2] (0,2) node[xi] {} -- (-1,1) ; \draw[kernels2] (-2,2)  node[xic] {} -- (-1,1) ; \draw (-1,1)  node[not] {} -- (0,0); 
 \draw (0,0) node[xic] {}  -- (1,1) node[xi] {};
}

\DeclareSymbol{Xi4ebc2}{0}{
\draw[kernels2] (0,2) node[xi] {} -- (-1,1) ; \draw[kernels2] (-2,2)  node[xi] {} -- (-1,1) ; \draw (-1,1)  node[not] {} -- (0,0); 
 \draw (0,0) node[xic] {}  -- (1,1) node[xic] {};
}

\DeclareSymbol{Xi2cbispex}{0}{\draw[kernels2] (0,1) -- (0.8,2.2) node[xi] {};\draw (0,-0.25) node[xie] {} -- (0,1); \draw[kernels2] (0,1) node[not] {} -- (-0.8,2.2) node[xi] {};}

\DeclareSymbol{Xi2cbis1p}{0}{\draw (0,1) -- (-0.8,2.2) node[xi] {};\draw (0,-0.25) node[not] {} -- (0,1) node[xi] {}; }

\DeclareSymbol{Xi2Xp}{-2}{\draw (0,-0.25) node[not] {} -- (-1,1) node[xix] {};} 

\DeclareSymbol{I1XiIXib}{0}{\draw  (0,-0.25) node[xi] {} -- (0,1) node[not] {};
\draw[kernels2] (0,1) -- (0,2.25) ; \draw (0,2.25) node[xi]{}; }

\DeclareSymbol{IXi2b}{0}{\draw  (0,-0.25) node[xi] {} -- (0,1) node[not] {};
\draw (0,1) -- (0,2.25) ; \draw (0,2.25) node[xi]{}; }

\DeclareSymbol{IXi2bex}{0}{\draw  (0,-0.25) node[xi] {} -- (0,1) node[xie] {};
\draw (0,1) -- (0,2.25) ; \draw (0,2.25) node[xi]{}; }

 \def\1{\mathbf{\symbol{1}}}

\def\one{\mathbf{1}}
\def\eps{\varepsilon}

\DeclareSymbol{diff}{0}{
\draw (0,0.5) node[diff] {};
}

\DeclareSymbol{diff1}{0}{
\draw (0,0.5) node[diff1] {};
}

\DeclareSymbol{diff2}{0}{
\draw (0,0.5) node[diff2] {};
}

\DeclareSymbol{geo}{0}{
\draw (0,0) node[diff] {};
\draw (0.3,0) node[diff] {};
}

\DeclareSymbol{generic}{0}{
\draw (0,0.6) node[xi] {};
}

\DeclareSymbol{g}{0}{
\draw (0,0.6) node[g] {};
}

\DeclareSymbol{Ito}{0}{
\draw (0,0.6) node[xies] {};
}

\DeclareSymbol{Itob}{0}{
\draw (0,0.6) node[xiesf] {};
}

\DeclareSymbol{greycirc}{0}{
\draw (0,0.3) node[xi] {};
}

\DeclareSymbol{not}{0}{
\draw (0,0.6) node[not] {};
\draw[tinydots] (0,0.6) circle (0.8);
}

\DeclareSymbol{genericb}{0}{
\draw (0,0.6) node[xic] {};
}

\DeclareSymbol{bluecirc}{0}{
\draw (0,0.3) node[xic] {};
}

\DeclareSymbol{genericxix}{0}{
\draw (0,0.6) node[xix] {};
}

\DeclareSymbol{genericX}{0}{
\draw (0,0.6) node[X] {};
}

\DeclareSymbol{diffIto}{1}{
\draw  (0,2.5) -- (0,0) ;
\draw (0,-0.1) node[diff] {};
\draw (0,2.5) node[xies] {};
}
\DeclareSymbol{Itodiff}{2}{
\draw(0,2.9) -- (0,-0.2);
\draw (0,2.9) node[diff] {};
\draw (0,-0.1) node[xies] {};
}

\DeclareSymbol{diffgeneric}{1}{
\draw  (0,2.5) -- (0,0) ;
\draw (0,-0.1) node[diff] {};
\draw (0,2.5) node[xi] {};
}

\DeclareSymbol{genericdiff}{2}{
\draw(0,2.9) -- (0,-0.2);
\draw (0,2.9) node[diff] {};
\draw (0,-0.1) node[xi] {};
}

\DeclareSymbol{diffdot}{2}{
\draw  (0,3) -- (0,-0.1) ;
\draw (0,3) node[not] {};
\draw (0,-0.1) node[diff] {};
}

\DeclareSymbol{diffdotmini}{0}{
\draw  (0,0) -- (0,1.2) ;
\draw (0,1.2) node[not] {};
\draw (0,0) node[diffmini] {};
}

\DeclareSymbol{dotdiff}{2}{
\draw[kernelsmod]  (0,3) -- (0,-0.1) ;
\draw (0,3) node[diff] {};
\draw (0,-0.1) node[not] {};
}

\DeclareSymbol{3}{-2}{\draw (0,-0.25) node[xi] {} -- (-1,1) node[xi] {};}
\DeclareSymbol{AAA}{-0.5}{\draw (0,0) node[xi] {} -- (-1,1) node[xi] {} -- (0,2) node[xi] {};
	\draw (0,0) node[xi] {} -- (-1,-1) node[xi] {};}
\DeclareSymbol{AAM}{0.5}{\draw (-1,1)  -- (0,2) node[xi] {};
	\draw (-1,1) node[xi] {} -- (0,0) node[xi] {} -- (1,1) node[xi] {};}
\DeclareSymbol{AMA}{-4}{\draw (-1,1) node[xi] {} -- (0,0) node[xi] {} -- (1,1) node[xi] {};
	\draw (0,0) node[xi] {} -- (0,-1.4) node[xi] {};}
\DeclareSymbol{AMM}{0.5}{\draw (0,0)  -- (0,1.4) node[xi] {};
	\draw (-1,1) node[xi] {} -- (0,0) node[xi] {} -- (1,1) node[xi] {};}

\DeclareSymbol{dotdiff1}{2}{
\draw[kernelsmod]  (0,3) -- (0,-0.1) ;
\draw (0,3) node[diff1] {};
\draw (0,-0.1) node[not] {};
}

\DeclareSymbol{dotdiff1mini}{0}{
\draw[kernelsmod]  (0,1.2) -- (0,0) ;
\draw (0,1.2) node[diffmini] {};
\draw (0,0) node[not] {};
}

\DeclareSymbol{dotdiff2}{2}{
\draw (0,3) -- (0,-0.1) ;
\draw (0,3) node[diff] {};
\draw (0,-0.1) node[not] {};
}

\DeclareSymbol{dotdiff2mini}{0}{
\draw (0,1.2) -- (0,0) ;
\draw (0,1.2) node[diffmini] {};
\draw (0,0) node[not] {};
}

\DeclareSymbol{dotdiffstraight}{0}{
\draw  (0,3) -- (0,-0.1) ;
\draw (0,3) node[diff] {};
\draw (0,-0.1) node[not] {};
}

\DeclareSymbol{arbre1}{0}{
\draw  (0,0) -- (1.5,1.5) ;
\draw (1.5,1.5) node[not] {};
\draw (0,0) node[not] {};
}

\DeclareSymbol{arbre2}{0}{
\draw  (0,0) -- (1.5,1.5) ;
\draw[kernelsmod] (0,0) -- (-1.5,1.5);
\draw (1.5,1.5) node[not] {};
\draw (0,0) node[not] {};
\draw (-1.5,1.5) node[xi] {};
}

\DeclareSymbol{arbre3}{0}{
\draw  (0,0) -- (1.5,1.5) ;
\draw[kernelsmod] (1.5,1.5) -- (0,3);
\draw (0,0) node[not] {};
\draw (1.5,1.5) node[not] {};
\draw (0,3) node[xi] {};
}

\DeclareSymbol{treeeval}{0}{
\draw (0,0) -- (1,1);
\draw (0,0) node[xi] {};
\draw (1.25,1.25) node[xi] {};
\draw (-0.6,0.6) node[]{\tiny{$i$}};
\draw (0.65,1.85) node[]{\tiny{$j$}};
}

\DeclareSymbol{testeval}{0}{
\draw (0,0) -- (1,1);
\draw (0,0) -- (-1,1);
\draw (0,0) node[xi] {};
\draw (1.25,1.25) node[xi] {};
\draw (-1.25,1.25) node[xi] {};
\draw (-0.6,-0.6) node[]{\tiny{$i$}};
\draw (0.65,1.85) node[]{\tiny{$j$}};
\draw (-1.95,1.85) node[]{\tiny{$k$}};
}

\DeclareSymbol{treeeval2}{0}{
\draw[kernelsmod] (-0.25,-1) -- (1,0.5) ;
\draw[kernelsmod] (1,0.5) -- (-0.25,2);
\draw (1,0.5) node[diff2] {};
\draw (-0.25,-1) node[not] {};
\draw (-0.25,2) node[xi] {};
\draw (-0.6,1.2) node[]{\tiny{1}};
}

\DeclareSymbol{arbreact}{1}{
\draw (0,0) node[not] {};
\draw[kernelsmod] (0,0) -- (1,1);
\draw[kernelsmod] (0,0) -- (-1,1);
\draw (-1,1) node[xic] {};
\draw  (0,2) -- (1,1) ;
\draw (0,2) node[xic] {};
\draw (1,1) node[xi] {};
}

\DeclareSymbol{arbreact1}{0}{
\draw (0,-1.5) -- (0,0);
\draw[kernelsmod] (0,0) -- (1,1);
\draw[kernelsmod] (0,0) -- (-1,1);
\draw  (0,2) -- (1,1) ;
\draw (0,-1.5) node[diff] {};
\draw (0,0) node[not] {};
\draw (-1,1) node[xic] {};
\draw (0,2) node[xic] {};
\draw (1,1) node[xi] {};
}

\DeclareSymbol{arbreact2}{0}{
\draw (0,-0.75) -- (-1,0.5); 
\draw (0,-0.75) -- (1,0.5);
\draw (0,1.5) -- (1,0.5);
\draw (0,1.5) node[xic] {};
\draw (1,0.5) node[xi] {};
\draw (-1,0.5) node[xic] {};
\draw (0,-0.75) node[diff] {};
}

\DeclareSymbol{arbreact3}{0}{
\draw[kernelsmod] (0,-0.75) -- (-1,0.5); 
\draw[kernelsmod] (0,-0.75) -- (1,0.5);
\draw (0,1.75) -- (1,0.5);
\draw (2,1.75) -- (1,0.5);
\draw (0,1.75) node[xic] {};
\draw (1,0.5) node[diff] {};
\draw (-1,0.5) node[xic] {};
\draw (2,1.75) node[xi] {};
\draw (0,-0.75) node[not] {};
}

\DeclareSymbol{pre_im_1}{0}{
\draw[kernels2] (0,-0.5) node[not] {} -- (-0.6,0.5) ;
\draw[kernels2] (0,-0.5) -- (0.6,0.5);
\draw (0,1.1)  -- (-0.55,2);
\draw (0,1.1)  -- (0.55,2);
\draw (0,0.7) node[g] {};
\draw (0,2.2) node[g] {};
}

\DeclareSymbol{disconnect}{0}{
\draw[kernels2] (0,-0.5) node[not] {} -- (-0.6,0.5) ;
\draw[kernels2] (0,-0.5) -- (0.6,0.5);
\draw (-0.55,1.1)  -- (-0.55,2.3);
\draw (0.55,2.3) -- (0.55,1.5) -- (1.2,1.5) -- (1.2,3.5) -- (0.55,3.5) -- (0.55,2.7);
\draw (0,0.7) node[g] {};
\draw (0,2.5) node[g] {};
}

\DeclareSymbol{pre_im_2}{2}{\draw[kernels2] (0,0) node[not] {} -- (-1,1) node[not] {};
\draw[kernels2] (0,0) -- (1,1) node[not] {};
\draw[kernels2] (-1,1) -- (-1.5,2.5);
\draw[kernels2] (-1,1) -- (-0.5,2.5);
\draw[kernels2] (1,1) -- (0.5,2.5);
\draw[kernels2] (1,1) -- (1.5,2.5);
\draw (-1,2.7) node[g] {};
\draw (1,2.7) node[g] {};
}

\DeclareSymbol{CX_rec}{0}{
\draw [black] (-0.3,1) to (-0.3,-0.3);
\draw [black] (0.3,1) to (0.3,-0.3);
\draw [black] (-0.3,1) to (-0.3,2.3);
\draw [black] (0.3,1) to (0.3,2.3);
\draw (0,1) node[rec] {};
}

\DeclareSymbol{CX_cerc}{0}{
\draw [black] (0,1) to (0,-0.3);
\draw (0,1) node[cerc] {};
}

\DeclareSymbol{proof0}{0}{
\draw (0,-3) node[] {\tiny$\tau_0$};
\draw (0,-2.3) -- (0,0.5);`
\draw (0,0.5) -- (-1.5,2.5);
\draw (0,0.5) -- (1.5,2.5);
\draw (-1,-2) node[] {\tiny$v$};
\draw (-1.5,3.1) node[] {\tiny$\tau_1$};
\draw (1.5,3.1) node[] {\tiny$\tau_2$};
\draw (0,0.5) node[diff] {};
}

\DeclareSymbol{proof0b}{0}{
\draw (0,0.5) -- (-1.5,2.5);
\draw (0,0.5) -- (1.5,2.5);
\draw (-1.5,3.1) node[] {\tiny$\tau_1$};
\draw (1.5,3.1) node[] {\tiny$\tau_2$};
\draw (0,0.5) node[diff] {};
}


\DeclareSymbol{proof}{0}{
\draw[kernelsmod] (-2,3) -- (0,0);
\draw[kernelsmod] (2,3) -- (0,0);
\draw (0,0) node[not] {};
}

\DeclareSymbol{prooftri}{0}{
\draw[kernelsmod] (-2,3) -- (0,0);
\draw[kernelsmod] (0,4) -- (0,0);
\draw (2,2.7)--(0,0);
\draw (0,0) node[not] {};
}

\DeclareSymbol{proofqua}{0}{
\draw[kernelsmod] (-3,3) -- (0,0);
\draw[kernelsmod] (-1,4) -- (0,0);
\draw (1,3.6)--(0,0);
\draw (3,2.7)--(0,0);
\draw (0,0) node[not] {};
}


\DeclareSymbol{proof1_1}{0}{
\draw (0,-2.7) node[] {\tiny$\tau_0$};
\draw (0,-2) -- (0,0.5);`
\draw[kernelsmod] (0,0.5) -- (-1.5,2.5); 
\draw[kernelsmod] (0,0.5) -- (1.5,2.5);
\draw (-1,-1.7) node[] {\tiny$v$};
\draw (-1.5,3.1) node[] {\tiny$\tau_1$};
\draw (1.5,3.1) node[] {\tiny$\tau_2$};
\draw (0,0.5) node[not] {};
}

\DeclareSymbol{proof1b_1}{0}{
\draw[kernelsmod] (0,0.5) -- (-1.5,2.5); 
\draw[kernelsmod] (0,0.5) -- (1.5,2.5);
\draw (-1.5,3.1) node[] {\tiny$\tau_1$};
\draw (1.5,3.1) node[] {\tiny$\tau_2$};
\draw (0,0.5) node[not] {};
}

\DeclareSymbol{proof1_2}{0}{
\draw (0,-2.7) node[] {\tiny$\tau_0$};
\draw[kernelsmod] (0,-2) -- (0,0.5);`
\draw[kernelsmod] (0,0.5) -- (-1.5,2.5); 
\draw[kernelsmod] (0,0.5) -- (1.5,2.5);
\draw (-1,-1.7) node[] {\tiny$v$};
\draw (-1.5,3.1) node[] {\tiny$\tau_1$};
\draw (1.5,3.1) node[] {\tiny$\tau_2$};
\draw (0,0.5) node[not] {};
}

\DeclareSymbol{proof2_1}{0}{
\draw (0,-2.7) node[] {\tiny$\tau_0$};
\draw (0,-2) -- (-1.8,-0.3);
\draw (-1.8,0.7) -- (0,2.5); 
\draw (1,-1.7) node[] {\tiny$v$};
\draw (-2.5,1.2) node[] {\tiny$r_1$};
\draw (0,3.1) node[] {\tiny$\tau_2$};
\draw (-2.5,0) node[] {\tiny$\tau_1$};
}

\DeclareSymbol{proof2b_1}{0}{
\draw (-1.8,0.7) -- (0,2.5); 
\draw (-2.5,1.2) node[] {\tiny$r_1$};
\draw (0,3.1) node[] {\tiny$\tau_2$};
\draw (-2.5,0) node[] {\tiny$\tau_1$};
}

\DeclareSymbol{proof2b_2}{0}{
\draw (-1.8,0.7) -- (0,2.5); 
\draw (-2.5,1.2) node[] {\tiny$r_2$};
\draw (0,3.1) node[] {\tiny$\tau_1$};
\draw (-2.5,0) node[] {\tiny$\tau_2$};
}

\DeclareSymbol{proof2_2}{0}{
\draw (0,-2.7) node[] {\tiny$\tau_0$};
\draw[kernelsmod] (0,-2) -- (-1.8,-0.3);
\draw (-1.8,0.7) -- (0,2.5); 
\draw (1,-1.7) node[] {\tiny$v$};
\draw (-2.5,1.2) node[] {\tiny$r_1$};
\draw (0,3.1) node[] {\tiny$\tau_2$};
\draw (-2.5,0) node[] {\tiny$\tau_1$};
}

\DeclareSymbol{proof3_1}{0}{
\draw (0,-2.7) node[] {\tiny$\tau_0$};
\draw (0,-2) -- (1.8,-0.3);
\draw (1.8,0.7) -- (0,2.5); 
\draw (-1,-1.7) node[] {\tiny$v$};
\draw (2.5,1.2) node[] {\tiny$r_2$};
\draw (0,3.1) node[] {\tiny$\tau_1$};
\draw (2.5,0) node[] {\tiny$\tau_2$};
}

\DeclareSymbol{proof3_2}{0}{
\draw (0,-2.7) node[] {\tiny$\tau_0$};
\draw[kernelsmod] (0,-2) -- (1.8,-0.3);
\draw (1.8,0.7) -- (0,2.5); 
\draw (-1,-1.7) node[] {\tiny$v$};
\draw (2.5,1.2) node[] {\tiny$r_2$};
\draw (0,3.1) node[] {\tiny$\tau_1$};
\draw (2.5,0) node[] {\tiny$\tau_2$};
}

\DeclareSymbol{proof4_1}{0}{
\draw (0,-2) node[] {\tiny$\tau_0$};
\draw (0,-1.3) -- (-1.5,1.8); 
\draw  (0,-1.3) -- (1.5,1.8);
\draw (-1,-1) node[] {\tiny$v$};
\draw (-1.5,2.4) node[] {\tiny$\tau_1$};
\draw (1.5,2.4) node[] {\tiny$\tau_2$};
}

\DeclareSymbol{proof4_2}{0}{
\draw (0,-2) node[] {\tiny$\tau_0$};
\draw[kernelsmod] (0,-1.3) -- (-1.5,1.8); 
\draw  (0,-1.3) -- (1.5,1.8);
\draw (-1,-1) node[] {\tiny$v$};
\draw (-1.5,2.4) node[] {\tiny$\tau_1$};
\draw (1.5,2.4) node[] {\tiny$\tau_2$};
}

\DeclareSymbol{proof4_3}{0}{
\draw (0,-2) node[] {\tiny$\tau_0$};
\draw (0,-1.3) -- (-1.5,1.8); 
\draw[kernelsmod]  (0,-1.3) -- (1.5,1.8);
\draw (-1,-1) node[] {\tiny$v$};
\draw (-1.5,2.4) node[] {\tiny$\tau_1$};
\draw (1.5,2.4) node[] {\tiny$\tau_2$};
}

\DeclareSymbol{prooftriple}{0}{
\draw (0,-2.7) node[] {\tiny$\tau_0$};
\draw (0,-2) -- (0,0.25);`
\draw (0,0.25) -- (-1.5,2.5); 
\draw (0,0.25) -- (1.5,2.5);
\draw (0,0.25) -- (0,2.5);
\draw (-1,-1.7) node[] {\tiny$v$};
\draw (-2,3.1) node[] {\tiny$\tau_1$};
\draw (0,3.1) node[] {\tiny$\tau_2$};
\draw (2,3.1) node[] {\tiny$\tau_3$};
\draw (0,0.25) node[diff] {};
}

\DeclareSymbol{prooftriple1}{0}{
\draw (0,-2.7) node[] {\tiny$\tau_0$};
\draw (0,-2) -- (0,0.25);
\draw[kernelsmod] (0,0.25) -- (-1.5,2.5); 
\draw (0,0.25) -- (1.5,2.5);
\draw[kernelsmod] (0,0.25) -- (0,2.5);
\draw (-1,-1.7) node[] {\tiny$v$};
\draw (-2,3.1) node[] {\tiny$\tau_1$};
\draw (0,3.1) node[] {\tiny$\tau_2$};
\draw (2,3.1) node[] {\tiny$\tau_3$};
\draw (0,0.25) node[not] {};
}

\DeclareSymbol{prooftripleperm1}{0}{
\draw (0,-2.7) node[] {\tiny$\tau_0$};
\draw (0,-2) -- (0,0.25);
\draw[kernelsmod] (0,0.25) -- (-3.5,2.5); 
\draw (0,0.25) -- (2.5,2.5);
\draw[kernelsmod] (0,0.25) -- (0,2.5);
\draw (-1,-1.7) node[] {\tiny$v$};
\draw (-3,3.1) node[] {\tiny$\tau_{\sigma_1}$};
\draw (0,3.1) node[] {\tiny$\tau_{\sigma_{\!2}}$};
\draw (3,3.1) node[] {\tiny$\tau_{\sigma_3}$};
\draw (0,0.25) node[not] {};
}

\DeclareSymbol{proofdouble}{0}{
\draw (0,0.5) -- (-1.5,2.5);
\draw (0,0.5) -- (1.5,2.5);
\draw (-1.5,3.1) node[] {\tiny$\tau_1$};
\draw (1.5,3.1) node[] {\tiny$\tau_2$};
\draw (0,0.5) node[diff] {};
}

\DeclareSymbol{proofquadruple}{0}{
\draw (0,0) -- (-2.5,2.5); 
\draw (0,0) -- (-1,2.5);
\draw (0,0) -- (1,2.5);
\draw (0,0) -- (2.5,2.5);
\draw (-3,3.1) node[] {\tiny$\tau_1$};
\draw (-1,3.1) node[] {\tiny$\tau_2$};
\draw (1,3.1) node[] {\tiny$\tau_3$};
\draw (3,3.1) node[] {\tiny$\tau_4$};
\draw (0,0) node[diff] {};
}

\DeclareSymbol{proofquadruple1}{0}{
\draw[kernelsmod] (0,0) -- (-2.5,2.5); 
\draw[kernelsmod] (0,0) -- (-1,2.5);
\draw (0,0) -- (1,2.5);
\draw (0,0) -- (2.5,2.5);
\draw (-3,3.1) node[] {\tiny$\tau_1$};
\draw (-1,3.1) node[] {\tiny$\tau_2$};
\draw (1,3.1) node[] {\tiny$\tau_3$};
\draw (3,3.1) node[] {\tiny$\tau_4$};
\draw (0,0) node[not] {};
}

\DeclareSymbol{proofquadrupleperm1}{0}{
\draw[kernelsmod] (0,-0.4) -- (-3.5,2.3); 
\draw[kernelsmod] (0,-0.4) -- (-1,2.3);
\draw (0,-0.4) -- (1,2.5);
\draw (0,-0.4) -- (3.5,2.5);
\draw (-4.5,3.1) node[] {\tiny$\tau_{\sigma_1}$};
\draw (-1.5,3.1) node[] {\tiny$\tau_{\sigma_2}$};
\draw (1.5,3.1) node[] {\tiny$\tau_{\sigma_3}$};
\draw (4.5,3.1) node[] {\tiny$\tau_{\sigma_4}$};
\draw (0,-0.4) node[not] {};
}


\pagenumbering{arabic}

\DeclareMathAlphabet{\mathpzc}{OT1}{pzc}{m}{it}

%
%
%
%

\let\d\partial
\let\eps\varepsilon

\def\eqref#1{(\ref{#1})}

\def\loc{{\text{\rm\tiny loc}}}

\makeatletter 
\newcommand*{\bigcdot}{}
\DeclareRobustCommand*{\bigcdot}{%
  \mathbin{\mathpalette\bigcdot@{}}%
}
\newcommand*{\bigcdot@scalefactor}{.5}
\newcommand*{\bigcdot@widthfactor}{1.15}
\newcommand*{\bigcdot@}[2]{%
  \sbox0{$#1\vcenter{}$}
  \sbox2{$#1\cdot\m@th$}%
  \hbox to \bigcdot@widthfactor\wd2{%
    \hfil
    \raise\ht0\hbox{%
      \scalebox{\bigcdot@scalefactor}{%
        \lower\ht0\hbox{$#1\bullet\m@th$}%
      }%
    }%
    \hfil
  }%
}
\makeatother

\def\sat{{\sf sat}}

\tcbset
{colframe=boxcolor,colback=symbols!7!pagebackground,coltext=pageforeground,
fonttitle=\bfseries,nobeforeafter,center title,size=fbox,boxsep=1.5pt,
top=0mm,bottom=0mm,boxsep=0mm,tcbox raise base}

\def\two{{\<generic>\kern0.05em\<genericb>}}
\def\twoI{{\<Ito>\kern0.05em\<Itob>}}

\def\mail#1{\burlalt{#1}{mailto:#1}}

\begin{document}

\title{Quasi-generalised KPZ equation}
\author{Y. Bruned$^1$, M. Gerencs\'er$^2$, U. Nadeem$^3$}
\institute{IECL (UMR 7502), Université de Lorraine\and 
TU Wien\and The Maxwell Institute, University of Edinburgh
\\
Email:\ \begin{minipage}[t]{\linewidth}
\mail{yvain.bruned@univ-lorraine.fr},
\\ \mail{mate.gerencser@tuwien.ac.at},
\\ \mail{m.u.nadeem@sms.ed.ac.uk}.
\end{minipage}}

\maketitle

\begin{abstract}
We derive the renormalised equation for the quasi-generalised KPZ equation with space-time white noise, by complementing the program initiated in \cite{MH,Mate19} for solving quasi-linear equations using regularity structures by an algebraic machinery that gives a systematic tool to remove non-local counterterms and provide a precise expression of the renormalised equation that is consistent with the semilinear case. In particular, the solution theory satisfies the chain rule and a natural notion of It\^o isometry, which can be combined to obtain global in time solution.
\end{abstract}

\setcounter{tocdepth}{2}
\tableofcontents

\section{Introduction}

In this paper, we consider the quasi-generalised KPZ equation:
\begin{equs} \label{eq:quasi KPZ}
\partial_t u - a(u) \partial_x^{2} u = f(u) (\partial_x u)^{2} + k(u) \partial_x u + h(u) + g(u) \xi,
\end{equs}
where $ (t,x) \in \mathbb{R}_+ \times  \mathbb{T}$ with $ \mathbb{T} $ denoting one dimensional torus, and the noise $ \xi $ is the space-time white noise.
When $ a \equiv 1 $, this singular stochastic partial differential equation (SPDE) is well-understood and its resolution relies on a series of papers \cite{BHZ,CH,BCCH,BGHZ} which have laid the foundation of a general resolution of singular SPDEs. Good surveys on these developments are given in \cite{FrizHai,BaiHos}. For a geometric context of the equation when one constructs a natural stochastic process taking values in the space of loops in a compact Riemannian manifold, see \cite{proc,BGHZ}. In this context, it has been understood that with a careful choice of renormalisation constants, one has a unique counter-term that guarantees the chain rule property and the Itô isometry. 
All these main results have been obtained using the theory of Regularity Structures invented by Martin Hairer in \cite{reg}.

When $ a $ is non-constant several techniques inspired by the theory of Regularity Structures \cite{reg} and Paracontrolled Calculus \cite{Gub}  have been used for tackling this equation. 
The first result on solving a quasilinear SPDE was via a rough approach in \cite{Otto2016} but it works only for a noise still singular but much more regular than the space-time white noise. Then, we can distinguish three different categories of techniques:
\begin{itemize}
	\item Paracontrolled calculus initiated on the equation in \cite{furlan2019} and  \cite{Bailleul2019}. Then, thanks to the high-order Paracontrolled calculus developed in \cite{BB19}, one is able to reach space-time white noise without a convergence theorem for the model in \cite{BM23}.
	\item Regularity Structures via decorated trees, this is the approach proposed in \cite{MH} and it was pursued in  \cite{Mate19}. This is the approach that we use in the present paper. Let us mention also the work \cite{BHS} that treats the equation \eqref{eq:quasi KPZ} with an expansion closer to the one performed via multi-indices. 
	\item Multi-indices that are the natural continuation of the works \cite{Otto2016,OSSW18} and appeared at the first time in \cite{OSSW}. They have reached a high degree of generality covering many singular SPDEs. Indeed, their algebraic structure has been investigated in the recent works \cite{LOT,BK23,JZ,Li23,BD23} with \cite{BL23} extending them to a large class of singular SPDEs. 
	 Convergence of the multi-indices model has been obtained in \cite{LOTT} via a spectral gap assumption on the noise and an inductive procedure see also \cite{T,GT} for further extensions. One can look at \cite{LO23,OST,BOT24} for surveys on multi-indices. The  approach of \cite{LOTT} is recursive and it does not use diagrams like in \cite{CH}. Such an approach is also valid within the context of decorated trees (see \cite{BN23,BB23,HS,BH23}).
\end{itemize}

None of these approaches were able to provide a fully automated solution theory that reaches the generality obtained in the semilinear case.
From the perspective of the approach of Regularity Structures, the missing ingredient is the derivation of the renormalised equations (i.e. an analogue of \cite{BCCH}), and in particular the locality of the renormalisation counterterms.
The goal of the present work is to give a systematic algebraic understanding of these counterterms, replacing lengthy by-hand computations in \cite{MH, Mate19}. We explain below the reason for our restriction to the space-time white regularity regime, but a large part of our approach promises to be effective in the full subcritical regime.

Before moving to a detailed setup of our result, we state it in a somewhat informal way, assuming some familiarity with notations from \cite{BCCH}.
We start by introducing a class of mollifiers denoted by $ \mathrm{Moll} $ which is
the set of all compactly supported smooth functions $ \varrho : \mathbb{R}^2
\rightarrow \mathbb{R}  $ integrating to $1$, such that
$\varrho(t, -x) = \varrho(t, x)$, and such that $\varrho(t, x) = 0$ for $t \leq 0 $ (i.e. is non-anticipative). Let $ \varepsilon > 0 $, we replace $ \xi $ by its regularisation
$ \xi_{\varepsilon} = \varrho_{\varepsilon} * \xi $, the space-time convolution of the noise $ \xi $ 
with $ \varrho_{\varepsilon} $ given by:
\begin{equs}
	\varrho_{\varepsilon} = \varepsilon^{-3} \varrho(\eps^{-2}t,\eps^{-1}x)
	\end{equs}
where we have used the parabolic scaling $(2,1)$ for the rescaling.
Recall also from \cite{BCCH} (explained in detail in Section \ref{sec::2}) that for $c,\eps>0$, $\rho \in \mathrm{Mol}$, and \emph{semilinear} singular SPDEs of the form:
\begin{equ}
\partial_t u - c \partial_x^{2} u = \big(f(u) -a'(u)\big)(\partial_x u)^{2} + k(u) \partial_x u + h(u) + g(u) \xi,
\end{equ}
the renormalisation counterterm is given by
		\begin{equs}
			\sum_{\tau \in \SS_{\<generic>} } C_{\eps}^{c}(\tau) \frac{\Upsilon_{ F}[\tau]}{S(\tau)}(\cdot).
		\end{equs}
Here $ C_{\eps}^{c}(\tau) $ are smooth functions of the parameter $ c $, $ \SS_{\<generic>} $ is a finite set that parametrises the functions $C_{\eps}^{c}(\tau)$ and $ S(\tau) $ is a symmetry factor. The terms $ \Upsilon_{ F}[\tau] $ are coefficients or elementary differentials computed from $ F$ the nonlinearity of \eqref{eq:quasi KPZ} but with $f$ replaced by $f-a'$. With this notation at hand, we present our main result:
\begin{theorem}\label{thm:main}
	Let $a\in\mathcal{C}^6$, $f,g\in \mathcal{C}^{5}$ such that $a$ takes values in $[\lambda,\lambda^{-1}]$ for some $\lambda>0$.
	Let $u_0\in\CC^\alpha(\mathbb{T})$ for some $\alpha>0$. For every $ \varrho \in \mathrm{Moll} $, $\eps>0$,	
		the renormalised equation of \eqref{eq:quasi KPZ} is given by:
		\begin{equs}[eq:renorm nonlocal]
			\partial_t u_{\eps} - a(u_{\eps}) \partial_x^{2} u_{\eps} & = f(u_{\eps}) (\partial_x u_{\eps})^{2} + k(u_{\eps}) \partial_x u_{\eps} + h(u_{\eps}) + g(u_{\eps}) \xi_{\eps} \\ & + \sum_{\tau \in {\SS}_{\<generic>} } C_{\eps}^{a(u_\eps)}(\tau) \frac{\Upsilon_{ F}[\tau](u_{\eps})}{ S(\tau)}\,.
		\end{equs}
		and the solutions $u_\eps$ of the random PDEs \eqref{eq:renorm nonlocal} converge as $\eps\to 0$ in probability, locally in time, to a nontrivial limit $u$. The functions $C^c_\eps(\tau)$ are chosen in such a way that the equations \eqref{eq:renorm nonlocal} transform according to the chain rule under composition with diffeomorphisms.
		
		If furthermore, $g,g',f,\int\!f,a'$ are bounded, then it is possible to choose the functions $C^c_\eps(\tau)$ in such a way that the solutions converge globally in time, that is, for $t\in[0,1]$.
\end{theorem}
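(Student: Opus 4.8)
The plan is to run the regularity-structures program for quasilinear SPDEs of \cite{MH,Mate19}, using the algebraic machinery developed in this paper to control the counterterms, and then to convert the resulting chain-rule and It\^o-isometry structure into global solutions. First I would set up the regularity structure adapted to \eqref{eq:quasi KPZ} by adjoining to the usual decorated trees a ``quasilinear'' edge type carrying a frozen value $c$ of the leading coefficient, so that the one-parameter family of kernels $\{K_c\}$ associated with $\partial_t-c\partial_x^2$ enters the structure; since $\xi$ is space-time white noise the set of trees of negative homogeneity is finite and short, so the relevant truncation is finite-dimensional. I would then build the BPHZ-renormalised model for $\xi_\eps=\varrho_\eps*\xi$ and establish its convergence as $\eps\to0$: with the list of negative trees explicit, the stochastic bounds and their limits reduce to direct estimates tree by tree, where the symmetry $\varrho(t,-x)=\varrho(t,x)$ and the non-anticipativity of $\varrho$ are used to kill the borderline, would-be logarithmic contributions exactly as for $a\equiv1$. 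Continuity of the abstract fixed-point map in the model then yields local existence, uniqueness and $\eps$-uniform bounds for the modelled distribution $U_\eps$, hence for $u_\eps=\mathcal{R}U_\eps$; this part is essentially the quasilinear theory of \cite{MH,Mate19} adapted to the present nonlinearity.

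The core step is to identify the PDE solved by $u_\eps$. Renormalisation amounts to twisting the abstract fixed-point equation by an element of the renormalisation group, and on the reconstructed solution this produces, besides the noise term, a counterterm obtained by summing $\ell_\eps(\tau)\,\Upsilon_F[\tau]/S(\tau)$ over the negative-homogeneity trees $\tau$. The new phenomenon in the quasilinear case, absent when $a\equiv1$, is that this sum does not immediately present itself in the local form of \eqref{eq:renorm nonlocal}: the trees carrying the frozen leading coefficient contribute pieces that are a priori \emph{non-local} functionals of $u_\eps$. Here I would use the algebraic machinery of the paper to show that these non-local contributions either cancel among themselves or reorganise---after resumming over families of trees that differ only in the placement of the frozen coefficient along their edges---into the advertised local expression $\sum_{\tau\in\SS_{\<generic>}}C_\eps^{a(u_\eps)}(\tau)\,\Upsilon_F[\tau](u_\eps)/S(\tau)$, in which the dependence on the leading coefficient survives only through the pointwise value $c=a(u_\eps)$. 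This reduction is where the restriction to the white-noise regime is used, since subcriticality keeps the trees on which these cancellations must be verified to a manageable explicit list. I expect this to be the main obstacle: organising the bookkeeping of the non-local pieces and proving that everything not of the advertised local shape either cancels identically or carries positive homogeneity and is thus negligible in the limit.

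With \eqref{eq:renorm nonlocal} in hand I would pin down the free parameters inside $C_\eps^c(\tau)$ following \cite{BGHZ}: requiring that the limiting equation be covariant under composition $u\mapsto\phi(u)$ with diffeomorphisms is equivalent to a finite linear system on these parameters, whose consistency and solvability are a finite check because $\SS_{\<generic>}$ is finite; any solution gives a renormalised equation obeying the chain rule, and imposing in addition the It\^o-isometry normalisation fixes the remaining constants. The regularity hypotheses $a\in\CC^6$, $f,g\in\CC^5$ are exactly what make the Taylor expansions and the elementary differentials $\Upsilon_F[\tau]$ occurring in this step well-defined, while $a\in[\lambda,\lambda^{-1}]$ keeps $\partial_t-a(u)\partial_x^2$ uniformly parabolic.

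Finally, under the extra hypothesis that $g,g',f,\int\! f,a'$ are bounded, I would combine the chain rule and the It\^o isometry to transport the limiting equation, via a Cole--Hopf-type change of unknown built from primitives of the coefficients, into an equation with globally controlled coefficients (and, in particular, no surviving $(\partial_x u)^2$ term), for which an a priori $\CC^\alpha$-bound on $[0,1]$ follows from the standard estimates available for semilinear equations with bounded coefficients. The stated boundedness is precisely what makes this change of variables globally invertible with Lipschitz inverse, so the a priori bound transfers back to $u$; patching the local solutions along this bound then upgrades the convergence of $u_\eps$ from ``locally in time'' to ``on $[0,1]$'', completing the proof.
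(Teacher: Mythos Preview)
Your broad strategy matches the paper's: work in the quasilinear regularity structure of \cite{MH,Mate19}, derive the renormalised equation, and then exploit chain rule plus It\^o isometry for global existence via a Cole--Hopf-type change of variables. However, the logical order at the crucial step is inverted, and this is a genuine gap.

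You write that you would first use ``algebraic machinery'' to show the non-local counterterms ``cancel among themselves or reorganise'' into the local form \eqref{eq:renorm nonlocal}, and only \emph{afterwards} impose the chain-rule constraint to fix the remaining free parameters in $C_\eps^c(\tau)$. The paper does the opposite, and this is not cosmetic: the chain-rule constraint is precisely what \emph{causes} the non-local terms to vanish. The raw renormalised equation (Theorem~\ref{fisrt renormalisation}) carries counterterms $\sum_{\tau\in\hat{\SS}_{\<generic>}} C_\eps^{a(u_\eps)}(\tau)\,\Upsilon_{\hat F}[\tau]/(qS(\tau))$ that depend on the auxiliary variables $v_c,v_{cc},\ldots$, hence are genuinely non-local. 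Only after imposing Assumption~\ref{chain rule}---that the $C_\eps$ are generated by the covariant derivatives $\nabla^m$---does the paper prove, inductively on the covariant-derivative basis (Theorem~\ref{th:covloc}), the identity $\Upsilon_{\hat F}[\nabla_{\tau_2}\tau_1+\nabla^1_{\tau_2}\tau_1]=q\,\Upsilon_F[\mathcal{P}\nabla_{\tau_2}\tau_1]$ for $\tau_1,\tau_2\in\hat T^{\loc}$, which collapses everything to the local $\Upsilon_F$ form. Your description ``resumming over families of trees that differ only in the placement of the frozen coefficient'' does not capture this mechanism, and without first imposing chain rule there is no reason for the cancellation you invoke.

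Two further points. First, you do not mention the paper's framework of \emph{non-commuting} derivatives $\partial_{v_\alpha}$ (Definition~\ref{definition_derivative}) and partially planar parametrised trees, which is what makes the elementary differentials $\Upsilon_{\hat F}$ well-defined for the transformed system \eqref{eq:transformed} and underlies both the coherence result (Theorem~\ref{prop:coherence}) and the preparation-map proof of Theorem~\ref{fisrt renormalisation}; this is not routine bookkeeping but the main new ingredient compared to \cite{BCCH}. Second, in the global-existence step, the equation after your change of variables is still \emph{quasilinear} (only the $(\partial_x u)^2$ term disappears), so ``standard estimates for semilinear equations'' is not quite right; the paper instead identifies the transformed solution with an It\^o SPDE via a consistency theorem (Theorem~\ref{thm:consistency}) and obtains a priori bounds through the monotone-operator framework of \cite{KR} together with a multiplicative analogue of the Otto--Weber regularity estimate (Theorem~\ref{thm:Ito-regularity}).
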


The global in time existence of the solution is obtained from relating \eqref{eq:quasi KPZ} rigorously to It\^o SPDEs, which can be seen as an instance of It\^o isometry.
The chain rule and It\^o isometry properties have been recently studied to give a unique solution in \cite{BGHZ} for the geometric KPZ equation in sufficiently high dimension for the Riemannian manifold considered. In our context, chain rule corresponds to Assumption~\ref{chain rule} and Itô isometry to Assumption~\ref{Assumption_Ito}.
Let us stress that the main challenge in proving Theorem ~\ref{thm:main} is that one does not have a mild formulation for quasi-linear equations. The strategy adopted in \cite{MH} is to instead solve the implicit system given in \eqref{eq:transformed}, which is locally-in-time equivalent to the quasilinear equation, whilst being mindful of the fact that this transformation introduces non-local functions of the solution into the system. One applies the theory of Regularity Structures to solve and renormalise this system and then one gets some counter-terms possibly non-local to pull back in the original equation. This approach was continued in \cite{Mate19} when $ g\equiv 1 $ and $ f\equiv g \equiv h \equiv 0 $, employing mainly ad-hoc computations with some integration by parts relations to show that ultimately the counter-terms are local. These integration by parts formulae correspond to the chain rule symmetry defined in \cite{BGHZ}.
This approach raised several issues/open problems:
\begin{enumerate}
	\item When $f,g,k,h$ are chosen more generally, computations by hand become untractable. To generalise the result hence, one needs a precise parametrisation of the counter-terms as given in \cite{BCCH}.
	\item Furthermore, one has to identify the correct symmetries on the renormalisation constants to get rid of the non-local counter-terms. With the work \cite{Mate19}, one can suppose that the chain rule is the correct symmetry to impose which is also very natural in the context of the generalised KPZ equation.
	\end{enumerate}

The first open problem is maybe the most challenging. A first attempt in a simpler context has been done \cite{GHM} where the noise is assumed to depend on the solution.
We use more recent techniques to circumvent these problems:
\begin{enumerate}
	\item We work with preparation maps introduced in \cite{BR18} and together with the new algebraic understanding of \cite{BM22}, we are able to present a simple proof of the renormalised equation in \cite{BB21a,BB21b}. We adapt this proof in Theorem~\ref{fisrt renormalisation}. Let us mention that this approach has been also successfully performed in \cite{BHS}.   
	\item One of the main difficult problems is to find the correct elementary differentials $ \Upsilon_{\hat{F}} $ and $ \Upsilon_{U} $ where $ \hat{F} $ and $ U $ are the two main non-linearities associated to the implicit system \eqref{eq:transformed}. We introduce new derivatives in Definition~\ref{eq:transformed} that do not commute which induces one to work with partially planar decorated trees. 
	In Theorem~\ref{prop:coherence}, we prove that these derivatives are the correct ones for describing the elementary differentials associated with the expansion of the solution. This could be seen as a new type of B-series as they are strongly connected to the approach of Regularity Structures see \cite{BR23}.
\end{enumerate}

From  Theorem~\ref{fisrt renormalisation} that depends on Theorem~\ref{prop:coherence}, one does not get exactly the counter-terms given in Theorem~\ref{thm:main} but
\begin{equs}
		\sum_{\tau \in \hat{\SS}_{\<generic>} } C_{\eps}^{c}(\tau) \frac{\Upsilon_{ \hat{F}}[\tau]}{qS(\tau)}(u_{\varepsilon})
\end{equs}
where  $ \hat{\SS}_{\<generic>} $ is a bigger combinatorial set that contains $ \SS_{\<generic>} $ and some of the terms $\Upsilon_{ \hat{F}}[\tau]$ are potentially non-local functions of the solution $u_{\varepsilon}$. Then, the chain rule symmetry given by Assumption~\ref{chain rule} allows us to get:
\begin{equs}
		\sum_{\tau \in \hat{\SS}_{\<generic>} } C_{\eps}^{a(u_{\varepsilon})}(\tau) \frac{\Upsilon_{ \hat{F}}[\tau]}{qS(\tau)}(u_{\varepsilon}) = 	\sum_{\tau \in \SS_{\<generic>} } C_{\eps}^{a(u_{\varepsilon})}(\tau) \frac{\Upsilon_{ F}[\tau]}{S(\tau)}(u_{\varepsilon})
	\end{equs}
see Theorem~\ref{main result 2}. This statement is obtained by the fact that we use covariant derivatives on trees as a basis for the elements satisfying the chain rule see Theorem ~\ref{th:covloc}. This is the only limitation of our approach that restricts us from reaching the complete subcritical regime. Indeed, considering a noise worse than space-time white noise would require one to increase the size of the combinatorial set that parametrises the counter-terms. It has been showed in \cite{BGHZ} that covariant derivatives generate linear combinations of decorated trees with four leaves that satisfy the chain rule. This open problem has been solved in \cite{BD24} for the full subcritical regime.  Theorem~\ref{prop:coherence} and Theorem~\ref{fisrt renormalisation} are not limited to the case of the space-time white noise but are true in the subcritical regime with exactly the same proof. By subcritical regime, we mean a space-time noise whose Hölder regularity in space-time is bigger than $-2$. It could be non-Gaussian. Then, one can associate a combinatorial set $  {\SS}_{\xi}$ for parametrising the renormalisation and gets the following Theorem (see \cite[Sec. 5]{BD24})   

\begin{theorem}\label{thm:main_BD}
	Let $a, f,g$ be smooth functions such that $a$ takes values in $[\lambda,\lambda^{-1}]$ for some $\lambda>0$.
	Let $u_0\in\CC^\alpha(\mathbb{T})$ for some $\alpha>0$. For every $ \varrho \in \mathrm{Moll} $, $\eps>0$,	
	the renormalised equation of \eqref{eq:quasi KPZ} is given by:
	\begin{equs}[eq:renorm nonlocal_bis]
		\partial_t u_{\eps} - a(u_{\eps}) \partial_x^{2} u_{\eps} & = f(u_{\eps}) (\partial_x u_{\eps})^{2} + k(u_{\eps}) \partial_x u_{\eps} + h(u_{\eps}) + g(u_{\eps}) \xi_{\eps} \\ & + \sum_{\tau \in {\SS}_{\xi} } C_{\eps}^{a(u_\eps)}(\tau) \frac{\Upsilon_{ F}[\tau](u_{\eps})}{ S(\tau)}\,.
	\end{equs}
	and the solutions $u_\eps$ of the random PDEs \eqref{eq:renorm nonlocal} converge as $\eps\to 0$ in probability, locally in time, to a nontrivial limit $u$. The functions $C^c_\eps(\tau)$ are chosen in such a way that the equations \eqref{eq:renorm nonlocal} transform according to the chain rule under composition with diffeomorphisms.
\end{theorem}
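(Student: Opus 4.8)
The plan is to rerun the proof of Theorem~\ref{thm:main} essentially verbatim, isolating the two places where the space-time white noise hypothesis is genuinely used and substituting their subcritical replacements. The first of these is analytic --- the convergence of the renormalised model --- and the second is algebraic --- the statement that chain-rule invariant linear combinations of decorated trees are spanned by covariant derivatives, which in the white noise case we only needed, and only had at our disposal via \cite{BGHZ}, for trees with at most four leaves.

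First I would set up the regularity structure for the implicit system \eqref{eq:transformed} rather than for \eqref{eq:quasi KPZ} itself, built from the partially planar decorated trees carrying the non-commuting derivatives of Definition~\ref{eq:transformed}. Because the noise is subcritical, i.e. of space-time Hölder regularity larger than $-2$, the set of trees of negative homogeneity is still finite, so Theorem~\ref{prop:coherence} and Theorem~\ref{fisrt renormalisation} apply with no change whatsoever to their proofs. This produces, for each $\varrho\in\mathrm{Moll}$ and $\eps>0$, a renormalised equation for the transformed system whose counterterm is $\sum_{\tau\in\hat{\SS}_\xi} C^{c}_\eps(\tau)\,\Upsilon_{\hat F}[\tau]/(qS(\tau))$, with $\hat{\SS}_\xi$ the finite analogue of $\hat{\SS}_{\<generic>}$ and with some of the $\Upsilon_{\hat F}[\tau]$ still non-local functions of $u_\eps$.

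Next I would impose the chain rule symmetry, Assumption~\ref{chain rule}, on the family $C^c_\eps(\tau)$ and show that the non-local part of the counterterm collapses onto the local expression $\sum_{\tau\in\SS_\xi} C^{a(u_\eps)}_\eps(\tau)\,\Upsilon_F[\tau](u_\eps)/S(\tau)$ of \eqref{eq:renorm nonlocal_bis}. For white noise this is Theorem~\ref{main result 2}, whose proof runs through Theorem~\ref{th:covloc}: covariant derivatives on trees form a basis of the chain-rule invariant subspace. The single ingredient that does not survive the passage to a rougher noise is precisely this spanning statement, which for white noise reduces to a four-leaf computation. Its replacement is \cite[Sec.~5]{BD24}, where covariant derivatives are shown to generate the chain-rule invariant space of decorated trees with an arbitrary number of leaves; feeding this into the argument of Theorem~\ref{main result 2} rewrites the $\hat{\SS}_\xi$-sum as the $\SS_\xi$-sum with all surviving $\Upsilon_F[\tau]$ local. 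I expect this to be the main obstacle: one must control the combinatorics of an unbounded family of decorated trees and check that the linear constraints coming from the chain rule are exactly those solved by covariant derivatives --- and this is the reason the statement is credited to \cite{BD24} rather than proved here.

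Finally I would establish the analytic half, namely that the solutions $u_\eps$ of \eqref{eq:renorm nonlocal_bis} converge in probability, locally in time, to a non-trivial limit. Here I would follow \cite{MH,Mate19}: solve the lifted transformed system by the fixed-point theory of Regularity Structures --- using that the transformation preserves short-time well-posedness despite introducing non-local functions of the solution into the system --- and then pull the (now local) counterterms back to \eqref{eq:quasi KPZ}. The only genuinely noise-dependent input is the convergence of the BPHZ-renormalised model in the subcritical regime, available from \cite{CH} for Gaussian noise and, in the non-Gaussian case, from the recursive and spectral-gap approaches discussed in the introduction; continuity of the solution map then transfers model convergence to convergence of $u_\eps$. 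Since subcriticality keeps every structure finite, no new analytic difficulty arises beyond bookkeeping, and the argument for Theorem~\ref{thm:main} carries over.
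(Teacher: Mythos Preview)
Your proposal is correct and matches the paper's own treatment: the paper does not give a separate proof of Theorem~\ref{thm:main_BD} but states it as the outcome of combining Theorems~\ref{prop:coherence} and~\ref{fisrt renormalisation} (which it notes hold verbatim in the subcritical regime) with the spanning result for covariant derivatives from \cite[Sec.~5]{BD24}, and then following the argument of Theorem~\ref{main result 2}. You have identified exactly the two noise-dependent ingredients --- the convergence of the model and the chain-rule basis --- and supplied the same substitutes the paper points to.
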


Let us stress that the long time existence depends on the choice of the noise and we have verified it for the space-time white noise in this article. It could potentially work for other type of noise but it will be more on a case by case basis.

\vspace{0.5em}

Let us outline the paper by summarising the content of its sections. In Section~\ref{sec::2}, we start by rewriting equation \eqref{eq:quasi KPZ} in a divergence form see equation \eqref{eq:quasi KPZ new}. Then, we set up the implicit system \eqref{eq:transformed} given in \cite{MH} that is equivalent to the equation we started with. After this short subsection, we introduce the decorated trees that are used for coding the iterated integrals appearing in the expansion of the solution, wherefrom we get our definition of the set $\SS_{\<generic>}$ (see \eqref{listPage}). Moreover, we introduce a family of grafting products $ (\graft{\alpha})_{\alpha \in \mathbb{N}^2} $ (see \eqref{grafting_product}) and operators that increase node decorations $ (\uparrow^{k})_{k \in \mathbb{N}^2} $ , first introduced in \cite{BCCH}. From these products, one is able to define a product $ \star $ in \eqref{star_product} that is crucial in the sequel. The formula for the product $ \star $ was first introduced in \cite[Prop. 3.17]{BM22}. Then, we present parametrised decorated trees which boils down to introducing extra decorations on the edges that correspond to derivative in some parameters (see \eqref{paremetrised_trees} and \eqref{eq:augdecomp}); this is the main motivation for considering partial planar decorated trees. The last assumption in this subsection is natural for renormalisation constants and it will be used in the sequel: one interprets the extra decoration on the edges as parameter derivatives (see \eqref{c_derivative}). The next subsection proposes the main definition of this paper which is Definition~\ref{definition_derivative}: It introduces new derivatives in the variables $ v_{\alpha} $ that have been used for describing the implicit system \eqref{eq:transformed}.
 It is followed by Definition~\ref{def:upsilonhatF} for the elementary differentials   $ \Upsilon_F $  for parametrised decorated trees that strongly rely on the previous derivatives. One key property at the end of this subsection is Proposition~\ref{morphism star} which establishes a morphism property for the $ \Upsilon_F $ toward the product $ \star $. It is one of the ingredients for getting a short proof for the renormalised equation (see \cite{BB21a}).
 The last subsection focuses on the coherent expansion which is to show that the elementary differentials described before are the correct coefficients for an expansion of the solution. This important property is proved in Theorem~\ref{prop:coherence}.

 In Section~\ref{sec::3}, we explain how to obtain the renormalised equation with only local counter-terms thanks to the chain rule Assumption \ref{chain rule}. This proves Theorem
\ref{thm:main} except for the global-in-time existence which is the subject of the last section.
We start by introducing (strong) preparation maps (see Definitions \ref{def:PrepMap} and \eqref{def:strongprepmap}). This allows us to give a recursive definition of the renormalised model. Then, in Theorem \ref{fisrt renormalisation}, we establish the first renormalisation result for the equation \eqref{eq:quasi KPZ} but with counter-terms that are potentially non-local. The proof relies strongly on the knowledge of the coefficient $ \Upsilon $ and the morphism property given in Proposition \ref{morphism star}. It is an adaptation of the short proof of the renormalised equation for semi-linear SPDEs in \cite{BB21a}. To make the non-local counter-terms vanish one needs to introduce Assumption \ref{chain rule} corresponding to the chain rule defined in \cite{BGHZ}, which says that our counter-terms are generated by covariant derivatives given in \eqref{covariant_derivative}. These derivatives contain the basis introduced in \cite{BGHZ} for describing the chain rule property, to which we have added derivatives in the parameters. With this machinery finally, we prove Theorem~\ref{main result 2} that provides local counter-terms. Its proof relies on Theorem \ref{th:covloc} that checks the locality inductively on terms described via covariant derivatives. A warm-up example is provided in the previous subsection to illustrate the main arguments of the general proof.
 
  In Section~\ref{sec::4}, we make use of the generality (in terms of allowing general nonlinearities $f$, $g$) considered in \eqref{eq:quasi KPZ} to obtain as an interesting byproduct that, under some mild assumption on the coefficients, solutions are global in time. Indeed, the fact that our solutions obey the chain rule and a form of It\^o isometry, allows us to transform \eqref{eq:quasi KPZ} rigorously to an It\^o SPDE, whose blowup can be excluded by more classical arguments.
The global-in-time existence of singular SPDEs is in general a rather challenging (and largely still open) problem, see e.g. \cite{MW, CMW, HR, HZZ} for various approaches, to which our proof contributes an alternative argument.
   Although this proof of global well-posedness admittedly relies on the structure of the noise, the global existence of quasilinear singular SPDEs is new even in the $f=g'=0$ case of \cite{Mate19}.

\subsection*{Acknowledgements}

{\small
 Y. B. gratefully acknowledges funding support from the European Research Council (ERC) through the ERC Starting Grant Low Regularity Dynamics via Decorated Trees (LoRDeT), grant agreement No.\ 101075208.
 M. G. thankfully acknowledges support from the Austrian Science Fund
(FWF) through the START project Stochastic PDEs and Renormalisation (grant STA119).
}

\section{Expansion of the solution}

\label{sec::2}

\subsection{Main equation}
We begin by transforming \eqref{eq:quasi KPZ} into the divergence form:
\begin{equs}\label{eq:quasi KPZ new}
	\partial_t u - \partial_x (a(u) \partial_x u) = (f(u) - a'(u)) (\partial_x u)^{2} + k(u) \partial_x u + h(u) + g(u) \xi.
\end{equs}
Let $ P(c,\cdot) $ be the Green's function of the operator $ \partial_t - c \partial_x^2  $.
It is in fact easy to check that one has
\begin{equ}\label{eq:kernel scaling}
	P(c,t,x)=c^{-1/2}P(1,t,c^{-1/2}x).
\end{equ}
For a multiindex $\alpha \in \mathbb{N} \times \mathbb{N}^2$, we define the operator $I_\alpha$ as
\begin{equ}\label{eq:Inta}
	I_\alpha(b,f)(z)=(\d_\alpha P(c,\cdot)\ast f)(z)|_{c=b(z)}.
\end{equ}
where for $ \alpha = (\hat{\alpha}, \bar \alpha) $ with $ \bar \alpha = (\bar \alpha_1, \bar \alpha_2) $, one has 
\begin{equs}
	\d^\alpha = \partial_c^{\hat{\alpha}} \partial^{\bar \alpha}
	\quad
\partial^{\bar \alpha} = 	\partial_t^{\bar{\alpha}_1} 	\partial_x^{\bar{\alpha}_2}.
\end{equs}
In the sequel, we will use the short hand notations $ c $ for $ (1,0,0) $ and $ x $ for $ (0,0,1) $.
For smooth input $\xi$, the equation \eqref{eq:quasi KPZ new} is locally in time equivalent to the following system:
\begin{equs} 
	u &  = I(a(u), \hat F), \\
	\hat F & = (1 - a'(u) I_{c}(a(u), \hat F))  \, F + (a (a')^2)(u) I_{cc}(a(u),\hat F) (\partial_x u)^2 \\
	&\quad + (a a'')(u) I_{c}(a(u),\hat F) (\partial_x u)^2    + 2 ( a a' )(u)( \partial_x u ) I_{cx}(a(u),\hat F)
	\\
	& \quad + a'(u) (\partial_x u ) I_{x}(a(u),\hat F),\label{eq:transformed}
\end{equs}
where $ F = (f(u) - a'(u)) (\partial_x u)^{2} + g(u) \xi $. 

\begin{remark}
	This formulation is different from the original one given in \cite{MH,Mate19} where the authors do not consider the equation in a divergence form. One of the reasons for this formulation is to underline the fact that part of the renormalisation acts on $ a(u) \partial_x^{2} $.
	As we will show, rewriting the second-order operator in divergence form puts all the divergent terms on the right-hand side of the equation.
\end{remark}

By setting $ v_\alpha \coloneqq  I_{\alpha}(a(u), \hat F)) $ for multiindices $\alpha$, and $ q(u) \coloneqq 1-a'(u) v_c $ and we get
\begin{equs}
	\hat F & = q  \, F + (a (a')^2)(u) v_{cc} (\partial_x u)^2 \\ &+ (a a'')(u) v_{c} (\partial_x u)^2    + 2 ( a a' )(u)( \partial_x u ) v_{cx} + a'(u) (\partial_x u ) v_{x}.
\end{equs}
Rewriting the last equation, we get
\begin{equs}\label{eq:nonlinearity}
	\hat F = \hat f(u) (\partial_x u)^{2} + \hat g(u) \xi + 2 ( a a' )(u)( \partial_x u ) v_{cx} + a'(u) (\partial_x u ) v_{x},
\end{equs}
where 
\begin{equs}
	\hat f  = q (f - a') + a (a')^{2} v_{cc} + a a'' v_c, \quad \hat g = q g. 
\end{equs}
Motivated by the fact that the multi-index $\alpha$ refers to the number of derivatives taken in $c$ and $x$ of \eqref{eq:Inta}, we will at times eschew the multi-indices altogether for clarity and indicate the derivative being taken instead. So, for instance, $v = v_{(0,0)}$, $v_x = v_{(0,1)}$, $v_{cc} = v_{(2,0)}$ etc.

\subsection{Decorated trees and main products}
The application of regularity structures to the study of SPDEs is facilitated by decorated trees, an exposition whereof we present in this section. We begin by fixing first a finite set of types which will be the basic component for building the aforementioned trees - $\Lab \coloneqq \{\CI,\Xi_\xi,\Xi_1\}$. The first of these abstract symbols is meant to represent convolution with the kernel $P$, the second corresponds to the space-time noise $\xi$ and finally, the last one stands in for $1$. We then define a set of edge decorations $\CD \coloneqq \CL\times\mathbb{N}^2$, with which we are able to give the following definition:
\begin{definition}
A \textbf{decorated tree} over $\mathcal{D}$ is a $3$-tuple of the form  $\tau_{\Labe}^{\Labn} =  (\tau,\Labn,\Labe)$ where $\tau$ is a non-planar rooted tree with node set $N_\tau$ and edge set $E_\tau$. The maps $\Labn : N_{\tau} \rightarrow \mathbb{N}^{2}$ and $\Labe : E_\tau{\tiny } \rightarrow \mathcal{D}$ are node and edge decorations, respectively.
\end{definition}
\begin{remark}
To not clutter notation, we will often suppress the decorations and just write $\tau$ instead of $\tau^{\mfn}_{\mfe}$ when they are understood or of no consequence to the argument being made.
\end{remark}
We use $T$ to denote the set of decorated trees, and define a binary tree product $\cdot:T\times T\mapsto T$ by 
\begin{equation}  \label{treeproduct}
 	(\tau,\Labn,\Labe) \cdot  (\tau',\Labn',\Labe') 
 	= (\tau  \tau',\Labn + \Labn', \Labe + \Labe')\;, 
\end{equation} 
where $\tau  \tau'$ is the rooted tree obtained by identifying the roots of $ \tau$ and $\tau'$. The sums $ \Labn + \Labn'$ mean that decorations are added at the root and extended to the disjoint union by setting them to vanish on the other tree. Edges and vertices of either tree, keep their decoration, except at the roots which merge into a new root decorated by the sum of the previous two decorations. While this definition allows for greater generality, in our analysis we will only see edges such that they are:

\begin{enumerate}
   
  \item[(i)] decorated by $ (\Xi_i,0) \in \mathcal{D} $ and consequently denoted by $  \Xi_i $, for $i\in\{1,\xi\}$, or;
  \item[(ii)] decorated by  $ (\mcI,\alpha) \in \mathcal{D} $  and consequently denoted by $ \mcI_{\alpha} $.
\end{enumerate} 
We will also use $\CI_\alpha(\tau)$ to represent the tree that is constructed by taking $\tau\in T$ and grafting onto its root, an edge that carries the decoration $\CI_\alpha$. The root of $\CI_\alpha(\tau)$ will carry the node decoration of $0$. Such trees will be referred to as planted trees and denoted by $\CI(T)$.
Additionally, we will have as nodes, a family of symbols $\{\bullet^k\}_{k\in\mbbN^2}$ that encode a factor $ \mbX^k$  such that $\Labn(\bullet^k)=k$. We will write $ X_i$, $ i \in \lbrace 0,1\rbrace $, to denote $ \mbX^{e_i}$, where we have denoted by $ e_0,\,e_1 $ the canonical basis of $ \mathbb{N}^{2} $, so that one has $\mbX^k = \mbX^{2k_0}_0\mbX^{k_1}_1$, for $k=(k_0,k_1)\in\mbbN^2$. Of particular significance will be the monomial $ \mbX^0(\coloneqq\mbX^{(0,0)}) $ that will be identified with the empty tree $\mathbf{1}$. Finally, we will denote by $\bar T$ the space of all such monomials, and $\bar \CT$ the linear span of $\bar T$.

To these symbols we will add a notion of degree by postulating for a function $\deg:T\rightarrow \mbbR$ that
\begin{alignat*}{2}
&\deg(\Xi_\xi)=\alpha-\kappa,\qquad&&\deg(\Xi_1)=0\\
&\deg(\mbX_0) = 2,&&\deg(\mbX_1)=1,
\end{alignat*}
where $ \kappa > 0 $. The $\alpha$ here correspondence to regularity of the noise under question. If one wants to be in the subcritical regime, it is necessary that
\begin{equs}
	\alpha - \kappa > -2.
\end{equs}
Otherwise, one can produce an infinite number of decorated trees with negative degree.
 For the space-time white noise one has that $\alpha=-\tfrac{3}{2}$ and the degrees of the monomial are due to the parabolic scaling we impose. We then extend it recursively by
\begin{equation*}
\deg\left(\prod_{i=1}^m\tau_i\right)=\sum_{i=1}^m\deg(\tau_i),\qquad \deg\left(\CI_a(\tau)\right) = \deg(\tau)+2-|a|_\s,
\end{equation*}
where the product $``\prod_{i=1}^m"$ is the $m$-fold tree product, and the $``2"$ above is due to the Schauder estimate for $P$. A feature of this construction is that all $\tau\in T$, admit a unique, recursive decomposition of the form:
\begin{equation}\label{eq:treedecomp}
\tau = \mathbf{X}^{k} \Big( \prod_{i=1}^m \mathcal{I}_{\alpha_i}[\tau_i]\Big)\Xi_\mathfrak{l},
\end{equation}
with $\mathfrak{l}\in\{1,\xi\}$. Of particular importance in this work will be the following space of ``positive'' trees
\begin{equs}
T^{+}=\{\mbX^{k}\prod_{j}\CI^{+}_{a_j}(\tau_j) :\deg(\CI^{+}_{a_j}(\tau_j))>0,\,\tau_j\in T,k\in\mbbN^{2}\}
\end{equs}
where the symbol $\CI^{+}$ is meant to stress the difference between $T$ and $T^{+}$. $T_-$ is correspondingly the space of all negative degree trees.

Due to this recursive nature of trees, a representation as in  \eqref{eq:treedecomp} fast becomes cumbersome to handle. It is helpful therefore to institute a graphical program that is better suited to represent such trees. The standard approach in the literature is to use $ \<generic> $ for the space-white noise symbol $\Xi_\xi$ and $ \<thin> $ for $\CI$ while $\Xi_1$, does not warrant special depiction. In this graphical setting, we understand $\CI_\alpha(\tau)$ in the same way as before, except now the edge $\CI(\coloneqq\CI_{(0.0)})$ will be depicted by \<thin> (the same as the type $\CI$). For demonstration consider the tree $\tau = \CI(\Xi_\xi)$, which under our scheme will be depicted as \<PlantedNoise>. One might think that this will make the notation unwieldy because $a\in\mbbN^2$, but it turns out that the only other multi-index apart from $(0,0)$ that $a$ can be - at least so far as \eqref{eq:quasi KPZ} is concerned - is $(0,1)$. Due to this, we append to our scheme the symbol $\<thick> = (\<thin>, (0,1))\in\CD$ and use it in the same way as $\<thin>$. Finally, products of two trees are depicted, as explained before, by joining them at the root, so for example $\tau = \CI(\Xi_\xi)^2$ is represented by \begin{tikzpicture}[scale=0.8]
\draw[kernels2] (-0.25,0.25) node[xi] {} to (0,0); 
\draw[kernels2] (0.25,0.25) node[xi] {} to (0,0);
\end{tikzpicture}. Moving forward we will indulge in a slight abuse of notation by using $\CI(\Xi)$ instead of $\CI(\Xi_\xi)$, but this is harmless as we will have $\CI(\Xi_1)=0$.

Our construction thus far has relied on the fact that we will only encounter a very specific set of trees. This idea is made rigorous with the notion of a rule which is used to construct the regularity structure wherein we do our analysis. For a complete exposition, we refer to \cite{BHZ}, but remark that these rules are extracted very naturally from the SPDE under question. For \eqref{eq:quasi KPZ}, the rule map $ R:\Lab \mapsto \CD$ is defined by setting $R(\<generic>) = \{()\}$ and
\begin{equ}
R(\<thin>) = \{(\<thin>^k,\<generic>), (\<thick>^\ell,\<thin>^k)\,:\, k \ge 0,\, \ell \in \{0,1,2\} \}\;,
\end{equ}
where we used $\<thin>^k$ to denote $k$ repetitions of $\<thin>$ and similarly for $\<thick>^\ell$.
Moving forward the only trees we are interested in, are those that conform to $ R $ by which we mean that at each node $ e $, the edges adjacent to $e$ correspond to the tuple associated to the edge $ e $ by the rule $ R $. The linear span of these $R$-conformal trees will be denoted by $ \CT $. 
In deriving the renormalised equation it turns out that one can disregard some trees and focus only on the saturated ones, see \cite[Sec. 2.2]{BGHZ} where some trees will give a zero cancellation. Indeed, if one has  one thick edge at a node (odd number of thick edges over the decorated trees), 
the renormalisation constants are zero due to antisymmetry (odd number
of derivatives $ \partial_x $), see \cite[Lem. 2.5]{BGHZ}. For the same reason decorated trees containing a polynomial decoration are disregarded (see \cite[Sec. 2.1]{BGHZ}). If one has two nodes with one thick edge each, then the decorated trees is of positive degree when the regularity of the noise is in the full subcritical regime. Therefore, we consider trees generated by the ``saturated rule'' $ R^\sat $ given by
\begin{equ}
R^\sat(\<thin>) = \{(\<thin>^k,\<generic>_i), (\<thick>^2,\<thin>^k)\,:\, k \ge 0 \}\;,
\end{equ}
This rule guarantees that all thick edges come in pairs.
Given a noise $\xi$ such that its space-time regularity is strictly greater then $-2$, we denote by $ \SS_{\xi} $ the decorated trees with negative degree generated by the saturated rule. For the space-time white noise, we denote this space by $ \SS_{\<generic>} $.

We write $\CS_{\<generic>}$ \label{CS page ref} for the real vector space generated by $\SS_{\<generic>} = \SS_{\<generic>}^{(2)} \sqcup  \SS_{\<generic>}^{(4)}$ where $ \SS_{\<generic>}^{(k)} $ are $  R^\sat $ conformal trees with a vanishing vertex decoration $\mfn$ and with $ k $ noises.
\begin{equs} \label{listPage}
\begin{tabular}{rll} \toprule
& Elements of $ \SS_{\<generic>}  $\\
\midrule
 $ \SS_{\<generic>}^{(2)} $   &  \<Xi2> \,, \<I1Xitwo> 
\\ 
$ \SS_{\<generic>}^{(4)} $ &  \<Xi4>\,, \<I1Xi4a> \,,  \<I1Xi4b> \,, \<I1Xi4c> \,, \<I1Xi4ab> \,, \<I1Xi4bc> \,, \<I1Xi4ac> \,,  \<Xitwo> \,, \<2I1Xi4> \,, \<2I1Xi4b> \,,  \<2I1Xi4c> \,, \\  & \<Xi4b>\,, \<Xi4ba>\,, \<Xi4c>\,,   \<Xi4cb>\,, \<Xi4ca>\,, \<Xi4cab>\,, \<Xi4e> \,, \<Xi4ea> \,, \<Xi4eabis> \,,  \<Xi4eb> \,, \<Xi4eab> \,, \<Xi4eabbis> 
\\
\bottomrule
\end{tabular}
\end{equs}

We also associate to a decorated tree $\tau$, a symmetry factor $ S(\tau) $ that is ostensibly a count of its tree isomorphisms. An explicit recursive definition of $S(\tau)$ is possible by grouping the same planted trees $\CI_\alpha(\tau_i)$ in \eqref{eq:treedecomp} in such a manner so that $ \tau = \mathbf{X}^{k} \Big( \prod_{j=1}^m \mathcal{I}_{\alpha_j}[\tau_j]^{\beta_j}\Big) \Xi_{\mfl}\;, $ and $ \CI_{\alpha_i}(\tau_i) \neq \CI_{\alpha_j}(\tau_j)$ for $i \neq j$. With this representation of $\tau$ we can set
\begin{align}
S(\Xi_{\mfl}) = 1, \quad S(\tau)
=
k!
\Big(
\prod_{j=1}^{m}
S(\tau_{j})^{\beta_{j}}
\beta_{j}!
\Big)\;.
\end{align}
This furthermore defines an inner product on the space of decorated trees $T$, by:
\begin{equs}\label{eq:innerproduct}
\langle\sigma,\tau\rangle\coloneqq S(\tau)\delta(\sigma,\tau).
\end{equs}\vspace{-1cm}
\begin{example}
For monomials, one has
\begin{equs}\label{eq:monprod}
\inprod{\mbX^{k}}{\mbX^{\bar k}}=\delta(k,\bar k)k!,
\end{equs}
which implies the following for trees of the form $\Xi_{\mfl}\mbX^{k}$
$$\inprod{\Xi_{\mfl}\mbX^{k}}{\Xi_{\bar\mfl}\mbX^{\bar k}}=\delta(\mfl,\bar\mfl)\,\delta(k,\bar k)\,k!.$$
\end{example}
\begin{example}\label{ex:plantedtree} For a planted tree, $\tau=\CI_\alpha(\bar\tau)$, one notices that $S(\tau) = S(\bar\tau)$, and hence
\begin{equs}
\inprod{\CI_{\alpha_1}(\tau_1)}{\CI_{\alpha_2}(\tau_2)}=S(\tau_1)\delta(\tau_1,\tau_2)\delta(\alpha_1,\alpha_2)=\inprod{\tau_1}{\tau_2}\delta(\alpha_1,\alpha_2),
\end{equs}
so that one has in particular
\begin{equs}\label{eq:plantedtree}
\inprod{\CI_\alpha(\tau_1)}{\CI_\alpha(\tau_2)} = \inprod{\tau_1}{\tau_2}
\end{equs}
\end{example}
\begin{example}
For $U = \sum_{\tau}u_{\tau}\tau \in \CT$ and $\sigma\in\ T$, one has:
\begin{equs}
\inprod{U}{\sigma} = \inprod{\sum_{\tau}u_{\tau}\tau}{\sigma}=\begin{cases}
u_{\sigma}\inprod{\sigma}{\sigma} & \text{if }\sigma\text{ is amongst the }\tau \\
0 & \text{otherwise}
\end{cases}
\end{equs}
which implies that the coefficient $u_{\sigma}$ can be recovered via our inner product as follows:
\begin{equs}
 u_{\sigma}=\frac{\inprod{U}{\sigma}}{\inprod{\sigma}{\sigma}},
\end{equs}
which in turn means we are able to represent any $U\in\CT$ as
\begin{equs}
U = \sum_\tau \frac{\inprod{U}{\tau}}{\inprod{\tau}{\tau}}\tau.
\end{equs}
\end{example}
In addition to the functional derivatives we introduced before, we introduce a couple of abstract derivatives $\{\CD_i\}_{i\in\{0,1\}}$ on $\CT$, by requiring
\begin{equs}
\CD_i\mbX_j=\delta(i,j)\mathbf{1},\quad \CD_i\mathbf{1} = 0,\quad \CD_{i}\CI_\alpha(\tau) = \CI_{\alpha+e_i}(\tau),
\end{equs}
and extending to all of $\CT$ via Leibniz rule. We also set for $p=(p_0,p_1)\in\mathbb{N}^{2}$, $\CD^{p}=\CD_0^{p_0}\CD_1^{p_1}$, which is well defined because the derivatives commute.

Now following the authors' lead in \cite{BB21a} we introduce a number of important bilinear operators on $T$. The first one is a derivation denoted by $\uparrow^{k}_{n}$ that adds $k\in\mbbN^2$ to the node decoration at $v\in\N_\tau$. The second one, is the grafting operator $\graft{\alpha}$ defined for $\sigma,\,\tau\in T$ and indexed by $\alpha\in\mbbN^2$ by:
\begin{equs} \label{grafting_product}
\sigma\graft{\alpha}\tau\coloneqq\sum_{v\in N_\tau}\sum_{\beta\in\mbbN^2}\binom{\mbn_v}{\beta}\sigma\graft{\alpha-\beta}^v(\uparrow^{-m}_{v}\tau)
\end{equs}
where $\mbn_v$ is the decoration at the node $v$ and $\sigma\graft{\alpha - \beta}^{v}\tau$ the decorated tree constructed by attaching $\sigma$ to $\tau$ at $v$ with the edge $\CI_{\alpha -\beta}$.  
For planted trees $\CI_\alpha(\sigma)$, our grafting operator leads to a natural product $\graft{}:\CI(T)\times T\mapsto T$ given by:
\begin{equs}
\CI_\alpha(\sigma)\graft{}\tau\coloneqq\sigma\graft{\alpha}\tau,
\end{equs}
which is then extended to products of the form $\prod_{i}^{m}\CI_{\alpha_i}(\sigma_i)$ by grafting each tree $\sigma_i$ on $\tau$ according to $a_i$ independently of the others. We also enlarge the scope of the previously defined derivation to $B\subseteq N_\tau$, by setting:
\begin{equs}
\uparrow^{k}_B\tau=\sum_{\sum_{v\in B}k_v=k}\prod_{v\in B}\tau
\end{equs}
and set $\uparrow^{k}\tau\coloneqq\uparrow^{k}_{N_\tau}$.
The last of our operators is the $\star$ product which we define for all $\sigma = \mbX^{k}\prod_{i}\CI_{a_i}(\sigma_i),\,\tau\in T$, by:
\begin{equs} \label{star_product}
\sigma\star\tau\coloneqq\uparrow^{k}\left(\prod_{i}\CI_{a_i}(\sigma_i)\graft{}\tau\right).
\end{equs}

With all these definitions, we are able to describe the renormalisation occurring when $ a $ is not constant.
For non-constant $a$ taking values in $[\lambda,\lambda^{-1}]$ for some $\lambda>0$,
we use an augmented regularity structure as in \cite{MH}, which in our situation is set up as follows.
Fix a parameter value $c_0\in[\lambda,\lambda^{-1}]$. We consider $ T_p $ the planar version of $ T $ and we set on $T_p$ the function $[\cdot]$ denoting the number of integration edges. Finally, we define the set
\begin{equs} \label{paremetrised_trees}
\hat T=\{\d^{i_1}\otimes\cdots\otimes\d^{i_{[\tau]}}\otimes\tau:\,\tau\in T_p ;\,i_1,\ldots,i_{[\tau]}\in\mathbb{N}\}.
\end{equs}
The vector space generated by $\hat T$ is denoted by $\hat\mcT$. each
Here the derivatives are purely formal and should be understood to act on the parameter.
More precisely, if $\hat\alpha\in\mathbb{N}$ and $\bar \alpha\in \mathbb{N}^2$, then for $\alpha=(\hat\alpha,\bar\alpha)$ we have the abstract integration operator $\CI_{\alpha}$ acting on $\hat T$ by
\begin{equ}\label{eq:CI parameter}
\CI_{\alpha}\big(\d^{i_1}\otimes\cdots\otimes\d^{i_{[\tau]}}\otimes\tau\big)=\d^{\hat\alpha}\otimes\d^{i_1}\otimes\cdots\otimes\d^{i_{[\tau]}}\otimes\CI_{\bar\alpha}\tau,
\end{equ}
which is further extended linearly to all of $\bar\CT$.
\begin{remark}
We concede here the possibility of confusion betwixt this operation $\CI_\alpha$ for $\alpha\in\mbbN\times \mathbb{N}^2$ on parametrised trees - i.e. elements of $\hat T$ - and $\CI_{\alpha}$ for $\alpha\in\mbbN^{2}$ on unparametrised trees - elements of $T$. In defining the former we also apply $\CI_{\bar{\alpha}}$ on unparametrised trees which is also incongruous to our construction before. Nonetheless, we persist with this abuse of notation, with the hope that the readers are able to distinguish between these operators from context.
\end{remark}
Similarly, we extend the abstract derivative operator to parametrised trees $\tau\in\hat{T}$ by setting
\begin{equs}
\CD\tau = \partial^{i_1}\otimes\cdots\otimes\partial^{i_{[\tau]}}\otimes\CD\tau
\end{equs}
which is furthermore extended to $\hat{\CT}$ by linearity.
From \eqref{eq:CI parameter} and the product rule
\begin{equs}
\big(\d^{i_1}\otimes\cdots\otimes\d^{i_{[\tau]}}\otimes\tau\big)
\big(&\d^{\bar i_1}\otimes\cdots\otimes\d^{\bar i_{[\hat\tau]}}\otimes\bar\tau\big)
= \\
&\d^{i_1}\otimes\cdots\otimes\d^{i_{[\tau]}}\otimes \d^{\bar i_1}\otimes\cdots\otimes\d^{\bar i_{[\bar\tau]}}\otimes\tau\bar\tau,
\end{equs}
where now $ \tau\bar\tau $ is the planar tree product that respects the order of the edges at the root. 
All elements of $\hat T$ can be built from repeated use of integrations and multiplications. It should not be difficult to see that much as for $\tau\in T$, one has the following decomposition for $\hat\tau\in\hat{T}$
\begin{equs}\label{eq:augdecomp}
\hat\tau = \mathbf{X}^{k} \Big( \prod_{i=1}^m \mathcal{I}_{\alpha_i}[\hat\tau_i]\Big)\Xi_\mathfrak{l},
\end{equs}
where $\hat\tau_i\in\bar{T}$, $\alpha_i=(\hat{\alpha}_i,\bar{\alpha_i})\in\mbbN\times\mathbb{N}^2$.  
In the sequel, we will use the notation given by \eqref{eq:augdecomp} and we will not work with complete planar trees. Indeed, we allow edges with no parameter decoration to commute between themselves.
This means that if all of the first decorations - $ \hat{\alpha} $ - are zero, the decorated tree is non-planar and corresponds to the definition set in the semi-linear setting.  If two edges carry the same decoration, then they commute. This is also a natural property as these edges are interpreted as derivatives that commute. With this quotient, one has
\begin{equs}
	\CI_{(0,k)}(\tau) \CI_{(0,\ell)}(\sigma) =
	\CI_{(0,\ell)}(\sigma) \CI_{(0,k)}(\tau),
\end{equs}
but one has
\begin{equs}
	\CI_{(0,k)}(\tau) \CI_{(1,\ell)}(\sigma) \neq
	\CI_{(1,\ell)}(\sigma) \CI_{(0,k)}(\tau). 
	\end{equs}
On this new set of decorated trees, we extend naturally the definition of the grafting product  $ \graft{\alpha} $ by grafting on the right most location on each node. One then obtains an extension for the product $ \star $. The definition of $ \uparrow^{k} $ is not modified. 
For the symmetry factor, we also consider a similar definition by looking at the decomposition of the form $ \tau = \mathbf{X}^{k} \Big( \prod_{j=1}^m \mathcal{I}_{\alpha_j}[\tau_j]^{\beta_j}\Big) \Xi_{\mfl}\;, $ where now there is an order in the product $ \prod_i $. Then, one has:
\begin{equs}
	\label{not_equal}
	 \CI_{\alpha_i}(\tau_i) \neq \CI_{\alpha_j}(\tau_j), \quad \text{for } i \neq j. 
	 \end{equs}
	or if   $ \CI_{\alpha_i}(\tau_i)   = \CI_{\alpha_j}(\tau_j) $, then they are separated by an edge for which they do not commute which corresponds to the following situation:
\begin{equs} \label{edges_not_commuting}
	\begin{aligned}
	\tau = \mathbf{X}^{k} \Big( \prod_{\ell=1}^i \mathcal{I}_{\alpha_\ell}[\tau_\ell]^{\beta_\ell} \Big) \sigma_1 \mathcal{I}_{\alpha_r}[\tau_r]^{\beta_r} \sigma_2 \mathcal{I}_{\alpha_j}[\tau_j]^{\beta_j} \sigma_3 \Xi_{\mfl}, 
	\\ \mathcal{I}_{\alpha_r}[\tau_r]\mathcal{I}_{\alpha_j}[\tau_j] \neq \mathcal{I}_{\alpha_j}[\tau_j]\mathcal{I}_{\alpha_r}[\tau_r],
	\end{aligned}
\end{equs}
where the $ \sigma_i $ could be potentially empty and we have assumed that $i \leq j$ without loss of generality.
 With this representation of $\tau$ we can set
\begin{align}
	S(\Xi_{\mfl}) = 1, \quad S(\tau)
	=
	k!
	\Big(
	\prod_{j=1}^{m}
	S(\tau_{j})^{\beta_{j}}
	\beta_{j}!
	\Big)\;.
\end{align}
One recovers the classical definition of the combinatorial factor in the non-planar setting.
In the sequel, when we write $\prod_{j=1}^m \mathcal{I}_{\alpha_j}[\tau_j]^{\beta_j}$ we suppose the previous two conditions satisfied that are \eqref{not_equal} and  \eqref{edges_not_commuting}.

\begin{remark}Note that we work with a single (but arbitrary) value of $c_0$, which is slightly simpler than in \cite{MH}: the reason is that the abstract solution of \eqref{eq:transformed} is a modelled distribution, which at a given point $z$ takes values in $\hat\mcT$ with the choice $c_0=a(u(z))$.
Since we will calculate the renormalisation pointwise, this smaller structure will suffice for our purposes.
\end{remark}

We employ a similar graphical programme for parametrised trees as we did for the unparametrised trees, except that derivatives in the parameter are depicted alongside the edge. So for example one will have:
\begin{equs}
\Xi_\xi\CI_{(k,0)}(\Xi_\xi) = \begin{tikzpicture}[baseline = 0cm, scale=0.4]
\node[xi] at (0,0) (a) {}; 
\node[xi] at (1,1) (b) {};
\node at (0.2,0.9) {\tiny $k$};
\draw[color=blue] (a) to (b);
\end{tikzpicture},\qquad \CI_{(m,1)}(\Xi_\xi)\CI_{(l,1)}(\Xi_\xi)= \begin{tikzpicture}[baseline=0cm,scale=0.4]
\node[xi] at (0.9,1) (a) {};
\node[xi] at (-1,1) (b) {};
\draw[kernels2] (a) to (-0.1,0);
\draw[kernels2] (b) to (0,0);
\node at (0.5,0) {\tiny $l$};
\node at (-0.6,-0.1) {\tiny $m$};
\end{tikzpicture},
\end{equs}
with the caveat that we will eschew writing the parameters whenever they are all zero. This shorthand works because we can signal the second component by the thickness of the edge. When we wish to make a more general claim we will put aside the edges, the entire multi-index. To avoid confusion we will always use Greek alphabets for this purpose.
We denote by $\hat{\SS}_{\<generic>}$ the 
extension of ${\SS}_{\<generic>}$ with $ c $ derivatives in the sense, that one gets from $ \tau \in \hat{\SS}_{\<generic>}$ all the terms 
$ \left(\otimes_{e\in E_\tau}\partial^{\beta(e)}\right) \otimes \tau $ where $ \beta(e) $ is bounded by a certain $ N $. The degree for such terms is the same as the degree for $ \tau $ without $c$ derivatives.- Morever, we extend naturally the set  $T^{+}$, and $T_-$ by including the elements with the parameter derivatives. 

The renormalisation of the augmented regularity structure first assigns to each symbol $\tau\in\mfT$ a \emph{function} $C_\eps(\tau)(\cdot)$ in $[\tau]$ variables (as opposed to just a constant in the semilinear case).
The renormalisation constant assigned to $\hat\tau=\d^{i_1}\otimes\cdots\otimes\d^{i_{[\tau]}}\otimes\tau$ is then given by
\begin{equs} \label{c_derivative}
\hat C_\eps(\hat\tau)=\d^{i_1}_{c_1}\cdots\d^{i_{[\tau]}}_{c_{[\tau]}}
C_\eps(\tau)(c_1,\ldots,c_{[\tau]})|_{c_1=\cdots c_{[\tau]}=c_0}.
\end{equs}
Let us also introduce separate notation for the diagonal of the renormalisation function
\begin{equ}
C_\eps^c(\tau)=C_\eps(\tau)(c,\ldots,c).
\end{equ}


\subsection{Elementary differentials}
In the expansion of the solution of \eqref{eq:transformed}, one needs to define coefficients that are an extension of elementary differentials found in B-series. For that, we have to introduce new derivatives which are the cornerstone of our construction.
 \begin{definition} \label{definition_derivative}
 	One sets for every $ \alpha, \beta \in \mathbb{N} \times \mathbb{N}^2 $ and $ m \in \mathbb{N}^2 $:
\begin{equs}[eq:derivatives] 
	&\partial_x u = \frac{v_x}{q}, \qquad \partial_t u = \frac{v_t}{q} \qquad \partial_{v_{\alpha}}\partial^m u  = \delta(\alpha,(0,m))  \frac{1}{q}, \\
	&\d_{v_{\alpha}} v_{\beta}   = \delta(\alpha,(0,0)) \,  \frac{a'(u)v_{\beta + (1,0)}}{q}  + \delta(\alpha,\beta),
\end{equs}
where $\delta$ is the Kronecker symbol between multi-indices and $ \partial^m  = \partial_t^{m_1} \partial_x^{m_2} $ with $ m = (m_1,m_2) $.  We extend these derivatives via the chain rule.
\end{definition}

\begin{remark}
	These derivatives are not commuting in general. One checks for example that:
	\begin{equs}
		\partial_{v_c} \partial_{v} u  = \partial_{v_c} \left(\frac{1}{q} \right) = - \frac{\partial_{v_c} q}{q^{2}} =  \frac{a'(u)} {q^{2}}, \quad \partial_{v} \partial_{v_c} u = 0.
	\end{equs}
\end{remark}
The family of derivative $\{\partial^{m}\}_{m\in \mathbb{N} \times \mbbN^{2}}$ satisfies the chain rule:
\begin{equs}
	\partial^{k}F\coloneqq\sum_{\alpha\in \mathbb{N} \times \mathbb{N}^{2}}\partial^kv_\alpha \partial_{v_\alpha}F.
\end{equs}

\begin{definition}\label{def:upsilonhatF}
Given a non-linearity that admits the form $F =  F_{1} 1 +   F_{\xi} \xi  $ where $F$ depends on $ u, \partial_x u, v_c, v_{cc}, v_{xc} $, we define elementary differential operators $\Upsilon_{ F}(\tau)(u)$  such that 
\begin{equation} \label{eq:upsilonhatF}
\Upsilon_{ F} \left[\mathbf{X}^{k} \Big( \prod_{i=1}^m \mathcal{I}_{\alpha_i}[\tau_i]\Big) \Xi_{\mfl}  \right] = \partial^{k}\partial_{v_{\alpha_1}}\hdots\partial_{v_{\alpha_m}} F_{\mfl}\prod_{i=1}^{m}  \Upsilon_{ F}[\tau_i],
\end{equation}
where $ \mfl \in \lbrace 1,\xi  \rbrace $.
\end{definition}
\begin{example}
Accepting the convention that empty products are just one, we get that $\Upsilon_F[\Xi_\mfl]=F_\mfl$.
\end{example}
\begin{remark}
One may contrast this with the treatment of standard generalised KPZ equation in \cite{BCCH}, where one defines the elementary differential operators by
\begin{equs}\label{eq:upsilonF}
\Upsilon_F \left(\mathbf{X}^{k} \Big( \prod_{i=1}^m \mathcal{I}_{\alpha_i}[\tau_i]\Big) \Xi_{\mfl}  \right) = \prod_{i=1}^{m}  \Upsilon_{F}(\tau_i)  \partial^{k} \prod_{i=1}^m  \partial_{\partial_{\alpha_i}u}  F_{\mfl}.
\end{equs}
Although similarly defined, one needs to be careful that in Definition~\ref{def:upsilonhatF}, one is in the non-commutative setting which means that the order in the products $ \prod_{i=1}^m \mathcal{I}_{\alpha_i}[\tau_i]$ and  $ \prod_{i=1}^m  \partial_{v_{\alpha_i}} $
matters.
\end{remark}
We may now present our version of the morphism property in \cite[Cor. 4.15]{BCCH},
\begin{equs}\label{eq:morphism}
\Upsilon_{ F} \left( \tau_1 \graft{\alpha} \tau_2 \right) = \Upsilon_{ F} \left( \tau_1 \right) \partial_{v_{\alpha}} \Upsilon_{F} \left( \tau_2 \right),
\end{equs}
which is a consequence of the following proposition: 
\begin{proposition} \label{morphism star}
For $\tau=\mbX^{k}\prod_{i=1}^{n}\CI_{\alpha_i}(\tau_i)\in T$, $\sigma\in T$, one has
\begin{equs}
\Upsilon_{ F}\left(\left[\mbX^{k}\prod_{i=1}^{n}\CI_{a_i}(\tau_i)\right]\star\sigma\right)=\partial^{k}\partial_{v_{\alpha_1}}\cdots\,\partial_{v_{\alpha_n}}\Upsilon_{F}[\sigma]\prod_{i=1}^{n}\Upsilon_{F}[\tau_i].
\end{equs}
\end{proposition}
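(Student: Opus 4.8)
The plan is to prove the statement by induction on the size (number of edges) of $\sigma$, treating the product $\mbX^{k}\prod_{i=1}^{n}\CI_{\alpha_i}(\tau_i)$ as a whole. First I would unpack the definition of the $\star$ product: by \eqref{star_product} one has $\tau\star\sigma=\uparrow^{k}\big(\prod_{i}\CI_{a_i}(\tau_i)\graft{}\sigma\big)$, so the claim splits into two independent sub-claims, one handling the raising operators $\uparrow^{k}$ (which must produce the $\partial^{k}$ factor) and one handling the iterated grafting $\prod_i\CI_{a_i}(\tau_i)\graft{}\sigma$ (which must produce the $\partial_{v_{\alpha_1}}\cdots\partial_{v_{\alpha_n}}$ factors together with the $\prod_i\Upsilon_F[\tau_i]$). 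The grafting part is itself most naturally handled by first establishing the single-graft morphism identity \eqref{eq:morphism}, namely $\Upsilon_F(\tau_1\graft{\alpha}\tau_2)=\Upsilon_F(\tau_1)\,\partial_{v_\alpha}\Upsilon_F(\tau_2)$, and then iterating it; so the real work is \eqref{eq:morphism}.

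For \eqref{eq:morphism} I would argue by induction on $\sigma=\tau_2$ using its canonical decomposition \eqref{eq:augdecomp}, $\sigma = \mbX^{p}\big(\prod_{j}\CI_{\beta_j}(\sigma_j)\big)\Xi_{\mfl}$. Expanding $\tau_1\graft{\alpha}\sigma$ via \eqref{grafting_product}: grafting $\tau_1$ with edge $\CI_\alpha$ onto $\sigma$ means summing over all nodes $v$ of $\sigma$, with the binomial/$\uparrow^{-m}$ bookkeeping distributing node decorations. On the $\Upsilon_F$ side, Definition~\ref{def:upsilonhatF} applied to such a grafted tree produces, at each node where $\tau_1$ is attached, one extra factor $\partial_{v_\alpha}$ acting on the appropriate $F_{\mfl}$ (when grafting at the root) or recursively inside one of the $\Upsilon_F[\sigma_j]$ (when grafting deeper); summing over all grafting locations reassembles exactly $\partial_{v_\alpha}$ applied to the whole $\Upsilon_F[\sigma]$ by the chain rule $\partial_{v_\alpha}=\sum\,(\cdots)$ that these derivatives satisfy (Definition~\ref{definition_derivative}). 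The binomial coefficients and the $\uparrow^{-m}$ operators must be checked to match precisely the Leibniz-type combinatorics of $\partial^{k}$ acting through the recursive structure; this is where the non-commutativity of the $\partial_{v_\alpha}$ matters and one must be careful that grafting "on the rightmost location of each node" corresponds to placing $\partial_{v_\alpha}$ in the correct order in the product, consistent with the convention fixed after \eqref{eq:upsilonhatF}.

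Having \eqref{eq:morphism}, the proposition follows by iterating: $\big[\mbX^k\prod_{i=1}^n\CI_{\alpha_i}(\tau_i)\big]\star\sigma = \uparrow^k\big(\CI_{\alpha_1}(\tau_1)\graft{}(\cdots(\CI_{\alpha_n}(\tau_n)\graft{}\sigma)\cdots)\big)$, and applying $\Upsilon_F$ together with \eqref{eq:morphism} $n$ times peels off the factors $\Upsilon_F(\tau_i)=\Upsilon_F[\tau_i]$ and the derivatives $\partial_{v_{\alpha_i}}$ in the right order, landing on $\partial_{v_{\alpha_1}}\cdots\partial_{v_{\alpha_n}}\Upsilon_F[\sigma]\prod_i\Upsilon_F[\tau_i]$. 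It then remains to handle the outermost $\uparrow^k$: one shows $\Upsilon_F(\uparrow^k\omega)=\partial^k\Upsilon_F(\omega)$ for any $\omega$, again by induction on $\omega$ using that $\uparrow^k$ distributes node decorations by $\uparrow^k_B\tau=\sum_{\sum_{v\in B}k_v=k}\prod_{v\in B}(\cdots)$ and that $\Upsilon_F$ turns each node decoration into a $\partial^{(\cdot)}$ via \eqref{eq:upsilonhatF}, with the sum over distributions matching the Leibniz rule for $\partial^k$. Combining the two gives the stated formula. I expect the main obstacle to be the bookkeeping in the inductive step of \eqref{eq:morphism}: reconciling the binomial coefficients and the decoration-lowering operator $\uparrow^{-m}$ in the grafting formula \eqref{grafting_product} with the precise (non-commutative, order-sensitive) way the chain-rule derivatives $\partial_{v_\alpha}$ of Definition~\ref{definition_derivative} act recursively through $\Upsilon_F$, in particular verifying that no spurious cross terms appear and that the ordering conventions are respected.
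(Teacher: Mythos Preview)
Your plan has a genuine gap in the iteration step. The identity you write,
\[
\Big[\mbX^k\prod_{i=1}^n\CI_{\alpha_i}(\tau_i)\Big]\star\sigma
=\uparrow^k\Big(\CI_{\alpha_1}(\tau_1)\graft{}\big(\cdots(\CI_{\alpha_n}(\tau_n)\graft{}\sigma)\cdots\big)\Big),
\]
is false. By definition (see the sentence after \eqref{grafting_product}) the multi-graft $\prod_i\CI_{\alpha_i}(\tau_i)\graft{}\sigma$ attaches each $\tau_i$ \emph{independently} to nodes of $\sigma$ only, whereas the iterated single graft on your right-hand side also attaches $\tau_1$ to nodes of the previously grafted $\tau_2,\ldots,\tau_n$. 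The two already differ for $n=2$. Correspondingly, iterating \eqref{eq:morphism} twice does \emph{not} produce $\Upsilon_F[\tau_1]\Upsilon_F[\tau_2]\,\partial_{v_{\alpha_1}}\partial_{v_{\alpha_2}}\Upsilon_F[\sigma]$: by Leibniz one picks up the extra term $\Upsilon_F[\tau_1]\big(\partial_{v_{\alpha_1}}\Upsilon_F[\tau_2]\big)\partial_{v_{\alpha_2}}\Upsilon_F[\sigma]$. These two discrepancies (on the tree side and on the derivative side) do in fact cancel --- this is the Guin--Oudom/symmetric-braces mechanism --- but that cancellation is itself the content of the proposition and needs an argument; it does not follow from the equality of trees you assert.

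The paper does not go through \eqref{eq:morphism} first; it proves the full statement directly, following \cite[Prop.~2]{BB21a}, and then \eqref{eq:morphism} is the $k=0$, $n=1$ special case. The only new ingredient relative to \cite{BB21a} is the commutation identity
\[
\partial_{v_{\alpha}}\partial^{m}
=\sum_{\ell\le m}\binom{m}{\ell}\,\partial^{m-\ell}\,\partial_{v_{\alpha-(0,\ell)}},
\]
which is exactly what matches the binomial coefficients and the decoration-lowering in \eqref{grafting_product} with the derivative calculus of Definition~\ref{definition_derivative}. You allude to this matching as ``bookkeeping'', but it is the crux of the argument and should be stated and proved as a lemma; once you have it, the inductive proof on $\sigma$ goes through for the full $\star$-product in one pass, without any attempt to reduce to iterated single grafts.
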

\begin{proof}
The proof of this proposition is exactly as in \cite[Proposition 2]{BB21a} but \cite[Equation 2.5]{BB21a} is replaced by
\begin{equs}
\sum_{\substack{l=(l_1,\hdots,l_n)\in(\mbbN^{2})^{n} \\ l_1+\hdots+l_n\le m}}\binom{m}{l_1,\hdots,l_n}\partial^{m-l_i}\partial_{v_{\alpha_i-(0,l_j)}}=\partial_{v_{\alpha_i}}\partial^m
\end{equs}
for $m\in\mbbN^{2}$, and
\begin{equs}
\binom{m}{l_1,\hdots,l_n} \coloneqq \frac{m!}{l_1!\cdots l_n!}
\end{equs}
The proof whereof is elementary.
\end{proof}
\subsection{Coherent Expansions}
In this section, we will formulate our counterpart to the notion of \textit{coherence} first given in \cite[Def. 3.20]{BCCH}. Naturally for the quasilinear case to be amenable under it, our definition needs to be more general. 

To formulate this definition, we remind the readers of the space of modelled distribution $D^{\gamma}_{P}$ (and $D^{\gamma,\eta}_{P}$), with $P=\{(0,x):x\in\mbbT\}$ as defined in \cite[Sec. 6]{reg} and the lift of a smooth function, as it was done in \cite[Eq. 4.11]{reg}. Then we may lift \eqref{eq:transformed} by setting
\begin{equs}\label{eq:nonlindef}
\hat{\mathbf{F}} = \left(1-A'(U)V_c\right)\mathbf{F}&+\left(A(U)\left(A'(U)\right)^{2}\right)V_{cc}(\CD U)^{2} \\ &+(AA'')(U)V_c(\CD U)^{2}+ 2(AA')(U)(\CD U)V_{cx}
\\&+A'(U)(\CD U)V_x,
\end{equs}
where $A(U),\,A'(U)\text{, and }A''(U)$ are the lift of $a(u),\,a'(u),\,a''(u)$ and we define $\mathbf{F}(U) = \mathbf{F}_1(U)(\CD U)^2 + \mathbf{F}_\xi(U)\Xi \coloneqq (F(U) - A'(U)) (\CD U)^{2} + G(U) \Xi $. In the interest of brevity for the upcoming arguments, we set $\mathbf{v}$ as the list of the arguments that $\hat{F}$ takes, namely $u, \partial_x u, v_c, v_{cc}, v_{cx}$, and $v_{x}$, and $\mathbf{V}$ correspondingly is the list of lifts that $\FF$ takes as argument. Moreover, to define $V_\alpha$, we take inspiration from \cite[Eq. 4.1]{MH} in setting
\begin{equ} \label{eq:frI}
V_\alpha(z)=\sum_{|\ell|\le N}\frac{(\tilde A(U)(z))^{ \ell}}{\ell!}
\left[ \mathcal{K}_{\alpha+(\ell,0)} \hat{\mathbf{F}}\right](z),
\end{equ}
where $\tilde A(U) = A(U) - \inprod{A(U)}{\one}\one$ and $\mcK_{\beta}$ is the abstract convolution built from $\CI_\beta$ and $\d^\beta K(c_0,\cdot)$, with the choice $c_0=a(u(z))$ (so in particular $\mathcal{K}_{\alpha+(\ell,0)} \hat{\mathbf{F}}=\mathcal{I}_{\alpha+(\ell,0)} \hat{\mathbf{F}}+v_{\alpha+(\ell,0)}\one+...$, where the ellipses contain polynomials that are not relevant to this article). We will also abide by the following convention
\begin{equs}\label{eq:defU}
U=V_{0,0}(z)=\sum_{\ell\le N}\frac{(\tilde A(U)(z))^{ \ell}}{\ell!}
( \mathcal{K}_{(\ell,0)} \hat{\mathbf{F}}(U))(z).
\end{equs}
In the usual regularity structures paradigm, it is the reconstruction of this, $\CR U$, which solves the (renormalised) equation. The same is true for us here, but the $U$ that \eqref{eq:defU} determines is different from the linear combination of monomials and planted trees in \cite{BCCH}. From \eqref{eq:defU}, it is obvious that $U$ admits the following decomposition:
\begin{equs}\label{eq:defU2}
U = u\one  + u_{\<PlantedNoise>}\<PlantedNoise> + ...,
\end{equs}
where the ellipses denote a sum in trees of the form $\tau = \prod_i\CI^{\zeta_i}_{\alpha_i}(\tau_i)$, with $\tau \neq \CI(\Xi)$. Indeed all of $V_\alpha$ admit a decomposition in products of planted trees. This leads to the following definition for $\left\{V_\alpha\right\}_\alpha$ of elementary differential operators:
\begin{equs}
    \Upsilon_{V_\alpha}\left[\prod_{i=1}^n\CI^{\zeta_i}_{\alpha_i}(\tau_i)\right] = \left(\prod_{i=1}^n\Upsilon_{\hat{F}}\left[\tau_i\right]\right)\partial^{\zeta_n}_{v_{\alpha_n}}\hdots\partial^{\zeta_1}_{v_{\alpha_1}}v_{\alpha}.
\end{equs}
\begin{example} Below, we present some examples of computation:
\begin{equs}
    \Upsilon_{U}\left[\<It0>\right] &= \Upsilon_{\hat{F}}\left[\tau\right]\partial_{v_{(0,0)}}u = \frac{\Upsilon_{\hat{F}}\left[\tau\right]}{q}
\end{equs}
and
\begin{equs}
    \Upsilon_{V_c}\left[\<0I1t11I1t2>\right] &= \Upsilon_{\hat{F}}\left[\tau_1\right]\Upsilon_{\hat{F}}\left[\tau_2\right]\partial_{v_c}\partial_vv_c\\ &= \Upsilon_{\hat{F}}\left[\tau_1\right]\Upsilon_{\hat{F}}\left[\tau_2\right]\partial_{v_c}\left[\frac{a'(u)v_{cc}}{q}\right] \\ &= \Upsilon_{\hat{F}}\left[\tau_1\right]\Upsilon_{\hat{F}}\left[\tau_2\right]\left(\frac{(a'(u))^2v_{cc}}{q^2}\right).
\end{equs}
\end{example}
Manifestly, these new differentials too obey a similar morphism property as in \eqref{eq:morphism}:
\begin{equs}\label{eq:morphism2}
    \Upsilon_{V_{\alpha}}\left[\tau\graft{\beta}\sigma\right] = \Upsilon_{\hat{F}}\left[\tau\right]\partial_{v_\beta}\Upsilon_{V_\alpha}\left[\sigma\right],
\end{equs}
where $\sigma$ necessarily takes the form $\prod_{i}\CI_{\alpha_i}\left(\tau_i\right)$.
\begin{example}
We check:
\begin{equs}
\Upsilon_{U}\left[\tau\graft{}\<PlantedNoise>\right] &= \Upsilon_{U}\left[\begin{tikzpicture}[scale = 0.2,baseline=0cm]
    \node[xi] at (-1,1) (b) {};
    \node[var] at (1,1) (c) {{\tiny $\tau$}};
    \draw[] (0,-0.75) to (b);
    \draw[] (0,-0.75) to (c);
\end{tikzpicture}
\right] + \Upsilon_{U}\left[\begin{tikzpicture}[scale = 0.2,baseline=0cm]
    \node[xi] at (0,0) (a) {};
    \node[var] at (0,2) (b) {{\tiny $\tau$}};
    \draw[] (a) to (b);
    \draw[] (a) to (0,-1.5);
\end{tikzpicture}\right] \\
&= \hat{g}\Upsilon_{\hat F}\left[\tau\right]\partial_{v}^{2}u + \Upsilon_{\hat F}\left[\begin{tikzpicture}[scale = 0.2,baseline=0cm]
    \node[xi] at (1,-0.5) (a) {};
    \node[var] at (0,1.5) (b) {{\tiny $\tau$}};
    \draw[] (a) to (b);
    \end{tikzpicture}\right]\partial_vu \\
    &= \frac{\hat{g}\Upsilon_{\hat F}[\tau]p_c}{q^2} + \frac{\Upsilon_{\hat F}[\tau]}{q^2}\left(g'q - gqp_c\right) \\ &= \frac{\Upsilon_{\hat{F}}[\tau]g'}{q}= \Upsilon_{\hat F}\left[\tau\right]\partial_vg = \Upsilon_{\hat F}\left[\tau\right]\partial_v\Upsilon_{U}\left[\<PlantedNoise>\right].
\end{equs}
\end{example}

\begin{remark}
The proof should mimic what happens for two derivatives. One gets the following recursion on the derivatives:
\begin{equs}
\partial_{v} \partial_v u = a'(u) v_c \partial_{v} \partial_v u + \left( a''(u) v_c + (a'(u))^2 v_{cc} \right) (\partial_{v} u)^{2}.
 \end{equs}
Knowing the value for $ \partial_v u $, one gets from the previous identity the value for $ \partial_{u} \partial_v u $ and this matches the definition of the derivative and the use of the chain and Leibniz rules. Then, one needs to check this in a more general setting when one wants some coherent properties as in \cite{BCCH}.
\end{remark}

We denote, henceforth, by $ T^{\textsf{sat}}_{v_\alpha} $ the set of trees that appear in the decomposition of $V_\alpha$ such that they are generated via the saturated rules. Let us highlight here that it is our restriction to saturated trees that allows us to discount monomials in our argument. This fact is central to the proof of the theorem that follows and also implies that in the expansions of $V_{\alpha}$, we must have that $T^{\,\textsf{sat}}_{v_\alpha}\cap\bar{T}=\{\one\}$ unlike \cite{BCCH}, where this would comprise all monomials. Nonetheless, $v_\alpha$ (the coefficient of $\one$ in $V_\alpha$) play the same role that $u^U_{\alpha}$  plays in \cite{BCCH}.
\begin{theorem}\label{prop:coherence}
    One has for all $\tau\in T^{\textsf{sat}}_{v}$ that
    \begin{equs}\label{eq:Vacoeff}
        \inprod{V_\alpha}{\tau} = \frac{\Upsilon_{V_\alpha}(\tau)}{S(\tau)},
    \end{equs}
 and
 \begin{equs}\label{eq:Fcoeff}
 \inprod{\hat{\mathbf{F}}}{\tau} = \frac{\Upsilon_{\hat{F}}(\tau)}{S(\tau)}.
 \end{equs}
\end{theorem}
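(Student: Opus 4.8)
The plan is to prove \eqref{eq:Vacoeff} and \eqref{eq:Fcoeff} together, by induction on the number of edges $|E_\tau|$ of $\tau$. (Equivalently, one checks that the formal series $\sum_\tau \frac{\Upsilon_{V_\alpha}(\tau)}{S(\tau)}\tau$ and $\sum_\tau\frac{\Upsilon_{\hat F}(\tau)}{S(\tau)}\tau$ solve the coupled fixed‑point system \eqref{eq:nonlindef}, \eqref{eq:defU}, \eqref{eq:frI} that defines $U$, $\hat{\mathbf{F}}$ and the $V_\alpha$, and then invokes uniqueness of that fixed point; but since this uniqueness only holds level by level in $|E_\tau|$, it is cleaner to run the induction by hand.) The two identities must be handled simultaneously because the system genuinely couples them, and the induction is well founded only once one exploits the following structure. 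In \eqref{eq:nonlindef} every summand carries a factor ($\mathbf{F}$, or a power of $\CD U$) with vanishing $\one$‑component, so the coefficient of an $N$‑edge tree in $\hat{\mathbf{F}}$ depends only on coefficients of $V_\beta$, $\CD U$ and $U$ on trees with strictly fewer than $N$ edges. On the other hand, in \eqref{eq:frI} the only way an $N$‑edge tree $\tau$ can feed back on $\inprod{V_\alpha}{\tau}$ at its own level is when $\tau=\CI(\sigma)$ is a single planted tree with root edge of index $(0,0)$, in which case the $\ell=1$ term of \eqref{eq:frI} forces $\inprod{U}{\CI(\sigma)}=\inprod{\hat{\mathbf{F}}}{\sigma}+a'(u)v_c\inprod{U}{\CI(\sigma)}$; resolving this linear relation produces the factor $q^{-1}=(1-a'(u)v_c)^{-1}$, which is precisely the factor carried by $\partial_{v_{(0,0)}}$ in Definition~\ref{definition_derivative}. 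So at each level one treats the $\hat{\mathbf{F}}$‑coefficients first, then the $V_\alpha$‑coefficients.

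The base case is immediate: $\inprod{V_\alpha}{\one}=v_\alpha=\Upsilon_{V_\alpha}(\one)$ (empty product) and $\inprod{\hat{\mathbf{F}}}{\Xi_\mfl}=\hat F_\mfl=\Upsilon_{\hat F}(\Xi_\mfl)$, both with symmetry factor $1$, using \eqref{eq:nonlindef}, \eqref{eq:frI} and the convention recorded after Definition~\ref{def:upsilonhatF}. Throughout, the restriction to saturated trees lets me discard all monomials and all polynomial remainders hidden in the ellipses of $\mcK_\beta\hat{\mathbf{F}}=\CI_\beta\hat{\mathbf{F}}+v_\beta\one+\cdots$ and in $\uparrow^k$, since a tree of $T^{\textsf{sat}}_{v_\alpha}$ other than $\one$ is never a monomial.

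For the inductive step of \eqref{eq:Fcoeff}, fix $\tau=\mbX^k\prod_{i=1}^m\CI_{\alpha_i}(\tau_i)\Xi_\mfl$. I would expand the smooth functions $A(U),A'(U),A''(U),F(U),G(U)$ in \eqref{eq:nonlindef} as their Taylor lifts in $U-u\one$ (sums of products of planted trees, each coherent by the inductive hypothesis) and multiply out the factors $V_c,V_{cc},V_{cx},V_x,\CD U,\mathbf{F}$; the coefficient of $\tau$ then becomes a sum over the ways of distributing the planted subtrees $\CI_{\alpha_i}(\tau_i)$ of $\tau$ among these factors. By Proposition~\ref{morphism star} (equivalently \eqref{eq:morphism}), attaching a planted tree $\CI_\alpha(\tau_i)$ to a factor amounts to applying $\partial_{v_\alpha}$ to that factor's coefficient, and, using the recursive definition of $S$ from \eqref{eq:augdecomp}, the whole sum reorganises into $\frac{1}{S(\tau)}\partial^k\partial_{v_{\alpha_1}}\cdots\partial_{v_{\alpha_m}}\hat F_\mfl\prod_{i=1}^m\Upsilon_{\hat F}(\tau_i)$, which is $\Upsilon_{\hat F}(\tau)/S(\tau)$ by \eqref{eq:upsilonhatF}. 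The symmetry‑factor bookkeeping is as in \cite[Cor.~4.15]{BCCH} and \cite{BB21a}; the genuinely new point is to keep track of the order of the differentiations $\partial_{v_{\alpha_i}}$, which is dictated by the planar tree product of \eqref{eq:augdecomp} and the convention that grafting acts at the right‑most slot of each node. The step for \eqref{eq:Vacoeff} is parallel, using \eqref{eq:frI} in place of \eqref{eq:nonlindef}: for $\tau$ a single planted tree one performs the resummation of the first paragraph, and for a genuine product one distributes the planted subtrees of $\tau$ between the $\tilde A(U)^\ell$‑factors and the single $\CI_{\alpha+(\ell,0)}$‑factor of the $\ell$‑th summand, reorganising by \eqref{eq:morphism2} together with $\partial_{v_\alpha}v_\beta=\delta(\alpha,(0,0))\frac{a'(u)v_{\beta+(1,0)}}{q}+\delta(\alpha,\beta)$; the index shifts $\beta\mapsto\beta+(\ell,0)$ built into \eqref{eq:frI} are reproduced one unit at a time by the chain‑rule extension of $\partial_v$, matching the definition of $\Upsilon_{V_\alpha}$ stated just before Theorem~\ref{prop:coherence}.

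The hard part will be the interaction of the two features absent from the semilinear theory of \cite{BCCH}. First, the derivatives $\partial_{v_\alpha}$ do not commute (see the remark after Definition~\ref{definition_derivative}), so one must work genuinely on partially planar trees and verify that the order of the differentiations generated by unfolding the product structures of $\hat{\mathbf{F}}$ and of the $V_\alpha$ agrees with the planar order of the corresponding edges of $\tau$ — this is exactly what the non‑commutative form of the morphism property \eqref{eq:morphism}, \eqref{eq:morphism2} is designed to control. Second, $U$ enters \eqref{eq:frI} self‑referentially through $\tilde A(U)$, so one must resum (finitely, at each level) the geometric series this creates and verify that the resulting $q^{-1}$ denominators and the $c$‑derivative shifts $v_\beta\mapsto v_{\beta+(1,0)}$ are precisely those encoded in Definition~\ref{definition_derivative}. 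Once these two points are in place, the remaining identities between symmetry factors follow as in the semilinear case.
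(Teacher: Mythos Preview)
Your overall inductive scheme — prove \eqref{eq:Fcoeff} first at each level from lower-level instances of \eqref{eq:Vacoeff}, then close \eqref{eq:Vacoeff} after resolving the self-reference coming from $\tilde A(U)$ — is exactly the scheme the paper follows. Your treatment of the $\hat{\mathbf{F}}$-step via distributing the planted subtrees of $\tau$ among the factors of \eqref{eq:nonlindef} and invoking Fa\`a di Bruno is also precisely what the paper does (their equations labelled \eqref{eq:2.17p1}--\eqref{eq:2.17p2}).

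There is, however, a genuine error in your description of the self-feedback. You claim it occurs only when $\tau=\CI(\sigma)$ is a single planted tree with root edge of index $(0,0)$. In fact, for $U=V_{(0,0)}$, the $\ell=1$, $k=1$ contribution of \eqref{eq:frI} produces the term $a'(u)\,v_c\,U_\tau$ for \emph{every} tree $\tau$ appearing in $U$, not only for single planted ones: one simply takes the full $\tau$ from the factor $(U-u\one)$ inside $\tilde A(U)$ and the scalar $v_{(1,0)}=v_c$ from the $\mathcal{K}$-part. The paper makes this explicit, rearranging $(U_\sigma-a'(u)v_cU_\sigma)=\ldots$ to $U_\sigma=\tfrac{1}{q}[\ldots]$ for arbitrary $\sigma$; see the passage leading to their decomposition \eqref{eq:DecompUpsilonU}. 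Your displayed relation $\inprod{U}{\CI(\sigma)}=\inprod{\hat{\mathbf{F}}}{\sigma}+a'(u)v_c\inprod{U}{\CI(\sigma)}$ is thus not the full story even in the single-planted case (there are further lower-level contributions), and the restriction to single planted trees is wrong. This does not invalidate your strategy — the cure (divide through by $q$) is the one you propose — but the scope of its application must be corrected.

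For the $V_\alpha$-step itself, the paper takes a route different from your ``distribute the planted subtrees between $\tilde A(U)^\ell$ and $\CI_{\alpha+(\ell,0)}$'' argument. Having established the decomposition \eqref{eq:DecompUpsilonU} for $\Upsilon_U(\sigma)/S(\sigma)$, the paper differentiates it with respect to $\partial_{v}$ (and then $\partial_{v_c}$, etc.), multiplies by $\Upsilon_{\hat F}[\tau]$, and performs a rather lengthy term-by-term matching against the decompositions of the individual summands $\tau_0,\tau_1,\ldots,\tau_n$ of $\tau\curvearrowright\sigma$ (their display \eqref{eq:base2}), tracking the combinatorial factors $S(\cdot)$ at each step. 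Your proposed direct distribution argument is more in the spirit of the semilinear proof in \cite{BCCH,BB21a} and should go through once the self-feedback is handled correctly for all $\tau$, but be aware that the planar/non-commutative bookkeeping you flag as ``the hard part'' is precisely what the paper's long explicit computation is there to control; you will need to carry it out with comparable care.
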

\begin{proof}
To pin down the pattern, we begin with the lowest degree symbol in $\hat{\mathbf{F}}$, as follows:
\begin{equs}\label{eq:nonlin}
\hat{\mathbf{F}}&= (\one - A'(U)V_c)\mathbf{F}_{\xi}\,\<generic> + ... = (\one - (a'(u)\one + ...)(v_c\one + ...))\mathbf{F}_{\xi}\Xi \\
&= (\one - a'(u)v_c\one - ...)\left(g(u)\one + g'\tilde{U} + ...\right)\<generic> = qg\,\<generic> + ...,
\end{equs}
where the ellipses contain trees of a higher degree than \<generic>. Compare this to $\Upsilon_{\hat{F}}\left[\<generic>\right] = \hat{g} = gq$. This can now be used to compute the coefficients in the expansion of $U$, indeed from \eqref{eq:frI}, we have that:
\begin{equs}
U = V_{(0,0)} &= \sum_{|\ell|<N}\frac{(\tilde{A}(U))^{\ell}}{\ell!}\left[\CK_{(\ell,0)}\hat{\mathbf{F}}(U)\right](z) \\
&= \CK\hat{\mathbf{F}} + \tilde{A}(U)\CK_{(1,0)}\hat{\mathbf{F}} + ... \\
&= u\one + \CI(\mathbf{\hat F}) + A'(U)\tilde{U}(\CI_{(1,0)}(\mathbf{\hat F}) + v_{(1,0)}) + ... \\
&= u\one + \CI(qg\Xi) + v_{(1,0)}\left(a'(u)(U_{\<PlantedNoise>}\<PlantedNoise>+...)\right) + ... \\
&= u\one + qg\<PlantedNoise> + v_ca'(u)U_{\<PlantedNoise>}\<PlantedNoise> + ...,
\end{equs}
Comparing the coefficients gives us:
\begin{equs}\label{eq:ex1}
U_{\<PlantedNoise>}\<PlantedNoise> = qg\<PlantedNoise> + v_ca'(u)U_{\<PlantedNoise>}\<PlantedNoise> \\
\Leftrightarrow U_{\<PlantedNoise>}= g.
\end{equs}
This allows one to compute:
 \begin{equs}
 A(U) = \sum_{k}\frac{(a^{(k)})}{k!}\tilde{U}^{k} =a(u)\one + (a'g)(u)\<PlantedNoise> + ...,
 \end{equs}
 where $\tilde{U}^{k} = \overbrace{\tilde{U}\cdot\hdots\cdot\tilde{U}}^{k\text{ products in total}}$.
 The same line of reasoning yields the following equations:
 \begin{alignat}{2}\label{eq:expansions}
 &V_c &&= v_c\one + (a'g)(u)v_{cc}\<PlantedNoise> + (qg)(u)\!\!\<PlantedNoiseC>  + ... \nonumber \\
 &V_{cx} &&= v_{cx}\one + (a'g)(u)v_{ccx}\<PlantedNoise> + qg(u)\!\!\<PlantedNoiseCX> + ...\nonumber \\
 &V_{cc} &&= v_{cc}\one + (a'g)(u)v_{ccc}\<PlantedNoise> + qg(u)\!\!\<PlantedNoiseCC> + ... \nonumber \\
 &V_x &&= v_x\one + (a'g)(u)v_{cx}\<PlantedNoise> + qg(u)\<PlantedNoiseX> + ...
 \end{alignat}
One also checks that $\Upsilon_{V_\alpha}\left[\<PlantedNoise>\right]$ gives us exactly the coefficients as in \eqref{eq:ex1} and \eqref{eq:expansions}. This confirms the result for $\<PlantedNoise>$, which one can then use to get the coefficients of bigger trees in $\FF$ and from there the coefficients of trees in $\{V_\alpha\}_{\alpha}$. This interdependence suggests that an inductive proof of this proposition is possible which we now show to be the case. We show first that much like coefficients of smaller trees in $\FF$ can be used to compute coefficients of bigger trees in $V_\alpha$, \eqref{eq:Vacoeff} for smaller trees can be used to get \eqref{eq:Fcoeff} for larger ones. As the upcoming argument is not peculiar to our specific choice of $\FF$ (beyond the fact that it is possible to decompose the corresponding non-linearity $\hat{F}$ into $\hat{F}_{\one} + \hat{F}_{\Xi}\xi$), we pose it in terms of a general $\hat{\mathbf{F}}$ which takes as input $\mathbf{v}=\{\mathbf{v}_1,\hdots,\mathbf{v}_n\}$ and denote by $\mathbf{V}$ the vector of the corresponding lifts. We also introduce the following notation: $\CP_\tau$, which is the real-valued linear map that kills all trees that cannot be identified with $\tau$ (in the sense of tree products), and is $1$ on products of trees that can; $\Pi^{(n)}_{\tau}$, which is the set of all non-trivial decomposition of $\tau$ into $n$ trees, by which we mean that it comprises of all $n$-tuples $(\tau_1,\hdots,\tau_n)$, such that $\prod_{i=1}^{n}\tau_i = \tau$; and for a tree $\tau=\prod_{i=1}^{n}\CI_{\alpha_i}(\tau_i)$, the following map  $|\prod_{i=1}^n\CI(\tau_i)|_0 \coloneqq n$. A generalised variant of \cite[Eq 4.11]{reg} allows us to write:
\begin{equs}\label{eq:2.17p1}
\FF_{\tau} &= \CP_{\tau}\left[\sum_{\substack{k\neq 0 \\ k_1+\hdots +k_{n} = k}}\hspace{-6mm}\hat{F}^{(n)}\prod_{i}^{n}\left[\mathbf{V}_{i}-\mathbf{v}_i\right]^{k_i}\right] \\
&= \sum_{\substack{k\neq 0 \\ k_1+\hdots +k_{n} = k}}\hspace{-6mm}\hat{F}_{\mfl}^{(n)}\sum_{\substack{(\rho_1,\hdots,\rho_n)  \\ \in \Pi^{(n)}_{\tau}}}\left(\prod_{i=1}^{n}\left(\sum_{\substack{(\rho_{1,i},\hdots,\rho_{i,k_i})\\ \in \Pi^{(k_1)}_{\rho_i}}}\prod_{j=1}^{k_j}\frac{\Upsilon_{\mathbf{V}_i}[\rho_{i,j}]}{S(\rho_{(i,j)})}\right)\right),
\end{equs}
where $\hat{F}^{(n)}\coloneqq\frac{\partial^{k}\hat{F}}{\partial^{(k_1)}_{\mathbf{v}_1}\cdots\partial^{(k_n)}_{\mathbf{v}_n}}$ and the second inequality is due to the inductive hypothesis, that is we assume that for $\tau=\prod_{i=1}^{n}\CI^{\beta_i}_{\alpha_i}[\tau_i]$ (and trees smaller) $\eqref{eq:Vacoeff}$ holds. On the other hand, one has due to the Faa di Bruno formula:
\begin{equs}\label{eq:2.17p2}
\frac{\Upsilon_{\hat{F}}[\tau]}{S(\tau)} &= \frac{1}{S(\tau)}\left(\prod_{i}^n\Upsilon_{\hat F}[\tau_i]\right)\left(\partial_{\mathbf{v}_1}\cdots\partial_{\mathbf{v}_n}{F}_\mathfrak{l}\right) \\
&= \frac{1}{S(\tau)}\left(\prod_{i}^n\Upsilon_{\hat F}[\tau_i]\right)\left(\sum_{\pi\in\Pi}F_\mfl^{(|\pi|)}(\mathbf{v})\prod_{B\in\pi}\Big(\prod_{j\in B}\partial_{\mathbf{v}_{j}}\Bigr)\mathbf{v}\right),
\end{equs}
where $\pi$ runs through the set of all partitions, $\Pi$, of the set $\{1,\hdots,n\}$, "$B\in\pi$" means the variable $B$ runs through the list of all of the "blocks" of the partition $\pi$, and $|\pi|$ and $|B|$ as is usual represent the cardinality of the sets. We need to argue that \eqref{eq:2.17p1} and \eqref{eq:2.17p2} are the same, but this is easy to see. In fact, notice that all the $\rho_{i,j}$ are of the form $\prod_{k=1}^{|\rho_{i,j}|_0}\CI_{\alpha_k}\left(\rho_{(i,j,k)}\right)$, where $\CI_{\alpha_k}\left(\rho_{(i,j,k)}\right)\in\{\CI_{\alpha_1}(\tau_{1}),\hdots,\CI_{\alpha_n}(\tau_{n})\}$, which means that when $\Upsilon_{\mathbf{V}_i}$ is applied to $\rho_{(i,j)}$ we will get some product of terms like $\Upsilon_{\hat{F}}(\tau_i)$ multiplied to some derivatives. As these are ultimately decompositions of $\tau$, one is able to factor out $\prod_{i}^n\Upsilon_{\hat F}[\tau_i]$ from \eqref{eq:2.17p1}, and in the sum we are left with all the various combinations of the derivatives. It is not difficult to convince one's self that it is the same sum of derivatives in either equation. It only remains to see that the $S(\tau)$ in \eqref{eq:2.17p2} can be written as a product of the $S(\rho_{(i,j)})$. Recall that for $\tau=\prod_i^n\CI_{\alpha_i}^{\beta_i}(\tau_i)$, $S(\tau)=\prod_{i}^{n}S(\tau_i)^{\beta_i}\beta_i!$ and fix some decomposition of $\tau$. If in this decomposition $\CI_{\alpha_i}^{\beta_i}(\tau_i)$ ends up completely in some disjoint $\rho_{(i,j)}$, one directly gets the needed $S(\tau_i)^{\beta_i}\beta_i!$ factor. Alternatively, if $\CI_{\alpha_i}^{\beta_i}(\tau_i)$ ends up decomposed, which is to say for each $\ell$, $\CI_{\alpha_i}^{\beta_{i,\ell}}(\tau_i)$ ends up in distinct $\rho_{(i,j)}$, where $\prod_{\ell}\CI_{\alpha_i}^{\beta_{i,\ell}}(\tau_i) = \CI_{\alpha_i}^{\beta_{i}}(\tau_i)$ and $\sum_{\ell}\beta_{i,\ell}= \beta_i$, means that in \eqref{eq:2.17p1}, one sees the term $\prod_{\ell}S(\tau_{i})^{\beta_{i,\ell}}\beta_{i,\ell}!=S(\tau_i)^{\beta_i}\prod_{\ell}\beta_{i,\ell}!$. As this decomposition can be made in $\binom{\beta_i}{\prod_\ell\beta_{i,\ell}}$ ways, we see that this term is repeated the same number of times, wherefrom we get the correct combinatorial factors. With this we have shown how to go from $\Upsilon_{V_\alpha}$ to $\Upsilon_{\hat{F}}$. It remains to contend with the other direction.
To this end, recall the morphism property \eqref{eq:morphism2} for a fixed $\tau$:
\begin{equs}\label{eq:base1}
\Upsilon_{U}(\tau\!\graft{\alpha}\!\sigma) = \Upsilon_{\hat F}(\tau)\partial_{v_\alpha}\Upsilon_{U}(\sigma).
\end{equs}
Hence to prove $U_{\tau\graft{\alpha}\sigma} = \Upsilon_{U}\left[\tau\graft{\alpha}\sigma\right]/S(\tau\graft{\alpha}\sigma)$, it is enough to prove:
\begin{equs}
    S(\tau\graft{\alpha}\sigma)U_{\tau\graft{\alpha}\sigma} = \Upsilon_{\hat{F}}\left[\tau\right]\partial_{v_\alpha}\Upsilon_{U}\left[\sigma\right],
\end{equs}
where we are abusing notation $\tau\graft{\alpha}\sigma$, as it refers to a sum of trees and the equation above is to be read individually for every summand in it. Seeing as $U$ only consists of products of planted trees, we may set $\sigma = \prod_i^n\CI_{\alpha_i}(\sigma_i)$ and fix $\tau$, for which one is able to compute:
\begin{equs}\label{eq:base2}
\tau\graft{}\sigma &= \underbrace{ 
\begin{tikzpicture}[scale=0.2,baseline=1]
\node[var] at (-3,1) (a) {\tiny{$\sigma_{\!1}$}}; 
\node[var] at (3,1) (b) {\tiny{$\tau$}};
\node[var] at (3.5,3) (d) {{\tiny $\sigma_{\!n}$}};
\node[var] at (-3.5,3) (c) {\tiny{$\sigma_{\!2}$}};
\node at (0,1) {{\tiny ...}};
\draw (0,0) -- (a);
\draw (0,0) -- (b);
\draw (0,0) -- (c);
\draw (0,0) -- (d);
\node at (-1.5,-0.15) {{\tiny $\alpha_1$}};
\node at (-1.25,2) {\tiny{$\alpha_2$}};
\node at (1.25,2) {\tiny{$\alpha_n$}};
\end{tikzpicture}}_{\coloneqq\tau_0} + \underbrace{\begin{tikzpicture}[scale=0.2,baseline=0]
\node[sqvar] at (-4.5,2) (a) {{\tiny $\tau\!\!\graft{}\!\!\sigma_{\!1}$}};
\node[var] at (0,4) (b) {{\tiny $\sigma_{\!2}$}};
\node[var] at (3,2) (c) {{\tiny $\sigma_{\!n}$}};
\draw (0,0) -- (a);
\draw (0,0) -- (b);
\draw (0,0) -- (c);
\node at (-2,0.1) {\tiny{$\alpha_1$}};
\node at (-0.9,2) {\tiny{$\alpha_2$}};
\node[rotate=315] at (0.75,1.5) {\tiny{...}};
\node at (2,0.15) {\tiny{$\alpha_n$}};
\end{tikzpicture}}_{\coloneqq\tau_1} \\ &+ \underbrace{\begin{tikzpicture}[scale=0.2,baseline=2]
\node[var] at (-3,2) (a) {{\tiny $\sigma_{\!1}$}};
\node[sqvar] at (0,4) (b) {{\tiny $\tau\!\!\graft{}\!\!\sigma_{\!2}$}};
\node[var] at (3,2) (c) {{\tiny $\sigma_{\!n}$}};
\draw (0,0) -- (a);
\draw (0,0) -- (b);
\draw (0,0) -- (c);
\node at (-2,0.5) {\tiny{$\alpha_1$}};
\node at (-0.9,2) {\tiny{$\alpha_2$}};
\node[rotate=315] at (0.75,1.5) {\tiny{...}};
\node at (2,0.15) {\tiny{$\alpha_n$}};
\end{tikzpicture}}_{\coloneqq\tau_2} + ... + \underbrace{\begin{tikzpicture}[scale=0.2,baseline=2]
\node[var] at (-3,2) (a) {{\tiny $\sigma_{\!1}$}};
\node[var] at (0,4) (b) {{\tiny $\sigma_{\!2}$}};
\node[sqvar] at (4.5,2) (c) {{\tiny $\tau\!\!\graft{}\!\!\sigma_{\!n}$}};
\draw (0,0) -- (a);
\draw (0,0) -- (b);
\draw (0,0) -- (c);
\node at (-2,0.5) {\tiny{$\alpha_1$}};
\node at (-0.9,2) {\tiny{$\alpha_2$}};
\node[rotate=315] at (0.75,1.5) {\tiny{...}};
\node at (2,0.15) {\tiny{$\alpha_{n}$}};
\end{tikzpicture}}_{\coloneqq\tau_n},
\end{equs}
where the ellipses in the trees encode $\CI_{\alpha_i}(\sigma_i)$, $i\in\{3,\,4,\hdots,n-1\}$. We caution here that none of the $\tau_i$ are actual trees but rather sums of trees. The trees that have $\begin{tikzpicture}
    \node[sqvar] at (0,0) {};
\end{tikzpicture}$ are the sum of all the trees that are generated by $\tau\graft{}\sigma_i$ as one continues to graft back. 
Recall that $U$ takes the following form:
\begin{equs}\label{eq:proof1}
U = u\one + \sum_{\tau}U_\tau\tau,
\end{equs}
where $\tau$ runs over products of planted trees, i.e. $\tau = \prod_{i}\CI_{\alpha_i}(\tau_i)$. We recall from the definition that one has:
\begin{equs}\label{eq:proof2}
U = \sum_{\ell} \frac{(\tilde{A}(U))^{\ell}}{\ell!}\left[\CK_{(\ell,0)}\hat{\mathbf{F}}\right]
\end{equs}
where we know:
\begin{equs}
\tilde{A}(U) = \sum_{k \neq 0}\frac{D^{k}[a(u)]}{k!}(U - u\one)^{k}
\end{equs}
By substituting \eqref{eq:proof1} into this, one gets:
\begin{equs}
\tilde{A}(U) = \sum_{k \neq 0}\frac{D^{k}[a(u)]}{k!}\left[\sum_{\tau} U_\tau\tau\right]^{k}
\end{equs}
which in turn can be put into \eqref{eq:proof2}, yielding:
\begin{equs}
U = \sum_{\ell}\frac{1}{\ell!}\left(\sum_{k\neq 0}\frac{D^{k}[a(u)]}{k!}\left[\sum_{\tau} U_\tau\tau\right]^{k}\right)^{\ell}\left[\CK_{(\ell,0)}\hat{\mathbf{F}}\right] 
\end{equs}
where now the product is not the tree product but some forest product, in the sense that $ \tau_1 \tau_2 $ is not the joint root product.
Notice that any arbitrary $\sigma$ in the expansion of $U$ is also seen on the right-hand side (without identifying it with products of smaller trees) when $\ell=1,\,k=1$. By rearranging the expansion one gets:
\begin{equs}
    \left(U_{\sigma} - a'(u)v_cU_{\sigma}\right) &= \tilde{\CP}_\sigma\left[\sum_{\ell}\frac{1}{\ell!}\left(\sum_{k\neq 0}\frac{D^{k}[a(u)]}{k!}\left[\sum_{\tau\neq\sigma} U_\tau\tau\right]^{k}\right)^{\ell}\left[\CK_{(\ell,0)}\hat{\mathbf{F}}\right]\right] \\
    \Leftrightarrow U_\sigma &= \tilde{\CP}_\sigma\left[\frac{1}{q}\sum_{\ell}\frac{1}{\ell!}\left(\sum_{k\neq 0}\frac{D^{k}[a(u)]}{k!}\left[\sum_{\tau\neq\sigma} U_\tau\tau\right]^{k}\right)^{\ell}\left[\CK_{(\ell,0)}\hat{\mathbf{F}}\right]\right].
\end{equs}
where the projection $ \tilde{\CP}_{\sigma} $ extracts the coefficients of all the forests that preserve the order of the edges attached to the root of $ \sigma $. We can perform this identification as these elements have the same analytical interpretation.
  
Notice that the only forests that survive with $\tilde{\CP}_\sigma$ contain trees which are strictly smaller than $\sigma$, which means that due to the induction step, we can reformulate the equation above:
\begin{equs}\label{eq:DecompUpsilonU}
\frac{\Upsilon_{U}(\sigma)}{S(\sigma)} = \tilde{\CP}_\sigma\Biggl[\frac{1}{q}\sum_{\ell}\frac{1}{\ell!}\left(\sum_{k\neq 0}\frac{D^{k}[a(u)]}{k!}\left[\sum_{\tau\neq\sigma} \frac{\Upsilon_U(\tau)}{S(\tau)}\tau\right]^{k}\right)^{\ell}\\
\times\left[\CK_{(\ell,0)}\hat{\mathbf{F}}\right]\Biggr]
\end{equs}
The equation above tells us that the coefficient of a particular tree can be calculated as a linear combination of its non-trivial decompositions, and we use this to motivate the following decomposition.
\begin{align*}
&\frac{\Upsilon_{U}[\sigma]}{S(\sigma)} = \frac{1}{q}\left[\sum_{\ell=1}^{|\sigma|_0}\frac{1}{\ell!}\sum_{\substack{(\rho_1,\hdots,\rho_\ell) \\ \hphantom{aaaa}\in\Pi^{(\ell)}_\sigma}}\left[v_{(\ell,0)}\prod_{i=1}^{\ell}\left(\sum_{k=1}^{|\rho_i|_0}\frac{a^{(k)}}{k!}\left(\sum_{\substack{(\nu_1,\hdots,\nu_k) \\ \hphantom{aaaa}\in\CP^{(k)}_{\rho_i}}}\prod_{j=1}^k\frac{\Upsilon_U[\nu_j]}{S(\nu_j)}\right)\right)\right]\right. \\
&+ \left.\sum_{\ell=1}^{|\sigma|_0}\frac{1}{\ell!}\hspace{-5mm}\sum_{\substack{(\rho_1,\hdots,\rho_{\ell}, \\ \hphantom{aaaa}\CI_{\alpha_n}(\sigma_n))\in\Pi^{(\ell+1)}_\sigma}}\left[\frac{\Upsilon_{\hat{F}}[\sigma_n]}{S\left(\sigma_n\right)}\prod_{i=1}^{\ell}\left(\sum_{k=1}^{|\rho_i|_0}\frac{a^{(k)}}{k!}\left(\sum_{\substack{(\nu_1,\hdots,\nu_k) \\ \hphantom{aaaa}\in\Pi^{(k)}_{\rho_i}}}\prod_{j=1}^k\frac{\Upsilon_U[\nu_j]}{S(\nu_j)}\right)\right)\right]\right]
\end{align*}
We first make the argument for $\alpha=(0,0)$, for which we need to differentiate as follows:
\begin{align*}
&\frac{\partial_v\Upsilon_{U}[\sigma]}{S(\sigma)}=\frac{p_c\Upsilon_{U}(\sigma)}{q^2S(\sigma)} + \frac{1}{q}\sum_{\ell=1}^{|\sigma|_0}\frac{1}{\ell!}\hspace{-2mm}\sum_{\substack{(\rho_1,\hdots,\rho_\ell) \\ \hphantom{aaaa}\in\Pi^{(\ell)}_\sigma}}\left[\frac{a'v_{(\ell+1,0)}}{q}\prod_{i=1}^{\ell}\left(\sum_{k=1}^{|\rho_i|_0}\frac{a^{(k)}}{k!}\left(\sum_{\substack{(\nu_1,\hdots,\nu_k) \\ \hphantom{aaaa}\in\Pi^{(k)}_{\rho_i}}}\prod_{j=1}^k\frac{\Upsilon_U[\nu_j]}{S(\nu_j)}\right)\right)\right.\\
&+ v_{(\ell,0)}\sum_{m=1}^{\ell}\prod_{\substack{i=1 \\ i\neq m}}^\ell\left[\sum_{k=1}^{|\rho_i|_0}\frac{a^{(k)}}{k!}\left(\sum_{\substack{(\nu_1,\hdots,\nu_k) \\ \hphantom{aaaa}\in\Pi^{(k)}_{\rho_i}}}\prod_{j=1}^k\frac{\Upsilon_U[\nu_j]}{S(\nu_j)}\right)\right]\left(\sum_{k=1}^{|\rho_m|_0}\frac{a^{(k+1)}}{k!q}\left(\sum_{\substack{(\nu_1,\hdots,\nu_k) \\ \hphantom{aaaa}\in\Pi^{(k)}_{\rho_m}}}\prod_{j=1}^k\frac{\Upsilon_U[\nu_j]}{S(\nu_j)}\right)\right. \\
&+\left.\sum_{k=1}^{|\rho_m|_0}\frac{a^{(k)}}{k!}\left(\sum_{\substack{(\nu_1,\hdots,\nu_k) \\ \hphantom{aaaa}\in\Pi^{(k)}_{\rho_m}}}\sum_{n=1}^k\frac{\partial_{v}\Upsilon_U[\nu_n]}{S(\nu_n)}\prod_{\substack{j=1 \\ j\neq n}}^k\frac{\Upsilon_U[\nu_j]}{S(\nu_j)}\right)\right) + \sum_{\ell=1}^{|\sigma|_0}\frac{1}{\ell!}\hspace{-5mm}\sum_{\substack{(\rho_1,\hdots,\rho_{\ell}, \\ \hphantom{aaaa}\CI(\sigma_n))\in\Pi^{(\ell+1)}_\sigma}}\hspace{-2mm}\frac{\partial_v\Upsilon_{\hat{F}}[\sigma_n]}{S\left(\sigma_n\right)}
\end{align*}
\begin{align*}
&\prod_{i=1}^{\ell}\left(\sum_{k=1}^{|\rho_i|_0}\frac{a^{(k)}}{k!}\left(\sum_{\substack{(\nu_1,\hdots,\nu_k) \\ \hphantom{aaaa}\in\Pi^{(k)}_{\rho_i}}}\prod_{j=1}^k\frac{\Upsilon_U[\nu_j]}{S(\nu_j)}\right)\right) + \frac{\Upsilon_{\hat{F}}[\sigma_n]}{S\left(\sigma_n\right)}\sum_{m=1}^{\ell}\prod_{\substack{i=1 \\ i\neq m}}^\ell\left[\sum_{k=1}^{|\rho_i|_0}\frac{a^{(k)}}{k!}\left(\sum_{\substack{(\nu_1,\hdots,\nu_k) \\ \hphantom{aaaa}\in\Pi^{(k)}_{\rho_i}}}\prod_{j=1}^k\frac{\Upsilon_U[\nu_j]}{S(\nu_j)}\right)\right] \\
&\left(\sum_{k=1}^{|\rho_m|_0}\frac{a^{(k+1)}}{k!q}\left(\sum_{\substack{(\nu_1,\hdots,\nu_k) \\ \hphantom{aaaa}\in\Pi^{(k)}_{\rho_m}}}\prod_{j=1}^k\frac{\Upsilon_U[\nu_j]}{S(\nu_j)}\right)\right.
+\left.\sum_{k=1}^{|\rho_m|_0}\frac{a^{(k)}}{k!}\left(\sum_{\substack{(\nu_1,\hdots,\nu_k) \\ \hphantom{aaaa}\in\Pi^{(k)}_{\rho_m}}}\sum_{n=1}^k\frac{\partial_{v}\Upsilon_U[\nu_n]}{S(\nu_n)}\prod_{\substack{j=1 \\ j\neq n}}^k\frac{\Upsilon_U[\nu_j]}{S(\nu_j)}\right)\right)
\end{align*}
One computes then:
\begin{equs}
\Upsilon_{\hat{F}}[\tau]\partial_v\Upsilon_{U}[\sigma] &= \Upsilon_{\hat{F}}[\tau]\left[\frac{p_c\Upsilon_{U}[\sigma]}{q^{2}}+\hdots\right] \\
&=\frac{p_c\Upsilon_{\hat{F}}[\tau]\Upsilon_{U}[\sigma]}{q^{2}} + \hdots \\
&=\frac{p_c\Upsilon_{U}\left[\CI(\tau)\right]\Upsilon_{U}[\sigma]}{q} + \hdots,
\end{equs}
where for the third inequality we have used that $\Upsilon_{U}\left[\CI(\tau)\right]=\Upsilon_{\hat{F}}[\tau]/q$. Naturally, one might suspect that the above term can be found in $\Upsilon_U\left[\tau_0\right]$, and indeed by considering $\ell=1$ (which forces $k=2$) with the decomposition $\left(\sigma,\CI(\tau)\right)\in\Pi^{(2)}_{\tau_0}$ and also $\ell=2$ (which forces $k=1$) with the same decomposition, and one gets:
\begin{equs}
\Upsilon_U\left[\tau_0\right] &= \frac{S(\tau_0)}{q}\left[\frac{\left(a'(u)\right)^2v_{cc} + a''(u)v_{c}}{q}\left(\frac{  m(\tau_0,\tau)\Upsilon_U\left[\CI(\tau)\right]\Upsilon_U[\sigma]}{qS\left(\CI(\tau)\right)S\left(\sigma\right)}\right)\right] + \hdots \\
&= \frac{p_c\Upsilon_{U}\left[\CI(\tau)\right]\Upsilon_{U}[\sigma]}{q} + \hdots
\end{equs}
where $ m(\tau_0,\tau) $ is the number of times $ \CI(\tau) $ appear on the right-most location in a commutative block of edges. Then, the combinatorial factors cancel out and we get the identity we wanted to prove.
Consider now the term:
\begin{align*}
&\Upsilon_{\hat{F}}[\tau]\left[\frac{S(\sigma) }{q }\sum_{\ell=1}^{|\sigma|_0}\frac{1}{\ell!}\hspace{-2mm}\sum_{\substack{(\rho_1,\hdots,\rho_\ell) \\ \hphantom{aaaa}\in\Pi^{(\ell)}_\sigma}}\frac{a'v_{(\ell+1,0)}}{q}\prod_{i=1}^{\ell}\left(\sum_{k=1}^{|\rho_i|_0}\frac{a^{(k)}}{k!}\left(\sum_{\substack{(\nu_1,\hdots,\nu_k) \\ \hphantom{aaaa}\in\Pi^{(k)}_{\rho_i}}}\prod_{j=1}^k\frac{\Upsilon_U[\nu_j]}{S(\nu_j)}\right)\right)\right] \\
&= \frac{1}{q}\sum_{\ell=1}^{|\sigma|_0}\frac{S(\tau)S(\sigma)}{\ell!}\hspace{-2mm}\sum_{\substack{(\rho_1,\hdots,\rho_\ell,\CI(\tau)) \\ \hphantom{aaaa}\in\Pi^{(\ell+1)}_{\tau_0}}}v_{(\ell+1,0)}\prod_{i=1}^{\ell}\left(\sum_{k=1}^{|\rho_i|_0}\frac{a^{(k)}}{k!}\left(\sum_{\substack{(\nu_1,\hdots,\nu_k) \\ \hphantom{aaaa}\in\Pi^{(k)}_{\rho_i}}}\prod_{j=1}^k\frac{\Upsilon_U[\nu_j]}{S(\nu_j)}\right)\right) \\
&\hphantom{aaaaaaaaa}\hspace{7cm}\Biggl(a^{(1)}\frac{\Upsilon_{U}\left[\CI(\tau)\right]}{S(\CI(\tau))}\Biggr)
\end{align*}
We can compare this to $\Upsilon_{U}[\tau_0]$, with the partition $\{\rho_1,\hdots,\rho_\ell,\CI(\tau)\}\in\Pi^{(\ell+1)}_{\tau_0}$. It remains to match the factor $S(\tau_0)$ to $S(\tau)S(\sigma)$ which is dealt with the same way as before.
The next term we consider is
\begin{align*}
&S(\sigma)\Upsilon_{\hat{F}}[\tau]v_{(\ell,0)}\sum_{m=1}^{\ell}\prod_{\substack{i=1 \\ i\neq m}}^\ell\left[\sum_{k=1}^{|\rho_i|_0}\frac{a^{(k)}}{k!}\left(\sum_{\substack{(\nu_1,\hdots,\nu_k) \\ \hphantom{aaaa}\in\Pi^{(k)}_{\rho_i}}}\prod_{j=1}^k\frac{\Upsilon_U[\nu_j]}{S(\nu_j)}\right)\right]
\end{align*}
\begin{align*}
&\hspace{4cm}\left[\sum_{k=1}^{|\rho_m|_0}\frac{a^{(k+1)}}{k!q}\left(\sum_{\substack{(\nu_1,\hdots,\nu_k) \\ \hphantom{aaaa}\in\Pi^{(k)}_{\rho_m}}}\prod_{j=1}^k\frac{\Upsilon_U[\nu_j]}{S(\nu_j)}\right)\right]\\
&=S(\sigma)S(\tau)v_{(\ell,0)}\sum_{m=1}^{\ell}\prod_{\substack{i=1 \\ i\neq m}}^\ell\left[\sum_{k=1}^{|\rho_i|_0}\frac{a^{(k)}}{k!}\left(\sum_{\substack{(\nu_1,\hdots,\nu_k) \\ \hphantom{aaaa}\in\Pi^{(k)}_{\rho_i}}}\prod_{j=1}^k\frac{\Upsilon_U[\nu_j]}{S(\nu_j)}\right)\right] \\
&\hspace{4cm}\left[\sum_{k=1}^{|\rho_m|_0}\frac{a^{(k+1)}}{(k+1)!}\left(\frac{\Upsilon_U[\CI(\tau)]}{S(\CI\left(\tau)\right)}\hspace{-2mm}\sum_{\substack{(\nu_1,\hdots,\nu_k,\CI(\tau)) \\ \hphantom{aaaa}\in\Pi^{(k+1)}_{\rho_m\CI(\tau)}}}\prod_{j=1}^k\frac{\Upsilon_U[\nu_j]}{S(\nu_j)}\right)\right]
\end{align*}
We again compare with $\Upsilon_{U}[\tau_0]$ and the argument is the same as before.
\begin{align*}
&S(\sigma)\Upsilon_{\hat{F}}[\tau]v_{(\ell,0)}\sum_{m=1}^{\ell}\prod_{\substack{i=1 \\ i\neq m}}^\ell\left[\sum_{k=1}^{|\rho_i|_0}\frac{a^{(k)}}{k!}\left(\sum_{\substack{(\nu_1,\hdots,\nu_k) \\ \hphantom{aaaa}\in\Pi^{(k)}_{\rho_i}}}\prod_{j=1}^k\frac{\Upsilon_U[\nu_j]}{S(\nu_j)}\right)\right] \\
&\hspace{4cm}\sum_{k=1}^{|\rho_m|_0}\frac{a^{(k)}}{k!}\left(\sum_{\substack{(\nu_1,\hdots,\nu_k) \\ \hphantom{aaaa}\in\Pi^{(k)}_{\rho_m}}}\sum_{n=1}^k\frac{\partial_{v}\Upsilon_U[\nu_n]}{S(\nu_n)}\prod_{\substack{j=1 \\ j\neq n}}^k\frac{\Upsilon_U[\nu_j]}{S(\nu_j)}\right) \\
&=S(\sigma)v_{(\ell,0)}\sum_{m=1}^{\ell}\prod_{\substack{i=1 \\ i\neq m}}^\ell\left[\sum_{k=1}^{|\rho_i|_0}\frac{a^{(k)}}{k!}\left(\sum_{\substack{(\nu_1,\hdots,\nu_k) \\ \hphantom{aaaa}\in\Pi^{(k)}_{\rho_i}}}\prod_{j=1}^k\frac{\Upsilon_U[\nu_j]}{S(\nu_j)}\right)\right] \\
&\hspace{4cm}\sum_{k=1}^{|\rho_m|_0}\frac{a^{(k)}}{k!}\left(\sum_{\substack{(\nu_1,\hdots,\nu_k) \\ \hphantom{aaaa}\in\Pi^{(k)}_{\rho_m}}}\sum_{n=1}^k\frac{\Upsilon_U[\tau\graft{}\nu_n]}{S(\nu_n)}\prod_{\substack{j=1 \\ j\neq n}}^k\frac{\Upsilon_U[\nu_j]}{S(\nu_j)}\right)
\end{align*}
One has to be careful that $\tau\graft{}\nu_n$ is not one tree, but finitely many. The first of these terms - i.e. the root grafting - is to be found in $\Upsilon_U[\tau_0]$, while for the rest, one has to pick the relevant $\Upsilon_U[\tau_i]$, $i\neq 0$. The combinatorial factors are matched as before. Similarly, we take care of:
\begin{align*}
&S(\sigma)\Upsilon_{\hat{F}}[\tau]\sum_{\ell=1}^{|\sigma|_0}\frac{1}{\ell!}\hspace{-5mm}\sum_{\substack{(\rho_1,\hdots,\rho_{\ell}, \\ \hphantom{aaaa}\CI(\sigma_n))\in\Pi^{(\ell+1)}_\sigma}}\hspace{-3mm}\frac{\partial_v\Upsilon_{\hat{F}}[\sigma_n]}{S\left(\sigma_n\right)}\prod_{i=1}^{\ell}\left(\sum_{k=1}^{|\rho_i|_0}\frac{a^{(k)}}{k!}\left(\sum_{\substack{(\nu_1,\hdots,\nu_k) \\ \hphantom{aaaa}\in\Pi^{(k)}_{\rho_i}}}\prod_{j=1}^k\frac{\Upsilon_U[\nu_j]}{S(\nu_j)}\right)\right)\\
&=S(\sigma)\sum_{\ell=1}^{|\sigma|_0}\frac{1}{\ell!}\hspace{-5mm}\sum_{\substack{(\rho_1,\hdots,\rho_{\ell}, \\ \hphantom{aaaa}\CI(\sigma_n))\in\Pi^{(\ell)}_\sigma}}\hspace{-2mm}\frac{\Upsilon_{\hat{F}}[\tau\graft{}\sigma_n]}{S\left(\sigma_n\right)}\prod_{i=1}^{\ell}\left(\sum_{k=1}^{|\rho_i|_0}\frac{a^{(k)}}{k!}\left(\sum_{\substack{(\nu_1,\hdots,\nu_k) \\ \hphantom{aaaa}\in\Pi^{(k)}_{\rho_i}}}\prod_{j=1}^k\frac{\Upsilon_U[\nu_j]}{S(\nu_j)}\right)\right)
\end{align*}
Next, consider:
\begin{align*}
&S(\sigma)\Upsilon_{\hat{F}}[\tau]\frac{\Upsilon_{\hat{F}}[\sigma_n]}{S\left(\sigma_n\right)}\sum_{m=1}^{\ell}\prod_{\substack{i=1 \\ i\neq m}}^\ell\left[\sum_{k=1}^{|\rho_i|_0}\frac{a^{(k)}}{k!}\left(\sum_{\substack{(\nu_1,\hdots,\nu_k) \\ \hphantom{aaaa}\in\Pi^{(k)}_{\rho_i}}}\prod_{j=1}^k\frac{\Upsilon_U[\nu_j]}{S(\nu_j)}\right)\right]\\
&\hspace{4cm}\left(\sum_{k=1}^{|\rho_m|_0}\frac{a^{(k+1)}}{k!q}\left(\sum_{\substack{(\nu_1,\hdots,\nu_k) \\ \hphantom{aaaa}\in\Pi^{(k)}_{\rho_m}}}\prod_{j=1}^k\frac{\Upsilon_U[\nu_j]}{S(\nu_j)}\right)\right)\\
&=S(\sigma)S(\tau)\frac{\Upsilon_{\hat{F}}[\sigma_n]}{S\left(\sigma_n\right)}\sum_{m=1}^{\ell}\prod_{\substack{i=1 \\ i\neq m}}^\ell\left[\sum_{k=1}^{|\rho_i|_0}\frac{a^{(k)}}{k!}\left(\sum_{\substack{(\nu_1,\hdots,\nu_k) \\ \hphantom{aaaa}\in\Pi^{(k)}_{\rho_i}}}\prod_{j=1}^k\frac{\Upsilon_U[\nu_j]}{S(\nu_j)}\right)\right]
\end{align*}
\begin{align*}
&\hspace{3.5cm}\left(\sum_{k=1}^{|\rho_m|_0}\frac{a^{(k+1)}}{(k+1)!}\left(\frac{\Upsilon_{U}[\CI(\tau)]}{S(\CI(\tau))}\sum_{\substack{(\nu_1,\hdots,\nu_k) \\ \hphantom{aaaa}\in\Pi^{(k)}_{\rho_m}}}\prod_{j=1}^k\frac{\Upsilon_U[\nu_j]}{S(\nu_j)}\right)\right)
\end{align*}
The reasoning for this is the same as before. Finally one has:
\begin{align*}
&S(\sigma)\Upsilon_{\hat{F}}[\tau]\frac{\Upsilon_{\hat{F}}[\sigma_n]}{S\left(\sigma_n\right)}\sum_{m=1}^{\ell}\prod_{\substack{i=1 \\ i\neq m}}^\ell\left[\sum_{k=1}^{|\rho_i|_0}\frac{a^{(k)}}{k!}\left(\sum_{\substack{(\nu_1,\hdots,\nu_k) \\ \hphantom{aaaa}\in\Pi^{(k)}_{\rho_i}}}\prod_{j=1}^k\frac{\Upsilon_U[\nu_j]}{S(\nu_j)}\right)\right]
\end{align*}
\begin{align*}
&\hspace{4cm}\left.\sum_{k=1}^{|\rho_m|_0}\frac{a^{(k)}}{k!}\left(\sum_{\substack{(\nu_1,\hdots,\nu_k) \\ \hphantom{aaaa}\in\CP^{(k)}_{\rho_m}}}\sum_{n=1}^k\frac{\partial_{v}\Upsilon_U[\nu_n]}{S(\nu_n)}\prod_{\substack{j=1 \\ j\neq n}}^k\frac{\Upsilon_U[\nu_j]}{S(\nu_j)}\right)\right)\\
&=\frac{\Upsilon_{\hat{F}}[\sigma_n]}{S\left(\sigma_n\right)}\sum_{m=1}^{\ell}\prod_{\substack{i=1 \\ i\neq m}}^\ell\left[\sum_{k=1}^{|\rho_i|_0}\frac{a^{(k)}}{k!}\left(\sum_{\substack{(\nu_1,\hdots,\nu_k) \\ \hphantom{aaaa}\in\CP^{(k)}_{\rho_i}}}\prod_{j=1}^k\frac{\Upsilon_U[\nu_j]}{S(\nu_j)}\right)\right] \\
&\hspace{4cm}\left.\sum_{k=1}^{|\rho_m|_0}\frac{a^{(k)}}{k!}\left(\sum_{\substack{(\nu_1,\hdots,\nu_k) \\ \hphantom{aaaa}\in\CP^{(k)}_{\rho_m}}}\sum_{n=1}^k\frac{\Upsilon_U[\tau\graft{}\nu_n]}{S(\nu_n)}\prod_{\substack{j=1 \\ j\neq n}}^k\frac{\Upsilon_U[\nu_j]}{S(\nu_j)}\right)\right)
\end{align*}
where we repeat the previous argument. With this we have exhausted all the trees in $\Upsilon_{U}\left[\tau\graft{}\sigma\right]$, completing the argument. We now proceed with the induction for other possible grafts. The $\alpha=(1,0)$ turns out to be simpler than $\alpha=(0,0)$ because as it only hits $v_c$ and $q$, one computes:
\begin{align*}
&\frac{\partial_{v_c}\Upsilon_{U}(\sigma)}{S(\sigma)} = \frac{a^{(1)}\Upsilon_{U}(\sigma)}{q^2S(\sigma)} + \frac{1}{q}\left[\left(\sum_{k=1}^{|\sigma|_0}\frac{a^{(k)}}{k!}\left(\sum_{\substack{(\nu_1,\hdots,\nu_k) \\ \hphantom{aaaa}\in\CP^{(k)}_{\sigma}}}\prod_{j=1}^k\frac{\Upsilon_U[\nu_j]}{S(\nu_j)}\right)\right)\right. \\
&+v_c\left(\sum_{k=1}^{|\sigma|_0}\frac{a^{(k)}}{k!}\left(\sum_{\substack{(\nu_1,\hdots,\nu_k) \\ \hphantom{aaaa}\in\CP^{(k)}_{\sigma}}}\sum_{n=1}^k\frac{\partial_{v_c}\Upsilon_{U}[\nu_n]}{S(\nu_n)}\prod_{\substack{j=1 \\ j\neq n}}^k\frac{\Upsilon_U[\nu_j]}{S(\nu_j)}\right)\right) \\
&+ \sum_{\ell=2}^{|\sigma|_0}\frac{1}{\ell!}\sum_{\substack{(\rho_1,\hdots,\rho_\ell) \\ \hphantom{aaaa}\in\CP^{(\ell)}_\sigma}}\left[v_{(\ell,0)}\sum_{m=1}^\ell\prod_{\substack{i=1 \\ i\neq m}}^{\ell}\left(\sum_{k=1}^{|\rho_i|_0}\frac{a^{(k)}}{k!}\left(\sum_{\substack{(\nu_1,\hdots,\nu_k) \\ \hphantom{aaaa}\in\CP^{(k)}_{\rho_i}}}\prod_{j=1}^k\frac{\Upsilon_U[\nu_j]}{S(\nu_j)}\right)\right)\right.\\
&\left.\left(\sum_{k=1}^{|\rho_i|_0}\frac{a^{(k)}}{k!}\left(\sum_{\substack{(\nu_1,\hdots,\nu_k) \\ \hphantom{aaaa}\in\CP^{(k)}_{\rho_m}}}\sum_{n=1}^k\frac{\partial_{v_c}\Upsilon_{U}[\nu_n]}{S(\nu_n)}\prod_{\substack{j=1 \\ j\neq n}}^k\frac{\Upsilon_U[\nu_j]}{S(\nu_j)}\right)\right)\right]\\
&+\sum_{\ell=1}^{|\sigma|_0}\frac{1}{\ell!}\hspace{-5mm}\sum_{\substack{(\rho_1,\hdots,\rho_{\ell}, \\ \hphantom{aaaa}\CI_{\alpha_n}(\sigma_n))\in\CP^{(\ell+1)}_\sigma}}\left[\frac{\partial_{v_c}\Upsilon_{\hat{F}}[\sigma_n]}{S\left(\sigma_n\right)}\prod_{i=1}^{\ell-1}\left(\sum_{k=1}^{|\rho_i|_0}\frac{a^{(k)}}{k!}\left(\sum_{\substack{(\nu_1,\hdots,\nu_k) \\ \hphantom{aaaa}\in\CP^{(k)}_{\rho_i}}}\prod_{j=1}^k\frac{\Upsilon_U[\nu_j]}{S(\nu_j)}\right)\right)\right] \end{align*}
\begin{align*}
&+ \sum_{\ell=1}^{|\sigma|_0}\frac{1}{\ell!}\hspace{-5mm}\sum_{\substack{(\rho_1,\hdots,\rho_{\ell}, \\ \hphantom{aaaa}\CI_{\alpha_n}(\sigma_n))\in\CP^{(\ell+1)}_\sigma}}\left[\frac{\Upsilon_{\hat{F}}[\sigma_n]}{S\left(\sigma_n\right)}\sum_{m=1}^{\ell-1}\prod_{\substack{i=1 \\ i\neq m}}^{\ell-1}\left(\sum_{k=1}^{|\rho_i|_0}\frac{a^{(k)}}{k!}\left(\sum_{\substack{(\nu_1,\hdots,\nu_k) \\ \hphantom{aaaa}\in\CP^{(k)}_{\rho_i}}}\prod_{j=1}^k\frac{\Upsilon_U[\nu_j]}{S(\nu_j)}\right)\right)\right. \\
&\hspace{5cm}\left(\sum_{k=1}^{|\rho_m|_0}\frac{a^{(k)}}{k!}\left(\sum_{\substack{(\nu_1,\hdots,\nu_k) \\ \hphantom{aaaa}\in\CP^{(k)}_{\rho_m}}}\sum_{n}^{k}\frac{\Upsilon_U[\nu_n]}{S(\nu_n)}\prod_{\substack{j=1 \\ j\neq n}}^k\frac{\Upsilon_U[\nu_j]}{S(\nu_j)}\right)\right)
\end{align*}
The first term above is the only new term one needs to contend with.
\begin{equs}
    \Upsilon_{\hat F}(\tau)\,\partial_{v_c}\!\Upsilon_{U}(\sigma) &= \Upsilon_{\hat F}(\tau)\left(\frac{a^{(1)}\Upsilon_{U}(\sigma)}{q^2} + ...\right. \\
    &= \frac{a^{(1)}\Upsilon_{\U}(\tau)\Upsilon_{U}(\sigma)}{q} + ...
\end{equs}
This term is found in $\Upsilon_{U}(\tau_0)$. The arguments for the rest of the terms are in the same vein as for $\alpha=(0,0)$, and as such are omitted for brevity. The same argument can now be made for all $\alpha\in\mathbb{Z}_+^2\setminus\{(0,0),(1,0)\}$ and the same conclusion drawn.
\end{proof}
\begin{proposition}
The coefficient of $ \CI_{\alpha}(a(u), \hat F) $ in the expansion $ \mathcal{D}^{(0,m)} U  $ (resp. $ V_{\beta} $) is given by $ \partial_{v_{\alpha}} \partial^{m}_{x} u $
(resp. $ \partial_{v_{\alpha}} v_{\beta} $) defined in \ref{eq:derivatives}.\end{proposition}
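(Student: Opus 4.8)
The plan is to obtain this proposition as a corollary of the coherent expansion Theorem~\ref{prop:coherence}, combined with the morphism property for $\Upsilon_{V_\alpha}$ and the explicit form of the fixed point \eqref{eq:frI}. The first point is to make precise what ``the coefficient of $\CI_\alpha(a(u),\hat F)$'' means: it is the coefficient of the \emph{planted block} $\CI_\alpha(\FF)$, obtained by collecting in the tree expansion all single planted trees $\CI_\alpha(\sigma)$ with $\sigma$ a tree occurring in $\FF$ and factoring out the common weight coming from $\FF$. Inspecting \eqref{eq:frI}, one sees that (always discarding trees carrying polynomial decorations, as throughout) the only way a single planted tree can occur in $V_\beta$ is from the $\ell=0$ summand $\mathcal{K}_\beta\FF$, producing $\CI_\beta(\sigma)$, or from the $\ell=1$ summand $\tilde A(U)\,v_{\beta+(1,0)}\,\one = v_{\beta+(1,0)}\,\tilde A(U)$, whose single-planted part is $v_{\beta+(1,0)}$ times the single-planted part of $a'(u)\tilde U$; every other summand is a tree product with $\tilde A(U)^{\ell}$, $\ell\ge 1$, which carries no $\one$-component and hence contributes only forests with at least two edges at the root. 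This pins down which $\alpha$ can occur, and the whole argument is an induction on the number of edges of $\sigma$.

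For these single planted trees, the defining formula for $\Upsilon_{V_\beta}$ with $n=1$ reads $\Upsilon_{V_\beta}[\CI_\alpha(\sigma)] = \Upsilon_{\hat F}[\sigma]\,\partial_{v_\alpha}v_\beta$, while $S(\CI_\alpha(\sigma)) = S(\sigma)$ by the computation in Example~\ref{ex:plantedtree} (the identity persists in the partially planar/parametrised setting since a planted tree has trivial root structure). Plugging these into \eqref{eq:Vacoeff} and \eqref{eq:Fcoeff} gives $\inprod{V_\beta}{\CI_\alpha(\sigma)} = \Upsilon_{\hat F}[\sigma]\,\partial_{v_\alpha}v_\beta / S(\sigma) = \inprod{\FF}{\sigma}\,\partial_{v_\alpha}v_\beta$ for every $\sigma$. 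Summing over $\sigma$ and factoring out $\partial_{v_\alpha}v_\beta$ identifies the coefficient of the block $\CI_\alpha(\FF)$ in $V_\beta$ with $\partial_{v_\alpha}v_\beta$, which is exactly the second line of Definition~\ref{definition_derivative}. Specialising to $\beta=(0,0)$ and using $q = 1-a'(u)v_c$, the quantity $\partial_{v_\alpha}v_{(0,0)} = \delta(\alpha,0)\,a'(u)v_c/q + \delta(\alpha,0)$ collapses to $\delta(\alpha,0)/q = \partial_{v_\alpha}u$, giving the statement for $U=V_{(0,0)}$.

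Finally, for $\CD^{(0,m)}U$ I would use that in the augmented structure at the fixed parameter value the abstract derivative acts on planted trees by bumping the outermost edge decoration, $\CD_i\CI_\gamma(\tau)=\CI_{\gamma+e_i}(\tau)$, and by the Leibniz rule on products; hence $\CD^{(0,m)}$ shifts the edge decoration of a single planted tree by $(0,m)$, sends any forest with at least two root edges to such a forest, and annihilates $u\one$. Therefore the coefficient of $\CI_\alpha(\FF)$ in $\CD^{(0,m)}U$ equals the coefficient in $U$ of the single planted tree obtained from $\CI_\alpha(\FF)$ by undoing that shift (and is $0$ if no such tree exists), which by the previous step is $\partial_{v_{\alpha-(0,m)}}u = \delta(\alpha,(0,m))/q = \partial_{v_\alpha}\partial^m_x u$, as in the first line of Definition~\ref{definition_derivative}. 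The genuinely new work here is thus only the elementary verification that $v_\beta$, $u=v_{(0,0)}$ and $\partial^m_x u$ are each reproduced by the matching line of Definition~\ref{definition_derivative}; the real obstacle — the bookkeeping of symmetry factors and grafting multiplicities in the partially planar setting that is needed to legitimately factor the block $\CI_\alpha(\FF)$ out of the expansion — has already been dealt with in the proof of Theorem~\ref{prop:coherence}, which is why I would invoke that result rather than redo it.
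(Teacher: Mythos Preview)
Your argument is correct, but the route differs from the paper's. The paper does \emph{not} invoke Theorem~\ref{prop:coherence}; instead it computes $\langle \mathcal{D}^{(0,m)}U,\CI_\alpha\hat F\rangle$ directly from the fixed-point formula \eqref{eq:defU}, observing that only the $\ell=0,1$ summands can contribute a single planted tree. The $\ell=0$ term gives $\delta((0,m),\alpha)$, while the $\ell=1$ term (after noting that $\bar A(U)\CD^{(0,m)}\CK_{(1,0)}\hat F$ contributes nothing at this regularity) returns $a'(u)v_c\langle \CD^{(0,m)}U,\CI_\alpha\hat F\rangle$; solving this one-line implicit identity yields $\delta_{\alpha,(0,m)}/q$. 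The computation for $V_\beta$ is done analogously and uses the $U$-result with $m=0$. So the paper treats $\CD^{(0,m)}U$ first and $V_\beta$ second, in the opposite order to yours.

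What this buys: the paper's argument is entirely elementary and self-contained, making the proposition logically independent of (indeed, conceptually prior to) the coherence theorem --- it is really the computation that \emph{motivates} Definition~\ref{definition_derivative}. Your derivation is shorter once Theorem~\ref{prop:coherence} is in hand, and is a legitimate corollary, but it obscures the fact that the single-planted coefficients can be read off directly from the fixed point without any induction on trees or bookkeeping of symmetry factors.
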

\begin{proof}
Using \eqref{eq:defU} and the fact that planted trees are only seen when $\ell=0,\,1$, we get that:
\begin{equs}
\langle \mathcal{D}^{(0,m)} U , \CI_{\alpha} \hat F \rangle & =\left\langle \mathcal{D}^{(0,m)} \left[\sum_{|\ell|\le N}\frac{(\bar A(U))^{ \ell}}{\ell!}
(  \mathcal{K}_{(\ell,0)} \hat F)\right], \CI_{\alpha} \hat F \right\rangle  \\
& =  \langle
(  \mathcal{D}^{(0,m)} \mathcal{K} \hat F), \CI_{\alpha} \hat F \rangle + \langle \mathcal{D}^{(0,m)} (\bar A(U)\mathcal{K}_{(1,0)} \hat F), \CI_{\alpha} \hat F \rangle\\
&=\langle
(  \mathcal{D}^{(0,m)} \mathcal{K} \hat F), \CI_{\alpha} \hat F \rangle + \langle (\mathcal{D}^{(0,m)} \bar A(U))\mathcal{K}_{(1,0)} \hat F), \CI_{\alpha} \hat F \rangle
\end{equs}
One has to be careful in the above computation because the term $$\langle ( \bar A(U))(  \mathcal{D}^{(0,m)}\mathcal{K}_{(1, 0)} \hat F), \CI_{\alpha} \hat F \rangle,$$ that one sees when one applies Leibniz rule to the second term in the second equality above is not in general non-zero. On the level of regularity of KPZ, it can be disregarded, however. To compute the two terms in the last equality above, we first recall that we have by definition $\mathcal{K} \hat F=\mathcal{I}\hat F+v_{(0,0)}\one$. An application of $\CD^{(0,m)}$ then gives us that:
\begin{equs}
\langle
(  \mathcal{D}^{(0,m)} \mathcal{K} \hat F), \CI_{\alpha} \hat F \rangle &= \langle
(  \CI_{(0,m)} \hat F), \CI_{\alpha} \hat F \rangle \\
&= \delta((0,m),\alpha)
\end{equs}
A similar computation as before gives:
\begin{equs}
\langle (\mathcal{D}^{(0,m)} \bar A(U))\mathcal{K}_{(1,0)} \hat F, \CI_{\alpha} \hat F \rangle = a'(u) v_c \langle \mathcal{D}^{(0,m)} U , \CI_{\alpha} \hat F \rangle
\end{equs}
Inserting these into the first equation and rearranging yields:
\begin{equs}\label{eq:U coeff}
\langle \mathcal{D}^{(0,m)} U &, \CI_{\alpha}\hat F \rangle = \delta_{\alpha,(0,m)}  \frac{1}{q}\,,
\end{equs}
as claimed. For $ V_{\beta} $, we get:
\begin{equs}
\langle V_{\beta} , \CI_{\alpha} \hat F \rangle  &=\left\langle  \sum_{|\ell|\le N}\frac{(\bar A(U))^{ \ell}}{\ell!}
(  \mathcal{K}_{\beta+ (\ell,0)} \hat F), \CI_{\alpha} \hat F \right\rangle  \\
& = \langle
   \mathcal{K}_\beta \hat F, \CI_{\alpha} \hat F \rangle + \langle ( \bar A(U))
(  \mathcal{K}_{\beta+(1,0)} \hat F), \CI_{\alpha} \hat F\rangle  \\
& = \delta_{\alpha,\beta} + a'(u) v_{\beta+(1,0)} \langle  U , \CI_{\alpha} \hat F \rangle.
\end{equs}
Substituting in \eqref{eq:U coeff} with $m=0$, we get \eqref{eq:derivatives}.
\end{proof}
\section{Renormalised equation}

\label{sec::3}

\subsection{Admissible model via preparation maps}

The next component we introduce is that of a preparation map $R$. Such maps are fundamental integrants in the construction of renormalisation maps, and as such their utility here, should not come as a surprise. These were first introduced in \cite{BR18}.
\begin{definition}
	\label{def:PrepMap}
	A \textbf{preparation map} is a map $
	R : \CT \rightarrow \CT $ 
	that fixes polynomials, noises, planted trees, and such that 
	\begin{itemize}
		\item for each $ \tau \in \mathcal{T} $ there exist finitely many $\tau_i \in \mathcal{T}$ and  $\lambda_i \in \mathbb{R}$ on such that
		\begin{equation} \label{EqAnalytical}
			R(\tau) = \tau + \sum_i \lambda_i \tau_i, \quad\textrm{with}\quad \deg(\tau_i) \geq \deg(\tau) \quad\textrm{and}\quad |\tau_i|_{\Xi} < |\tau|_{\Xi},
		\end{equation} 
		where $|\tau|_{\Xi}$ refers to the number of the noises in $\tau$.
		\item one has
		\begin{equation} \label{EqCommutationRDelta}
			( R  \otimes \Id) \Delta = \Delta R.
		\end{equation}
		where $\Delta$ is the coaction defined in \cite[Eq. 8.8a \& 8.8b]{reg}.
	\end{itemize}
\end{definition}
It is known that the adjoint $R^{\star}$ of $R$ with respect to \eqref{eq:innerproduct} satisfies the identity
\begin{equs}\label{eq:identity1}
	R^{\star}(\sigma\star\tau) = \sigma\star(R^{\star}\tau),
\end{equs}
for $\sigma\in T^{+},\,\tau\in T$. Imposing this condition more generally leads to a desirable strengthening of the preparation map:
\begin{definition}\label{def:strongprepmap}
	A \textbf{strong preparation map} is a preparation map that satisfies \eqref{eq:identity1} for all $\sigma\in T,\,\tau\in T$.
\end{definition}

\begin{example} Denote by $\CB_{-}$, the canonical basis of $T_-$ and let $\ell:T_-\mapsto\mbbR$ be a character on the linear span of the forests of $T_-$. Then
	\begin{equs}\label{eq:exprep}
		R^{\star}_\ell(\tau)\coloneqq\sum_{\sigma\in\CB^{-}}\frac{\ell(\sigma)}{S(\sigma)}(\tau\star\sigma)
	\end{equs}
	is a strong preparation map. 
\end{example}
In the sequel, we consider a strong preparation map $ R_{\ell}^{\star} $ defined on $ \hat{\CT} $ where we use the product $ \star $ on $\hat{\CT}$ and now the subset of trees $ \CB^- $ is replaced by  $ \hat{\CB}^- $ by adding $c$ derivatives. One assumption on the character $ \ell(\sigma) $ is to be compatible with the $c$-derivatives in that it satisfies the property \eqref{c_derivative}. Morever, following \cite{BGHZ}, one has just to consider $ \ell $ such that it is supproted on $ \SS_{\<generic>} $ or on $ \hat{\SS}_{\<generic>} $ when working with parametrised trees. 

Let us fix a nonnegative symmetric (under the involution $x \mapsto -x$) 
smooth function $\rho$ supported in the unit ball and integrating to 1 and set $\rho_\varepsilon(t,x)=\eps^{-3}\rho(\eps^{-2} t,\eps^{-1}x)$, $\xi_\eps=\rho_\eps\ast\xi$.
Then for $\varepsilon > 0$, we denote by
\begin{equs}
\xi_{\varepsilon}\in C^{\infty}(\mathbb{R}\times\mbbT)
\end{equs}
the regularised version of the spacetime white noise $\xi$, and also set $\xi_1 = 1$. Now given a spacetime dependent strong preparation map $R$ and a parameter constant $c_0$ fixed, we define an admissible model with respect to $\Xi_i$, $i\in\{1,\xi\}$ by setting
\begin{equs}
\Pi_x^{R,c_0}\Xi_i = \hat\Pi_x^{R,c_0}\Xi_i = \xi_i,\qquad\mbox{for }i\in\{1,\xi\}
\end{equs}
and then we recursively extend $\hat{\Pi}_x^{R,c_0}$ multiplicatively
\begin{equs}
\left(\hat{\Pi}_x^{R,c_0}(\tau\bar\tau)\right)(y)=\left(\hat{\Pi}_x^{R,c_0}\tau\right)(z)\left(\hat{\Pi}_x^{R,c_0}\bar\tau\right)(y)
\end{equs} and under the operation $\CI_\alpha$ for $\alpha=(\hat a,\bar a)\in\mbbN\times\mathbb{N}^2$ by requiring:
\begin{equs}
\left(\hat\Pi_x^{R,c_0}\CI_\alpha(\tau)\right)(y) = \bigl(\d_c^{\hat\alpha}&\d^{\bar\alpha}K(c_0,\cdot)\ast\Pi_x^{R,c_0}\tau\bigr)(y) \\
&-\sum_{|k|_{\s}\le\gamma+\beta}\frac{(y-x)^{k}}{k!}\left(\d_c^{\hat\alpha} \d^{\bar \alpha}  K(c_0,\cdot)\ast\Pi_x^{R,c_0}\right)(x)
\end{equs}
and then finally
\begin{equs}
\left(\Pi^{R,c_0}_{x}\tau\right)(y) = \left(\hat\Pi^{R,c_0}_{x}\left(R \tau\right)\right)(y).
\end{equs}
In the sequel, we will use the short-hand notation $ \PPi^R $ or $ \PPi $ for denoting a model.
\subsection{Main results} 
In the build-up to our main result, let us introduce some notation and conventions. We fix some $\delta\in(0,1/2)$ and set $\Co^\delta_\star$ to be the set of parabolic H\"older-$\delta$ functions with possibly finite (but strictly positive) blow-up time, for a precise definition see \cite[Sec.~2]{BGHZ}. With these, we are able to state one of our main results:

\begin{theorem} \label{fisrt renormalisation}
The renormalised term of \eqref{eq:transformed} is given by:
\begin{equs}
\sum_{\tau \in \hat{\SS}_{\<generic>} } C_{\eps}^{a(u_\eps)}(\tau) \frac{\Upsilon_{\hat F}[\tau]}{qS(\tau)}(u_{\eps})
\end{equs}
and then the renormalised equation of \eqref{eq:quasi KPZ} is given by:
\begin{equs}[eq:renorm nonlocal]
\partial_t u_{\eps} - a(u_{\eps}) \partial_x^{2} u_{\eps} & = f(u_{\eps}) (\partial_x u_{\eps})^{2} + k(u_{\eps}) \partial_x u_{\eps} + h(u_{\eps}) + g(u_{\eps}) \xi_{\eps} \\ & + \sum_{\tau \in \hat{\SS}_{\<generic>} } C_{\eps}^{a(u_\eps)}(\tau) \frac{\Upsilon_{\hat F}[\tau](u_{\eps})}{qS(\tau)}\,.
\end{equs}
More precisely, the local solutions $u_\eps$ of \eqref{eq:renorm nonlocal} on  $\mathbb{T}$ endowed with an initial condition $u(0,\cdot)=\varphi\in\Co^{2\delta}(\mathbb{T})$, converge in probability in $\Co^\delta_\star$ to a nontrivial limit $u$.
\end{theorem}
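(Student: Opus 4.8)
\textbf{Proof plan for Theorem~\ref{fisrt renormalisation}.}

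The plan is to follow the strategy of the short proof of the renormalised equation for semilinear SPDEs in \cite{BB21a}, adapted to the augmented (parametrised) regularity structure and the implicit system \eqref{eq:transformed}. First I would recall that, by the general theory of regularity structures, the reconstruction $\CR U$ of the solution $U$ of the abstract fixed point problem associated with \eqref{eq:transformed} solves a classical PDE whose right-hand side is obtained by applying $\hat\Pi_x^{R,c_0}$ (with $c_0=a(u(z))$ at each point $z$) to $R\hat{\mathbf F}(U)$ and comparing with the naive right-hand side $\hat F(u)$. The discrepancy is exactly the renormalisation counterterm, and it is governed by the ``negative part'' of $R$, i.e.\ by the character $\ell$ supported on $\hat\SS_{\<generic>}$. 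Concretely, using the strong preparation map $R^\star_\ell$ from \eqref{eq:exprep}, the counterterm picked up in the equation for $\hat F$ is
\begin{equs}
\sum_{\tau\in\hat\SS_{\<generic>}}\frac{\ell(\tau)}{S(\tau)}\,\big\langle \hat{\mathbf F}(U),\tau\big\rangle\big|_{\text{evaluated via the model}},
\end{equs}
and by Theorem~\ref{prop:coherence}, $\langle\hat{\mathbf F},\tau\rangle = \Upsilon_{\hat F}[\tau](u)/S(\tau)$, so with the appropriate normalisation of $\ell$ as the BPHZ-type renormalisation $C_\eps(\tau)$ one obtains the term $\sum_{\tau}C_\eps^{a(u_\eps)}(\tau)\Upsilon_{\hat F}[\tau](u_\eps)/S(\tau)$. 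The extra factor $1/q$ in the statement arises because the renormalisation of $\hat F$ must be transported through the relation $u=I(a(u),\hat F)$, i.e.\ through $\partial_v u = 1/q$, to land in the equation for $u$ itself; this is the content of the morphism property in Proposition~\ref{morphism star} together with the definition of $\Upsilon_U$, which gives $\Upsilon_U[\CI(\tau)]=\Upsilon_{\hat F}[\tau]/q$.

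The key steps, in order, would be: (i) set up the abstract fixed point problem for \eqref{eq:transformed} in the spaces $D^{\gamma,\eta}_P$ of the augmented structure, invoking the Schauder estimates and the fact that the nonlinearity $\hat{\mathbf F}$ is locally Lipschitz in $U$ (this uses $a\in\mathcal C^6$, $f,g\in\mathcal C^5$ and $a\in[\lambda,\lambda^{-1}]$ to control the composition operators $A(U)$, $\tilde A(U)$, and the geometric-type series \eqref{eq:frI}); (ii) show short-time existence and uniqueness of the modelled distribution $U$, hence of $\CR U$, for the renormalised model $\PPi^{R_\ell^\star}$; (iii) identify the equation solved by $\CR U$: apply $\hat\Pi_x$ to $R_\ell^\star\hat{\mathbf F}(U)$, use \eqref{eq:identity1} and the recursive definition of the admissible model to commute $R_\ell^\star$ past the abstract integration, and read off the local counterterm; (iv) use the coherence result Theorem~\ref{prop:coherence} and Proposition~\ref{morphism star} to rewrite the counterterm in terms of $\Upsilon_{\hat F}$ and to push it from the $\hat F$-equation to the $u$-equation, producing the $1/q$; (v) finally invoke the convergence of the renormalised BPHZ model (in the space-time white-noise regime, following \cite{CH,BHZ} adapted to the augmented structure, or directly citing the corresponding statements in \cite{MH,Mate19}) to conclude that $u_\eps\to u$ in probability in $\Co^\delta_\star$, with the finite blow-up time handled as in \cite[Sec.~2]{BGHZ}.

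The main obstacle I expect is step (iii) combined with (iv): carefully tracking how the counterterm generated by the preparation map acting on the \emph{implicit} system \eqref{eq:transformed} — where $U$ is not a simple sum of monomials and planted trees but involves the nonlocal expansions $V_\alpha$ of \eqref{eq:frI} — translates into a local modification of the original equation \eqref{eq:quasi KPZ}. In the semilinear case of \cite{BB21a} this is essentially automatic because $\Upsilon_F[\tau]$ is a genuine function of $u$; here $\Upsilon_{\hat F}[\tau]$ may a priori be a nonlocal functional of $u_\eps$ (through $v_c, v_{cc}, v_{cx}, v_x$), which is precisely why the theorem's counterterm is stated with $\hat\SS_{\<generic>}$ rather than $\SS_{\<generic>}$ and why locality is deferred to Section~\ref{sec::3} and Theorem~\ref{main result 2}. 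So at this stage I would \emph{not} attempt to prove locality; I would only establish the (possibly nonlocal) form of the renormalised equation, leaning on the exact knowledge of $\Upsilon$ afforded by Theorem~\ref{prop:coherence} and the morphism property, exactly as announced in the introduction. A secondary technical point is ensuring that the pointwise choice $c_0=a(u(z))$ is consistent with the fixed single value of $c_0$ used to build the structure (cf.\ the remark after \eqref{eq:defU}); this is handled by working with the smaller structure and noting that the renormalisation is computed pointwise.
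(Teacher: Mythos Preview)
Your proposal is correct and follows essentially the same route as the paper: adapt the short proof of \cite{BB21a,BB21b} to the augmented structure by combining the strong preparation map with the coherence result Theorem~\ref{prop:coherence}, evaluate $\hat\Pi_x^{R,c_0}(R\hat{\mathbf F})(x)$ pointwise, and read off the counterterm. The only minor deviation is your explanation of the $1/q$ factor via $\Upsilon_U[\CI(\tau)]=\Upsilon_{\hat F}[\tau]/q$ and ``transporting through $\partial_v u=1/q$''; the paper instead obtains it in one line by noting that $\hat F = qF + \cdots$, so a counterterm $C$ added to $\hat F$ in the implicit system corresponds to $C/q$ added to $F$ in \eqref{eq:quasi KPZ} --- same content, slightly more direct. (Also, in your step (iii) you should be applying $R_\ell$, not its adjoint $R_\ell^\star$, to $\hat{\mathbf F}$; the adjoint enters only through the identity $\langle R_\ell\hat{\mathbf F},\tau\rangle=\Upsilon_{\hat F}[R_\ell^\star\tau]$, which together with Fa\`a di Bruno yields the key identity $R\hat{\mathbf F}=\sum_{\mfl}\hat{\mathbf F}_{R^\star\Xi_\mfl}\Xi_\mfl$ that the paper makes explicit.)
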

\begin{proof}
We follow here very closely \cite[Th. 10]{BB21b}, which can be traced back to \cite[Th. 9]{BB21a}. The key concept there is the notion of coherence that comes from \cite{BCCH} which is analogous to Theorem~\ref{prop:coherence} in our setup. Recall that:
$$
\FF\coloneqq\sum_{\tau}\frac{\Upsilon_{\hat F}[\tau]}{S(\tau)}\tau\text{,       and       }{\sf R}f(x) = \left(\hat\Pi^{R,c_0}_{x}\left(R f\right)\right)(x),
$$
for $f$ a modelled distribution, $\CQ_{\gamma-2}$  a projection from $T$ onto the subset thereof that consists of trees of degree less than $\gamma-2$ and finally $\gamma = 3/2$. The function ${\sf v}_i$ is a modelled distribution of regularity $\gamma$ and explosion index $\eta$ (refer to \cite{reg}). One has by construction and proposition~\eqref{prop:coherence}:
$$
\langle R\mathsf{v},\tau\rangle = \langle \mathsf{v},\,R^*\tau\rangle = \Upsilon_{\hat F}[R^*\tau],
$$
which is ultimately just the right derivation property of the preparation map (refer to \cite[Proposition 3.3]{BB21b}). Recall that the non-linearity $\hat{F}$ is such that in the lift $\FF$ one only sees trees of the form: $\tau = \Xi_\mfl\prod_{i=1}^n\CI_{\alpha_i}(\tau_i)$. Using the definition of $\Upsilon_{\hat{F}}$ for this particular form of $\tau$, one can rewrite the equality
$$
R {\FF} = \sum_{\textrm{deg}(\tau)<\gamma-2} \frac{\Upsilon_{\hat F}[R^*\tau]}{S(\tau)}\,\tau
$$ 
as:
\begin{equation*}
\begin{aligned}
R {\FF} &= \sum_{\substack{\mfl\in\{\one,\xi\} }} \sum_{\alpha_1,\hdots,\alpha_n} \sum_{\tau_1,...,\tau_n} \left\{\frac{\prod_{i=1}^n S(\tau_i)}{S\big(\Xi_{\mfl}\prod_{i=1}^n\CI_{a_i}(\tau_i)\big)} \prod_{i=1}^n  \frac{\Upsilon_{\hat{F}}(\tau_i)}{S(\tau_i)}\, \CI_{\alpha_i}(\tau_i)\right.\\ &\left.\hspace{6.65cm}\prod_{i=1}^n \partial_{v_{a_i}}\Upsilon_{\hat F}(R^*\Xi_{\mfl}) \Xi_{\mfl}\right\},
\end{aligned}
\end{equation*}
By grouping together  $\alpha_i$'s and $ \tau_i $'s so that $ \CI_{a_i}(\tau_i) \neq \CI_{a_j}(\tau_j) $ for $ i \neq j $ and rearranging the index, we can rewrite $\tau$ as $\prod_{i=1}^n\CI_{\alpha_i}^{\beta_i}(\tau_i)$ with also the assumption \eqref{edges_not_commuting}, which in turn allows us to rewrite the equation above as:
\begin{equs}
R{\FF}= \sum_{\substack{\mfl\in\{\one,\xi\}}} \sum_{a_1,...,a_n} \sum_{\tau_1,...,\tau_n} \prod_{i=1}^n \frac{1}{\beta_i !} \left( \frac{\Upsilon_{\hat F}(\tau_i)}{S(\tau_i)}\,\CI_{a_i}(\tau_i) \right)^{\beta_i}\prod_{i=1}^n (\partial_{v_{\alpha_i}})^{\beta_{i}} \Upsilon_{\hat F} (R^*\Xi_{\mfl}) \Xi_{\mfl}.
\end{equs}
Here $\beta_i!$ comes from the fact that ratio of  $S(\tau_i)^{\beta_i}$ and $S(\CI^{\beta_i}_{\alpha_1}(\tau_i))$ is $1/\beta_i!$, as we have seen in the proof of proposition~\ref{prop:coherence}. Now, applying Fa\`a di Bruno formula as in \cite[Lemma A.1]{BCCH}, one gets:
\begin{equs}
R{\FF} = \sum_{\substack{\mfl\in\{\one,\xi\} }} \sum_{\alpha_1,...,\alpha_n} \sum_{\beta_1,...,\beta_n } \prod_{\alpha}\prod_{i=1}^n \frac{1}{\beta_i !} \left(\left[V_\alpha\right]_{\alpha_i} - \left[v_{\alpha}\right]_{\alpha_i} \right)^{\beta_j} \prod_{i=1}^n (\partial_{v_{\alpha_i}})^{\beta_{i}} \Upsilon_{\hat F} (R^*\Xi_{\mfl}) \Xi_{\mfl},
\end{equs}
where $\left[V_\alpha\right]_{\alpha_i}\coloneqq D_{\alpha_i}V_{\alpha}$, and $\left[v_{\alpha}\right]_{\alpha_i}\coloneqq \inprod{V_{\alpha_i}}{\one}$. One can see in the summation above, the definition of the lift of the smooth function $\Upsilon_{\hat F}[R^*\Xi_\mfl]$
\begin{equs} \label{main_identity}
R\FF= \sum_{\mfl\in\{\one,\xi\}}\FF_{R^*\Xi_\mfl}\Xi_{\mfl},
\end{equs}
where as usual $\FF_{R^\star\Xi_\mfl} \coloneqq \inprod{\FF}{R^*\Xi_\mfl}$. Returning to the equation for a fixed spacetime point $x$, one gets:
 
\begin{equation*} \begin{split}
\big((\partial_t-a(u)\partial_{x}^2)u\big)(x) &= \big({\sf R}{\FF}\big)(x)  = \hat\Pi^{R,c_0}_{x}\big(R{\FF}(x)\big)(x)   \\
&= \sum_{\mfl\in\{\one,\xi\}} \hat\Pi^{R,c_0}_{x}\Big(\FF_{R^{*}\Xi_\mfl}\left(\mathbf{V}(x)\right)\Xi_\mfl\Big)(x)   \\
&= \sum_{\mfl\in\{\one,\xi\}} \FF_{R^*\Xi_\mfl}\left(\hat\Pi^{R,c_0}_{x}\mathbf{V}(x)\right)\hat\Pi^{R,c_0}_{x}\Xi_\mfl   \\
&=  \FF_{R^*\Xi_\one}(\mathbf{v}(x)) + \FF_{R^*\Xi_\xi}(\mathbf{v}(x))\xi,
\end{split}\end{equation*} 
where $\hat\Pi^{R,c_0}_x\mathbf{V}(x)$ codifies applying $\hat\Pi^{R,c_0}_x$ to the entries of $\mathbf{V}(x)$ which of course gives $\mathbf{v}$. This reveals how the renormalisation is affected at the level of the noises. In particular, we see that as the preparation map leaves $\Xi_\xi$ invariant, the renormalisation occurs in the term $\FF_{R^* \Xi_1}$. From here we need to make an appropriate choice of preparation maps and the BPHZ form \eqref{eq:exprep} works for us, yielding:
\begin{equs}
\sum_{\tau \in \hat{\SS}_{\<generic>} } C_{\eps}^{a(u_\eps)}(\tau) \frac{\Upsilon_{\hat F}[\tau]}{S(\tau)}(u_{\eps})
\end{equs}
Finally notice that $\hat{F} = qF + ...$, which explains the presence of the $1/q$ factor when one backtracks from $\hat{F}$ to $F$.
\end{proof}
\begin{remark}
The above result cannot preclude the possibility of some nonlocal term appearing on the right-hand side of the equation. We need some constraints on the $ C_{\eps}(\tau) $ in order to make these terms disappear.
\end{remark}

We introduce now the so-called covariant derivatives that are known from \cite{BGHZ} to generate all the trees in our analysis:
\begin{equs}
	\Nabla_{\tau_2} \tau_1 &=  a
	\begin{tikzpicture}[scale=0.2,baseline=2]
		\draw[symbols]  (-.5,2.5) -- (0,0) ;
		\draw[tinydots] (0,0)  -- (0,-1.3);
		\node[var] (root) at (0,-0.1) {\tiny{$ \tau_1 $ }};
		\node[var] (diff) at (-0.5,2.5) {\tiny{$ \tau_2 $ }};
		\node[blank] at (0.3,1.25) {\tiny{$0$}};
	\end{tikzpicture} + \frac{1}{2}\;
	\begin{tikzpicture}[scale=0.2,baseline=2]
		\coordinate (root) at (0,0);
		\coordinate (t1) at (-1,2);
		\coordinate (t2) at (1,2);
		\draw[tinydots] (root)  -- +(0,-0.8);
		\draw[kernels2] (t1) -- (root);
		\draw[kernels2] (t2) -- (root);
		\node[not] (rootnode) at (root) {};
		\node[var] (t1) at (t1) {\tiny{$ \tau_1 $}};
		\node[var] (t1) at (t2) {\tiny{$ \tau_2 $}};
		\node[blank] at (-1.2,0.6) {\tiny{$0$}};
		\node[blank] at (1.2,0.6) {\tiny{$0$}};
	\end{tikzpicture} \\
	\Nabla_{\tau_2}^1 \tau_1 &=  \partial(a\cdot)
	\begin{tikzpicture}[scale=0.2,baseline=2]
		\draw[symbols]  (-.5,2.5) -- (0,0) ;
		\draw[tinydots] (0,0)  -- (0,-1.3);
		\node[var] (root) at (0,-0.1) {\tiny{$ \tau_1 $ }};
		\node[var] (diff) at (-0.5,2.5) {\tiny{$ \tau_2 $ }};
		\node[blank] at (0.3,1.25) {\tiny{$1$}};
	\end{tikzpicture} + \frac{1}{2}\; (\partial \cdot )  \left( 
	\begin{tikzpicture}[scale=0.2,baseline=2]
		\coordinate (root) at (0,0);
		\coordinate (t1) at (-1,2);
		\coordinate (t2) at (1,2);
		\draw[tinydots] (root)  -- +(0,-0.8);
		\draw[kernels2] (t1) -- (root);
		\draw[kernels2] (t2) -- (root);
		\node[not] (rootnode) at (root) {};
		\node[var] (t1) at (t1) {\tiny{$ \tau_1 $}};
		\node[var] (t1) at (t2) {\tiny{$ \tau_2 $}};
		\node[blank] at (-1.2,0.6) {\tiny{$1$}};
		\node[blank] at (1.2,0.6) {\tiny{$0$}};
	\end{tikzpicture}  + 
	\begin{tikzpicture}[scale=0.2,baseline=2]
		\coordinate (root) at (0,0);
		\coordinate (t1) at (-1,2);
		\coordinate (t2) at (1,2);
		\draw[tinydots] (root)  -- +(0,-0.8);
		\draw[kernels2] (t1) -- (root);
		\draw[kernels2] (t2) -- (root);
		\node[not] (rootnode) at (root) {};
		\node[var] (t1) at (t1) {\tiny{$ \tau_1 $}};
		\node[var] (t1) at (t2) {\tiny{$ \tau_2 $}};
		\node[blank] at (-1.2,0.6) {\tiny{$0$}};
		\node[blank] at (1.2,0.6) {\tiny{$1$}};
	\end{tikzpicture} \right)
\end{equs}
The second covariant derivative could be seen as the derivative of the first one. Here we have used short-hand notation. In fact for a renormalisation constant
\begin{equs} \label{c_derivative}
\hat{C}^a_{\varepsilon} = 	 C_\eps(c_1 \cdots , c_{[\tau_1]}, \bar{c}_1, \bar{c}_2 , \tilde{c}_1,\cdots , \tilde{c}_{[\tau_2]})|_{c_i= \bar{c}_j = \tilde{c}_{\ell} = a}.
\end{equs}
One has:
\begin{equs}
	\partial(a \hat{C}^a_{\varepsilon}) = \hat{C}^a_{\varepsilon} + \left(   (\partial_{\bar{c}_1} + \partial_{ \bar{c}_2})C_\eps(c_1 \cdots , c_{[\tau_1]}, \bar{c}_1, \bar{c}_2 , \tilde{c}_1,\cdots , \tilde{c}_{[\tau_2]}) \right)|_{c_i= \bar{c}_j = \tilde{c}_{\ell} = a}.
\end{equs} 
More generally, one can set
\begin{equs} \label{covariant_derivative}
		\Nabla_{\tau_2}^m \tau_1 &=  \partial^m(a\cdot)
	\begin{tikzpicture}[scale=0.2,baseline=2]
		\draw[symbols]  (-.5,2.5) -- (0,0) ;
		\draw[tinydots] (0,0)  -- (0,-1.3);
		\node[var] (root) at (0,-0.1) {\tiny{$ \tau_1 $ }};
		\node[var] (diff) at (-0.5,2.5) {\tiny{$ \tau_2 $ }};
		\node[blank] at (0.45,1.25) {\tiny{$m$}};
	\end{tikzpicture} + \frac{1}{2}\; (\partial^m \cdot )  \left(  \sum_{k+\ell = m} \frac{1}{\ell!k!} 
	\begin{tikzpicture}[scale=0.2,baseline=2]
		\coordinate (root) at (0,0);
		\coordinate (t1) at (-1,2);
		\coordinate (t2) at (1,2);
		\draw[tinydots] (root)  -- +(0,-0.8);
		\draw[kernels2] (t1) -- (root);
		\draw[kernels2] (t2) -- (root);
		\node[not] (rootnode) at (root) {};
		\node[var] (t1) at (t1) {\tiny{$ \tau_1 $}};
		\node[var] (t1) at (t2) {\tiny{$ \tau_2 $}};
		\node[blank] at (-1.3,0.6) {\tiny{$k$}};
		\node[blank] at (1.3,0.6) {\tiny{$\ell$}};
	\end{tikzpicture}
	\right).
\end{equs}
In general, in expansion of the solution, the number of parameter derivative is bounded, although in the interest of brevity, we will write sums over the number of parameter derivative  $ \sum_m $ without indicating the bound.
\begin{assumption}\label{chain rule}
	We assume that the renormalisation constants $ C_{\varepsilon}^{a(u_\eps)}(\tau) $ have been chosen such that they satisfy the chain rule in the sense that the covariant derivatives generate the counter-terms. It means that the term $ \sum_{\tau \in \hat{\SS}_{\<generic>} } C_{\eps}^{a(u_\eps)}(\tau) \frac{\Upsilon_{\hat F}[\tau]}{S(\tau)}(u_{\eps}) $ can be written as a linear combination of the form $ \Upsilon_{\hat F}[ \sum_m \Nabla_{\tau_2}^m \tau_1] $ with $ \tau_1, \tau_2 $ also satisfying the chain rule. 
\end{assumption}

\begin{remark}
	In fact, the terms of the above assumption are generated by the $ \Nabla_{}^m $ and the noise $ \<generic> $. In the context of space-time white noise, they contain at most four covariant derivatives and four noises $ \<generic> $.
	\end{remark}

\begin{remark}
	The previous assumption is very natural. Indeed, when the sum boils down to just $m=0 $, one recovers the chain rule property stated in \cite{BGHZ} which is equivalent to the integration by parts identities used in \cite{Mate19}.  The $m\neq 0$ case corresponds to taking parameter derivatives of the identities obtained in \cite{BGHZ}. It is shown in  \cite[Prop. 3.9]{BGHZ} that if one performs the BPHZ renormalisation and looks at the projection onto the orthogonal of the geometric terms, the renormalisation constants converge to a limit. If one takes parameter derivatives of them, one also gets a finite limit.
 This corresponds to the parameter derivation of the integration by parts formulae found in \cite[Lem. 2.4]{Mate19}.
\end{remark}
 
The next theorem is the main result of this paper.

\begin{theorem}\label{main result 2}
The renormalised equation of \eqref{eq:quasi KPZ} is given by:
\begin{equs}[eq:renorm local]
\partial_t u_{\eps} - a(u_{\eps}) \partial_x^{2} u_{\eps} & = f(u_{\eps}) (\partial_x u_{\eps})^{2} + k(u_{\eps}) \partial_x u_{\eps} + h(u_{\eps}) + g(u_{\eps}) \xi_{\eps} \\ & + \sum_{\tau \in \SS_{\<generic>} } C_{\eps}^{a(u_\eps)}(\tau) \frac{\Upsilon_{F}[\tau](u_{\eps})}{S(\tau)}
\end{equs}
where the $ C^{a(u_\eps)}_{\eps}(\tau) $ satisfy the Assumption~\ref{chain rule} on the chain rule.
More precisely, the solutions $u_\eps$ of \eqref{eq:renorm local} converge in the same sense as in Theorem \ref{fisrt renormalisation}.
\end{theorem}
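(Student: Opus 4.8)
The plan is to derive Theorem~\ref{main result 2} from Theorem~\ref{fisrt renormalisation} by showing that, once the chain-rule Assumption~\ref{chain rule} is imposed, the potentially non-local counterterm
\[
\sum_{\tau \in \hat{\SS}_{\<generic>} } C_{\eps}^{a(u_\eps)}(\tau) \frac{\Upsilon_{\hat F}[\tau]}{qS(\tau)}(u_{\eps})
\]
collapses to the local expression $\sum_{\tau \in \SS_{\<generic>} } C_{\eps}^{a(u_\eps)}(\tau) \frac{\Upsilon_{F}[\tau](u_{\eps})}{S(\tau)}$. The starting point is the observation made at the very end of the proof of Theorem~\ref{fisrt renormalisation}: since $\hat F = qF + \dots$, the factor $1/q$ in the counterterm of \eqref{eq:transformed} disappears when one pulls the renormalisation back to the original equation \eqref{eq:quasi KPZ new}, leaving a counterterm of the form $\sum_{\tau \in \hat{\SS}_{\<generic>} } C_{\eps}^{a(u_\eps)}(\tau) \frac{\Upsilon_{\hat F}[\tau]}{S(\tau)}(u_{\eps})$ acting on the divergence-form nonlinearity. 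So the analytic content — convergence of $u_\eps$ in $\Co^\delta_\star$ — is already supplied by Theorem~\ref{fisrt renormalisation}, and what remains is a purely algebraic identity between the two displayed sums.

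First I would invoke Assumption~\ref{chain rule}: the non-local counterterm, written in terms of $\Upsilon_{\hat F}$, equals $\Upsilon_{\hat F}\big[\sum_m \Nabla_{\tau_2}^m \tau_1\big]$ for a linear combination of pairs $(\tau_1,\tau_2)$ that themselves satisfy the chain rule. The key structural fact is that the $\Upsilon_{\hat F}$ applied to a covariant-derivative combination can be re-expressed, up to the $1/q$ bookkeeping, as $\Upsilon_{F}$ applied to the corresponding covariant-derivative combination built on the \emph{semilinear} (non-parametrised) trees in $\SS_{\<generic>}$; this is the content of Theorem~\ref{th:covloc}, which I would cite as the inductive engine. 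Concretely, Theorem~\ref{th:covloc} checks locality inductively on the complexity of the covariant-derivative expression: the base case is that $\Upsilon_{\hat F}[\<generic>] = \hat g = qg$ and $\Upsilon_F[\<generic>] = g$ differ exactly by the factor $q$, matching the $1/q$ in \eqref{eq:renorm nonlocal} of Theorem~\ref{fisrt renormalisation}; the inductive step uses the morphism property \eqref{eq:morphism} together with Definition~\ref{definition_derivative} of the derivatives $\partial_{v_\alpha}$ to show that attaching a covariant derivative $\Nabla^m$ produces, after applying $\Upsilon_{\hat F}$, a function of $u$ and its \emph{local} $x$-derivatives only — the non-local quantities $v_c, v_{cc}, v_{cx}, v_x$ cancel because covariant derivatives are precisely the combinations on which the chain-rule identities of \cite{BGHZ} (and their parameter-derivative extensions, cf. the remark following Assumption~\ref{chain rule}) force these cancellations. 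The warm-up example in the preceding subsection illustrates this cancellation in the lowest-order case and the general induction follows the same pattern.

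Finally, I would assemble the pieces: starting from the counterterm of Theorem~\ref{fisrt renormalisation}, rewrite it via Assumption~\ref{chain rule} as a sum of $\Upsilon_{\hat F}[\sum_m \Nabla_{\tau_2}^m\tau_1]/(qS(\cdot))$, apply Theorem~\ref{th:covloc} to each summand to replace it by $\Upsilon_{F}[\sum_m \Nabla_{\tau_2}^m\tau_1]/S(\cdot)$ over the smaller set $\SS_{\<generic>}$ (the $1/q$ being absorbed), and re-index to obtain $\sum_{\tau \in \SS_{\<generic>} } C_{\eps}^{a(u_\eps)}(\tau) \Upsilon_{F}[\tau](u_{\eps})/S(\tau)$; the identification of symmetry factors $qS(\tau)$ versus $S(\tau)$ is handled exactly as in the proof of Theorem~\ref{prop:coherence} (the $\beta_i!$ combinatorics). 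The convergence statement is then inherited verbatim from Theorem~\ref{fisrt renormalisation}, since the two right-hand sides define the same random PDE. \emph{The main obstacle} I anticipate is the bookkeeping in Theorem~\ref{th:covloc}: one must verify that the parameter-derivative decorations (the formal $\partial_c$ acting on the $C_\eps(\tau)$, cf.\ \eqref{c_derivative}) interact correctly with the covariant derivatives $\Nabla^m$ — i.e.\ that taking $m$ parameter derivatives of a chain-rule identity from \cite{BGHZ} again yields a relation among local quantities with convergent renormalisation constants — and that this is compatible with the non-commutativity of the $\partial_{v_\alpha}$ recorded in the remark after Definition~\ref{definition_derivative}. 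This is the one place where the restriction to the space-time white-noise regime (at most four covariant derivatives, at most four noises) is genuinely used to keep the induction finite.
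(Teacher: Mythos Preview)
Your outline is the same route the paper takes: start from Theorem~\ref{fisrt renormalisation}, invoke Assumption~\ref{chain rule} to write the counterterm as a linear combination of $\Upsilon_{\hat F}\big[\sum_m \Nabla_{\tau_2}^m\tau_1\big]$, and then use the locality result to collapse this to $\Upsilon_F$ on $\SS_{\<generic>}$. Two points deserve sharpening, however.

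First, you treat Theorem~\ref{th:covloc} as handling all $m$ at once, but in the paper it is a statement only about the \emph{sum} $\Nabla_{\tau_2}\tau_1 + \Nabla^1_{\tau_2}\tau_1$ (i.e.\ $m=0$ and $m=1$ together), yielding exactly
\[
\Upsilon_{\hat F}[\Nabla_{\tau_2}\tau_1] + \Upsilon_{\hat F}[\Nabla^1_{\tau_2}\tau_1] = q\,\Upsilon_F[\mathcal{P}\,\Nabla_{\tau_2}\tau_1].
\]
The terms with $m\ge 2$ are disposed of separately by Proposition~\ref{prop:nullprop}, which shows that for $\tau_1,\tau_2\in\hat T^{\loc}$ the relevant trees already lie in $\hat T^{\loc}$ with vanishing $\Upsilon_{\hat F}$. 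You should cite both results, not fold everything into Theorem~\ref{th:covloc}.

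Second, your remark that ``the identification of symmetry factors $qS(\tau)$ versus $S(\tau)$ is handled exactly as in the proof of Theorem~\ref{prop:coherence} (the $\beta_i!$ combinatorics)'' is a red herring. There is no combinatorial matching of symmetry factors here: the factor $q$ in the displayed identity above cancels the $1/q$ from Theorem~\ref{fisrt renormalisation} directly, and the projection $\mathcal{P}$ simply drops the parameter decorations so that one lands on $\SS_{\<generic>}$ rather than $\hat\SS_{\<generic>}$. The expression $\Upsilon_F[\sum_m\Nabla^m_{\tau_2}\tau_1]$ that you write does not make sense for $m\ge 1$ on unparametrised trees; the correct target is $\Upsilon_F[\mathcal{P}\,\Nabla_{\tau_2}\tau_1]$, and the $m\ge 1$ contributions have already been absorbed on the $\Upsilon_{\hat F}$ side.
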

\begin{proof} From Theorem \ref{fisrt renormalisation}, one gets the following counter-terms:
	\begin{equs}
		\sum_{\tau \in \hat{\SS}_{\<generic>} } C_{\eps}^{a(u_\eps)}(\tau) \frac{\Upsilon_{\hat F}[\tau](u_{\eps})}{q S(\tau)}\,.
	\end{equs}
According to the Assumption \ref{chain rule}, it can be rewritten as a linear combination of terms of the form
$ \sum_{m} \Upsilon_{\hat F} \left[ \Nabla_{\tau_2}^m \tau_1 \right]$. In the previous sum, the terms for $ m \geq 2 $ are equal to zero due Proposition \ref{prop:nullprop}, and the first two terms ($m = 0,\,1$) as we will see in Theorem \ref{th:covloc}, satisfy:
\begin{equs}
	\Upsilon_{\hat F} \left[ \Nabla_{\tau_2} \tau_1 \right] + \Upsilon_{\hat F} \left[ \Nabla_{\tau_2}^1 \tau_1 \right]= q\Upsilon_{F}\brsq{ \mathcal{P}\nabla_{\tau_2}\tau_1}
	\end{equs}
where $ \mathcal{P} $ is the projection from parametrised to unparametrised trees.
So in the end, one obtains a counter-term 
\begin{equs}
\sum_{\tau \in \SS_{\<generic>} } C_{\eps}^{a(u_\eps)}(\tau) \frac{\Upsilon_{F}[\tau](u_{\eps})}{S(\tau)}
\end{equs}
satisfying Assumption \ref{chain rule} without any paremeter derivative ($ m=0 $).
\end{proof}

\begin{remark}
One can check as an exercise that the renormalised equations given in \cite{MH,Mate19} are covered by this theorem. As in \cite{OW-div}, we see also that there is not a need for renormalisation when the equation is in divergence form
and the right-hand side only contains the noise.
\end{remark}

\begin{remark}
This result has the rather interesting feature that the chain rule transforms a non-commutative structure into a commutative one. From Theorem~\ref{fisrt renormalisation} to Theorem~\ref{main result 2}, we remove the augmented regularity structure and its planar trees. Indeed, the renormalisation looks the same as in the case where $ a $ is constant.
\end{remark}

\begin{remark}
The dependence of $C_\eps^c(\tau)$ on $c$ has no explicit form in our setting. However, if one instead considers a higher dimensional variant of \eqref{eq:quasi KPZ} with noise that is white in only space, then the kernels that play a role in the regularity structure are the Green's functions $\bar P(c,\cdot)$ of the operators $-c\Delta$.
Their form is of course even simpler than \eqref{eq:kernel scaling}: $\bar P(c,\cdot)=c^{-1}\bar P(1,\cdot)$, and therefore $C_\eps^c(\tau)=c^{-[\tau]}C_\eps^1(\tau)$.
\end{remark}

\subsection{Chain rule: A warm up example}\label{sec:computation}

In the sections preceding this, we have performed a transformation which introduces $v_\alpha$ into \eqref{eq:transformed} and solved the resulting system of equations in the Regularity Structures paradigm before transforming the solution back. Naturally one expects to see counter-terms in both $U$ and $V$ when solving the transformed equation, which can be potentially non-local functions of the solution $u$. It is imperative therefore that the counter-terms in $V$ vanish when transforming back for this line of reasoning to be well-founded, as we have seen in Theorem~\ref{main result 2}. To ground this ``magical disappearance'' we illustrate this process on the trees: $ \<Xi2> \,, \<I1Xitwo>  $. Indeed, due to the recursive definitions \eqref{eq:upsilonF} and \eqref{eq:upsilonhatF}, we have for $ \<Xi2>  $:
\begin{equs}\label{eq:eq1}
\Upsilon_{F}[\<Xi2>] & = \Upsilon_{F}[\<generic>]\, \partial_u \Upsilon_{F}[\<generic>] = g g', \,\,\, \Upsilon_{\hat F}[\hspace{-1mm}\<pXi2>]  = \Upsilon_{\hat F}[\<generic>]  \, \partial_{v_{c}} \Upsilon_{\hat F}[\<generic>] = - q a' g^2, \\
\Upsilon_{\hat F}[\<Xi2>] & = \Upsilon_{\hat F}[\<generic>]  \, \partial_{v} \Upsilon_{\hat F}[\<generic>] = \hat g \partial_v \hat g = q g g'  +  (- a'' v_c - (a')^2 v_{cc})  g^2.
\end{equs} 
Then for $ \<I1Xitwo>  $, we get
\begin{equs}
 \frac{1}{2} \Upsilon_{F}[\<I1Xitwo>]&  = \frac{1}{2} \Upsilon_{ F}[\<generic>]^{2} \partial_{\partial_x u}^{2} (f-a') (\partial_{x} u)^2 = (f-a') g^2, \\
\frac{1}{2} \Upsilon_{\hat F}[\<I1Xitwo>]
& = \frac{1}{2}\Upsilon_{\hat F}[\<generic>]^{2} \partial_{v_x}^{2} \hat F_{1}
\\ & = \frac{1}{q^2} \hat f \hat g^2 +  \frac{1}{q} a' \hat g^2 = q (f-a') g^2  + (a (a')^{2} v_{cc} + a a'' v_c) g^2 + q a' g^2 \\
\Upsilon_{\hat F}[\hspace{-1mm}\<I1Xitwou>\hspace{-1mm}] & = \Upsilon_{\hat F}[\<generic>]^{2} \partial_{v_x}\partial_{v_{cx}} \hat F_{1} =  2 q a a' g^2. 
\\ 
\Upsilon_{\hat F}[\hspace{-1mm}\<I1Xitwoub>\hspace{-1mm}] & = \Upsilon_{\hat F}[\<generic>]^{2}\partial_{v_{cx}} \partial_{v_x} \hat F_{1} =  2 q a a' g^2.
\end{equs}
These computations reveal the following relationship:
\begin{equs}\label{eq:eq1}
a \Upsilon_{\hat F}\brsq{\<Xi2>} + \frac{1}{2}\Upsilon_{\hat F}\brsq{\<I1Xitwo>}&+\partial (a \cdot ) \Upsilon_{\hat F}\brsq{\!\!\<pXi2>} \\ &+ (\partial \cdot)\frac{1}{2} \left( \Upsilon_{\hat F}\brsq{\!\<I1Xitwou>\!} + \Upsilon_{\hat F}\brsq{\!\<I1Xitwoub>\!} \right) = q a \Upsilon_{ F}\brsq{\<Xi2>} + q \frac{1}{2} \Upsilon_{F}\brsq{\<I1Xitwo>}
\end{equs}
Now using the easily verifiable facts that:
\begin{equs}
\Upsilon_{\hat F}\brsq{\hspace{-1mm}\<puXi2>}  = 0, \quad m > 1,  \quad
\Upsilon_{\hat F}\brsq{\hspace{-1mm}\<I1Xitwoup>\hspace{-1mm}}=0 , \quad k+ \ell > 1
\end{equs}
and $ S(\<Xi2>) = 1, S(\<I1Xitwo>) = 2 $, one has the following identity:
\begin{equs}
& \sum_{\tau \in \hat{\SS}_{\<generic>}^{(2)}}  C_{\eps}(\tau) \frac{\Upsilon_{\hat F}[ \tau]}{qS(\tau)}  = \frac{1}{q}\left(C_{\epsilon}(\<Xi2>)\Upsilon_{\hat F}\brsq{\<Xi2>}+ \frac{1}{2} C_{\epsilon}(\<I1Xitwo>  )\Upsilon_{\hat F}\brsq{\<I1Xitwo>}\right. \\
&\left. + 
 C_{\epsilon}(\!\!\<pXi2>) \Upsilon_{\hat F}\brsq{\!\!\<pXi2>} + \frac{1}{2}  C_{\eps}(\!\<I1Xitwou>\!)  \Upsilon_{\hat F}\brsq{\!\<I1Xitwou>\!} +  \frac{1}{2}  C_{\eps}(\!\<I1Xitwoub>\!)  \Upsilon_{\hat F}\brsq{\!\<I1Xitwoub>\!}\right)
\end{equs}
\noindent Wanting to leverage \eqref{eq:eq1}, we now adjust the preparation map to enforce
\begin{equs} \label{identity constants}
  C_{\varepsilon}(\<Xi2>) =  a  C_{\varepsilon}(\<I1Xitwo>),
\end{equs}
which yields to an application of $\partial$:
\begin{equs}
 C_{\varepsilon}(\!\!\<pXi2>) = C_{\varepsilon}(\<I1Xitwo>) + 2 a  \left( C_{\eps}(\!\<I1Xitwou>\!) + C_{\eps}(\!\<I1Xitwoub>\!)  \right).
\end{equs}
Therefore:
\begin{equs}
\sum_{\tau \in \SS_{\<generic>}^{(2)}}  \hspace{-2mm}C_{\eps}(\tau) \frac{\Upsilon_{\hat F}[\tau]}{qS(\tau)}  & =  \frac{1}{q}C_{\varepsilon}(\<I1Xitwo>) \Bigl( a \Upsilon_{\hat F}\brsq{\<Xi2>} + \frac{1}{2}\Upsilon_{\hat F}\brsq{\<I1Xitwo>} \\
&\hspace{2cm}+  \partial (a \cdot ) \Upsilon_{\hat F}\brsq{\!\!\<pXi2>} + (\partial \cdot)\frac{1}{2}\Upsilon_{\hat F}\brsq{\!\<I1Xitwou>\!} \Bigr) \\
& =  C_{\varepsilon}(\<I1Xitwo>) \left( a \Upsilon_{ F}\brsq{\<Xi2>} + \frac{1}{2} \Upsilon_{ F}\brsq{\<I1Xitwo>} \right).
\end{equs}
where we have used the fact that $ \Upsilon_{\hat F}(\hspace{-1mm}\<I1Xitwou>\hspace{-1mm}) =  \Upsilon_{\hat F}(\hspace{-1mm}\<I1Xitwoub>\hspace{-1mm}) $.
Hence we see the locality of the constants for these particular trees. It turns out that this is true more generally as we will see in the next section.

\subsection{Chain rule: The general case}
With the view of proving that the calculation in the previous section is not a peculiarity of that particular tree and is indeed true for all the trees we see for the generalised KPZ, we would like to introduce a notion of locality on $T$. Pursuant to this, we define a projection $\CP:\hat{T}\mapsto T$ that canonically sends every parameterised tree with zero parameter derivatives to its unparametrised tree, and kills everything else.
\begin{example}
Consider trees of the form $\begin{tikzpicture}[scale=0.2,baseline=2]
\draw[symbols]  (-.5,2.5) -- (0,0) ;
\draw[tinydots] (0,0)  -- (0,-1.3);
\node[var] (root) at (0,-0.1) {\tiny{$ \tau_1 $ }};
\node[var] (diff) at (-0.5,2.5) {\tiny{$ \tau_2 $ }};
\node[blank] at (0.3,1.25) {\tiny{$k$}};
\end{tikzpicture}$ for $k\in\mbbN$ and $\begin{tikzpicture}[scale=0.2,baseline=2]
\coordinate (root) at (0,0);
\coordinate (t1) at (-1,2);
\coordinate (t2) at (1,2);
\draw[tinydots] (root)  -- +(0,-0.8);
\draw[kernels2] (t1) -- (root);
\draw[kernels2] (t2) -- (root);
\node[not] (rootnode) at (root) {};
\node[var] (t1) at (t1) {\tiny{$ \tau_1 $}};
\node[var] (t1) at (t2) {\tiny{$ \tau_2 $}};
\node[blank] at (-1.2,0.6) {\tiny{$k$}};
\node[blank] at (1.2,0.6) {\tiny{$l$}};
\end{tikzpicture}$ for $k,\,l\in\mbbN$. Then one has
\begin{equs}
 \CP\left[\begin{tikzpicture}[scale=0.2,baseline=2]
\draw[symbols]  (-.5,2.5) -- (0,0) ;
\draw[tinydots] (0,0)  -- (0,-1.3);
\node[var] (root) at (0,-0.1) {\tiny{$ \tau_1 $ }};
\node[var] (diff) at (-0.5,2.5) {\tiny{$ \tau_2 $ }};
\node[blank] at (0.3,1.25) {\tiny{$k$}};
\end{tikzpicture}\right] = \begin{cases}
\begin{tikzpicture}[scale=0.2,baseline=2]
\draw[symbols]  (-.5,2.5) -- (0,0) ;
\draw[tinydots] (0,0)  -- (0,-1.3);
\node[var] (root) at (0,-0.1) {\tiny{$ \bar\tau_1 $ }};
\node[var] (diff) at (-0.5,2.5) {\tiny{$ \bar\tau_2 $ }};
\end{tikzpicture} & \mbox{for }k=0 \\
0 & \mbox{otherwise}
\end{cases}
 \end{equs} 
 \begin{equs}
\CP\left[\begin{tikzpicture}[scale=0.2,baseline=2]
\coordinate (root) at (0,0);
\coordinate (t1) at (-1,2);
\coordinate (t2) at (1,2);
\draw[tinydots] (root)  -- +(0,-0.8);
\draw[kernels2] (t1) -- (root);
\draw[kernels2] (t2) -- (root);
\node[not] (rootnode) at (root) {};
\node[var] (t1) at (t1) {\tiny{$ \tau_1 $}};
\node[var] (t1) at (t2) {\tiny{$ \tau_2 $}};
\node[blank] at (-1.2,0.6) {\tiny{$k$}};
\node[blank] at (1.2,0.6) {\tiny{$l$}};
\end{tikzpicture}\right] = \begin{cases}
\begin{tikzpicture}[scale=0.2,baseline=2]
\coordinate (root) at (0,0);
\coordinate (t1) at (-1,2);
\coordinate (t2) at (1,2);
\draw[tinydots] (root)  -- +(0,-0.8);
\draw[kernels2] (t1) -- (root);
\draw[kernels2] (t2) -- (root);
\node[not] (rootnode) at (root) {};
\node[var] (t1) at (t1) {\tiny{$ \bar\tau_1 $}};
\node[var] (t1) at (t2) {\tiny{$ \bar\tau_2 $}};
\end{tikzpicture} & \mbox{for }k=l=0 \\
0 & \mbox{otherwise}
\end{cases}
\end{equs}
\end{example}
With this definition at hand, we define the following subset of local trees in $\hat{T}$: 
\begin{equ}
\hat T^{\loc}=\{\tau\in\hat T:\,\Upsilon_{\hat F}(\tau)=q\Upsilon_{F}(\mcP\tau)\}.
\end{equ}
In particular, if a (paramterised) tree $\tau$ has at least one nonzero parameter derivative, the condition $\tau\in\hat T^{\loc}$ will imply that $\Upsilon_{\hat F}\brsq{\tau}$ must necessarily vanish because $\Upsilon_F\brsq{\CP\tau}$ will.
\begin{proposition}\label{prop:nullprop}
One has for $\tau_1,\tau_2\in\hat{T}^{\loc}$ that
$\begin{tikzpicture}[scale=0.2,baseline=2]
\draw[symbols]  (-.5,2.5) -- (0,0) ;
\draw[tinydots] (0,0)  -- (0,-1.3);
\node[var] (root) at (0,-0.1) {\tiny{$ \tau_1 $ }};
\node[var] (diff) at (-0.5,2.5) {\tiny{$ \tau_2 $ }};
\node[blank] at (0.3,1.25) {\tiny{$k$}};
\end{tikzpicture}\;\in\hat T^{\loc}$ for all $k>1$, and $\begin{tikzpicture}[scale=0.2,baseline=2]
\coordinate (root) at (0,0);
\coordinate (t1) at (-1,2);
\coordinate (t2) at (1,2);
\draw[tinydots] (root)  -- +(0,-0.8);
\draw[kernels2] (t1) -- (root);
\draw[kernels2] (t2) -- (root);
\node[not] (rootnode) at (root) {};
\node[var] (t1) at (t1) {\tiny{$ \tau_1 $}};
\node[var] (t1) at (t2) {\tiny{$ \tau_2 $}};
\node[blank] at (-1.2,0.6) {\tiny{$k$}};
\node[blank] at (1.2,0.6) {\tiny{$l$}};
\end{tikzpicture}$ for all $k + l > 1$.
\end{proposition}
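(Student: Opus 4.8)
Since $\mcP$ annihilates every parametrised tree that carries a nonzero parameter derivative on some edge, and both trees in the statement do so once $k>1$ (resp.\ $k+l>1$) — which forces the total parameter degree to be at least $2$, hence at least one edge to carry $\ge1$ parameter derivatives — membership in $\hat T^{\loc}$ reduces for these trees to the single identity $\Upsilon_{\hat F}(\tau)=q\,\Upsilon_F(0)=0$. So the plan is to prove: (A) $\Upsilon_{\hat F}$ of the grafting tree vanishes whenever the grafting edge carries $\ge2$ parameter derivatives, and (B) $\Upsilon_{\hat F}$ of the Christoffel-type renorm tree vanishes whenever its two thick edges carry $\ge2$ parameter derivatives in total.

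I would then compute these via the recursion of Definition~\ref{def:upsilonhatF} and the morphism identity \eqref{eq:morphism}. Writing $\tau_1=\mathbf X^{j}\big(\prod_i\mathcal I_{\alpha_i}(\sigma_i)\big)\Xi_{\mfl}$, grafting $\mathcal I_{(k,(0,0))}(\tau_2)$ at the (rightmost position of the) root of $\tau_1$ inserts $\partial_{v_{(k,(0,0))}}$ as the innermost derivation hitting $\hat F_{\mfl}$, so that $\Upsilon_{\hat F}$ of the grafting tree equals $\big(\partial^{j}\partial_{v_{\alpha_1}}\cdots\partial_{v_{\alpha_m}}\partial_{v_{(k,(0,0))}}\hat F_{\mfl}\big)\,\Upsilon_{\hat F}(\tau_2)\prod_i\Upsilon_{\hat F}(\sigma_i)$, while $\Upsilon_{\hat F}$ of the renorm node (whose root carries $\Xi_1$ and two thick edges) equals $\big(\partial_{v_{(k,(0,1))}}\partial_{v_{(l,(0,1))}}\hat F_{1}\big)\,\Upsilon_{\hat F}(\tau_1)\Upsilon_{\hat F}(\tau_2)$. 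Everything thus reduces to showing that the bracketed derivative expressions vanish.

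The core is a finite computation with the derivatives of Definition~\ref{definition_derivative}, resting on two observations: \emph{(a)} by \eqref{eq:nonlinearity} the nonlinearity $\hat F$ involves the renormalisation variables $v_\alpha$ only through $v_c=v_{(1,(0,0))}$, $v_{cc}=v_{(2,(0,0))}$, $v_{cx}=v_{(1,(0,1))}$, $v_x=v_{(0,(0,1))}$, and is affine in each of them; \emph{(b)} for every $\alpha$ with first component $\ge1$ the derivation $\partial_{v_\alpha}$ is the naive partial $\partial/\partial v_\alpha$ — it produces no higher $v$'s and annihilates $u$ and every $\partial^m u$ — since in Definition~\ref{definition_derivative} both the $a'(u)v_{\beta+(1,0)}/q$ term of $\partial_{v_\alpha}v_\beta$ and the $\partial^m u$ term of $\partial_{v_\alpha}\partial^m u$ carry a factor $\delta(\alpha,(0,\cdot))$ that vanishes when $\hat\alpha\ge1$. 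From \emph{(a)}–\emph{(b)}: a thick edge of label $1$ turns $\hat F_1$ into $2(aa')(u)\partial_x u$, which is free of every $v_{(m,(0,1))}$, $m\ge1$, and a thick edge of label $\ge2$ sends $\hat F_1$ to $0$; hence for $k+l>1$ one of the two thick-edge derivations acts as a naive partial $\partial/\partial v_{(\cdot,(0,1))}$ on an expression containing no such variable, so $\partial_{v_{(k,(0,1))}}\partial_{v_{(l,(0,1))}}\hat F_1=0$, settling (B). For (A), a thin grafting edge of label $\ge3$ gives $\partial_{v_{(m,(0,0))}}\hat F_{\mfl}=0$ for both $\mfl\in\{1,\xi\}$, and a thin edge of label $2$ gives $0$ when the root of $\tau_1$ carries the noise, since $\hat F_\xi=q g$ contains no $v_{cc}$.

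The one delicate case — and the step I expect to be the main obstacle — is a thin grafting edge of label exactly $2$ at a root of $\tau_1$ of type $\Xi_1$, where $\partial_{v_{cc}}\hat F_1=a(a')^2(u)(\partial_x u)^2$ is genuinely nonzero. Here the hypothesis $\tau_1\in\hat T^{\loc}$ must be used: for such $\tau_1$ one has $\Upsilon_{\hat F}(\tau_1)=q\,\Upsilon_F(\mcP\tau_1)$, and $\Upsilon_F$ of the semilinear nonlinearity $F=(f-a')(\partial_x u)^2+g\,\xi$ never produces any $v_\alpha$; combined with the decomposition $\hat F_1=q F_1+a(a')^2 v_{cc}(\partial_x u)^2+\cdots$ (so that the full $v_{cc}$-contribution to $\Upsilon_{\hat F}(\tau_1)$ has already cancelled against the lower-degree trees making up $\tau_1$), this forces the derivatives $\partial^{j}\partial_{v_{\alpha_1}}\cdots\partial_{v_{\alpha_m}}$ applied after the extra $\partial_{v_{cc}}$ to act on $a(a')^2(\partial_x u)^2$ exactly as they do inside $q^{-1}\Upsilon_{\hat F}(\tau_1)$, hence to sum to $0$. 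Making this cancellation precise — tracking it through the possibly non-commuting recursion and confirming that it is stable under the replacement of $\tau_1$ by a local combination feeding into this configuration — is where the real work lies; once \emph{(a)}–\emph{(b)} are in hand, the remaining cases are routine bookkeeping.
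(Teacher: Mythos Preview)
Your treatment of the second (Christoffel) tree is fine, but for the first (grafting) tree you have put the derivative in the wrong place, and this is precisely what manufactures your ``delicate case''. You expand $\tau_1=\mathbf X^{j}\prod_i\mathcal I_{\alpha_i}(\sigma_i)\Xi_{\mfl}$ and insert $\partial_{v_{(k,0)}}$ as the \emph{innermost} derivation acting on $\hat F_{\mfl}$. That is the recursive formula for the single tree obtained by grafting at the root only. The object in the proposition, however, is the full grafting $\tau_2\curvearrowright_{(k,0)}\tau_1$, and for that the morphism identity \eqref{eq:morphism} gives
\[
\Upsilon_{\hat F}\big[\tau_2\curvearrowright_{(k,0)}\tau_1\big]
=\Upsilon_{\hat F}(\tau_2)\,\partial_{v_{(k,0)}}\Upsilon_{\hat F}(\tau_1),
\]
with the derivative landing on $\Upsilon_{\hat F}(\tau_1)$ \emph{as a whole}. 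Now the locality hypothesis does all the work in one stroke: $\Upsilon_{\hat F}(\tau_1)=q\,\Upsilon_F(\mathcal P\tau_1)$, and this expression depends on the variables $v_\alpha$ only through $v_c$ (via $q=1-a'(u)v_c$), since $\Upsilon_F$ sees only $u,\partial_x u$. Together with your own observation that $\partial_{v_{(k,0)}}$ is the naive partial derivative when $k\ge1$, this gives $0$ for every $k>1$ with no case distinction whatsoever. This is exactly the paper's argument.

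Your $k=2$ difficulty arises because $\partial_{v_{cc}}\hat F_1=a(a')^2(\partial_x u)^2\neq0$; but this contribution is only the root-grafting piece of the full sum. The remaining graftings at the interior nodes of $\tau_1$ are exactly what cancel it, and the morphism packages that cancellation into the single outer derivative $\partial_{v_{(k,0)}}\big(q\Upsilon_F(\mathcal P\tau_1)\big)=0$. Your proposed workaround---arguing that the derivatives $\partial^{j}\partial_{v_{\alpha_1}}\cdots\partial_{v_{\alpha_m}}$ acting on $a(a')^2(\partial_x u)^2$ must somehow sum to zero---is not correct as stated (there is no reason for a single root-grafting term to vanish on its own), and in any case it is trying to reconstruct by hand what the morphism delivers for free.
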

\begin{proof}
Given the hypothesis we may assume $\Upsilon_{\hat F}(\tau_i) = \Upsilon_{F}(\CP\tau_i)$ for $i\in\{1,2\}$. Then one calculates
\begin{equs}
\Upsilon_{\hat{F}}\left[\begin{tikzpicture}[scale=0.2,baseline=2]
\draw[symbols]  (-.5,2.5) -- (0,0) ;
\draw[tinydots] (0,0)  -- (0,-1.3);
\node[var] (root) at (0,-0.1) {\tiny{$ \tau_1 $ }};
\node[var] (diff) at (-0.5,2.5) {\tiny{$ \tau_2 $ }};
\node[blank] at (0.3,1.25) {\tiny{$k$}};
\end{tikzpicture}\right] &= \Upsilon_{\hat{F}}[\tau_1]\partial_{v_{(k,0)}}\Upsilon_{\hat{F}}[\tau_2] \\
&= q\Upsilon_{{F}}[\CP\tau_1]\partial_{v_{(k,0)}}q\Upsilon_{{F}}[\CP\tau_2] \\
&= 0,
\end{equs}
where the last equality is due to that fact $\Upsilon_F$ and $q\,(\,=1-a'(u)v_c) $ is independent of $v_{(k,0)}$. On the other hand, one has
\begin{equs}
q\Upsilon_{{F}}\left[\CP\left[\begin{tikzpicture}[scale=0.2,baseline=2]
\draw[symbols]  (-.5,2.5) -- (0,0) ;
\draw[tinydots] (0,0)  -- (0,-1.3);
\node[var] (root) at (0,-0.1) {\tiny{$ \tau_1 $ }};
\node[var] (diff) at (-0.5,2.5) {\tiny{$ \tau_2 $ }};
\node[blank] at (0.3,1.25) {\tiny{$k$}};
\end{tikzpicture}\right]\right] = 0
\end{equs}
One can argue similarly that
\begin{equs}
\Upsilon_{\hat F}\brsq{\begin{tikzpicture}[scale=0.2,baseline=2]
\coordinate (root) at (0,0);
\coordinate (t1) at (-1,2);
\coordinate (t2) at (1,2);
\draw[tinydots] (root)  -- +(0,-0.8);
\draw[kernels2] (t1) -- (root);
\draw[kernels2] (t2) -- (root);
\node[not] (rootnode) at (root) {};
\node[var] (t1) at (t1) {\tiny{$ \tau_1 $}};
\node[var] (t1) at (t2) {\tiny{$ \tau_2 $}};
\node[blank] at (-1.2,0.6) {\tiny{$k$}};
\node[blank] at (1.2,0.6) {\tiny{$l$}};
\end{tikzpicture}} = 0 = q\Upsilon_F\brsq{\CP\left[\begin{tikzpicture}[scale=0.2,baseline=2]
\coordinate (root) at (0,0);
\coordinate (t1) at (-1,2);
\coordinate (t2) at (1,2);
\draw[tinydots] (root)  -- +(0,-0.8);
\draw[kernels2] (t1) -- (root);
\draw[kernels2] (t2) -- (root);
\node[not] (rootnode) at (root) {};
\node[var] (t1) at (t1) {\tiny{$ \tau_1 $}};
\node[var] (t1) at (t2) {\tiny{$ \tau_2 $}};
\node[blank] at (-1.2,0.6) {\tiny{$k$}};
\node[blank] at (1.2,0.6) {\tiny{$l$}};
\end{tikzpicture}\right]}
\end{equs}
\end{proof}

The crux of our argument now lies in the following theorem:
\begin{theorem}\label{th:covloc}
One has for all $\tau_1,\,\tau_2\in \hat{T}^{\text{loc}}$ that $\nabla_{\tau_2}\tau_1 + \nabla^{1}_{\tau_2}\tau_1 \in \hat{T}^{\text{loc}}$. 
\end{theorem}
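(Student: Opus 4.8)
The plan is to compute $\Upsilon_{\hat F}$ applied to both summands $\nabla_{\tau_2}\tau_1$ and $\nabla^1_{\tau_2}\tau_1$ directly from their definitions in \eqref{covariant_derivative}, use the morphism property \eqref{eq:morphism} together with the hypothesis $\tau_1,\tau_2\in\hat T^{\loc}$, and check that the resulting expression equals $q\,\Upsilon_F[\CP\nabla_{\tau_2}\tau_1]$ by comparing with the classical covariant-derivative identities of \cite{BGHZ}. Since $\tau_i\in\hat T^{\loc}$, we have $\Upsilon_{\hat F}[\tau_i]=q\,\Upsilon_F[\CP\tau_i]$, and in particular if either $\tau_i$ carries a nonzero parameter derivative then $\Upsilon_{\hat F}[\tau_i]$ and $\Upsilon_F[\CP\tau_i]$ both vanish; so we may assume both $\tau_i$ are honest unparametrised trees, for otherwise everything is zero by Proposition~\ref{prop:nullprop}-type reasoning and there is nothing to prove.

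First I would unwind the $m=0$ term: by \eqref{eq:morphism}, the grafted tree $\tau_2\graft{(0,0)}^v\tau_1$ at the root contributes $\Upsilon_{\hat F}[\tau_2]\,\partial_{v_{(0,0)}}\Upsilon_{\hat F}[\tau_1]$ plus the two thick-edge terms from the $\frac12(\dots)$ bracket contributing $\Upsilon_{\hat F}[\tau_2]\,\partial_{v_{(0,1)}}^2\Upsilon_{\hat F}[\tau_1]$-type expressions (more precisely $\partial_{v_x}\partial_{v_x}$ applied with the two orderings, which coincide). Using $\partial_{v_{(0,0)}} = \partial_v$, and Definition~\ref{definition_derivative}, one has $\partial_v\Upsilon_{\hat F}[\tau_1] = \partial_v(q\,\Upsilon_F[\CP\tau_1]) = -a'v_{(1,0)}\cdot\text{(stuff)}+q\,\partial_v\Upsilon_F[\CP\tau_1]$... but $\Upsilon_F[\CP\tau_1]$ depends on $v_x$ only through $u$-variables, so the chain rule expansion $\partial_v = \sum_\alpha \partial_v v_\alpha\,\partial_{v_\alpha}$ with $\partial_v v_\alpha = \delta(\alpha,(0,0)) a'(u)v_{(1,0)}/q + \delta(\alpha,(0,0))$ (and analogously $\partial^m u = v_m/q$) has to be matched against the corresponding classical derivative $\partial_u$ appearing in $\Upsilon_F$. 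Then I would do the $m=1$ term the same way: $\nabla^1_{\tau_2}\tau_1$ differs by applying $\partial(a\cdot)$ to the symbol-coefficient and by shifting one edge decoration from $0$ to $1$, i.e. replacing $\partial_{v_{(0,0)}}$ by $\partial_{v_{(1,0)}}$ in the appropriate slots. The key is that $\partial_{v_{(1,0)}}\Upsilon_{\hat F}[\tau_1]$ hits $q = 1-a'(u)v_c$ and $v_c$-dependence, producing exactly the $a'$-factors that, when added to the $m=0$ contribution, telescope into the $q\cdot(\text{classical covariant derivative})$ form.

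The cleanest way to organise this is to recall that in \cite{BGHZ} the (classical, unparametrised) covariant derivative $\nabla_{\tau_2}\tau_1$ satisfies $\Upsilon_F[\nabla_{\tau_2}\tau_1] = \Upsilon_F[\tau_1]\Upsilon_F[\tau_2]\cdot(\text{Christoffel-type expression in }a,a',a'')$, and that the combination $\nabla + \nabla^1$ is precisely engineered (via the $\partial(a\cdot)$ prefactor which produces $\hat C^a_\eps + (\partial_{\bar c_1}+\partial_{\bar c_2})C_\eps|_{\cdot=a}$) so that the two pieces of the non-commutative structure recombine. So the strategy is: (i) expand $\Upsilon_{\hat F}[\nabla_{\tau_2}\tau_1]$ using \eqref{eq:morphism} into a sum of $\Upsilon_{\hat F}[\tau_2]\cdot(\partial_{v_{(0,0)}}\text{ and }\partial_{v_{(0,1)}}^2)\Upsilon_{\hat F}[\tau_1]$; (ii) substitute $\Upsilon_{\hat F}[\tau_i] = q\Upsilon_F[\CP\tau_i]$ and push the derivatives through $q$ using Definition~\ref{definition_derivative}; (iii) do the same for $\nabla^1$, noting $\partial_{v_{(1,0)}}q = -a'$ and $\partial_{v_{(1,0)}}$ annihilates $\Upsilon_F$; (iv) add the two and observe that the terms organise themselves as $q$ times the classical formula $\Upsilon_F[\nabla_{\tau_2}\tau_1]$ of \cite{BGHZ}, with $\CP(\nabla_{\tau_2}\tau_1+\nabla^1_{\tau_2}\tau_1) = \nabla_{\tau_2}\tau_1$ (the unparametrised one) since $\CP$ kills the thick-edge-with-parameter-decoration terms coming from $\nabla^1$ and sends the $0$-decorated ones to the plain tree. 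The warm-up computation in Section~\ref{sec:computation} for $\<Xi2>$ and $\<I1Xitwo>$ is exactly this mechanism in miniature and can be cited as the template. The main obstacle I anticipate is bookkeeping the non-commutativity: in $\partial_{v_{\alpha_1}}\cdots\partial_{v_{\alpha_m}}$ the order matters, so when I expand $\partial_v(q\Upsilon_F[\CP\tau_1])$ I must track which derivative-orderings produce the extra $a'v_{(1,0)}/q$ terms from $\partial_v v_{(0,0)}$, and show these are precisely the ones cancelled (up to the factor $q$) by the $m=1$ contribution; combined with the factor $m(\tau_0,\tau)$-type multiplicities that appeared in the proof of Theorem~\ref{prop:coherence}, this is where the argument is most delicate, but it is the same combinatorics already handled there, so it should go through.
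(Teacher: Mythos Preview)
Your approach is essentially the paper's: compute $\Upsilon_{\hat F}$ on each summand of $\nabla_{\tau_2}\tau_1+\nabla^1_{\tau_2}\tau_1$ via the morphism property and Definition~\ref{def:upsilonhatF}, substitute $\Upsilon_{\hat F}[\tau_i]=q\,\Upsilon_F[\CP\tau_i]$, and verify the total equals $q\,\Upsilon_F[\CP\nabla_{\tau_2}\tau_1]$. One clarification: the non-commutativity\slash multiplicity bookkeeping you anticipate does not actually arise---once the locality hypothesis replaces $\Upsilon_{\hat F}[\tau_i]$ by closed expressions, there is no recursion into the structure of $\tau_i$ and the proof is a finite explicit calculation exactly as in the warm-up; also, the $\tfrac12$-tree (both $\tau_1,\tau_2$ planted at a fresh root via thick edges) is not a grafting onto $\tau_1$, so one uses Definition~\ref{def:upsilonhatF} directly to obtain $\Upsilon_{\hat F}[\tau_1]\Upsilon_{\hat F}[\tau_2]\,\partial_{v_x}^2\hat F_1$ rather than ``$\partial_{v_x}^2\Upsilon_{\hat F}[\tau_1]$''.
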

\begin{proof}
Assuming that $\Upsilon_{\hat F}\brsq{\tau_i}=\Upsilon_{{F}}\brsq{\CP\tau_i}$, $i\in\{1,2\}$, we need to prove that
\begin{equs}
\Upsilon_{\hat F}\brsq{\nabla_{\tau_2}\tau_1 + \nabla^{1}_{\tau_2}\tau_1}=q\Upsilon_{F}\brsq{\CP\brsq{\nabla_{\tau_2}\tau_1 + \nabla^{1}_{\tau_2}\tau_1}}.
\end{equs}
Due to the linearity of $\Upsilon$ and the definition of $\CP$, we find that 
\begin{equs}\label{eq:localeq1}
	\begin{aligned}
	aq \Upsilon_{F}\brsq{\begin{tikzpicture}[scale=0.2,baseline=2]
			\draw[symbols]  (-.5,2.5) -- (0,0) ;
			\draw[tinydots] (0,0)  -- (0,-1.3);
			\node[var] (root) at (0,-0.1) {\tiny{$ \bar\tau_1 $ }};
			\node[var] (diff) at (-0.5,2.5) {\tiny{$ \bar\tau_2 $ }};
	\end{tikzpicture}} + q\frac{1}{2}\Upsilon_{{F}}\brsq{\begin{tikzpicture}[scale=0.2,baseline=2]
			\coordinate (root) at (0,0);
			\coordinate (t1) at (-1,2);
			\coordinate (t2) at (1,2);
			\draw[tinydots] (root)  -- +(0,-0.8);
			\draw[kernels2] (t1) -- (root);
			\draw[kernels2] (t2) -- (root);
			\node[not] (rootnode) at (root) {};
			\node[var] (t1) at (t1) {\tiny{$ \bar\tau_1 $}};
			\node[var] (t1) at (t2) {\tiny{$ \bar\tau_2 $}};
	\end{tikzpicture}} & = 
a\Upsilon_{\hat F}\brsq{\begin{tikzpicture}[scale=0.2,baseline=2]
\draw[symbols]  (-.5,2.5) -- (0,0) ;
\draw[tinydots] (0,0)  -- (0,-1.3);
\node[var] (root) at (0,-0.1) {\tiny{$ \tau_1 $ }};
\node[var] (diff) at (-0.5,2.5) {\tiny{$ \tau_2 $ }};
\node[blank] at (0.3,1.25) {\tiny{$0$}};
\end{tikzpicture}} + \frac{1}{2}\Upsilon_{\hat{F}}\brsq{\begin{tikzpicture}[scale=0.2,baseline=2]
\coordinate (root) at (0,0);
\coordinate (t1) at (-1,2);
\coordinate (t2) at (1,2);
\draw[tinydots] (root)  -- +(0,-0.8);
\draw[kernels2] (t1) -- (root);
\draw[kernels2] (t2) -- (root);
\node[not] (rootnode) at (root) {};
\node[var] (t1) at (t1) {\tiny{$ \tau_1 $}};
\node[var] (t1) at (t2) {\tiny{$ \tau_2 $}};
\node[blank] at (-1.2,0.6) {\tiny{$0$}};
\node[blank] at (1.2,0.6) {\tiny{$0$}};
\end{tikzpicture}}+\partial(a\cdot)\Upsilon_{\hat{F}}\brsq{\begin{tikzpicture}[scale=0.2,baseline=2]
\draw[symbols]  (-.5,2.5) -- (0,0) ;
\draw[tinydots] (0,0)  -- (0,-1.3);
\node[var] (root) at (0,-0.1) {\tiny{$ \tau_1 $ }};
\node[var] (diff) at (-0.5,2.5) {\tiny{$ \tau_2 $ }};
\node[blank] at (0.3,1.25) {\tiny{$1$}};
\end{tikzpicture}} \\ & + \frac{1}{2}\; (\partial \cdot )  \left( 
\begin{tikzpicture}[scale=0.2,baseline=2]
\coordinate (root) at (0,0);
\coordinate (t1) at (-1,2);
\coordinate (t2) at (1,2);
\draw[tinydots] (root)  -- +(0,-0.8);
\draw[kernels2] (t1) -- (root);
\draw[kernels2] (t2) -- (root);
\node[not] (rootnode) at (root) {};
\node[var] (t1) at (t1) {\tiny{$ \tau_1 $}};
\node[var] (t1) at (t2) {\tiny{$ \tau_2 $}};
\node[blank] at (-1.2,0.6) {\tiny{$1$}};
\node[blank] at (1.2,0.6) {\tiny{$0$}};
\end{tikzpicture}  + 
\begin{tikzpicture}[scale=0.2,baseline=2]
\coordinate (root) at (0,0);
\coordinate (t1) at (-1,2);
\coordinate (t2) at (1,2);
\draw[tinydots] (root)  -- +(0,-0.8);
\draw[kernels2] (t1) -- (root);
\draw[kernels2] (t2) -- (root);
\node[not] (rootnode) at (root) {};
\node[var] (t1) at (t1) {\tiny{$ \tau_1 $}};
\node[var] (t1) at (t2) {\tiny{$ \tau_2 $}};
\node[blank] at (-1.2,0.6) {\tiny{$0$}};
\node[blank] at (1.2,0.6) {\tiny{$1$}};
\end{tikzpicture} \right) 
\end{aligned}
\end{equs}
To prove this we compute each summand above in turn.
\begin{equs}
\Upsilon_{\hat{F}}\brsq{
\begin{tikzpicture}[scale=0.2,baseline=2]
\draw[symbols]  (-.5,2.5) -- (0,0) ;
\draw[tinydots] (0,0)  -- (0,-1.3);
\node[var] (root) at (0,-0.1) {\tiny{$\tau_1 $ }};
\node[var] (diff) at (-0.5,2.5) {\tiny{$ \tau_2 $ }};
\node[blank] at (0.3,1.25) {\tiny{$0$}};
\end{tikzpicture}}&=\Upsilon_{\hat{F}}\brsq{\tau_1}\partial_v\Upsilon_{\hat{F}}\brsq{\tau_2} \\
&=q\Upsilon_{F}\brsq{\CP\tau_1}\partial_v\left[q\Upsilon_{F}\brsq{\CP\tau_2}\right] \\
&=q\Upsilon_{F}\brsq{\CP\tau_1}\left[q\partial_v\left(\Upsilon_{F}\brsq{\CP\tau_2}\right)+\Upsilon_{F}\brsq{\CP\tau_2}\partial_v(q)\right] \\
&= q\Upsilon_{F}\brsq{\CP\tau_1}\Bigl[q\partial_u\left(\Upsilon_{F}\brsq{\CP\tau_2}\right)\partial_vu \\
&\hspace{2cm}- \Upsilon_{F}\brsq{\CP\tau_2}\frac{a''v_c+(a')^2v_{cc}}{q}\Bigr] \\
&= q\Upsilon_{F}\brsq{\CP\tau_1}\partial_u\left(\Upsilon_{F}\brsq{\CP\tau_2}\right) \\
&\hspace{2cm}-\Upsilon_{F}\brsq{\CP\tau_1}\Upsilon_{F}\brsq{\CP\tau_2}(a''v_c+(a')^2v_{cc}),
\end{equs}
We compute then:
\begin{equs}
\frac{1}{2}\Upsilon_{\hat{F}}\brsq{\begin{tikzpicture}[scale=0.2,baseline=2]
\coordinate (root) at (0,0);
\coordinate (t1) at (-1,2);
\coordinate (t2) at (1,2);
\draw[tinydots] (root)  -- +(0,-0.8);
\draw[kernels2] (t1) -- (root);
\draw[kernels2] (t2) -- (root);
\node[not] (rootnode) at (root) {};
\node[var] (t1) at (t1) {\tiny{$ \tau_1 $}};
\node[var] (t1) at (t2) {\tiny{$ \tau_2 $}};
\node[blank] at (-1.2,0.6) {\tiny{$0$}};
\node[blank] at (1.2,0.6) {\tiny{$0$}};
\end{tikzpicture}}&=\frac{1}{2}\Upsilon_{\hat{F}}\brsq{\tau_1}\Upsilon_{\hat{F}}\brsq{\tau_2}\partial^{2}_{v_x}\hat{F}_1 \\
&=\frac{1}{2}q^{2}\Upsilon_F\brsq{\CP\tau_1}\Upsilon_{{F}}\brsq{\CP\tau_2}\partial^{2}_{v_x}\Bigl[(\partial_xu)^{2}\hat{f} \\
&\hspace{1cm}+2(aa')(\partial_xu)v_{cx} + a'(u)(\partial_xu)v_x\Bigr] \\
&=\frac{1}{2}q^{2}\Upsilon_F\brsq{\CP\tau_1}\Upsilon_{{F}}\brsq{\CP\tau_2}\partial_{v_x}\Bigl[\frac{2}{q}\hat{f}\partial_xu \\
&\hspace{1cm}+\frac{2}{q}(aa')v_{cx}+a'\left(\partial_xu + \frac{v_x}{q}\right)\Bigr] \\
&=\frac{q^2}{2}\Upsilon_F\brsq{\CP\tau_1}\Upsilon_{F}\brsq{\CP\tau_2}\frac{2}{q^2}(\hat{f}+qa') \\
&=(q(f-a')+a(a')^{2}v_{cc} \\
&\hspace{2cm}+aa''v_c+qa')\Upsilon_F\brsq{\CP\tau_2}\Upsilon_F\brsq{\CP\tau_1}
\end{equs}
The next term is handled thusly:
\begin{equs}
(\partial a \cdot  ) \Upsilon_{\hat{F}}\brsq{
\begin{tikzpicture}[scale=0.2,baseline=2]
\draw[symbols]  (-.5,2.5) -- (0,0) ;
\draw[tinydots] (0,0)  -- (0,-1.3);
\node[var] (root) at (0,-0.1) {\tiny{$ \tau_1 $ }};
\node[var] (diff) at (-0.5,2.5) {\tiny{$ \tau_2 $ }};
\node[blank] at (0.3,1.25) {\tiny{$1$}};
\end{tikzpicture}}&= (\partial a \cdot  )\Upsilon_{\hat{F}}\brsq{\tau_1}\partial_{v_c}\Upsilon_{\hat{F}}\brsq{\tau_2} \\
&= (\partial a \cdot  ) q\Upsilon_{F}\brsq{\CP\tau_1}\partial_{v_c}\left(q\Upsilon_{{F}}\brsq{\CP\tau_2}\right) \\
&= (\partial a \cdot  ) -qa'\Upsilon_{F}\brsq{\CP\tau_1}\Upsilon_{F}\brsq{\CP\tau_2}
\\ & =  -qa'\Upsilon_{F}\brsq{\CP\tau_1}\Upsilon_{F}\brsq{\CP\tau_2}    - \left( \partial \cdot \right)a  a' q \Upsilon_{F}\brsq{\CP\tau_1}\Upsilon_{F}\brsq{\CP\tau_2}
\end{equs}
and then the last term on the left hand side:
\begin{equs}
\Upsilon_{\hat{F}}\brsq{\begin{tikzpicture}[scale=0.2,baseline=2]
\coordinate (root) at (0,0);
\coordinate (t1) at (-1,2);
\coordinate (t2) at (1,2);
\draw[tinydots] (root)  -- +(0,-0.8);
\draw[kernels2] (t1) -- (root);
\draw[kernels2] (t2) -- (root);
\node[not] (rootnode) at (root) {};
\node[var] (t1) at (t1) {\tiny{$ \tau_1 $}};
\node[var] (t1) at (t2) {\tiny{$ \tau_2 $}};
\node[blank] at (-1.2,0.6) {\tiny{$0$}};
\node[blank] at (1.2,0.6) {\tiny{$1$}};
\end{tikzpicture}}&=\frac{1}{2}\Upsilon_{\hat{F}}\brsq{\tau_1}\Upsilon_{\hat{F}}\brsq{\tau_2}\partial_{v_{cx}}\partial_{v_x}\hat{F}_1 \\
&=\frac{1}{2}q^{2}\Upsilon_F\brsq{\CP\tau_1}\Upsilon_{{F}}\brsq{\CP\tau_2}\partial_{v_{cx}}\partial_{v_x}\Bigl[(\partial_xu)^{2}\hat{f} \\
&\hspace{2cm}+2(aa')(\partial_xu)v_cx + a'(u)(\partial_xu)v_x\Bigr] \\
&=\frac{1}{2}q^{2}\Upsilon_F\brsq{\CP\tau_1}\Upsilon_{{F}}\brsq{\CP\tau_2}\partial_{v_{cx}}\Bigl[\frac{2}{q}\hat{f}\partial_xu \\
&\hspace{2cm}+\frac{2}{q}(aa')v_{cx}+a'\left(\partial_xu \right)\Bigr] \\
&=aa'q\Upsilon_{F}\brsq{\CP\tau_1}\Upsilon_{{F}}\brsq{\CP\tau_2}
\end{equs}
On the other hand, one has:
On the other hand it is easy to see that
\begin{equs}
\Upsilon_{{F}}\brsq{
\begin{tikzpicture}[scale=0.2,baseline=2]
\draw[symbols]  (-.5,2.5) -- (0,0) ;
\draw[tinydots] (0,0)  -- (0,-1.3);
\node[var] (root) at (0,-0.1) {\tiny{$ \bar\tau_1 $ }};
\node[var] (diff) at (-0.5,2.5) {\tiny{$ \bar\tau_2 $ }};
\end{tikzpicture}}&=\Upsilon_{F}\brsq{\CP\tau_1}\partial_{u}\Upsilon_{F}\brsq{\CP\tau_2}
\end{equs}
and from \eqref{eq:transformed}
\begin{equs}
\frac{1}{2}\Upsilon_{{F}}\brsq{\begin{tikzpicture}[scale=0.2,baseline=2]
\coordinate (root) at (0,0);
\coordinate (t1) at (-1,2);
\coordinate (t2) at (1,2);
\draw[tinydots] (root)  -- +(0,-0.8);
\draw[kernels2] (t1) -- (root);
\draw[kernels2] (t2) -- (root);
\node[not] (rootnode) at (root) {};
\node[var] (t1) at (t1) {\tiny{$ \bar\tau_1 $}};
\node[var] (t1) at (t2) {\tiny{$ \bar\tau_2 $}};
\end{tikzpicture}}&=\frac{1}{2}\Upsilon_F\brsq{\CP\tau_1}\Upsilon_F\brsq{\CP\tau_2}\partial^{2}_{\partial_{x}u}F \\
&=\Upsilon_F\brsq{\CP\tau_1}\Upsilon_F\brsq{\CP\tau_2}(f-a')
\end{equs}
 One has also:
\begin{equs}
\Upsilon_{\hat{F}}\brsq{\begin{tikzpicture}[scale=0.2,baseline=2]
\coordinate (root) at (0,0);
\coordinate (t1) at (-1,2);
\coordinate (t2) at (1,2);
\draw[tinydots] (root)  -- +(0,-0.8);
\draw[kernels2] (t1) -- (root);
\draw[kernels2] (t2) -- (root);
\node[not] (rootnode) at (root) {};
\node[var] (t1) at (t1) {\tiny{$ \tau_1 $}};
\node[var] (t1) at (t2) {\tiny{$ \tau_2 $}};
\node[blank] at (-1.2,0.6) {\tiny{$1$}};
\node[blank] at (1.2,0.6) {\tiny{$0$}};
\end{tikzpicture}} = \Upsilon_{\hat{F}}\brsq{\begin{tikzpicture}[scale=0.2,baseline=2]
\coordinate (root) at (0,0);
\coordinate (t1) at (-1,2);
\coordinate (t2) at (1,2);
\draw[tinydots] (root)  -- +(0,-0.8);
\draw[kernels2] (t1) -- (root);
\draw[kernels2] (t2) -- (root);
\node[not] (rootnode) at (root) {};
\node[var] (t1) at (t1) {\tiny{$ \tau_1 $}};
\node[var] (t1) at (t2) {\tiny{$ \tau_2 $}};
\node[blank] at (-1.2,0.6) {\tiny{$0$}};
\node[blank] at (1.2,0.6) {\tiny{$1$}};
\end{tikzpicture}},
\end{equs}
Summarising the various computations, one gets:
\begin{equs}
	aq \Upsilon_{F}\brsq{\begin{tikzpicture}[scale=0.2,baseline=2]
		\draw[symbols]  (-.5,2.5) -- (0,0) ;
		\draw[tinydots] (0,0)  -- (0,-1.3);
		\node[var] (root) at (0,-0.1) {\tiny{$ \bar\tau_1 $ }};
		\node[var] (diff) at (-0.5,2.5) {\tiny{$ \bar\tau_2 $ }};
\end{tikzpicture}} + q\frac{1}{2}\Upsilon_{{F}}\brsq{\begin{tikzpicture}[scale=0.2,baseline=2]
		\coordinate (root) at (0,0);
		\coordinate (t1) at (-1,2);
		\coordinate (t2) at (1,2);
		\draw[tinydots] (root)  -- +(0,-0.8);
		\draw[kernels2] (t1) -- (root);
		\draw[kernels2] (t2) -- (root);
		\node[not] (rootnode) at (root) {};
		\node[var] (t1) at (t1) {\tiny{$ \bar\tau_1 $}};
		\node[var] (t1) at (t2) {\tiny{$ \bar\tau_2 $}};
\end{tikzpicture}} = q (f-a'+a) \Upsilon_F\brsq{\CP\tau_1}\Upsilon_F\brsq{\CP\tau_2}.
\end{equs}
On the other hand, one has
\begin{equs}
	a & \Upsilon_{\hat F}\brsq{\begin{tikzpicture}[scale=0.2,baseline=2]
			\draw[symbols]  (-.5,2.5) -- (0,0) ;
			\draw[tinydots] (0,0)  -- (0,-1.3);
			\node[var] (root) at (0,-0.1) {\tiny{$ \tau_1 $ }};
			\node[var] (diff) at (-0.5,2.5) {\tiny{$ \tau_2 $ }};
			\node[blank] at (0.3,1.25) {\tiny{$0$}};
	\end{tikzpicture}} + \frac{1}{2}\Upsilon_{\hat{F}}\brsq{\begin{tikzpicture}[scale=0.2,baseline=2]
			\coordinate (root) at (0,0);
			\coordinate (t1) at (-1,2);
			\coordinate (t2) at (1,2);
			\draw[tinydots] (root)  -- +(0,-0.8);
			\draw[kernels2] (t1) -- (root);
			\draw[kernels2] (t2) -- (root);
			\node[not] (rootnode) at (root) {};
			\node[var] (t1) at (t1) {\tiny{$ \tau_1 $}};
			\node[var] (t1) at (t2) {\tiny{$ \tau_2 $}};
			\node[blank] at (-1.2,0.6) {\tiny{$0$}};
			\node[blank] at (1.2,0.6) {\tiny{$0$}};
	\end{tikzpicture}}+\partial(a\cdot)\Upsilon_{\hat{F}}\brsq{\begin{tikzpicture}[scale=0.2,baseline=2]
			\draw[symbols]  (-.5,2.5) -- (0,0) ;
			\draw[tinydots] (0,0)  -- (0,-1.3);
			\node[var] (root) at (0,-0.1) {\tiny{$ \tau_1 $ }};
			\node[var] (diff) at (-0.5,2.5) {\tiny{$ \tau_2 $ }};
			\node[blank] at (0.3,1.25) {\tiny{$1$}};
	\end{tikzpicture}}    \\ & + \frac{1}{2}\; (\partial \cdot ) \left(  	\Upsilon_{\hat{F}}\brsq{
\begin{tikzpicture}[scale=0.2,baseline=2]
\coordinate (root) at (0,0);
\coordinate (t1) at (-1,2);
\coordinate (t2) at (1,2);
\draw[tinydots] (root)  -- +(0,-0.8);
\draw[kernels2] (t1) -- (root);
\draw[kernels2] (t2) -- (root);
\node[not] (rootnode) at (root) {};
\node[var] (t1) at (t1) {\tiny{$ \tau_1 $}};
\node[var] (t1) at (t2) {\tiny{$ \tau_2 $}};
\node[blank] at (-1.2,0.6) {\tiny{$1$}};
\node[blank] at (1.2,0.6) {\tiny{$0$}};
\end{tikzpicture}}  + 
	\Upsilon_{\hat{F}}\brsq{\begin{tikzpicture}[scale=0.2,baseline=2]
\coordinate (root) at (0,0);
\coordinate (t1) at (-1,2);
\coordinate (t2) at (1,2);
\draw[tinydots] (root)  -- +(0,-0.8);
\draw[kernels2] (t1) -- (root);
\draw[kernels2] (t2) -- (root);
\node[not] (rootnode) at (root) {};
\node[var] (t1) at (t1) {\tiny{$ \tau_1 $}};
\node[var] (t1) at (t2) {\tiny{$ \tau_2 $}};
\node[blank] at (-1.2,0.6) {\tiny{$0$}};
\node[blank] at (1.2,0.6) {\tiny{$1$}};
\end{tikzpicture} } \right) \\ & = a q  \Upsilon_{F}\brsq{\CP\tau_1}\partial_u\left(\Upsilon_{F}\brsq{\CP\tau_2}\right) -(a a''v_c+a (a')^2v_{cc}) \Upsilon_{F}\brsq{\CP\tau_1}\Upsilon_{F}\brsq{\CP\tau_2}
\\ & + (q(f-a')+a(a')^{2}v_{cc} +aa''v_c+qa')\Upsilon_F\brsq{\CP\tau_2}\Upsilon_F\brsq{\CP\tau_1}
\\ &  -qa'\Upsilon_{F}\brsq{\CP\tau_1}\Upsilon_{F}\brsq{\CP\tau_2}  - \left( \partial \cdot \right)a  a' q \Upsilon_{F}\brsq{\CP\tau_1}\Upsilon_{F}\brsq{\CP\tau_2}
\\ & +  \left( \partial \cdot \right) aa'q\Upsilon_{F}\brsq{\CP\tau_1}\Upsilon_{{F}}\brsq{\CP\tau_2}
\\ &= q (f-a'+a) \Upsilon_F\brsq{\CP\tau_1}\Upsilon_F\brsq{\CP\tau_2}.
\end{equs}
which allows us to conclude.
\end{proof}

\section{Global-in-time solutions}
\label{sec::4}

When the driving noise $\xi$ is white in space-time, the flexibility to treat all KPZ-like right-hand sides simultaneously (and, by the chain rule, consistently) allows to derive global-in-time well-posedness of the equation. 
The proof relies on the fact that the chain rule allows to see large families of equations as equivalent, as long as any two can be formally transformed into one another by a diffeomorphism.
This makes rigorous some previously formally used change of variables such as the Cole-Hopf transform for the classical KPZ equation or the transformation of quasilinear singular SPDEs mentioned (without proof) in \cite{Bailleul2019}.
Moreover, we crucially use that among all equivalent equations, there is a distinguished It\^o one, for which many stochastic analytic tools are available to deduce its global-in-time existence. In particular, for vector-valued equations like the geometric stochastic heat equation in \cite{BGHZ} the extension of this argument does not seem straightforward.

\begin{theorem}\label{thm:GWP}
	Let $a\in\mathcal{C}^6$, $f,g\in \mathcal{C}^{5}$ such that $a$ takes values in $[\lambda,\lambda^{-1}]$ for some $\lambda>0$, and that $g,g',f,\int f,a'$ are bounded.
	Let $u_0\in\CC^\alpha(\mbbT)$ for some $\alpha>0$ and let $\xi$ be the space-time white noise.
	Assume that a model $\PPi$ above $\xi$ is a limit of smooth models satisfying both Assumption~\ref{chain rule} and Assumption~\ref{Assumption_Ito}.
	Then the solution of the equation
	\begin{equ}\label{eq:1}
		\d_t u-a(u)\d_x^2 u=f(u)(\d_x u)^2+g(u)\xi
	\end{equ}
	with initial condition $u_0$,
	exists for all times $t\in[0,1]$.
\end{theorem}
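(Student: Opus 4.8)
The plan is to use the chain rule (Assumption~\ref{chain rule}) to transport \eqref{eq:1} onto an It\^o equation with globally controlled coefficients, to exclude blow-up of that equation by classical estimates, and to transfer the conclusion back. Write $u$ for the maximal local solution produced by Theorem~\ref{main result 2}, defined up to a (possibly finite) blow-up time $T_\star>0$; we must show $T_\star\ge 1$ almost surely. The change of variables is a diffeomorphism $\phi\colon\mathbb{R}\to\mathbb{R}$ determined by $f$ and $a$ through an ordinary differential equation --- concretely $\phi$ may be taken to solve $a\phi''=f\phi'$, i.e. $(\log\phi')'=f/a$ --- and chosen so that the formal chain rule kills the $(\partial_x u)^2$ term: writing $w=\phi(u)$ and substituting $\partial_x^2u=\phi'(u)^{-1}(\partial_x^2w-\phi''(u)(\partial_xu)^2)$ and $(\partial_xu)^2=\phi'(u)^{-2}(\partial_xw)^2$, the coefficient of $(\partial_xu)^2$ in the equation for $w$ is $\phi'(u)f(u)-a(u)\phi''(u)=0$. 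Under the ellipticity $a\in[\lambda,\lambda^{-1}]$ and the boundedness of $f$, $\int\!f$, $a'$ (possibly after a preliminary normalisation), $\phi$ and $\phi^{-1}$ are global $C^2$ diffeomorphisms with first derivatives bounded away from $0$ and $\infty$, so that, using also boundedness of $g$, $g'$, the coefficients $\hat a(w)=a(\phi^{-1}(w))$ and $\hat g(w)=\phi'(\phi^{-1}(w))\,g(\phi^{-1}(w))$ of the transformed equation are bounded and Lipschitz, with $\hat a\ge\lambda$.

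The rigorous transformation uses both standing assumptions. Because the counterterms of Theorem~\ref{main result 2} are generated by covariant derivatives (Assumption~\ref{chain rule}), the renormalised equation \eqref{eq:renorm local} is stable under composition with diffeomorphisms in the sense of \cite{BGHZ}: if $u_\eps$ solves the renormalised equation for $(a,f,g)$, then $w_\eps=\phi(u_\eps)$ solves it for $(\hat a,0,\hat g)$, and passing to the limit $w=\phi(u)$ solves the renormalised form of
\begin{equs}\label{eq:ito-target}
\partial_t w-\hat a(w)\partial_x^2 w=\hat g(w)\,\xi.
\end{equs}
It then remains to determine which renormalisation of the noise term survives; this is where Assumption~\ref{Assumption_Ito} (It\^o isometry) enters, identifying the limit with the It\^o--Walsh solution of \eqref{eq:ito-target}, with no residual singular drift --- the Cole--Hopf-type correction having been absorbed into $\phi$ exactly as for the classical KPZ equation. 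This makes rigorous the heuristic change of variables for quasilinear SPDEs alluded to in \cite{Bailleul2019}.

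Equation \eqref{eq:ito-target} is a uniformly parabolic quasilinear It\^o SPDE on $\mathbb{T}$ with multiplicative space-time white noise and bounded Lipschitz coefficients, and such equations do not explode: an a priori energy (or $L^p$) estimate --- testing against $w$, applying It\^o's formula, using $\hat a\ge\lambda$ to absorb the gradient term and the boundedness of $\hat g$ to control the It\^o correction and the martingale part, and Gronwall --- yields $\sup_{t\le1}\mathbb{E}\,\|w(t)\|_{L^2(\mathbb{T})}^2<\infty$, upgraded to a pathwise non-explosion statement by a Kolmogorov/stopping-time argument; compare \cite{MW,CMW,HR,HZZ} for related non-explosion results. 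Since $\phi^{-1}$ is a global diffeomorphism with bounded derivative, $u:=\phi^{-1}(w)$ then lies in $\Co^\delta_\star$ on $[0,1]$ and, by the same chain-rule stability, solves \eqref{eq:renorm local} for $(a,f,g)$ there; by uniqueness of the local solution it coincides with the maximal local solution, whence $T_\star\ge1$, which is the claim.

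The step I expect to be the main obstacle is the identification of \eqref{eq:ito-target}: one has to verify, at the level of the approximating families of smooth renormalised models, that the chain-rule-consistent choice of $C^{c}_\eps(\tau)$ for $(a,f,g)$ is mapped by $\phi$ precisely to the chain-rule-consistent choice for $(\hat a,0,\hat g)$, and that among all admissible renormalisations the one singled out by Assumption~\ref{Assumption_Ito} is the It\^o--Walsh one --- equivalently, that the quadratic counterterm and the diffeomorphism cancel exactly so that \eqref{eq:ito-target} is left drift-free. A secondary technical point is that $\phi$, which is a priori only meaningful on the (time-dependent, spatially compact) range of $u$, genuinely extends to a global diffeomorphism with controlled derivatives; this is precisely why $g,g',f,\int\!f,a'$ are assumed bounded rather than merely smooth.
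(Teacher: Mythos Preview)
Your strategy is exactly the paper's: the same diffeomorphism $\phi$ (with $(\log\phi')'=f/a$) kills the $(\partial_x u)^2$-term and reduces \eqref{eq:1} to a quasilinear It\^o equation with bounded Lipschitz coefficients. There is, however, a genuine gap in your non-explosion step. An $L^2$ or $L^p$ bound on the It\^o solution $w$ does not suffice: the regularity-structures solution blows up exactly when $\|u_t\|_{\mathcal{C}^{\alpha'}(\mathbb{T})}$ diverges, so what you actually need is $\sup_{t\le 1}\|w_t\|_{\mathcal{C}^{\alpha'}(\mathbb{T})}<\infty$ almost surely for some $\alpha'>0$. The monotone-operator It\^o theory controls $w$ only in $H^{-1}$ (at best $L^2$), and a Kolmogorov argument from $L^p$ energy bounds does not produce spatial H\"older regularity for a quasilinear equation driven by space-time white noise, since no mild formulation is available. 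The paper closes this gap by a separate, nontrivial regularity result (Theorem~\ref{thm:Ito-regularity}): one writes $w$ as a perturbation of the semilinear solution $v$ of $\partial_t v-\partial_x^2 v=\hat g(w)\xi$, for which mild H\"older bounds are classical, invokes a deterministic De~Giorgi-type estimate of Otto--Weber \cite{OW-div} to control $\|w\|_{\mathcal{C}^\alpha}$ pointwise by $\|v\|_{\mathcal{C}^\alpha}$, and closes with a buckling argument to obtain $\mathbb{E}\|w\|_{\mathcal{C}^\alpha([0,1]\times\mathbb{T})}^2<\infty$.

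The identification obstacle you flag is real and is likewise not settled by Assumption~\ref{Assumption_Ito} alone; the paper proves it as a standalone consistency result (Theorem~\ref{thm:consistency}). The argument proceeds via a two-scale non-anticipative mollification $\xi_{\bar\eps,\eps}$ with $\bar\eps\le\eps^5$: for fixed spatial scale $\eps$, a classical temporal Wong--Zakai theorem sends $\bar\eps\to 0$ and lands on an It\^o equation driven by spatially smooth noise $\xi_{0,\eps}$; then $\eps\to 0$ is handled by stability of quasilinear It\^o solutions under spatial smoothing of the noise, which the paper establishes in the weak (variational) formulation since no mild formulation exists. Assumption~\ref{Assumption_Ito} enters only through Proposition~\ref{prop:Ito}, which computes that on this scale the sole surviving counterterm is precisely the It\^o--Stratonovich correction $\tfrac12 C_\eps^{\Itoo}\hat g'\hat g$, confirming that \eqref{eq:ito-target} is indeed drift-free.
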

\begin{remark}\label{rem:GWP}
	Since the conditions are invariant under the change $f\leftrightarrow f-a'$, the statement for equations with divergence form second order operator $\d_x(a(u)\d_x u)$ is equivalent.
\end{remark}

\begin{proof}
	In light of Remark \ref{rem:GWP}, we will prove the statement for divergence form equations.
	It suffices to show that with some $\alpha'>0$, with probability $1$, $\|u_t\|_{\mathcal{C}^{\alpha'}(\mbbT)}$ remains finite for $t\in[0,1]$.
	
	By the chain rule, for any sufficiently regular diffeomorphism $\psi$,   $v=\psi(u)$ solves the equation
	\begin{equ}\label{eq:ito0}
		\d_t v-\d_x(\hat a(v)\d_x v)=\hat f(v)(\d_x v)^2+\hat g(v)\xi,
	\end{equ}
	with initial condition $v_0(\cdot)=u_0(\psi(\cdot))$,
	where the nonlinearities $\hat a$, $\hat f$, and $\hat g$ are given by
	\begin{equ}
		\hat a=a\circ\varphi,\qquad\hat f=\frac{(a\circ\varphi)\varphi''+(f\circ\varphi)(\varphi')^2}{\varphi'},\qquad \hat g=\frac{g\circ\varphi}{\varphi'},
	\end{equ}
	and $\varphi=\psi^{-1}$.
	Let us set $F(x)=\int_0^x\tfrac{f(r)}{a(r)}\,dr$ and $\psi(x)=\int_0^xe^{F(r)}\,dr$.
	By our assumptions, both $\psi$ and its inverse $\varphi$ have derivatives bounded away from $0$, so it is a diffeomorphism from $\mbbR$ to itself. One then has
	\begin{equ}\label{eq:phi prime}
		\varphi'=\frac{1}{\psi'\circ\varphi}=e^{-F\circ\varphi},
	\end{equ}
	and so taking logarithms and derivatives on both sides, one gets
	\begin{equ}
		\frac{\varphi''}{\varphi'}=-\frac{f\circ\varphi}{a\circ\varphi}\varphi'.
	\end{equ}
	Hence, $\hat f=0$. In particular, in \eqref{eq:ito0} the only remaining ill-defined product is $\hat g(v)\xi$ (since $\hat a(v)\d_x v$ can be written as a total derivative),
	which one can interpret as an It\^o integral.
	Moreover, by \eqref{eq:phi prime} one has
	\begin{equ}
		\hat g'=\big((ge^F)\circ\varphi\big)'=g'+\frac{fg}{a}, 
	\end{equ}
	which in particular implies that $\hat a,\hat g\in \mathcal{C}^{5}$, and $\hat a, \hat g,\hat g'$ are bounded.
	
	At this point one would like to wrap up the proof by saying that the solution of the It\^o equation \eqref{eq:ito0} is known to exist for all times \cite{Pardoux, KR}, and therefore so does $u$.
	There are two issues with this argument. First, it is not at all obvious that the It\^o solution and the regularity structure solution coincide.
Second, the It\^o theory in its most classical form guarantees the non-blow-up of the solution only in the rather weak space $H^{-1}(\mbbT)$, which is far from the non-blow-up in $\cC^{\alpha'}(\mbbT)$ that the regularity structures solution requires. 
	
	The resolutions of these issues are of independent interest and are stated separately below. By Theorem \ref{thm:consistency}, under our assumptions on the model $v$ indeed equals to the It\^o solution of \eqref{eq:ito0}, up to its maximal existence time. 
	By Theorem \ref{thm:Ito-regularity}, for small enough $\alpha'>0$ one has $\E\|v\|_{\mathcal{C}^{\alpha'}([0,1]\times \mbbT)}^2<\infty$. Therefore $\|v_t\|_{\mathcal{C}^{\alpha'}(\mbbT)}$ remains finite almost surely, which implies that so does $\|u_t\|_{\mathcal{C}^{\alpha'}(\mbbT)}$, and the proof is finished.
\end{proof}
\begin{remark}
	The conditions on the coefficients in Theorem \ref{thm:GWP} are likely possible to improve,
	for example, the reader might notice that the usual KPZ equation, $a=f=g=1$, is not covered. The reason is that the corresponding transformation $\psi(x)=e^x$ is a diffeomorphism between $\mathbb{R}$ and $(0,+\infty)$, and therefore one needs to show that the solution of the corresponding It\^o equation not only does not blow up but does not reach $0$ either.
	In the case of the KPZ equation $\psi(u)$ solves the stochastic heat equation with linear multiplicative noise for which such a positivity result is known from \cite{Mueller}.
	For general quasilinear It\^o equations we are not aware of analogous results.
\end{remark}

\subsection{It\^o consistency}
An intermediate step towards global well-posedness, and a question of interest in its own right, is whether solutions of SPDEs in the It\^o sense and the regularity structure sense coincide.
Of course, first, we have to restrict our attention to a class where both theories apply, which is the case for nondegenerate quasilinear equations in divergence form
\begin{equ}\label{eq:Ito-div}
	\d_t u-\d_x(a(u)\d_x u)=g(u)\xi.
\end{equ}
Let us mention that without a semigroup/heat kernel available, the It\^o SPDE approaches based on mild formulations \cite{DPZ, Walsh} cannot handle \eqref{eq:Ito-div}.
However, the monotone operator approach of \cite{Pardoux, KR} can accommodate quasilinear operators in divergence form and therefore can be used to define (analytically) weak solutions in the It\^o sense  (see Definition \ref{def:Ito} below) and prove their well-posedness (see Theorem \ref{thm:Ito-WP} below).
The consistency of the two solution theories depends on the choice of the model,
the required properties are summarised as follows.
 $\rho$ is a fixed $C^\infty$ function on $\mbbR$ that is nonnegative, even, has support contained in the unit ball, and integrates to $1$. We use the notations $\varrho^+(x)=\rho(x+1)$, $\rho_\eps(x)=\eps^{-1}\rho(\eps^{-1}x)$, $\rho_\eps^+(x)=\eps^{-1}\rho^+(\eps^{-1}x)$, $\varrho_{\bar\eps,\eps}(t,x)=\rho_{\bar\eps}^+(t)\rho_\eps(x)$, and $\xi_{\bar\eps,\eps}=\varrho_{\bar \eps,\eps}\ast\xi$ (the purpose of the shifting in time is that $\xi_{\bar\eps,\eps}$ is adapted). 
\begin{assumption} \label{Assumption_Ito}
The renormalisation functions $C_\eps^c(\tau)$ coincide with the BPHZ renormalisation functions for $\tau\in T_{\Itoo}:=\{\<3>\;,\<AAA>\;,\<AAM>\;,\<AMA>\;,\<AMM>\;\}$.

\end{assumption}

\begin{proposition} \label{model_both} There exists a choice of renormalisation constants such that the renormalised model $\PPi^{\bar\eps,\eps}$ satisfies both Assumption~\ref{chain rule} and Assumption~\ref{Assumption_Ito}.
	\end{proposition}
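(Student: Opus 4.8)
The plan is to obtain the required constants as a finite, convergent modification of the \BPHZ\ renormalisation, arranged so that the modification is invisible on the short list $T_{\Itoo}$ while restoring the chain rule on all of $\hat{\SS}_{\<generic>}$. Recall that a renormalised model is pinned down by the character $\ell$ on the span of forests of $T_-$ entering the strong preparation map $R^\star_\ell$ of \eqref{eq:exprep}, equivalently by the family $\big(C_\eps^c(\tau)\big)_{\tau\in\hat{\SS}_{\<generic>}}$ together with its $c$-derivatives; Assumption~\ref{chain rule} requires that the counterterm produced by Theorem~\ref{fisrt renormalisation} lie in the linear span of the $\Upsilon_{\hat F}\big[\sum_m\Nabla_{\tau_2}^m\tau_1\big]$, and Assumption~\ref{Assumption_Ito} pins these functions down on $T_{\Itoo}$ to agree with their \BPHZ\ counterparts.

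First I would make the covariant-derivative constraint explicit: by \eqref{covariant_derivative} it is linear in the constants and carves out a subspace $V_{\mathrm{chain}}$ of admissible renormalisations, and, following \cite{BGHZ}, there is an orthogonal splitting (with respect to the inner product \eqref{eq:innerproduct}) of the counterterms into a \emph{geometric} part --- the range of the covariant derivatives --- and its complement. The key input, \cite[Prop.~3.9]{BGHZ} together with its parameter-derivative version (which holds with the same proof, since $\partial_c$ of a convergent family of renormalisation functions converges), is that the projection of the \BPHZ\ counterterm onto the non-geometric complement stays bounded, in fact converges, as $\bar\eps,\eps\to0$. Subtracting this non-geometric projection therefore yields a renormalisation $\ell^{\mathrm{chain}}\in V_{\mathrm{chain}}$ that differs from \BPHZ\ by a convergent finite counterterm; since adding a convergent finite renormalisation only shifts the limit (and the shifted limit is still nontrivial), the model $\PPi^{\mathrm{chain}}$ built from $\ell^{\mathrm{chain}}$ converges by the robustness of Theorem~\ref{fisrt renormalisation}, and satisfies Assumption~\ref{chain rule} by construction.

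Second I would check that this modification does not touch $T_{\Itoo}$, i.e.\ that the non-geometric projection vanishes on every $\tau\in T_{\Itoo}=\{\<Xi2>,\<Xi4>,\<Xi4e>,\<Xi4c>,\<Xi4b>\}$. These are exactly the trees in $\SS_{\<generic>}$ carrying only plain integration edges, and they are precisely the terms produced by the covariant derivatives acting on lower-order saturated trees: for $\<Xi2>$ this is the computation behind \eqref{identity constants} in Section~\ref{sec:computation}, and for the four-leaf trees it is the statement of \cite{BGHZ} that at most four covariant derivatives and four noises generate exactly these terms. Hence each $\tau\in T_{\Itoo}$ lies in the geometric subspace, its non-geometric projection is $0$, so $C_\eps^c(\tau)$ is left untouched and equals the \BPHZ\ value; thus $\PPi^{\bar\eps,\eps}:=\PPi^{\mathrm{chain}}$ satisfies both assumptions and the proposition follows. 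Equivalently, one can phrase the last step as surjectivity of the linear map sending the chain-rule parameters to the tuple of values on $T_{\Itoo}$ --- the covariant derivatives of the low-complexity trees contribute independently to each entry --- and then choose the parameters so that the $T_{\Itoo}$-values are the \BPHZ\ ones, absorbing the leftover discrepancy on $\hat{\SS}_{\<generic>}\setminus T_{\Itoo}$ into a harmless finite renormalisation.

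The step I expect to be the main obstacle is precisely this compatibility bookkeeping: one must be sure that imposing the chain-rule relations --- such as $C_\eps(\<Xi2>)=a\,C_\eps(\<I1Xitwo>)$ and the identities obtained from it by applying $\partial$ --- never forces on a tree of $T_{\Itoo}$ a value different from \BPHZ. This is where one leans on the explicit description, taken from \cite{BGHZ}, of the covariant-derivative-generated subspace in the space-time white-noise regime, together with the fact that $T_{\Itoo}$ was chosen to be the minimal set of trees carrying the It\^o data. A secondary point to verify is that the time shift in the non-anticipative mollifier $\varrho_{\bar\eps,\eps}=\rho_{\bar\eps}^+(t)\rho_\eps(x)$ does not interfere: this is immediate, since the symmetry cancellations underlying both the chain-rule identities and \cite[Prop.~3.9]{BGHZ} use only the spatial reflection $x\mapsto-x$, which $\varrho_{\bar\eps,\eps}$ respects.
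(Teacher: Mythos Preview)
Your second paragraph contains a genuine error. You claim that each $\tau\in T_{\Itoo}$ lies in the geometric subspace $V_{\mathrm{chain}}$, so that subtracting the non-geometric projection of BPHZ leaves the $T_{\Itoo}$-coefficients unchanged. This is false already for $\tau=\<Xi2>$: the two-noise sector of $V_{\mathrm{chain}}$ is one-dimensional, spanned by $\Nabla_\Xi\Xi = a\,\<Xi2> + \tfrac12\<I1Xitwo>$, and $\<Xi2>$ by itself is not a scalar multiple of this combination. The identity \eqref{identity constants} you cite, $C_\eps(\<Xi2>)=a\,C_\eps(\<I1Xitwo>)$, is a nontrivial linear constraint linking two coefficients, not evidence that $\<Xi2>\in V_{\mathrm{chain}}$. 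Consequently the orthogonal projection of the BPHZ counterterm onto $V_{\mathrm{chain}}^\perp$ generically has a nonzero $\<Xi2>$-component, and the modification you propose \emph{does} perturb the $T_{\Itoo}$-values.

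The paper's proof is the one-line citation of \cite[Prop.~6.13]{BGHZ}, which establishes precisely the opposite of your claim: the trees in $T_{\Itoo}$ are linearly \emph{independent} of the chain-rule elements. This is what guarantees that one can prescribe the $T_{\Itoo}$-coefficients to their BPHZ values while remaining inside the chain-rule constraint set --- exactly the surjectivity you sketch in your closing ``Equivalently'' sentence. That alternative formulation is the correct one and should replace your primary argument; note that ``$T_{\Itoo}\subset V_{\mathrm{chain}}$'' (your main line) and ``$T_{\Itoo}$ linearly independent of $V_{\mathrm{chain}}$'' (the paper) are mutually exclusive statements about where $T_{\Itoo}$ sits, and it is the latter that holds.
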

\begin{proof}
	This is just a consequence of \cite[Prop. 6.13]{BGHZ} where one shows that $ T_{\Itoo} $ is linearly independent of the terms satisfying the chain rule.
	\end{proof}

Similarly to \cite{wong}, one can say more about the counterterms in an equation generated by smooth It\^o models.
\begin{proposition}\label{prop:Ito}
Let $\eps\to 0$ and $\bar \eps\leq\eps^5$ and consider approximating \eqref{eq:Ito-div} with the approximate equations driven by $\xi_{\beps,\eps}$. Then the counterterm can be chosen to be
\begin{equ}\label{eq:Itocounter}
		\tfrac{1}{2}C_\eps^{\Itoo}g'(u_{\beps,\eps})g(u_{\beps,\eps}),
	\end{equ}
	where $C^{\Itoo}_\eps=\|\rho_{\eps}\|_{L^2}^2$.
\end{proposition}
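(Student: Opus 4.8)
The plan is to isolate the single non-trivial tree contributing to the renormalisation of \eqref{eq:Ito-div} and compute its renormalisation constant explicitly under the adapted, time-shifted mollification. First I would recall that for \eqref{eq:Ito-div} the relevant negative-degree trees in $\SS_{\<generic>}$ that carry a nonzero elementary differential when $f=h=k=0$ are very few: the noise-squared term $\<Xi2>$ (together with its kernel-modified partner $\<I1Xitwo>$), and by the chain rule constraint from Theorem~\ref{main result 2} these collapse, exactly as in the warm-up of Section~\ref{sec:computation}, to a single counterterm proportional to $\Upsilon_F[\<Xi2>]=g'g$. So the claim reduces to identifying the scalar in front of $g'(u_{\beps,\eps})g(u_{\beps,\eps})$ and showing it equals $\tfrac12\|\rho_\eps\|_{L^2}^2$.

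Next I would carry out the BPHZ computation of $C_\eps(\<Xi2>)$ with the anisotropically-shifted mollifier $\varrho_{\beps,\eps}(t,x)=\rho^+_{\beps}(t)\rho_\eps(x)$. The key point is that the divergence of $\<Xi2>$ comes from the diagonal value of the covariance kernel of $\xi_{\beps,\eps}$, i.e.\ $\int \varrho_{\beps,\eps}(z)\varrho_{\beps,\eps}(z)\,dz=\|\rho^+_{\beps}\|_{L^2}^2\|\rho_\eps\|_{L^2}^2$, paired against the singular kernel attached to the second leaf. Since $\bar\eps\le\eps^5$, the time-mollification is much finer than the space-mollification relative to the parabolic scaling $(2,1)$, so in the limit $\bar\eps\to 0$ (at fixed $\eps$) the time-regularisation is transparent: the relevant integral against the heat kernel $P(c,\cdot)$ localises and produces exactly $\|\rho_\eps\|_{L^2}^2$ up to a factor that the symmetry factor $S(\<Xi2>)=1$ and the $\tfrac12$ from the planted partner combine to give $\tfrac12$. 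This is the same mechanism as in \cite[Prop.~6.13]{BGHZ} and in the Wong--Zakai-type analysis of \cite{wong}; I would invoke Proposition~\ref{model_both} to guarantee that one \emph{can} simultaneously arrange Assumption~\ref{chain rule} and Assumption~\ref{Assumption_Ito}, so that the BPHZ value on $T_{\Itoo}$ is the one actually appearing in the renormalised equation, and then the only remaining trees with nonzero $\Upsilon_F$ for the reduced nonlinearity $F=g(u)\xi$ are precisely those generated by $\<Xi2>$.

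The main obstacle I anticipate is the bookkeeping needed to show that \emph{all other} trees either have vanishing elementary differential for this particular $F$ (no $(\partial_x u)^2$ term, $f=a'$ replaced context) or carry an odd number of thick/derivative edges and hence vanish by the antisymmetry argument recalled after \eqref{listPage} and in \cite[Lem.~2.5]{BGHZ}; this has to be done at the level of $\hat F$, where the auxiliary fields $v_\alpha$ reintroduce nonlinear structure, so one must check that after the chain-rule reduction of Theorem~\ref{main result 2} the surviving local counterterm is genuinely only the $g'g$ one. Once that reduction is in place, the computation of the constant $C_\eps^{\Itoo}=\|\rho_\eps\|_{L^2}^2$ is the standard divergence extraction: write $\hat\Pi_x^{R,c_0}\<Xi2>$ via \eqref{eq:Inta}, subtract its expectation, and read off the renormalisation as $\mathbb{E}[\hat\Pi_x\<Xi2>](x)$, which factorises into the spatial $L^2$-norm of $\rho$ and a convergent (as $\bar\eps\to0$) time integral against the kernel that evaluates to $1$ by the normalisation $\int\rho=1$ and the non-anticipativity of $\rho^+$. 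I would close by noting that the factor $\tfrac12$ is exactly the symmetry factor discrepancy between $\<Xi2>$ and the way $g'g$ enters $\Upsilon_F$, matching \eqref{eq:Itocounter}.
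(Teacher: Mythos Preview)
Your reduction to a single tree is incomplete, and your explanation of the factor $\tfrac12$ is wrong; both gaps matter.

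First, the four-noise trees. For the nonlinearity $F=g(u)\xi$ (so $F_1=0$, $F_\xi=g$), the trees in $\SS_{\<generic>}$ with thick edges or with an inner $\Xi_1$ node indeed have $\Upsilon_F=0$. But this does \emph{not} exhaust $\SS_{\<generic>}^{(4)}$: the four trees built entirely from thin edges with a noise at every node (the set $T_{\Itoo}\setminus\{\<3>\}=\{\<AAA>,\<AAM>,\<AMA>,\<AMM>\}$ in the paper) all have nonzero $\Upsilon_F$ — they produce polynomials in $g,g',g'',g'''$. They are not killed by antisymmetry (even number of derivatives) nor by vanishing elementary differential. The paper disposes of them via their BPHZ \emph{constants}: two of them ($\<AMA>,\<AMM>$) have $C^c_{\beps,\eps}=0$ by Gaussianity, and the other two ($\<AAA>,\<AAM>$) have $C^c_{\beps,\eps}\to 0$ \emph{only} under the scale separation $\bar\eps\le\eps^\beta$, citing \cite{wong}. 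This is precisely why the hypothesis $\bar\eps\le\eps^5$ appears in the statement; your argument never uses it, which is a sign that something is missing.

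Second, the $\tfrac12$. You attribute it to a symmetry-factor discrepancy, but $S(\<3>)=1$ and $\Upsilon_F[\<3>]=g'g$ with no hidden factor. The $\tfrac12$ is analytic, not combinatorial: $C^c_{\beps,\eps}(\<3>)=(\varrho_{\beps,\eps}\ast\bar\varrho_{\beps,\eps}\ast K^c)(0)$, and after rescaling one is left with integrating the time-correlation $\rho^+_{\tilde\eps}\ast\bar\rho^+_{\tilde\eps}$ against the forward heat kernel. Since $K^c$ is supported on positive times and $\rho^+_{\tilde\eps}\ast\bar\rho^+_{\tilde\eps}$ is even with total mass $1$, only half the mass survives in the limit $\tilde\eps\to 0$, giving $\tfrac12\rho\ast\bar\rho(0)=\tfrac12\|\rho\|_{L^2}^2$. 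Your description of the constant as ``the diagonal value of the covariance $\|\rho^+_{\beps}\|_{L^2}^2\|\rho_\eps\|_{L^2}^2$'' is not the BPHZ constant for $\<3>$ and would blow up as $\bar\eps\to 0$; the kernel convolution is essential and is what produces the $\tfrac12$. The paper also checks that the error $|C^c_{\beps,\eps}(\<3>)-\tfrac12 C_\eps^{\Itoo}|\lesssim\eps^{1/2}$ is uniform in $c\in[\lambda,\lambda^{-1}]$, which is needed because $c=a(u_{\beps,\eps})$ varies; you do not address this uniformity.
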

Note that
	\begin{equ}
		C^{\Itoo}_\eps=\eps^{-1}C_1^{\Itoo}=\sum_{n\in\mbbN}\big(\varrho_\eps\ast e_n(x)\big)^2,
	\end{equ}
for any orthonormal basis $(e_n)_{n\in\mbbN}$ of $L^2(\mbbT)$ and any $x\in\mbbT$.
\begin{proof}
The statement is similar to the results of \cite[Sec.~2.3]{wong}.
By Proposition \ref{model_both} we can take models $\PPi_{\beps,\eps}$ above $\xi_{\beps,\eps}$ that satisfy both Assumptions \ref{chain rule} and \ref{Assumption_Ito}.
By Theorem \ref{main result 2} and the fact that for \eqref{eq:Ito-div} the nonlinearity $F$ is simply given by $g(u)\xi$, the counterterm is of the form
\begin{equ}\label{eq:Itocounter0}
\sum_{\tau\in T_{\Itoo}}C_{\beps,\eps}^{a(u_{\beps,\eps})}(\tau) \frac{\Upsilon_{F}[\tau](u_{\beps,\eps})}{S(\tau)}.
\end{equ}
For the BPHZ renormalisation, one has
$C^c_{\beps,\eps}(\<AMA>)=C^c_{\beps,\eps}(\<AMM>)=0$
simply as a consequence of Gaussianity, see \cite[Sec.~4.3]{wong}.
Further, for arbitrary $\beta>0$, on the scale $\bar \eps\leq \eps^\beta$ one has
$C^c_{\beps,\eps}(\<AAA>),C^c_{\beps,\eps}(\<AAM>)\to 0$, see \cite[Sec.~2.3]{wong}.
The latter convergence is easily seen to be uniform in $c\in[\lambda,\lambda^{-1}]$, since all the relevant bounds on the kernels $K(c,\cdot)$ are uniform.

As for the only nonzero contribution to \eqref{eq:Itocounter0}, namely $\tau=\<3>$,
we give all details, 
partly to detail that the difference between \eqref{eq:Itocounter} and \eqref{eq:Itocounter0} goes to $0$ (\cite{wong} only discusses the difference being $o(\eps^{-1})$),
partly to emphasize the feature that the dependence on the quasilinear coefficient $a$ disappears.
Clearly, $\Upsilon_F[\<3>](u)=g'(u)g(u)$ and $S(\<3>)=1$.
Let $c\in[\lambda,\lambda^{-1}]$ be arbitrary. Using the scaling property $K(c,(\mu^2 t,\mu x))=\mu^{-1}K(c,(t,x))$ for $\mu\in(0,1]$ and $|t|^{1/2}+|x|\leq r$ for some $r>0$, one has for sufficiently small $\bar \eps,\eps$
	\begin{equ}
		C_{\bar\eps,\eps}^{c}(\<3>)=\varrho_{\bar\eps,\eps}\ast\bar\varrho_{\bar\eps,\eps}\ast K^c(0)
		=\eps^{-1}\big(\varrho_{\bar\eps\eps^{-2},1}\ast\bar\varrho_{\bar\eps\eps^{-2},1}\ast K^c(0)\big),
	\end{equ}
	where $\bar\varrho_{\bar\eps,\eps}$ is the (space-time) reflection of $\varrho_{\bar\eps,\eps}$.
	Let $\tilde \eps=\bar\eps\eps^{-2}$, and note $\tilde\eps\to 0$.
	After some elementary arrangement one has
	\begin{equs}
		\varrho_{\tilde \eps,1}\ast\bar\varrho_{\tilde \eps,1}\ast K^c(0)&=\int\rho^{+}_{\tilde\eps}\ast\bar\rho^{+}_{\tilde\eps}(t)\int\rho\ast\bar\rho(x)K^c(-t,-x)\bone_{t<0}\,dx\,dt,
		\\
		\tfrac{1}{2}C_1^{\Itoo}&=\tfrac{1}{2}\rho\ast\bar\rho(0)=\rho\ast\bar\rho(0)\int\rho^{+}_{\tilde\eps}\ast\bar\rho^{+}_{\tilde\eps}(t)\bone_{t<0}\,dt.
	\end{equs}
	Hence one has uniformly in $c\in[\lambda,\lambda^{-1}]$,
	\begin{equ}
		\big|\varrho_{\tilde \eps,1}\ast\bar\varrho_{\tilde \eps,1}\ast K^c(0)-\tfrac{1}{2}C_1^{\Itoo}\big|\lesssim\max_{|t|\lesssim \tilde \eps}\int |x| K^c(-t,-x)\,dx\lesssim 
		\tilde\eps^{1/2}.
	\end{equ}
	Therefore if $\bar\eps\leq \eps^5$, we have
\begin{equ}
\big|C_{\bar\eps,\eps}^c(\<3>)-\tfrac{1}{2}C_{\eps}^{\Itoo}\big|\lesssim \eps^{1/2}
\end{equ}	
uniformly in $c\in[\lambda,\lambda^{-1}]$, which completes the proof.
\end{proof}
The main result of the section is the following.
\begin{theorem}\label{thm:consistency}
	Assume $a,g\in \mathcal{C}^5$, $a\geq \lambda$ for some constant $\lambda>0$, $u_0\in \mathcal{C}^\alpha(\mbbT)$ for some $\alpha\in(0,1/2)$, and suppose that
$\PPi$ is an model that is a limit of smooth models satisfying Assumptions \ref{chain rule} and \ref{Assumption_Ito}.
	Let $u^{\Itoo}$ be the solution of \eqref{eq:Ito-div} with initial condition $u_0$ in the sense of Theorem \ref{thm:Ito-WP} and
	let $u^{\mathrm{RS}}$ be the solution of \eqref{eq:Ito-div} with initial condition $u_0$ in the sense of Theorem \ref{main result 2}.
	Fix $L>\|u_0\|_{\mathcal{C}^\alpha(\mbbT)}$ and let define the random variable $T_L=\inf\{t:\,\|u_t^{\mathrm{RS}}(\cdot)\|_{\mathcal{C}^{\alpha/2}(\mbbT)}\geq L\}\wedge 1$.
	Then almost surely $u^{\Itoo}=u^{\mathrm{RS}}$ in $L^2([0,T_L]\times\mbbT)$.
\end{theorem}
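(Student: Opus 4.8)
The plan is to realise both $u^{\Itoo}$ and $u^{\mathrm{RS}}$ as limits of the \emph{same} sequence of classical (random) PDEs with smooth driving noise, and then to compare the two limits. First I would fix the anisotropic mollifier $\varrho_{\bar\eps,\eps}$ of Assumption~\ref{Assumption_Ito} with $\bar\eps=\eps^5$, so that $\xi_{\bar\eps,\eps}=\varrho_{\bar\eps,\eps}\ast\xi$ is adapted, and by Proposition~\ref{model_both} pick a renormalised model $\PPi^{\bar\eps,\eps}$ above $\xi_{\bar\eps,\eps}$ satisfying both Assumptions~\ref{chain rule} and~\ref{Assumption_Ito}. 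Let $u_\eps$ be the corresponding solution of the renormalised equation. Since for \eqref{eq:Ito-div} the nonlinearity in divergence form is simply $F=g(u)\xi$ (so $F_{\one}=0$), Theorem~\ref{main result 2} combined with Proposition~\ref{prop:Ito} identifies $u_\eps$ with the classical solution of
\begin{equ}
\d_t u_\eps-\d_x\big(a(u_\eps)\d_x u_\eps\big)=g(u_\eps)\,\xi_{\bar\eps,\eps}+\tfrac12 C^{\Itoo}_\eps\, g'(u_\eps)g(u_\eps)+r_\eps,
\end{equ}
where $C^{\Itoo}_\eps=\|\rho_\eps\|_{L^2}^2$ and $r_\eps\to0$ uniformly.

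Next I would take the regularity structures limit. As $\bar\eps=\eps^5\to0$, the renormalised models $\PPi^{\bar\eps,\eps}$ converge in probability to $\PPi$ (this is precisely the scaling already used in Proposition~\ref{prop:Ito}); together with continuity of the reconstructed solution map of Theorem~\ref{main result 2} and the fact that the lower-order smooth reaction term $r_\eps$ does not affect the limit, this yields $u_\eps\to u^{\mathrm{RS}}$ in $\Co^\delta_\star$ in probability. Restricting to $[0,T_L]\times\mbbT$, on which $u^{\mathrm{RS}}$ does not blow up and satisfies $\sup_{t\le T_L}\|u^{\mathrm{RS}}_t\|_{\mathcal{C}^{\alpha/2}(\mbbT)}\le L$, it follows that $u_\eps\to u^{\mathrm{RS}}$ strongly there, in particular in $L^2([0,T_L]\times\mbbT)$, and that --- up to replacing $T_L$ by a stopping time that differs from it with probability tending to $0$ --- $u_\eps$ itself stays uniformly bounded on the time interval considered, so that $g(u_\eps),g'(u_\eps)$ and the relevant derivatives are uniformly bounded there.

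The heart of the matter is the It\^o limit, i.e.\ showing $u_\eps\to u^{\Itoo}$ in $L^2([0,T_L]\times\mbbT)$ in probability. Using the coercivity furnished by $a\ge\lambda$ one tests the equation above against $u_\eps$ to obtain energy estimates uniform in $\eps$; although $C^{\Itoo}_\eps\sim\eps^{-1}\to\infty$, the a priori divergent contributions of $\langle g(u_\eps)\xi_{\bar\eps,\eps},u_\eps\rangle$ and of $\tfrac12 C^{\Itoo}_\eps\langle g'(u_\eps)g(u_\eps),u_\eps\rangle$ compensate up to bounded remainders, which is the classical Wong--Zakai cancellation and is precisely why the counterterm has the form dictated by Proposition~\ref{prop:Ito}. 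With these bounds one extracts a subsequential limit and passes to the limit in the monotone/variational formulation of \eqref{eq:Ito-div} as in \cite{Pardoux, KR}; the delicate term is the stochastic one, for which the adaptedness of $\xi_{\bar\eps,\eps}$ and the fine time-scale $\bar\eps\le\eps^5$ ensure that $\int_0^t\langle g(u_\eps)\xi_{\bar\eps,\eps},\varphi\rangle\,ds$ together with the counterterm integral converges, for deterministic $\varphi$, to the It\^o integral $\int_0^t\langle g(u)\,dW,\varphi\rangle$, so that the limiting equation is \eqref{eq:Ito-div} interpreted in the It\^o sense of Definition~\ref{def:Ito}. By the uniqueness part of Theorem~\ref{thm:Ito-WP} the full sequence then converges and its limit is $u^{\Itoo}$. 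Since $u_\eps$ thus converges in $L^2([0,T_L]\times\mbbT)$ simultaneously to $u^{\mathrm{RS}}$ and to $u^{\Itoo}$, these coincide almost surely on $[0,T_L]\times\mbbT$.

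I expect the main obstacle to be exactly this It\^o limit: establishing energy bounds uniform in $\eps$ despite $C^{\Itoo}_\eps\to\infty$ --- which forces one to exhibit the cancellation between the mollified noise and the counterterm already at the level of the a priori estimate --- and correctly identifying the limiting stochastic integral in the It\^o sense for an equation that possesses no mild formulation, where the adaptedness built into $\xi_{\bar\eps,\eps}$ and the explicit expression \eqref{eq:Itocounter} from Proposition~\ref{prop:Ito} are the crucial inputs. A secondary, more routine point --- handled by the stopping time $T_L$ --- is reconciling the strong parabolic H\"older convergence coming from the regularity structures side with the weaker energy/variational convergence on the It\^o side.
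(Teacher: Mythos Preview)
Your high-level strategy --- exhibit one approximating sequence that converges to both $u^{\mathrm{RS}}$ and $u^{\Itoo}$ --- matches the paper. The regularity-structures half of your argument is essentially correct (modulo a sign: the counterterm in the renormalised equation is $-\tfrac12 C^{\Itoo}_\eps g'g$, not $+\tfrac12$). The It\^o half, however, is where you diverge from the paper, and where there is a real gap.

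You take the joint limit $\bar\eps=\eps^5\to 0$ and try to show directly that $u_\eps\to u^{\Itoo}$ by an energy/monotonicity argument, asserting that the diverging counterterm $\tfrac12 C^{\Itoo}_\eps\langle g'g,u_\eps\rangle$ and the mollified-noise term $\langle g(u_\eps)\xi_{\bar\eps,\eps},u_\eps\rangle$ cancel ``up to bounded remainders'' in the a priori estimate. This is exactly a Wong--Zakai theorem for quasilinear equations driven by \emph{space-time} white noise; the paper explicitly notes that the existing semilinear result \cite{wong} relies on the mild formulation and does not extend, and the cancellation you invoke is not something one gets for free from coercivity --- it is the substance of the problem. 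Making it rigorous in the variational framework (with $C^{\Itoo}_\eps\sim\eps^{-1}$ and no mild representation) is at least as hard as the theorem you are trying to prove.

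The paper avoids this entirely by a two-step limit: first send $\bar\eps\to 0$ with $\eps>0$ \emph{fixed}. Then the noise $\xi_{0,\eps}$ is spatially smooth, so the equation is driven by a finite-dimensional Brownian motion and a classical Wong--Zakai result (Theorem~\ref{thm:WZ}) gives $u_{\bar\eps,\eps}\to u_\eps$, where $u_\eps$ solves the It\^o equation \eqref{eq:Ito-spatial} with spatially mollified noise. Second, send $\eps\to 0$: this is now a pure It\^o-to-It\^o stability question with \emph{no diverging counterterm}, handled by the quantitative estimate \eqref{eq:Ito-error} in Theorem~\ref{thm:Ito-WP}. The point is that decoupling the temporal and spatial mollifications turns the hard Wong--Zakai step into a finite-dimensional one, and pushes the singular limit into a setting where the monotone-operator machinery applies cleanly.
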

In the semilinear case $a\equiv 1$ the statement of Theorem \ref{thm:consistency} is contained in the work of Hairer-Pardoux \cite{wong}. The proof in \cite{wong} makes essential use of the mild formulation of the equation, which is not available for quasilinear equations.
Therefore we give another argument that relies on the stability of the It\^o equation with respect to spatial approximations of the noise. This gives a simpler alternative in the semilinear case, but more importantly for us, such spatial stability can be obtained also in the quasilinear case, using the weak formulation of the equation, see Section \ref{sec:Ito-stability} below.

\begin{proof}[of Theorem \ref{thm:consistency}]
	By Theorem \ref{thm:main} and Proposition \ref{prop:Ito}, we have that, $u_{\bar\eps,\eps}$, the solution of
	\begin{equ}
		\d_t u_{\bar\eps,\eps}-\d_x(a(u_{\bar\eps,\eps})\d_x u_{\bar\eps,\eps})=g(u_{\bar\eps,\eps})\xi_{\bar \eps,\eps}-\frac{1}{2}C_\eps^{\Itoo}g'(u_{\bar\eps,\eps})g(u_{\bar\eps,\eps})
	\end{equ}
	converges to $u^{\mathrm{RS}}$ as $\bar\eps,\eps\to 0$ with $\bar \eps\leq \eps^5$. Note that although the analytic arguments of \cite{reg, MH} would only yield convergence of local solutions, the arguments in \cite[Sec.~6]{GHM} yield convergence in probability in $\mathcal{C}^{\alpha/2}([0,T_L]\times\mbbT)$. 
By classical Wong-Zakai approximation results on quasilinear equations with spatially smooth noise (see \ref{sec:smoothWZ} below for details), for any fixed $\eps>0$, $u_{\bar\eps,\eps}$ converges to $u_\eps$ as $\bar \eps\to 0$ in probability in $C([0,1],L^2(\mbbT))$, where $u_\eps$ is the It\^o solution of
	\begin{equ}\label{eq:Ito-spatial}
		\d_t u_{\eps}-\d_x(a(u_{\eps})\d_x u_{\eps})=g(u_{\eps})\xi_{0,\eps}.
	\end{equ}
	The precise understanding of \eqref{eq:Ito-spatial} is given in Section \ref{sec:Ito-stability} below, as well as Theorem \ref{thm:Ito-WP} stating the convergence $u^\eps\to u^{\Itoo}$ in probability in $L^2([0,1]\times\mbbT)$ as $\eps\to 0$. This finishes the proof.
\end{proof}

\subsection{Stability of It\^o solutions}\label{sec:Ito-stability}
Here we recall the main relevant elements of the (analytically) weak It\^o solution theory of \eqref{eq:Ito-div} (for much more details and generality see \cite{Pardoux,KR, PR}) and prove their stability with respect to spatial approximations of the noise, which was crucial to the proof of Theorem \ref{thm:consistency} above.

Define the standard orthonormal basis of $L^2$ on $\mbbT$ by setting $e_0(x)\equiv 1$, $e_n(x)=\sqrt{2}\cos(\pi n x)$ for even $n\geq 1$, and $e_n(x)=\sqrt{2}\sin(\pi (n+1)x)$ for odd $n\geq 1$. 
Take a parameter $M\geq1$, and for $\alpha\in\mbbR$, $f,g\in \mathrm{span}\{e_{n}:\,n\geq 0\}$ define
\begin{equ}\label{eq:H alpha norm}
	\scal{f,g}_{H^\alpha_M}=\sum_{n\in\mbbN}(M+\pi^2\bar n^2)^{\alpha}\scal{f,e_n}_{L^2}\scal{g,e_n}_{L^2},
\end{equ}
where $\bar n=n$ for even $n$ and $\bar n=n+1$ for odd $n$.
Set $\|f\|_{H^\alpha_M}^2=\scal{f,f}_{H^\alpha_M}$,
and define the Sobolev space $H^\alpha_M$ as the completion of $ \mathrm{span}\{e_{n}:\,n\geq 0\}$ with respect to the norm $\|\cdot\|_{H^\alpha_M}$.
For $M=1$ we simply write $H^\alpha=H^\alpha_1$.
For any $\alpha\in\mbbR$, $M,M'\geq 1$, the norms $\|\cdot\|_{H^\alpha_M}$ and $\|\cdot\|_{H^\alpha_{M'}}$ are equivalent, so in particular as vector spaces all $H^\alpha_M$ coincide with $H^\alpha$.
For any $M\geq 1$, $H^0_M=L^2$.
Note that for any $\alpha,\beta\in\mbbR$, the operator $(M-\Delta)^{\beta/2}$ (defined in the natural way $\scal{(M-\Delta)^{\beta/2}f,e_n}_{L^2}=(M+\pi^2\bar n^2)^{\beta/2}\scal{f,e_n}_{L^2}$) is an isometry between $H^\alpha_M$ and $H^{\alpha-\beta}_M$ and that the inner product in $H^\alpha_M$ extends to a duality between $H^{\alpha-\beta}_M$ and $H^{\alpha+\beta}_M$.

Define the processes $(W^n_t)_{t\geq -1}$ for $n\geq 0$ as the continuous modifications of the stochastic process $\xi(\bone_{[0,t]}\otimes e_n)$, with the convention that for $a\geq 0$, $\bone_{[0,-a]}:=-\bone_{[-a,0]}$. It is standard that such continuous modifications exist and $(W^n)_{n\in\mbbN}$ is a sequence of mutually independent two-sided Brownian motions.
Let $\mathbf{F}=(\mathcal{F}_t)_{t\geq-1}$ be a complete filtration such that $(W^n)_{n\in\mbbN}$ is a $\mathbf{F}$-Brownian motion, and let $\mathcal{P}$ be the predictable $\sigma$-algebra constructed from $\mathbf{F}$. 

We can then write \eqref{eq:Ito-div} and its spatial approximations \eqref{eq:Ito-spatial} in the form
\begin{equ}\label{eq:Ito-general}
	du_t^\eps=\Delta(A(u_t^\eps))\,dt+\sum_{n\geq 0}g(u_t^\eps)(\rho_\eps\ast e_n)\,dW^n_t,
\end{equ}
where $A(r)=\int_0^r a(s)\,ds$, and equip \eqref{eq:Ito-general} with the initial condition $u_0$.

\begin{definition}\label{def:Ito}
	Let $a$ be a bounded function and let $g$ be a function of linear growth. Fix $u_0\in L^2(\Omega,H^{-1})$ and $\eps\in[0,1]$. An It\^o solution of \eqref{eq:Ito-general} with initial condition $u_0$ is a process $u^\eps\in L^2(\Omega\times[0,1]\times \mbbT;\mathcal{P}\otimes \mathcal{B}(\mbbT))\cap L^2(\Omega;\mathcal{F}_1;C([0,1],H^{-1}))$ such that the equality
	\begin{equ}
		u_t^\eps=u_0+\int_0^t\Delta(A(u_s^\eps))\,ds+\sum_{n\geq 0}\int_0^t g(u_s^\eps)(\rho_\eps\ast e_n)\,dW^n_s
	\end{equ}
	holds in $H^{-2}$ almost surely for all $t\in[0,1]$.
\end{definition}
\begin{remark}
	It is clear that the deterministic integral is a continuous process in $H^{-2}$ under the stated conditions on the coefficients and $u^\eps$ itself.
	The fact that the same holds for the sum of the stochastic integrals (in fact, even in $H^{-1}$) is not immediately obvious but will follow from 
	the arguments
	below.
\end{remark}

The following is a summary of the relevant consequences of the monotone operator approach from \cite{KR}.
There is one non-standard aspect in the formulation: we also consider quasilinear equations outside their ``usual'' Gelfand triple $L^2\subset H^{-1}\subset H^{-2}$, in this aspect we borrow arguments from \cite{DGG_STWN}.
\begin{theorem}\label{thm:Ito-WP}
	Let $\eps\in[0,1]$.
	Let $a$ satisfy $\lambda\leq a\leq \lambda^{-1}$ for some constant $\lambda>0$.
	Let $g$ be Lipschitz continuous.
	Let $u_0\in L^2(\Omega,L^2(\mbbT))$.
	Then there exists a unique It\^o solution $u^{\eps}$ to \eqref{eq:Ito-general} with initial condition $u_0$.
	Moreover, for any $\gamma\in[-1,-1/2)$ one has the bound
	\begin{equ}\label{eq:Ito-apriori}
		\E \sup_{t\in[0,1]}\|u_t^\eps\|_{H^{\gamma}}^2+\E\int_0^1 \|u_t^\eps\|_{H^{\gamma+1}}^2\,dt\leq
		N\big(\E\|u_0\|_{H^{\gamma}}^2+g(0)^2\big),
	\end{equ}
	with  $N$ depending only on $\gamma,\lambda, \|g'\|_{L^\infty}$.
Henceforth we denote $u=u^0$.
Then for any $\gamma\in(-1,-1/2)$ one has the boun
	\begin{equ}\label{eq:Ito-error}
		\E\int_0^1 \|u_t-u_t^\eps\|_{L^2}^2\,dt
		\leq
		N \eps^{2(\gamma+1)},
	\end{equ}
	with $N$ depending only on $\gamma,\lambda,\E\|u_0\|_{H^\gamma}^2, g(0), \|g'\|_{L^\infty},\rho$.
\end{theorem}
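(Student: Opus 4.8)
The statement collects three facts, proved in turn: existence and uniqueness of the It\^o solution, the a priori bound \eqref{eq:Ito-apriori} in the scale $H^\gamma_M$, and the approximation bound \eqref{eq:Ito-error}. For the first, the plan is to invoke the variational framework of \cite{KR} (see also \cite{PR}): fix $M\geq 1$ large and work in the Gelfand triple $L^2(\mbbT)\subset H^{-1}_M(\mbbT)\subset H^{-2}_M(\mbbT)$, viewing \eqref{eq:Ito-general} as $du^\eps_t=\mathcal{A}(u^\eps_t)\,dt+B^\eps(u^\eps_t)\,dW_t$ with $\mathcal{A}(u)=\Delta A(u)$, $B^\eps(u)e_n=g(u)(\rho_\eps\ast e_n)$, and to check hemicontinuity (immediate from continuity of $A$), weak monotonicity, coercivity, and at most quadratic growth. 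The algebraic inputs are $A'=a\in[\lambda,\lambda^{-1}]$, so $(A(r)-A(s))(r-s)\geq\lambda(r-s)^2$, $A(r)r\geq\lambda r^2$, $|A(r)|\leq\lambda^{-1}|r|$, together with $\langle\Delta f,g\rangle_{H^{-1}_M}=M\langle f,g\rangle_{H^{-1}_M}-\langle f,g\rangle_{L^2}$, which give $\langle\mathcal{A}(u)-\mathcal{A}(v),u-v\rangle\leq-\frac{\lambda}{2}\|u-v\|_{L^2}^2+C_M\|u-v\|_{H^{-1}_M}^2$ (and likewise with $v=0$ for coercivity). For the noise one uses $\rho_\eps\ast e_n=\widehat\rho(\pi n\eps)e_n$, so $|\rho_\eps\ast e_n|\leq|e_n|$ uniformly in $n,\eps$, and the elementary Fourier lemma $\sum_n\|fe_n\|_{H^{-1}_M}^2\leq c(M)\|f\|_{L^2}^2$ with $c(M)\to0$ as $M\to\infty$ (from summing $\sum_n(M+\pi^2(j\pm n)^2)^{-1}$); then $\|B^\eps(u)-B^\eps(v)\|_{L_2(\ell^2,H^{-1}_M)}^2\leq c(M)\|g'\|_{L^\infty}^2\|u-v\|_{L^2}^2$, which for $M$ large is absorbed into the strictly negative term above, and the linear growth of $g$ produces the $g(0)^2$ contribution. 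The theorem of \cite{KR} then gives a unique solution in the class of Definition~\ref{def:Ito}, satisfying the equation in $H^{-2}$ for all $t$ almost surely, together with the $\gamma=-1$ case of \eqref{eq:Ito-apriori}.

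For the a priori bound with general $\gamma\in[-1,-1/2)$, the plan is to rerun the same scheme in the triple $H^{\gamma+1}\subset H^\gamma_M\subset H^{\gamma-1}$; by uniqueness the solution is the same one, and here $\gamma\geq-1$ is needed so that $A$ acts on $H^{\gamma+1}$ while $\gamma<-1/2$ makes $\sum_n(M+\pi^2 n^2)^\gamma<\infty$, so that $B^\eps$ is Hilbert--Schmidt into $H^\gamma_M$ with norm $\lesssim\|g(u)\|_{L^2}^2$ and the energy inequality closes. The one genuinely new point is the coercivity: integrating by parts,
\[
\langle\Delta A(u),u\rangle_{H^\gamma_M}=-\langle a(u)\,\d_x u,\d_x u\rangle_{H^\gamma_M}\leq-\frac{\lambda}{2}\|u\|_{H^{\gamma+1}_M}^2+C\|u\|_{H^\gamma_M}^2+R(u),
\]
where $R(u)$ is the commutator $[(M-\Delta)^{\gamma/2},a(u)]\,\d_x u$ tested against $(M-\Delta)^{\gamma/2}\d_x u$. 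Controlling $R(u)$ is the main obstacle: one must show that, $a$ being smooth and $\gamma+1\in[0,1/2)$, the composition $a(u^\eps)$ inherits the $H^{\gamma+1}$-regularity of $u^\eps$ and the commutator gains enough derivatives for $R(u)$ to be absorbed, the borderline contributions being handled through the freedom in $M$. Granting this, It\^o's formula for $\|u^\eps_t\|_{H^\gamma_M}^2$, taking expectations, Gronwall's lemma and the Burkholder--Davis--Gundy inequality give \eqref{eq:Ito-apriori}, following the strategy of \cite{DGG_STWN}.

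For \eqref{eq:Ito-error}, set $w^\eps=u-u^\eps$ with $u=u^0$; it has zero initial datum and solves
\[
dw^\eps_t=\Delta\big(A(u_t)-A(u^\eps_t)\big)\,dt+\sum_{n\geq0}\big(g(u_t)-g(u^\eps_t)\big)(\rho_\eps\ast e_n)\,dW^n_t+\sum_{n\geq0}g(u_t)(e_n-\rho_\eps\ast e_n)\,dW^n_t.
\]
Applying It\^o's formula to $\|w^\eps_t\|_{H^{-1}_M}^2$ in the triple of the first part: the drift contributes $\leq-\frac{\lambda}{2}\|w^\eps_t\|_{L^2}^2+C_M\|w^\eps_t\|_{H^{-1}_M}^2$; the first martingale contributes the quadratic variation $\sum_n\|(g(u_t)-g(u^\eps_t))(\rho_\eps\ast e_n)\|_{H^{-1}_M}^2\leq c(M)\|g'\|_{L^\infty}^2\|w^\eps_t\|_{L^2}^2$, again absorbed for $M$ large; and the approximation martingale contributes
\[
\sum_{n\geq0}\big(1-\widehat\rho(\pi n\eps)\big)^2\|g(u_t)e_n\|_{H^{-1}_M}^2\lesssim_\rho\eps^{2(\gamma+1)}\|g(u_t)\|_{H^{\gamma+1}}^2,
\]
using $(1-\widehat\rho(\xi))^2\lesssim_\rho\min(\xi^2,1)\leq|\xi|^{2(\gamma+1)}$ (since $\widehat\rho(0)=1$ and $\widehat\rho$ is Lipschitz) and the Fourier bound $\sum_n n^{2s}\|fe_n\|_{H^{-1}_M}^2\lesssim\|f\|_{H^s}^2$, valid for $s=\gamma+1<1/2$. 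Taking expectations (the cross terms vanish) and using \eqref{eq:Ito-apriori} for $u=u^0$ to bound $\E\int_0^1\|g(u_t)\|_{H^{\gamma+1}}^2\,dt$ in terms of $\E\|u_0\|_{H^\gamma}^2$, $g(0)$ and $\|g'\|_{L^\infty}$, Gronwall's lemma gives $\sup_{t\leq1}\E\|w^\eps_t\|_{H^{-1}_M}^2\lesssim\eps^{2(\gamma+1)}$; reinserting this into the time-integrated energy inequality yields $\E\int_0^1\|w^\eps_t\|_{L^2}^2\,dt\lesssim\eps^{2(\gamma+1)}$, which is \eqref{eq:Ito-error}, with $N$ depending only on $\gamma,\lambda,\E\|u_0\|_{H^\gamma}^2,g(0),\|g'\|_{L^\infty},\rho$ as claimed.
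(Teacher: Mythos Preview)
Your Step 1 and the structure of Step 3 match the paper's approach. The genuine gap is Step 2, the a priori bound \eqref{eq:Ito-apriori} for $\gamma\in(-1,-1/2)$.

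You propose coercivity in $H^\gamma_M$ via the commutator $R(u)=\big\langle[(M-\Delta)^{\gamma/2},a(u)]\d_x u,(M-\Delta)^{\gamma/2}\d_x u\big\rangle_{L^2}$, and then write ``Granting this\ldots''. But any useful bound on this commutator needs regularity of the multiplier $a(u)$; your appeal to ``$a$ being smooth'' is not among the hypotheses (only $\lambda\le a\le\lambda^{-1}$ is assumed), and even if $a$ were smooth, the regularity of $a(u^\eps)$ is inherited from that of $u^\eps$, which is precisely what you are trying to establish --- the argument is circular. If you literally mean to re-apply Krylov--Rozovskii in the triple $H^{\gamma+1}\subset H^\gamma_M\subset H^{\gamma-1}$, you also owe a monotonicity estimate on $\langle\Delta(A(u)-A(v)),u-v\rangle_{H^\gamma_M}$, which you do not address and which does not follow from the pointwise monotonicity of $A$ once $\gamma\neq-1$.

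The paper replaces the commutator by the Stroock--Varopoulos inequality (Lemma~\ref{lem:S-V}): for $\tilde A'=\sqrt{a}$,
\[
\big\langle u,(M-\Delta)^{\gamma+1}A(u)\big\rangle_{L^2}\ \ge\ \|\tilde A(u)\|_{H^{\gamma+1}_M}^2,
\]
valid under no assumption beyond $a\ge0$. Combined with $\|\tilde A(u)\|_{H^{\gamma+1}}\gtrsim\|u\|_{H^{\gamma+1}}$ (Proposition~\ref{prop:triv}(ii) applied to the Lipschitz inverse $\tilde A^{-1}$), this yields the coercivity directly. The paper then does not re-run \cite{KR}; instead, \emph{for $\eps>0$ only}, it first observes that the equation, rewritten with second-order part $\d_x(a(u^\eps)\d_x\,\cdot\,)$, is linear and well-posed in $H^1\subset L^2\subset H^{-1}$, so $u^\eps\in L^2(\Omega\times[0,1];H^1)$, which legitimises It\^o's formula for $\|u^\eps_t\|_{H^\gamma}^2$ and closes via Gronwall. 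This produces \eqref{eq:Ito-apriori} for $\eps>0$; the case $\eps=0$ is recovered only at the end of Step~3 by Fatou's lemma from $u^\eps\to u$. This ordering also explains why the paper splits the noise difference so that the source term carries $g(u^\eps)$, not $g(u)$ as you do: your splitting requires the $\eps=0$ bound $\E\int_0^1\|g(u_t)\|_{H^{\gamma+1}}^2\,dt<\infty$, which at that stage of the argument is not yet available.
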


\begin{proof}[of Theorem \ref{thm:Ito-WP}]
	
\emph{Step 1.} We start by showing the first claim, as well as \eqref{eq:Ito-apriori} with $\gamma=-1$. This can be deduced from an appropriate application \cite[Thms.~2.1-2.2-2.3]{KR}. We only need to verify the assumption $A_1)$ through $A_5)$ in \cite[Chap.~II]{KR}, which we do with $p=2$ and the Gelfand triple 
	\begin{equ}
		V\subset H\subset V^*\qquad\text{ with }V=L^2,\, H=H^{-1}_M,\, V^*=H^{-2}_M.
	\end{equ}
	The parameter $M=M(\lambda,\|g'\|_{L^\infty})$ is to be chosen later.
	As a notational convenience, the inner product in $H$ as well as the duality between
	$V$ and $V^*$ is denoted by $\scal{\cdot,\cdot}$, which is not an essential abuse of notation as the two coincide for $\scal{v,h}$ with $v\in V$, $h\in H$. 
	
	Assumptions $A_1)$, $A_4)$, and $A_5)$ hold trivially. Next we verify $A_2)$ (``Monotonicity''). We use the shorthand $e_n^\eps=\rho_\eps\ast e_n$ and for later purposes, we look at the following quantity for any $u,v\in V$:
	\begin{equs}
		D_M^\eps(u,v):&=2\big{\langle} u-v,\Delta(A(u)-A(v))\big{\rangle}+2\sum_{n\geq 0}\|\big(g(u)-g(v)\big)e^\eps_n\|_{H}^2
	\end{equs}
	This is a trivial upper bound of the left-hand side of the monotonicity condition (which is the same expression without the second factor $2$), so it certainly suffices to bound $D_M^\eps(u,v)$. We write
	\begin{equs}
		D_M^\eps(u,v)
		&
		=2\Big\langle u-v,\big(\Delta-M\big)(A(u)-A(v))+M(A(u)-A(v))\Big\rangle
		\\ & +2\sum_{n\geq 0}\|\big(g(u)-g(v)\big)e^\eps_n\|_{H}^2
		\\
		&= -2\scal{u-v,A(u)-A(v)}_{L_2}+2M\scal{u-v,A(u)-A(v)}
		\\ & +2\sum_{n\geq 0}\|\big(g(u)-g(v)\big)e^\eps_n\|_{H}^2
		\\
		&\leq -2\lambda\|u-v\|_{L_2}^2+2M\scal{u-v,A(u)-A(v)}
	\\ & 	+2\sum_{n\geq 0}\|\big(g(u)-g(v)\big)e^\eps_n\|_{H}^2.
	\end{equs}
	First, we note the elementary fact that with some $N=N(M,\lambda)$
	\begin{equs}
		2M\scal{u-v,A(u)-A(v)} &\leq N\|u-v\|_{H}^2+\lambda^2\|A(u)-A(v)\|_H^2 \\ &\leq N\|u-v\|_{H}^2+\lambda\|u-v\|_{L^2}^2, 
	\end{equs}
	and therefore we have
	\begin{equs}
		\begin{aligned}
		D_M^\eps(u,v) & \leq-\lambda\|u-v\|_{L_2}^2+N\|u-v\|_{H}^2
		\\ & +2\sum_{n\geq 0}\|\big(g(u)-g(v)\big)e^\eps_n\|_{H}^2.\label{eq:monotone 2} \end{aligned}
	\end{equs}
	Next, we write, for any $w\in L^2$ and $\alpha<-1/2$
	\begin{equs}
	\label{eq:krylov-1}	\begin{aligned}
		\sum_{n\geq 0}\|we^\eps_n\|_{H^{\alpha}_M}^2
		&= \sum_{n\geq 0} \sum_{m\geq0}(M+\pi^2\bar m^2)^{\alpha}\scal{w e^\eps_n, e_m}_{L^2}^2
		\\
		&= \sum_{m\geq 0}(M+\pi^2\bar m^2)^{\alpha} \sum_{n\geq 0}\scal{\rho_\eps\ast(w e_m), e_n}_{L^2}^2
		\\
		&= \sum_{m\geq 0}(M+\pi^2\bar m^2)^{\alpha} \|\rho_\eps\ast(w  e_m)\|^2_{L^2} 
		\\
		&\leq 2 \|w\|^2_{L^2}\sum_{m\geq 0}(M+\pi^2\bar m^2)^{\alpha}
		\leq N(\alpha) \|w\|^2_{L^2} M^{\alpha+1/2}.
		\end{aligned}
	\end{equs}
	Applying this with $w=g(u)-g(v)$ and $\alpha=-1$, we see that if $M$ is sufficiently large (depending only on $\|g'\|_{L^\infty}$ and $\lambda$),
	then from \eqref{eq:monotone 2} we get
	\begin{equ}\label{eq:monotone 3}
		D_M^\eps(u,v)\leq -(\lambda/2)\|u-v\|_{L^2}^2+ N\|u-v\|_{H}^2,
	\end{equ}
	which verifies the required monotonicity condition $A_2)$.
	
	It remains to verify $A_3)$ (``Coercivity''). For later purposes we again look at a more general quantity, for $u\in H^{\alpha+1}$:
	\begin{equ}\label{eq:coercivity 1}
		E^\alpha_M(u):=2_{H^{\alpha+1}_M}\scal{u,\Delta A(u)}_{H^{\alpha-1}_M}+\sum_{n\geq 0}\|g(u)e^\eps_n\|_{H^\alpha_M}^2,
	\end{equ}
	with $\alpha\in[-1,-1/2)$,
	which is the left-hand side of the coercivity condition when $\alpha=-1$. Using \eqref{eq:krylov-1} and elementary rearrangements as above, we get that for any $\kappa>0$ there exist constants $N=N(\alpha,M,\kappa)$ and $\bar N=\bar N(\alpha)$ such that
	\begin{equs}
		E^\alpha_M(u)&\leq -2_{H^{\alpha+1}_M}\scal{u,(M-\Delta) A(u)}_{H^{\alpha-1}_M}
		+N\|u\|_{H^\alpha_M}\\ & +\kappa\|u\|_{L^2}^2+\bar N M^{\alpha+1/2}\|g(u)\|_{L^2}^2.
	\end{equs}
	Now note that 
	\begin{equ}
		\,_{H^{\alpha+1}_M}\scal{u,(M-\Delta) A(u)}_{H^{\alpha-1}_M}=\scal{u,(M-\Delta)^{1+\alpha} A(u)}_{L^2},
	\end{equ}
	which can be bounded from below by the Stroock-Varopoulos inequality, see Lemma \ref{lem:S-V}. With $\tilde A(r)=\int_0^r\sqrt{a(s)}\,ds$, we therefore get 
	\begin{equ}\label{eq:coerc-4}
		E^\alpha_M(u)\leq -2\|\tilde A(u)\|_{H^{\alpha+1}_M}^2+N\|u\|_{H^\alpha_M}^2+\kappa\|u\|_{L^2}^2+\bar N M^{\alpha+1/2}\|g(u)\|_{L^2}^2.
	\end{equ}
	First we choose $\alpha=-1$. Then since we have $\|\tilde A(u)\|_{L^2}^2\geq \lambda\|u\|_{L^2}^2$, it remains to choose first $M$ large enough and then $\kappa$ small enough to get
	\begin{equ}
		E^{-1}_M(u)\leq -\lambda\|u\|_{L^2}^2+N\big(\|u\|_{H^{-1}_M}^2+g(0)^2\big),
	\end{equ}
	which is precisely the required coercivity condition $A_3)$. This concludes Step 1.
	
	\emph{Step 2.} Next we show \eqref{eq:Ito-apriori} for $\gamma\in(-1,-1/2)$ and $\eps\in(0,1]$. Note that in this case  the equation
	\begin{equ}
		dv_t=\d_x(\tilde a_t^\eps \d_x v_t)\,dt+\sum_{n\geq 0}g(v_t)e^\eps_n\,dW^n_t,
	\end{equ}
	with $\tilde a^\eps_t=a(u^\eps_t)$,  is also well-posed in the Gelfand triple $H^1\subset L^2 \subset H^{-1}$, and since $u^\eps$ is a solution, we get that $u^\eps\in L^2(\Omega\times[0,1];\mathcal{P}; H^1)\cap L^2(\Omega;\mathcal{F}_1;C([0,1],L^2(\mbbT)))$. In particular, we can write It\^o's formula for the square of its $H^\gamma$ norm. Using the notation \eqref{eq:coercivity 1}, we can write
	\begin{equ}\label{eq:Ito-Hgamma}
		\|u_t^\eps\|_{H^\gamma}^2=\E\|u_0\|_{H^\gamma}^2+\int_0^t E^\gamma(u^\eps_s)\,ds+2\sum_{n\geq 0}\int_0^t \scal{u^\eps_s,g(u^\eps_s)e^\eps_n}_{H^\gamma}\,dW^n_s.
	\end{equ}
	From \eqref{eq:coerc-4} and  interpolation between the $H^\alpha$ spaces we have that for any $\kappa'>0$ there exist a constant $N=N(\gamma,\kappa')$ such that
	\begin{equ}
		E^{\gamma}(u)\leq -2\|\tilde A(u)\|_{H^{\gamma+1}}^2+N\big(\|u\|_{H^\gamma}^2+g(0)^2\big)+\kappa'\|u\|_{H^{\gamma+1}}^2.
	\end{equ}
	Using Proposition \ref{prop:triv} (ii) for the first term and then choosing $\kappa'$ sufficiently small, we get that with some $\theta=\theta(\lambda,\gamma)>0$
	\begin{equ}
		E^{\gamma}(u)\leq -\theta\|u\|_{H^{\gamma+1}}^2+N\big(\|u\|_{H^\gamma}^2+g(0)^2\big).
	\end{equ}
	Next, the quadratic variation of the martingale in \eqref{eq:Ito-Hgamma} is given by
	\begin{equs}
		4\sum_{n\geq 0}\int_0^t\scal{u^\eps_s,g(u^\eps_s)e^\eps_n}_{H^\gamma}^2\,ds&\leq 4 \sup_{s\in[0,t]}\|u_s^\eps\|_{H^\gamma}^2\int_0^t\sum_{n\geq 0}\|g(u^\eps_s)e^\eps_n\|_{H^\gamma}^2\,ds
		\\
		&\leq N(\gamma)\sup_{s\in[0,t]}\|u_s^\eps\|_{H^\gamma}^2\int_0^t\|g(u^\eps_s)\|_{L^2}^2\,ds,
	\end{equs}
	using \eqref{eq:krylov-1} in the last inequality.
	Therefore from \eqref{eq:Ito-Hgamma}, applying Burkholder-Davis-Gundy and Young inequalities, followed by interpolation between the $H^\alpha$ spaces,
	we get that with some $N=N(\lambda,\gamma)$,
	\begin{equs}
	&	\E\sup_{s\in[0,t]}\|u_t^\eps\|_{H^\gamma}^2
	\\ &\leq \E\|u_0\|^2_{H^\gamma}+Ng(0)^2+\int_0^t -\theta\E\|u_s^\eps\|_{H^{\gamma+1}}^2+N\E\sup_{r\in[0,s]}\|u_r^\eps\|^2_{H^\gamma}\,ds
		\\
		&\qquad+\frac{1}{2}\E\sup_{s\in[0,t]}\|u_s^\eps\|_{H^\gamma}^2+N\int_0^t\|g(u^\eps_s)\|_{L^2}^2\,ds
		\\
		&\leq \E\|u_0\|_{H^\gamma}^2+Ng(0)^2+\int_0^t -(\theta/2)\E\|u_s^\eps\|_{H^{\gamma+1}}^2+N\E\sup_{r\in[0,s]}\|u_r^\eps\|^2_{H^\gamma}\,ds
		\\
		&\qquad+\frac{1}{2}\E\sup_{s\in[0,t]}\|u_s^\eps\|_{H^\gamma}^2,
	\end{equs}
	or, after rearranging,
	\begin{equs}
	&	\E\sup_{s\in[0,t]}\|u_s^\eps\|_{H^\gamma}^2+\int_0^t\E\|u_s^\eps\|_{H^{\gamma+1}}^2\,ds
		\\ &
		\leq N\Big(
		\E\|u_0\|_{H^\gamma}^2+g(0)^2+\int_0^t\E\sup_{r\in[0,s]}\|u_r^\eps\|^2_{H^\gamma}\,ds\Big).
	\end{equs}
	Gronwall's lemma finishes Step 2.
	
	\emph{Step 3.} Next we show \eqref{eq:Ito-error}. Note that by Fatou's lemma this will also prove \eqref{eq:Ito-apriori} in the missing case ($\eps=0$, $\gamma\in(-1,-1/2)$).
	From Step 1 we have $M$ fixed and we already know that It\^o's formula can be applied for the process $\|u-u^\eps\|_{H^{-1}}^2$. Combined with an elementary Young's inequality, we get
	\begin{equs}\label{eq:Ito-difference}
	\begin{aligned}	\|u_t-u^\eps_t\|_{H^{-1}_M}^2&\leq\int_0^t D_M^0(u_s,u^\eps_s)+2\sum_{n\geq 0}\|g(u^\eps_s)(e^\eps_\eps-e_n)\|_{H^{-1}_M}^2\,ds
		\\ & +I_t
		\end{aligned}
	\end{equs}
	with some martingale $I$.
	Similarly to \eqref{eq:krylov-1} we can write, for any $w\in H^\gamma$
	\begin{equs}
		\sum_{n\geq 0}\|w(e^\eps_n-e_n)\|_{H^{-1}_M}^2 &= \sum_{n\geq 0} \sum_{m\geq 0}(M+\pi^2 \bar m^2)^{-1}|\scal{w(e^\eps_n-e_n), e_m}_{L^2}|^2
		\\
		&=\sum_{m\geq 0}(M+\pi^2 \bar m^2)^{-1} \sum_{n\geq 0} |\scal{ e_n, (\rho^\eps-\delta_0)\ast(w e_m)}_{L^2}|^2
		\\&=
		\sum_{m\geq 0}(M+\pi^2 \bar m^2)^{-1}\|(\rho^\eps-\delta_0)\ast(w  e_m)\|^2_{L^2} .
	\end{equs}
	Choose $\tilde\gamma>\gamma+1$ such that $2\tilde\gamma-2<-1$.
	Using Proposition \ref{prop:triv} (iv)-(v) and the fact that $\|e_m\|_{C^{\tilde\gamma}}\leq 10 m^{\tilde\gamma}$ for $m\geq 1$, we can continue with
	\begin{equs}
	\label{eq:krylov-2}	\begin{aligned}
		\sum_{n\geq 0}\|w(e^\eps_n-e_n)\|_{H^{-1}_M}^2 & \leq N \eps^{2(\gamma+1)}\sum_{m\geq 0}(M+\pi^2 \bar m^2)^{-1}\|w  e_m\|^2_{H^{\gamma+1}}
		\\
		& \leq N \eps^{2(\gamma+1)}\sum_{m\geq 0}(M+\pi^2 \bar m^2)^{-1}m^{2\tilde \gamma}\|w \|_{H^{ \gamma+1}}^2
		\\
		&\leq N\eps^{2(\gamma+1)}\|w \|_{H^{ \gamma+1}}^2. \end{aligned}
	\end{equs}
	Using \eqref{eq:monotone 3} and \eqref{eq:krylov-2} in \eqref{eq:Ito-difference} and taking expectations, we get
	\begin{equs}
		\E\|u_t-u^\eps_t\|_{H^{-1}_M}^2
		& \leq \E\int_0^t-(\lambda/2)\|u_s-u^\eps_s\|_{L^2}^2+N\|u_s-u^\eps_s\|_{H^{-1}_M}^2
		\\ & +N\eps^{2(\gamma+1)}\|g(u^\eps_s)\|_{H^{\gamma+1}}^2\,ds.
	\end{equs}
	Rearranging, using Proposition \ref{prop:triv} (iii), and the bound from Step 2, we get 
	\begin{equ}
		\E\|u_t-u^\eps_t\|_{H^{-1}_M}^2+ \E\int_0^t\|u_s-u^\eps_s\|_{L^2}^2\leq N\Big( \eps^{2(\gamma+1)}+\int_0^t\E\|u_s-u^\eps_s\|_{H^{-1}_M}^2\,ds\Big),
	\end{equ}
	hence Gronwall's lemma yields the claimed bound \eqref{eq:Ito-error}.

\end{proof}


In the above proof we used the following simple properties of Sobolev spaces, for the sake of completeness we provide short proofs.
\begin{proposition}\label{prop:triv}
	For any $\alpha\in(0,1)$, one has
	\begin{enumerate}
		\item[(i)] The norm
		\begin{equ}\label{eq:slobodeckij}
			\|f\|_{L^2}+\sqrt{\int_\mbbT\int_\mbbT\frac{|f(x)-f(y)|^2}{|x-y|^{1+2\alpha}}\,dx\,dy}
		\end{equ}
		is equivalent to $\|f\|_{H^\alpha}$.
		\item[(ii)] For any Lipschitz continuous function $F$ with $F(0)=0$, there exists a constant $N=N(\alpha, \|F'\|_{L^\infty})$ such that for all $u\in H^\alpha$, $\|F(u)\|_{H^{\alpha}}\leq N \|u\|_{H^\alpha}.$
		\item[(iii)] For any increasing function $F$ with $F(0)=0$ and $F'\geq\lambda$ for some $\lambda>0$, there exists a constant $C=C(\alpha,\lambda)>0$ such that for all $u\in H^\alpha$, $\|F(u)\|_{H^{\alpha}}\geq C \|u\|_{H^\alpha}$.
		\item[(iv)] For any $\alpha'>\alpha$ there exists a constant $N=N(\alpha,\alpha')$ such that for all
		$f\in H^{\alpha}$ and $g\in \CC^{\alpha'}$, $\|fg\|_{H^\alpha}\leq N\|f\|_{H^\alpha}\|g\|_{\CC^{\alpha'}}.$
		\item[(v)] There exists a constant $N=N(\rho)$ such that for all $\eps>0$ and $f\in H^\alpha$, $\|(\rho_\eps-\delta_0)\ast f\|_{L^2}\leq N\eps^{\alpha}\|f\|_{H^\alpha}.$
	\end{enumerate}
\end{proposition}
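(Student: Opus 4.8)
The plan is to prove part (i) by Fourier analysis and then obtain (ii), (iii), (iv) as soft consequences of the Slobodeckij characterisation, while (v) is again a direct Fourier computation. For (i), I would expand $f=\sum_{n\in\mathbb{Z}}\hat f(n)e^{2\pi inx}$ on $\mathbb{T}$, so that $\|f\|_{H^\alpha}^2$ is comparable to $\sum_n(1+n^2)^\alpha|\hat f(n)|^2$ (the equivalence with \eqref{eq:H alpha norm} at $M=1$ being immediate). By Parseval, for each fixed $h$,
\begin{equ}
\int_\mathbb{T}|f(x+h)-f(x)|^2\,dx=\sum_n|\hat f(n)|^2\,|e^{2\pi inh}-1|^2,
\end{equ}
and after multiplying by $|h|^{-1-2\alpha}$ and integrating $h$ over $\mathbb{T}$ (all terms nonnegative, so Tonelli applies) the problem reduces to showing that $\int_\mathbb{T}|e^{2\pi inh}-1|^2|h|^{-1-2\alpha}\,dh$ is comparable to $|n|^{2\alpha}$ uniformly in $n\neq 0$, and vanishes at $n=0$. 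The upper bound uses $|e^{i\theta}-1|\le\min(|\theta|,2)$ and splits the $h$-integral at scale $1/|n|$; the lower bound uses that $|e^{2\pi inh}-1|$ is bounded below on a fixed proportion of the annulus $|h|\sim 1/|n|$. Adding back $\|f\|_{L^2}^2=\sum_n|\hat f(n)|^2$ then yields the equivalence in \eqref{eq:slobodeckij}.

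For (ii), since $F(0)=0$ one has the pointwise bounds $|F(u(x))|\le\|F'\|_{L^\infty}|u(x)|$ and $|F(u(x))-F(u(y))|\le\|F'\|_{L^\infty}|u(x)-u(y)|$, so each of the two terms in the norm \eqref{eq:slobodeckij} evaluated at $F(u)$ is at most $\|F'\|_{L^\infty}$ times the same term evaluated at $u$; part (i) then converts this into the claimed bound on $\|F(u)\|_{H^\alpha}$. For (iii), I would observe that $F$ is an increasing bijection of $\mathbb{R}$ whose inverse $F^{-1}$ is Lipschitz with constant at most $1/\lambda$ and satisfies $F^{-1}(0)=0$; applying (ii) to $F^{-1}$ and to the function $F(u)\in H^\alpha$ gives $\|u\|_{H^\alpha}=\|F^{-1}(F(u))\|_{H^\alpha}\le N(\alpha,\lambda^{-1})\|F(u)\|_{H^\alpha}$, which is (iii).

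For (iv), I would split $f(x)g(x)-f(y)g(y)=(f(x)-f(y))g(x)+f(y)(g(x)-g(y))$ inside the Gagliardo double integral of \eqref{eq:slobodeckij}: the first piece is bounded by $\|g\|_{L^\infty}^2$ times the Gagliardo energy of $f$, and in the second piece $|g(x)-g(y)|^2\le\|g\|_{\CC^{\alpha'}}^2|x-y|^{2\alpha'}$ leaves the integrand $\le|f(y)|^2\|g\|_{\CC^{\alpha'}}^2|x-y|^{2(\alpha'-\alpha)-1}$, whose $x$-integral over $\mathbb{T}$ is finite and uniform in $y$ because $\alpha'>\alpha$; integrating in $y$ produces $\|f\|_{L^2}^2$. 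Together with $\|fg\|_{L^2}\le\|f\|_{L^2}\|g\|_{L^\infty}$ and (i), and using $\|g\|_{L^\infty}\le N\|g\|_{\CC^{\alpha'}}$, this gives (iv). For (v), on the Fourier side $\widehat{(\rho_\eps-\delta_0)\ast f}(n)=(\hat\rho(\eps n)-1)\hat f(n)$ with $\hat\rho$ the Fourier transform of $\rho$ on $\mathbb{R}$; since $\int\rho=1$ and $\rho$ has finite first moment, $|\hat\rho(\xi)-1|\le C\min(|\xi|,1)\le C|\xi|^\alpha$ for every $\xi$ and every $\alpha\in(0,1)$, whence by Parseval $\|(\rho_\eps-\delta_0)\ast f\|_{L^2}^2\le C^2\eps^{2\alpha}\sum_n|n|^{2\alpha}|\hat f(n)|^2\le N\eps^{2\alpha}\|f\|_{H^\alpha}^2$.

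The only step carrying genuine content is the equivalence in part (i), and within it the kernel estimate that $\int_\mathbb{T}|e^{2\pi inh}-1|^2|h|^{-1-2\alpha}\,dh$ is comparable to $|n|^{2\alpha}$ with constants independent of $n$; the mild torus-specific subtlety there is that one integrates only over $|h|\le 1/2$, but this is harmless since the missing tail of the analogous integral over $\mathbb{R}$ merely contributes a bounded multiple of $\|f\|_{L^2}^2$, which is already present in the norm. All remaining manipulations are routine bookkeeping.
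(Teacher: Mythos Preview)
Your proposal is correct and, for parts (i), (ii) and (iv), essentially identical to the paper's proof. There are two minor deviations worth noting. For (iii), the paper argues directly from the Slobodeckij form: $F'\ge\lambda$ gives $|F(u(x))-F(u(y))|\ge\lambda|u(x)-u(y)|$ and $|F(u(x))|\ge\lambda|u(x)|$, so the inequality is immediate from (i); your route via applying (ii) to $F^{-1}$ is equally valid and arguably tidier. For (v), the paper stays in physical space: it writes $\|(\rho_\eps-\delta_0)\ast f\|_{L^2}^2=\int_\mbbT\big|\int_\mbbT(f(x)-f(y))\rho_\eps(x-y)\,dy\big|^2\,dx$, applies Jensen's inequality, bounds $\rho_\eps\lesssim\eps^{-1}\bone_{|\cdot|\le\eps}\le\eps^{2\alpha}|\cdot|^{-1-2\alpha}$ on its support, and recognises the Slobodeckij seminorm. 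Your Fourier-multiplier argument is also correct; the paper's version has the small advantage of not needing to track the $\mbbR$-versus-$\mbbT$ Fourier conventions for $\rho_\eps$, while yours makes the dependence on $\rho$ more transparent.
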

\begin{proof}
	Part (i) is well-known, its short proof is as follows:
	switching to complex Fourier series by setting $\tilde e_{\pm n}=e^{2i\pi n}$ for $n\in\mathbb{Z}$,
	after a change of variables the integral in \eqref{eq:slobodeckij} can be written as
	\begin{equ}
		\int_\mbbT\frac{\|f(\cdot)-f(\cdot+z)\|_{L^2}^2}{|z|^{1+2\alpha}}\,dz=\int_\mbbT\sum_{n\in\mathbb{Z}}\frac{|\scal{f(\cdot)-f(\cdot+z),\tilde e_n}|^2}{|z|^{1+2\alpha}}\,dz.
	\end{equ}
	One clearly has $\scal{f(\cdot+z),\tilde e_n}=\scal{f,\tilde e_n}e^{2\pi i n z}$. 
	Therefore, the $n=0$ term vanishes and for all others we can write
	\begin{equs}
		\int_\mbbT\frac{|\scal{f(\cdot)-f(\cdot+z),\tilde e_n}|^2}{|z|^{1+2\alpha}}\,dz
		&=|\scal{f,\tilde e_n}|^2|n|^{2\alpha}\int_0^{|n|}\frac{|1-e^{\pm 2\pi i  z}|}{|z|^{1+2\alpha}}\,dz.
	\end{equs}
	The latter integral is bounded both from above and away from $0$ uniformly in $n\in\mathbb{Z}\setminus\{0\}$, from which (i) readily follows.
	Parts (ii) and (iii) follow immediately from (i) for $\alpha\in(0,1)$ and are trivial for $\alpha=0$.
	As for (iv), the inequality $\|fg\|_{L^2}\leq\|f\|_{L^2}\|g\|_{L^\infty}$ is obvious, and to estimate the integral in \eqref{eq:slobodeckij} with $fg$ in place of $f$, we can bound it by
	\begin{equs}
		2\int_\mbbT \int_\mbbT&\frac{|f(x)-f(y)|^2g(y)^2}{|x-y|^{1+2\alpha}}\,dx\,dy+2\int_\mbbT \int_\mbbT \frac{f(x)^2|g(x)-g(y)|^2}{|x-y|^{1+2\alpha}}\,dx\,dy
		\\
		&\leq 2\|f\|_{H^\alpha}^2\|g\|_{L^\infty}^2+2\int_\mbbT f(x)^2\int_\mbbT\frac{|g(x)-g(x-z)|^2}{|z|^{1+2\alpha}}\,dz\,dx
		\\
		&\lesssim \|f\|_{ H^\alpha}^2\|g\|_{L^\infty}^2+\|f\|_{L^2}^2\|g\|_{ \CC^{\alpha'}}^2,
	\end{equs}
	using $\alpha'>\alpha$ in the last inequality to guarantee the finiteness of the integral over $z$.
	Concerning (v), one has by Jensen's inequality followed by trivialities,
	\begin{equs}
		\|(\rho_\eps-\delta_0)\ast f\|_{L^2}^2&=\int_\mbbT \Big|\int_\mbbT \big(f(x)-f(y)\big)\rho_\eps(x-y)\,dy\Big|^2\,dx
		\\
		&\leq \int_\mbbT \int_\mbbT \big|f(x)-f(y)\big|^2\rho_\eps(x-y)\,dy\,dx
		\\
		&\lesssim\eps^{2\alpha}\int_\mbbT \int_\mbbT \big|f(x)-f(y)\big|^2\eps^{-1-2\alpha}\bone_{|x-y|\leq\eps}\,dy\,dx
		\\
		&\leq\eps^{2\alpha}\int_\mbbT\int_\mbbT \big|f(x)-f(y)\big|^2|x-y|^{-1-2\alpha}\,dy\,dx\lesssim\eps^{2\alpha}\|f\|_{ H^\alpha}^2.
	\end{equs}
\end{proof}

\begin{lemma}[Stroock-Varopoulos inequality]\label{lem:S-V}
	Let $f$ and $g$ be Lipshitz functions such that $f'=(g')^2$. Then for any $M\geq 1$, $\beta\in[0,1)$ one has for any smooth function $u$
	\begin{equ}
		\scal{u,(M-\Delta)^\beta f(u)}_{L^2}\geq \scal{(M-\Delta)^{\beta/2}g(u),(M-\Delta)^{\beta/2} g(u)}_{L^2}=\|g(u)\|_{H^\beta_M}^2.
	\end{equ}
\end{lemma}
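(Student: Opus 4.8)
The plan is to realise $(M-\Delta)^\beta$ through a subordination formula for the semigroup generated by $-(M-\Delta)$ and to reduce the asserted inequality to two elementary pointwise inequalities, each an instance of Cauchy--Schwarz applied to the relation $f'=(g')^2$. We may and do assume $f(0)=g(0)=0$: this is the only case used in the paper, and a normalisation of this kind is in fact necessary (for $f(s)=s+c$, $g(s)=s$ the stated inequality fails unless $c=0$). The case $\beta=0$ is trivial, being the pointwise bound $sf(s)\ge g(s)^2$ integrated over $\mbbT$, so from now on $\beta\in(0,1)$.

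Set $S_t=e^{-t(M-\Delta)}$, $t\ge 0$. Since $M\ge 1>0$, this is a self-adjoint, strongly continuous, strictly sub-Markovian semigroup on $L^2(\mbbT)$ with a smooth symmetric positive kernel $q_t(x,y)$ obeying $\int_\mbbT q_t(x,y)\,dy=e^{-tM}=:\theta_t\in(0,1]$. Using the elementary identity $\lambda^\beta=c_\beta\int_0^\infty(1-e^{-t\lambda})\,t^{-1-\beta}\,dt$ for $\lambda>0$, with $c_\beta=\beta/\Gamma(1-\beta)>0$, together with the spectral theorem, one has $(M-\Delta)^\beta\psi=c_\beta\int_0^\infty(\Id-S_t)\psi\,t^{-1-\beta}\,dt$. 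For $u$ smooth on the compact torus one has $\|(\Id-S_t)\psi\|_{L^2}\le t\,\|(M-\Delta)\psi\|_{L^2}$, so the integrand is $O(t^{-\beta})$ near $t=0$ and $O(t^{-1-\beta})$ near $t=\infty$; hence all integrals below converge absolutely and Fubini applies. I would spell this routine check out in the final version.

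The computational core is the identity, valid for any $\phi,\psi\in L^2(\mbbT)$ by self-adjointness and symmetry and positivity of $q_t$ together with the mass relation $\int_\mbbT q_t(x,y)\,dy=\theta_t$,
\begin{equs}
2\scal{\phi,(\Id-S_t)\psi}_{L^2}&=\int_\mbbT\!\!\int_\mbbT q_t(x,y)\big(\phi(x)-\phi(y)\big)\big(\psi(x)-\psi(y)\big)\,dx\,dy\\
&\quad+2(1-\theta_t)\int_\mbbT\phi(x)\psi(x)\,dx.
\end{equs}
Applying this with $(\phi,\psi)=(f(u),u)$ and with $(\phi,\psi)=(g(u),g(u))$, the lemma reduces to the two pointwise inequalities
\begin{equs}
\big(f(a)-f(b)\big)(a-b)\ \ge\ \big(g(a)-g(b)\big)^2,\qquad af(a)\ \ge\ g(a)^2\qquad (a,b\in\mbbR),
\end{equs}
since $q_t\ge 0$ and $1-\theta_t\ge 0$, so each term on the right of the identity for $(f(u),u)$ dominates the corresponding term for $(g(u),g(u))$. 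Both pointwise inequalities are Cauchy--Schwarz: for $a>b$, $\big(g(a)-g(b)\big)^2=\big(\int_b^a g'\big)^2\le(a-b)\int_b^a(g')^2=(a-b)\int_b^a f'=(a-b)\big(f(a)-f(b)\big)$, and the second one follows in the same way using $f(0)=g(0)=0$ (and that $f(a)$ has the sign of $a$). Consequently $\scal{f(u),(\Id-S_t)u}_{L^2}\ge\scal{g(u),(\Id-S_t)g(u)}_{L^2}$ for all $t>0$; multiplying by $c_\beta t^{-1-\beta}$, integrating in $t$, and using self-adjointness of $(M-\Delta)^\beta$ yields
\begin{equs}
\scal{u,(M-\Delta)^\beta f(u)}_{L^2}&=\scal{f(u),(M-\Delta)^\beta u}_{L^2}\\
&\ge\scal{g(u),(M-\Delta)^\beta g(u)}_{L^2}=\|g(u)\|_{H^\beta_M}^2,
\end{equs}
which is the claim. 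There is no deep obstacle here, this being a classical Dirichlet-form inequality; the one point requiring care is the zeroth-order (``killing'') term $2(1-\theta_t)\int_\mbbT\phi\psi$ produced by $M>0$, which is exactly what forces the normalisation $f(0)=g(0)=0$ and makes the second pointwise inequality necessary, everything else being bookkeeping.
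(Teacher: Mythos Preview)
Your argument is correct and follows essentially the same route as the paper: both use the subordination formula
\[
(M-\Delta)^\beta u=c_\beta\int_0^\infty\frac{u-e^{-t(M-\Delta)}u}{t^{1+\beta}}\,dt
\]
to reduce to a kernel-level inequality. The paper then simply invokes \cite{BFR} for operators of the form $Lu(x)=\int(u(x)-u(y))K(x,y)\,dy$ with $K$ symmetric nonnegative, whereas you carry out the pointwise Cauchy--Schwarz step explicitly.

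Your version is in fact more careful on one point. Because $M>0$, the semigroup $S_t$ is strictly sub-Markovian ($\int q_t(x,\cdot)=e^{-tM}<1$), so $(M-\Delta)^\beta$ is \emph{not} purely of the form $\int(u(x)-u(y))K(x,y)\,dy$ as the paper asserts; there is an additional zeroth-order killing term $M^\beta u$. You correctly isolate this as the $2(1-\theta_t)\int\phi\psi$ contribution and observe that it forces the normalisation $f(0)=g(0)=0$, which is harmless since in the paper's only application one has $f=A$, $g=\tilde A$ with $A(0)=\tilde A(0)=0$. So the two proofs are the same in spirit, but yours patches a small imprecision in the paper's.
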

\begin{proof}
	The inequality holds for any operator $L$ in place of $(M-\Delta)^\beta$ having an integral representation of the form
	\begin{equ}
		L u(x)=\int \big(u(x)-u(y)\big)K(x,y)\,dy,
	\end{equ}
where $K$ is symmetric and nonnegative, see e.g. \cite[App.~B]{BFR}. The fact that $(M-\Delta)^\beta$ does have such a representation can be seen from the formula
	\begin{equ}
		(M-\Delta)^\beta u=c_\beta\int_0^\infty\frac{u-e^{t(M-\Delta)}u}{t^{1+\beta}}\,dt,
	\end{equ}
	with some positive constant $c_\beta$, which in turn can be readily verified by taking Fourier transform of both sides.
\end{proof}


\subsection{Regularity of It\^o solutions}
The following regularity estimate is the multiplicative version of \cite{OW-div}.
\begin{theorem}\label{thm:Ito-regularity}
	Let $a$ be Lipshitz continuous and satisfy $\lambda\leq a\leq \lambda^{-1}$ for some constant $\lambda>0$.
	Then there exists an $\alpha_0>0$ depending only on $\lambda$ such that for all $\alpha\in(0,\alpha_0)$ the following holds.
	Let $g$ be Lipschitz continuous.
	Let $u_0\in \mathcal{C}^{\alpha}(\mbbT)$.
	Then the It\^o solution of \eqref{eq:Ito-div} with initial condition $u_0$ admits a continuous modification, and one has the bound
	\begin{equ}\label{eq:Ito-Holderbound}
		\E\|u\|_{\mathcal{C}^{\alpha}([0,1]\times\mbbT)}^2\leq N\big(1+\E\|u_0\|_{\mathcal{C}^{\alpha}(\mbbT)}^2\big),
	\end{equ}
	where the constant $N$ depends on $\lambda$, $\alpha$, $|g(0)|$, and $\|g'\|_{L^\infty}$.
\end{theorem}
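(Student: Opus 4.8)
The plan is to follow the strategy of \cite{OW-div}, adapting it to the multiplicative noise case. The key point is that the equation $\d_t u-\d_x(a(u)\d_x u)=g(u)\xi$ can be rewritten in the ``divergence + stochastic'' form already exploited in Theorem~\ref{thm:Ito-WP}: setting $A(r)=\int_0^r a(s)\,ds$, one has $\d_t u=\Delta A(u)+g(u)\xi$ in the It\^o sense, and the a priori bounds \eqref{eq:Ito-apriori} from Theorem~\ref{thm:Ito-WP} (valid for $\gamma\in(-1,-1/2)$, in particular the control of $\E\int_0^1\|u_t\|_{H^{\gamma+1}}^2\,dt$ together with $\E\sup_t\|u_t\|_{H^\gamma}^2$) provide the starting integrability. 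First I would localise in space-time and use a Campanato/Morrey-type characterisation of parabolic H\"older regularity: it suffices to establish, for balls $Q_r(z)$ of parabolic radius $r$ centred at $z\in[0,1]\times\mbbT$, a decay estimate of the form
\begin{equ}
\fint_{Q_r(z)}|u-\bar u_{Q_r(z)}|^2\,\lesssim\, r^{2\alpha}\big(1+\|u_0\|_{\mathcal{C}^\alpha(\mbbT)}^2+(\text{lower-order terms})\big)
\end{equ}
with the appropriate uniformity in $z$, where $\bar u_{Q_r}$ is the average and the implicit constant is (after taking expectations) summable.

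The core of the argument is a comparison with the solution of a \emph{linear, constant-coefficient} parabolic problem. On a fixed ball, I would freeze the coefficient $a$ at the value $a(\bar u_{Q_r})$ and split $u=w+h$, where $h$ solves the deterministic linear heat-type equation $\d_t h-a(\bar u_{Q_r})\Delta h=0$ on a slightly smaller ball with boundary data matching $u$, and $w$ collects the error: the difference between $\Delta A(u)$ and the frozen linear operator, plus the stochastic forcing $g(u)\xi$. For $h$ one has classical interior Schauder-type decay of Campanato norms; for $w$ one has an energy estimate. The deterministic part of the error, $\Delta\big(A(u)-a(\bar u_{Q_r})(u-\bar u_{Q_r})\big)$, is handled exactly as in \cite{OW-div} using the Lipschitz bound $|A(r)-A(s)-a(\bar u_{Q_r})(r-s)|\lesssim\|a'\|_{L^\infty}|r-s|\cdot|\,\cdot -\bar u_{Q_r}|$ together with Caccioppoli on the smaller scale, producing a term of the form $C(\lambda)\fint_{Q_{2r}}|u-\bar u_{Q_{2r}}|^2$ with a constant $C(\lambda)$ that can be made small by choosing $\alpha_0$ small (this is the mechanism fixing the dependence of $\alpha_0$ on $\lambda$ only). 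The \emph{new} ingredient compared to \cite{OW-div} is the stochastic term: here I would estimate $\E\|w\|$ on the ball using It\^o's formula for $\|w\|_{L^2(Q_r)}^2$ (legitimate since, as in Step~2 of the proof of Theorem~\ref{thm:Ito-WP}, $u\in L^2(\Omega\times[0,1];H^1)$), where the martingale part vanishes in expectation and the quadratic variation contributes $\E\int\sum_n\|g(u)e_n^{(\text{loc})}\|^2\lesssim \E\int\|g(u)\|_{L^2(Q_r)}^2\lesssim r^{3}(1+\E\sup_t\|u_t\|_{L^\infty\text{-type}})$; using the linear growth of $g$ and the bound $|g(u)|\lesssim 1+|u-\bar u_{Q_r}|+|\bar u_{Q_r}|$ this is controlled by $r^{3}$ times lower-order moments of $u$, which by \eqref{eq:Ito-apriori} and interpolation are finite. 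The scaling $r^3$ (parabolic volume) against the target $r^{2+2\alpha}$ forces $\alpha<1/2$, consistent with the statement.

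Assembling: one obtains an iteration inequality $\Phi(r)\le \big(C_1(r/R)^{2+2\sigma}+C_2(\lambda,\alpha_0)\big)\Phi(R)+C_3 R^{2+2\alpha}(1+\text{data})$ for the excess $\Phi(r)=\E\fint_{Q_r}|u-\bar u_{Q_r}|^2$, where $\sigma>\alpha$ comes from the linear Schauder decay and $C_2$ is small once $\alpha_0$ is small; a standard iteration lemma (Campanato/Giaquinta-type) then yields $\Phi(r)\lesssim r^{2+2\alpha}$ uniformly, whence $\E\|u\|_{\mathcal{C}^\alpha([0,1]\times\mbbT)}^2<\infty$ with the claimed dependence of $N$. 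The boundary/initial-time balls are treated with the same scheme using $u_0\in\mathcal{C}^\alpha(\mbbT)$ to control the excess at $t=0$; existence of a continuous modification follows a posteriori from Kolmogorov once the moment bound is in hand. The main obstacle I anticipate is making the stochastic energy estimate on a localised ball rigorous and scaling-sharp: one must carry the cutoff through It\^o's formula, keep track of the commutator terms between the cutoff and $\Delta A(u)$ (these are of the same type as the deterministic error already controlled, but bookkeeping is delicate), and verify that the resulting stochastic contribution genuinely has the better scaling $r^{3}$ rather than merely $r^{2}$ --- this is what ultimately pins down $\alpha_0$ and the fact that it depends only on $\lambda$, exactly as in the additive case of \cite{OW-div}.
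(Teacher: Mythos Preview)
Your Campanato-on-balls strategy has a real gap at the stochastic step. When you write that the quadratic variation contributes $\sum_n\|g(u)e_n^{(\mathrm{loc})}\|^2\lesssim\|g(u)\|_{L^2(Q_r)}^2\lesssim r^3(\ldots)$, you are implicitly assuming a trace-class spatial covariance; for space-time white noise one has $\sum_n e_n(x)^2=\infty$ pointwise, so the It\^o correction to $\|w\|_{L^2(\text{ball})}^2$ is infinite, not $O(r^3)$. This is precisely why the monotone-operator theory in Theorem~\ref{thm:Ito-WP} is forced into the $H^{-1}$ framework (cf.~\eqref{eq:krylov-1}), and working locally in $H^{-1}$ would wreck the comparison with the frozen-coefficient solution $h$. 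There is a second, independent issue: your iteration bounds $\sup_{z,r}\E\,\Phi(r,z)$, whereas the Campanato characterisation of $\mathcal{C}^\alpha$ needs $\E\,\sup_{z,r}\Phi(r,z)$; exchanging the supremum and the expectation is not free, and Kolmogorov alone does not close this since you would need moments of the excess at all centres and scales simultaneously.

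The paper bypasses both problems by a different and much shorter route. It introduces a \emph{global} auxiliary process $v_\eps$ solving the linear stochastic heat equation $\partial_t v_\eps-\partial_x^2 v_\eps=g(u_\eps)\xi_\eps$ with the same initial data, so that $\partial_t u_\eps-\partial_x(a(u_\eps)\partial_x u_\eps)=\partial_t v_\eps-\partial_x^2 v_\eps$. For $v_\eps$ the mild formula gives the classical moment bound $\E\|v_\eps\|_{\mathcal{C}^\alpha}^q\lesssim\E\|u_0\|_{\mathcal{C}^\alpha}^q+\E\|g(u_\eps)\|_{L^p}^q$. Then \cite[Lem.~1]{OW-div} is applied as a \emph{pathwise deterministic} black box, yielding $\|u_\eps(\omega)\|_{\mathcal{C}^\alpha}\leq N\|v_\eps(\omega)\|_{\mathcal{C}^\alpha}$ for each $\omega$ (qualitative finiteness of the left-hand side being supplied by \cite{DDH}). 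The loop is closed by interpolating $\|g(u_\eps)\|_{L^p}$ between $L^2$ (controlled by \eqref{eq:Ito-apriori}) and $L^\infty$ and absorbing a small multiple of $\E\|u_\eps\|_{L^\infty}^2$ into the left-hand side. This decouples the stochastic analysis (done once, globally, on the linear equation) from the quasilinear De~Giorgi machinery (applied $\omega$-by-$\omega$), and never requires a local It\^o formula in $L^2$.
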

\begin{proof}
	Take first $\eps\in(0,1]$ and $u_\eps$ from Theorem \ref{thm:Ito-WP}.
	Since we already have the convergence $u_\eps\to u$ from Theorem \ref{thm:Ito-WP}, it suffices to show \eqref{eq:Ito-Holderbound} for $u_\eps$ in place of $u$, uniformly over $\eps\in(0,1]$.
	Further, by a standard stopping time argument we may and will assume that $g$ is also bounded.
	As mentioned, the additive $g\equiv 1$ case is proven in \cite{OW-div}. Our goal is to reduce the mutliplicative case to the additive.
	
	We write
	\begin{equ}
		\d_t u_\eps-\d_x\big( a(u_\eps)\d_x u_\eps\big)=\d_t v_\eps-\d_x^2 v_\eps,
	\end{equ}
	where $v_\eps$ is the solution of
	the linear equation
	\begin{equ}\label{eq:IR-v}
		\d_t v_\eps-\d_x^2v_\eps=g(u_\eps)\xi_\eps,
	\end{equ}
	with initial condition $u_0$. For $v_\eps$ one has the explicit solution formula
	\begin{equ}
		v^\eps_t=\mathcal{P}_tu_0+\sum_{n\in \mbbN}\int_0^t \mathcal{P}_{t-s}g(u^\eps_s)e^\eps_n\,dW^n_s,
	\end{equ}
	where $\mathcal{P}_t=e^{t\d_x^2}$ is the heat kernel on $\mbbT$.
	Denote from now on $Q=[0,1]\times\mbbT$.
It is well-known from regularity of mild It\^o SPDEs (see e.g. \cite[Cor.~3.4]{Walsh} for a prototype calculation that applies here with minimal changes) that for any $\alpha<1/4$ there exists a $p\in[2,\infty)$ such that for all $q\in[2,p]$ one has
	\begin{equ}\label{eq:IR-classicalbound}
		\E\|v^\eps\|_{\mathcal{C}^{\alpha}(Q)}^q\leq N\big(\E\|u_0\|_{\mathcal{C}^{\alpha}(\mbbT)}^q+\E\|g(u^\eps)\|_{L^p(Q)}^q\big),
	\end{equ}
	where the constant $N$ depends only on $\alpha$, $p$, $q$, in particular it is uniform over $\eps\in[0,1]$.
	
Next, a simple adaptation \cite[Thm.~2.6]{DDH} to the periodic and infinite dimensional (but spatially smooth) noise setting yields $u_\eps\in L^p(\Omega,\cC^{\alpha_1}(Q))$ for some $\alpha_1>0$ for all $p\in[2,\infty)$.
With this qualitative information in hand one can invoke the following result \cite[Lem.~1]{OW-div}:
for some $\alpha_2\in(0,\alpha_1]$, for all $\alpha\in(0,\alpha_2)$, one has the bound
	\begin{equ}\label{eq:OW-bound}
\|u_\eps(\omega)\|_{\cC^{\alpha}(Q)}\leq N\|v_\eps(\omega)\|_{\cC^{\alpha}(Q)}
	\end{equ}
	for any $\omega\in\Omega$ such that the left-hand side is finite,
	where the constant $N$ depends only on $\lambda$ and $\alpha$, in particular it is uniform over $\eps\in(0,1]$ and $\omega\in\Omega$.
	\begin{remark}
		We actually use \cite[Lem.~1]{OW-div} with a few technical modifications.
Firstly, therein the bounds are stated in terms of the antiderivatives of $u_\eps$ and $v_\eps$. Moving from one formulation to the other is rather straightforward and is also exposed in \cite[Sec.~1]{OW-div}.
Secondly, \cite[Lem.~1]{OW-div} implicitly assumes $0$ initial condition on $u_\eps$, but the proof in fact provides first the bound \eqref{eq:OW-bound} for $w_\eps=u_\eps-v_\eps$ in place of $u_\eps$, which is of course equivalent.
		Since $w_\eps$ does have $0$ initial conditions in our setting as well, having initial conditions does not change anything.
Thirdly, in \cite[Lem.~1]{OW-div} the right-hand side of \eqref{eq:OW-bound} involves the $\CC^{\alpha}([-1,1]\times\mbbT)$ norm of $v_\eps$, more precisely, of its extension $\bar v_\eps$ by $0$ to negative times. This would force a certain vanishing of $v_\eps$ at $0$.
		One can again easily check in the proof that the temporal regularity of $\bar v_\eps$ is not used, only its ${L^\infty([-1,1],\CC^\alpha(\mbbT))}$-norm. This is clearly bounded by the right-hand side of \eqref{eq:OW-bound}.
	\end{remark}
	Choose $\alpha<\alpha_2\wedge(1/2)$ and the corresponding $p\in[2,\infty)$ from \eqref{eq:IR-classicalbound}.
Putting \eqref{eq:OW-bound} and \eqref{eq:IR-classicalbound} together, using the Lipschitz assumption on $g$, interpolation of the $L^p$ norm, and \eqref{eq:Ito-apriori}, we get, for any $\kappa>0$, 
	\begin{equs}
		\E\|u_\eps\|_{\mathcal{C}^{\alpha}(Q)}^2
		&\leq N\big(\E\|u_0\|_{\mathcal{C}^{\alpha}(\mbbT)}^2+\E\|g(u_\eps)\|_{L^p(Q)}^2\big)
		\\
		&\leq N\big(1+\E\|u_0\|_{\mathcal{C}^{\alpha}(\mbbT)}^2+\E\|u_\eps\|_{L^p(Q)}^2\big)
		\\
		&\leq N\big(1+\E\|u_0\|_{\mathcal{C}^{\alpha}(\mbbT)}^2+\kappa\E\|u_\eps\|_{L^\infty(Q)}^2+N'(\kappa)\E\|u_\eps\|_{L^2(Q)}^2\big)
		\\
		&\leq N\kappa\E\|u_\eps\|_{L^\infty(Q)}^2+N'(\kappa)(1+\E\|u_0\|_{\mathcal{C}^{\alpha}(\mbbT)}^2).
	\end{equs}
	Since $N$ does not depend on $\kappa$ (only $N'$ does), we can choose $\kappa$ small enough that $N\kappa<1/2$, absorb the first term in the left-hand side, and thus conclude the proof.
\end{proof}

\subsection{ Wong-Zakai approximation of non-singular quasilinear SPDEs}\label{sec:smoothWZ}

The purpose of this section is to formulate an appropriate version of a (temporal) Wong-Zakai theorem for quasilinear SPDEs of the form
\begin{equ}\label{eq:qsmooth}
dv=\partial_x \big(a(v)\partial_x v\big)\,dt+\sum_{k\in\mbbN}g(v)h_k\,dW^k_t
\end{equ}
with some initial condition $u_0\in L^2(\mbbT)$,
under some (strong) coloring condition in the spatial variable. 
\begin{theorem}\label{thm:WZ}
Let $u_0\in L^2(\mbbT)$, $g\in\CC^2$, and suppose that the sum
$
\sum_{k\in\mbbN} h_k
$
converges in $L_\infty(\mbbT)$.
 For $\bar \eps>0$ let $W^{k,\bar\eps}=\rho^+_{\bar\eps}\ast W^k$.
Consider the random PDE
\begin{equ}\label{eq:qsmoothWZ}
\partial_t v_{\bar \eps}-\partial_x(a(v_{\bar\eps})\partial_x v_{\bar\eps})=\sum_{k\in\mbbN}g(v_{\bar\eps})h_k\partial_t W^{k,\bar\eps}_t-\frac{1}{2}\sum_{k\in\mbbN}g'(v_{\bar\eps})g(v_{\bar\eps})h_k^2
\end{equ}
with initial condition $u_0$.
Then as $\bar \eps\to 0$, one has
\begin{equ}
\E\big(\sup_{t\in[0,1]}\|v_{\bar\eps}-v\|_{L^2(\mbbT)}^2\big)\to 0.
\end{equ}
\end{theorem}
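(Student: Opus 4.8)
The plan is to reduce the Wong--Zakai statement for the quasilinear equation \eqref{eq:qsmoothWZ} to an It\^o stability/convergence estimate analogous to Step~3 of the proof of Theorem~\ref{thm:Ito-WP}, carried out in a Gelfand triple adapted to the coloring assumption $\sum_k h_k\in L^\infty(\mbbT)$. First I would fix the triple $V=L^2\subset H=H^{-1}_M\subset V^*=H^{-2}_M$ with $M=M(\lambda,\|g'\|_{L^\infty})$ as in the proof of Theorem~\ref{thm:Ito-WP}, and record that the mollified driver $W^{k,\bar\eps}=\rho^+_{\bar\eps}\ast W^k$ has $\partial_t W^{k,\bar\eps}_t=\int \rho^+_{\bar\eps}(t-s)\,dW^k_s$, so that \eqref{eq:qsmoothWZ} is a genuine random PDE with spatially $L^\infty$ coefficients that can be solved pathwise by the classical monotone operator theory (e.g.\ \cite{Pardoux,KR}), and $v_{\bar\eps}\in L^2(\Omega\times[0,1];H^1)\cap L^2(\Omega;C([0,1],L^2(\mbbT)))$. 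The Stratonovich-type correction $-\tfrac12\sum_k g'g h_k^2$ is exactly the drift that makes the pathwise solutions of \eqref{eq:qsmoothWZ} approximate the It\^o solution $v$ of \eqref{eq:qsmooth}; this is the content one must prove.

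Next I would write It\^o's formula for $\|v_{\bar\eps,t}-v_t\|_{H^{-1}_M}^2$. Since both processes live in $H^1$, this is justified, and one obtains a drift term controlled by the monotonicity bound \eqref{eq:monotone 3} (applied with $\eps=0$, i.e.\ with the genuine noise coefficients $g(\cdot)h_k$ rather than mollified ones), plus a martingale, plus the error terms coming from the discrepancy between the mollified random PDE and the It\^o equation. The standard way to handle the latter is to introduce the auxiliary process $z_{\bar\eps,t}=\sum_k\int_0^t\big(\rho^+_{\bar\eps}\ast(g(v_{\bar\eps,\cdot})h_k)\big)(s)\,ds$ versus its It\^o counterpart and expand the square; the key cancellation is that $\tfrac12\sum_k g'(v_{\bar\eps})g(v_{\bar\eps})h_k^2\,dt$ matches the It\^o--Stratonovich bracket term up to an error that vanishes in $L^1$ as $\bar\eps\to0$, using $g\in\CC^2$, the a priori bound $\E\sup_t\|v_{\bar\eps,t}\|_{L^2}^2\lesssim 1$ (which follows from coercivity \eqref{eq:coerc-4} with $\alpha=-1$ exactly as in Step~1 of Theorem~\ref{thm:Ito-WP}, uniformly in $\bar\eps$), and the convergence of $\rho^+_{\bar\eps}\ast W^k$ to $W^k$. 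After applying Burkholder--Davis--Gundy to the martingale, Young's inequality, and absorbing, Gronwall's lemma then yields $\E\sup_{t\in[0,1]}\|v_{\bar\eps,t}-v_t\|_{H^{-1}_M}^2\to0$.

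To upgrade from $H^{-1}$ to $L^2$ convergence — which is what the statement demands — I would combine the $H^{-1}$ convergence just obtained with the uniform a priori bound $\E\int_0^1\|v_{\bar\eps,s}\|_{H^1}^2\,ds\lesssim 1$ and interpolation: writing $\|v_{\bar\eps}-v\|_{L^2}\lesssim \|v_{\bar\eps}-v\|_{H^{-1}}^{1/2}\|v_{\bar\eps}-v\|_{H^1}^{1/2}$ and integrating in time gives $\E\int_0^1\|v_{\bar\eps,s}-v_s\|_{L^2}^2\,ds\to 0$. For the supremum in time in the $L^2$ norm one uses instead that the difference satisfies a linear-type equation to which the coercivity estimate of Theorem~\ref{thm:Ito-WP} applies (the quasilinear term contributes a negative definite piece via $\lambda\leq a\leq\lambda^{-1}$), so that $\E\sup_t\|v_{\bar\eps,t}-v_t\|_{L^2}^2$ is controlled by the same right-hand side as the $H^{-1}$ estimate plus the error terms, all of which vanish. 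The main obstacle I expect is the bookkeeping in the It\^o--Stratonovich correction: one must track the error between $\rho^+_{\bar\eps}\ast(g(v_{\bar\eps})h_k)$ evaluated along the flow and the correct bracket $\tfrac12 g'(v_{\bar\eps})g(v_{\bar\eps})h_k^2$, summed over $k$, and show the remainder is $o(1)$ in $L^1(\Omega\times[0,1])$ uniformly using only $g\in\CC^2$ and $\sum_k h_k\in L^\infty$ — this is where the strong spatial coloring assumption is used to make the sum over $k$ converge after pulling out $\|\sum_k h_k\|_{L^\infty}$.
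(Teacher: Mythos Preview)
Your route and the paper's differ substantially. The paper does not attempt a direct energy-difference estimate between $v_{\bar\eps}$ and $v$. Instead it (i) truncates the noise to finitely many modes $k<K$, obtaining approximations $v^{(K)}_{\bar\eps}$ and $v^{(K)}$; (ii) proves the a priori bound $\E\big(\sup_t\|v^{(K)}_{\bar\eps,t}\|_{L^2}^p+\int_0^1\|v^{(K)}_{\bar\eps,t}\|_{H^1}^2\,dt\big)$ uniformly in $K,\bar\eps$, the only nonstandard step being a BDG/Young bound on the mollified-noise contribution $I(t,3)$ after stochastic Fubini; (iii) uses these bounds to pass $K\to\infty$ uniformly in $\bar\eps$; and (iv) for each fixed $K<\infty$ invokes rough-path stability for quasilinear PDEs \cite[Thm.~2.12]{HN} to obtain the Wong--Zakai limit with finite-dimensional driver. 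This modular scheme entirely bypasses the It\^o--Stratonovich bookkeeping you sketch.

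Your direct approach is in principle viable (it is close in spirit to \cite{Twardowska,Ma-Zhu}), but the two steps you flag as routine are exactly where the content lies, and as written they are gaps. First, the ``key cancellation'' --- showing that the mollified term, after stochastic Fubini and a Taylor expansion of $g(v_{\bar\eps,\cdot})$ along the $O(\bar\eps)$ time increment, reproduces $\tfrac12\sum_k g'(v_{\bar\eps})g(v_{\bar\eps})h_k^2$ with $o(1)$ remainder --- \emph{is} the Wong--Zakai theorem; you give no mechanism to control the remainder from only $g\in\CC^2$ and the quasilinear a priori bounds, and your auxiliary process $z_{\bar\eps}$ is not well-defined as written (there is no stochastic integral in it). Second, your upgrade to $\sup_t$-in-$L^2$ via ``the difference satisfies a linear-type equation to which coercivity applies'' fails: monotonicity of $u\mapsto\Delta A(u)$ holds in the $H^{-1}$ triple (Step~1 of Theorem~\ref{thm:Ito-WP}), but in the $L^2$ pairing the drift difference $\d_x(a(v_{\bar\eps})\d_x v_{\bar\eps})-\d_x(a(v)\d_x v)$ is not monotone in $v_{\bar\eps}-v$, and the natural linearisation produces a cross term $\int|\d_x(v_{\bar\eps}-v)|\,|v_{\bar\eps}-v|\,|\d_x v|$ that does not close with the available bounds. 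The paper's truncation strategy avoids both obstacles by exporting the actual Wong--Zakai step to the finite-dimensional rough PDE literature.
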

Results on Wong-Zakai approximations of SPDEs with colored noise are plentiful in the literature, and even in the quasilinear case there are several versions \cite{Twardowska, Ma-Zhu, HN}. Unfortunately, none of them imply Theorem \ref{thm:WZ} in this form. That said, Theorem \ref{thm:WZ} being a minor technical variation of well-established results, we do not aim for a self-contained proof and we only detail the steps that differ from the corresponding calculations of \cite{Ma-Zhu}.
Indeed, their setup with $H=L^2$, $V=H^1$, $\alpha=2$, $\beta=0$, arbitrary $p>2$, $U=\ell^2$, $A(v)=\partial_x(a(v)\partial_x v)$, $B(v)=(B_k(v))_{k\in\mbbN}=(g(v)h_k)_{k\in\mbbN}$, almost accommodated \eqref{eq:qsmoothWZ}, except for the choice of noise approximation: instead of mollification, \cite{Ma-Zhu} takes polygonal approximation combined with a truncation in $k$.
Below we outline how to reduce our setup to that of \cite{Ma-Zhu, HN}.

\begin{proof}
For $K\in\mbbN\cup\{\infty\}$ introduce $v^{(K)}$ ($v^{(K)}_{\beps}$, resp.) as the solution of \eqref{eq:qsmooth} (\eqref{eq:qsmoothWZ}, resp.) when replacing the sum (sums, resp.) over $k\in\mbbN$ by $k< K$. In particular, $v^{(\infty)}=v$, $v^{(\infty)}_{\beps}=v_{\beps}$.

One starts by establishing uniform in $\beps,K$ estimates for \eqref{eq:qsmoothWZ}. This is done similarly to \cite[Lem.~4.1]{Ma-Zhu}, in the simplified setting $B_1=B_3=0$.
We shall omit the rather standard Galerkin approximation step (i.e. take $m=\infty$ in the notation of \cite{Ma-Zhu}).
The a priori estimate is obtained from an energy estimate to $\|v^{(K)}_{\bar\eps}\|_{L^2}^p$. We now consider $\beps>0, K\in\mbbN\cup\{\infty\}$ fixed and for brevity we denote $y=v^{(K)}_{\bar\eps}$.
The only nonstandard term that has some blowup in $\beps\to0$ (and the only term that differs from our setting to that of \cite{Ma-Zhu}) is the contribution of the smoothed noise, namely $I_{n,m}(t,3)$ in the notation of \cite{Ma-Zhu}; we denote the corresponding term for us by
\begin{equ}
I(t,3):=p\sum_{k<K}\int_0^t\|y_s\|_{L^2}^{p-2}\scal{g(y_s)h_k\partial_s W^{k,\beps}_s,y_s}_{L^2}\,ds.
\end{equ}
Since $\partial_s W^{k,\beps}_s=\int_{s-\beps}^s\varrho^+(s-r)\,dW^k_r$, we have
\begin{equs}
I(t,3):&=p\sum_{k<K}\int_0^t\int_{s-\beps}^s\varrho^+(s-r)\|y_s\|_{L^2}^{p-2}\scal{g(y_s)h_k,y_s}_{L^2}\,dW^k_r\,ds
\\
&=p\sum_{k<K}\int_{-\beps}^t
\int_{r\vee 0}^{(r+\beps)\wedge t}
\varrho^+(s-r)\|y_s\|_{L^2}^{p-2}\scal{g(y_s)h_k,y_s}_{L^2}
\,ds\,dW^k_r.
\end{equs}
By the Burkholder-David-Gundy inequality, and Young's convolutional inequality one gets
\begin{equs}
\E &\sup_{t\in[0,1]}\big|I(t,3)\big|
\\
&\lesssim
\E\Big(\int_0^t
\sum_{k<K}\Big(\int_{r\vee 0}^{(r+\beps)\wedge t}
\varrho^+(s-r)\|y_s\|_{L^2}^{p-2}\scal{g(y_s)h_k,y_s}_{L^2}
\,ds\Big)^2\,dr\Big)^{1/2}
\\
&\lesssim \E\Big(\sum_{k<K}\int_0^t\|y_s\|_{L^2}^{2p-4}\scal{g(y_s)h_k,y_s}_{L^2}^2\,ds\Big)^{1/2}
\end{equs}
By the assumption on $(h_k)_{k\in\mbbN}$ and Young's inequality we get for any $\mu>0$ there exists a constant $C$ depending on $\mu, (h_k)_{k\in\mbbN}, g, p$ such that 
\begin{equ}
\E \sup_{t\in[0,1]}\big|I(t,3)\big|\leq \E\Big(\mu\sup_{t\in[0,1]}\|y_t\|_{L^2}^p+C\int_0^1 1+\|y_s\|_{L^2}^p\,ds\Big).
\end{equ}
This is the exact analogue of \cite[Eq.~(A.7)]{Ma-Zhu}, and the rest of the proof can be concluded as therein. This leads to the a priori estimate
\begin{equ}
\sup_{K\in\mbbN\cup\{\infty\}}\sup_{\bar\eps\in(0,1]}\E\Big(\sup_{t\in[0,1]}\|v^{(K),\beps}_t\|_{L^2}^p+\int_0^1\|v^{(K),\beps}_t\|_{H^1}^2\,dt\Big)<\infty.
\end{equ}

Using the a priori estimates one easily gets the convergences
\begin{equ}
\sup_{\bar\eps\in(0,1]}\E\big(\sup_{t\in[0,1]}\|v^{(K),\beps}_t-v^{\beps}_t\|_{L^2}^2\big)\to 0,\qquad K\to \infty,
\end{equ}
and similarly one has for the limiting equation
\begin{equ}
\E\big(\sup_{t\in[0,1]}\|v^{(K)}_t-v_t\|_{L^2}^2\big)\to 0,\qquad K\to \infty.
\end{equ}
Therefore it suffices to verify, for each $K<\infty$, the convergence
\begin{equ}\label{eq:...}
\E\big(\sup_{t\in[0,1]}\|v^{(K),\beps}_t-v^{(K)}_t\|_{L^2}^2\big)\to 0.
\end{equ}
Since the equation for $v^{(K)}$ is driven by a finite dimensional Brownian noise, it can simply be viewed as a quasilinear rough PDE, for which a Wong-Zakai result like \eqref{eq:...} is just an instance of stability with respect to the driving noise, see \cite[Thm.~2.12]{HN}.
\end{proof}


\begin{thebibliography}{99}
	\expandafter\ifx\csname url\endcsname\relax
	\def\url#1{\texttt{#1}}\fi
	\expandafter\ifx\csname urlprefix\endcsname\relax\def\urlprefix{URL }\fi
	\expandafter\ifx\csname href\endcsname\relax
	\def\href#1#2{#2}\fi
	\expandafter\ifx\csname burlalt\endcsname\relax
	\def\burlalt#1#2{\href{#2}{\texttt{#1}}}\fi
	
	 \bibitem{BB19}
	I. Bailleul, I. Bernicot. \newblock { \em High order paracontrolled calculus}. Forum of Mathematics, Sigma, 7, E44. 
	\newblock \burlalt{doi:10.1017/fms.2019.44}{https://doi.org/10.1017/fms.2019.44}.
	
	\bibitem{BB21a}
	{\rm I. Bailleul, Y. Bruned}.
	\newblock {\em Renormalised singular stochastic PDEs.}
	 \burlalt{arXiv:2101.11949}{https://arxiv.org/abs/2101.11949}.
	
	\bibitem{BB21b}
	{\rm I.~Bailleul, Y.~Bruned}
	\newblock {\em Locality for singular stochastic PDEs.}
	\burlalt{arXiv:2109.00399}{https://arxiv.org/abs/2109.00399}.
	
	\bibitem{BB23}
	{\rm I.~Bailleul, Y.~Bruned}
	\newblock {\em Random models for singular SPDEs.}
	\burlalt{arXiv:2301.09596}{https://arxiv.org/abs/2301.09596}.
	

	\bibitem{Bailleul2019}
	{\rm I.~Bailleul, A.~Debussche, M.~Hofmanov{\'a}}.
	\newblock {\em Quasilinear generalized parabolic anderson model equation.}
	\newblock Stoch. and Partial Differ. Equ. Anal. and Comput. \textbf{7},
	no.~1, (2019), 40--63.
	\burlalt{doi:10.1007/s40072-018-0121-1}{https://dx.doi.org/10.1007/s40072-018-0121-1}.


	
	
	\bibitem{BCCH}
	{\rm Y. Bruned, A. Chandra, I. Chevyrev,
		M. Hairer}.
	\newblock {\em Renormalising SPDEs in regularity structures}.
	\newblock J. Eur. Math. Soc. (JEMS), \textbf{23}, no.~3, (2021), 869-947. 
	\burlalt{doi:10.4171/JEMS/1025}{http://dx.doi.org/10.4171/JEMS/1025}.
	
		\bibitem{BD23}
	Y.~Bruned, V.~Dotsenko.
	\newblock {\em Novikov algebras and multi-indices in regularity structures
	}
	\newblock \burlalt{arXiv:2311.09091 }{http://arxiv.org/abs/2311.09091}.
	
	\bibitem{BD24}
	Y.~Bruned, V.~Dotsenko.
	\newblock {\em Chain rule symmetry for singular SPDEs.
	}
	\newblock \burlalt{arXiv:2403.17066 }{http://arxiv.org/abs/2403.17066}.
	

\bibitem{BFR}
M.~Bonforte, A.~Figalli, X.~Ros-Oton.
\newblock { \em Infinite speed of propagation and regularity of solutions to the
	fractional porous medium equation in general domains}.
\newblock Commun. Pure Appl. Math. \textbf{70},
no.~8, (2016), 1472--1508.
\burlalt{doi:10.1002/cpa.21673}{https://dx.doi.org/10.1002/cpa.21673}.
	
	\bibitem{BGHZ}
	{ \rm Y. Bruned, F. Gabriel, M. Hairer, 
		L. Zambotti},
	\newblock   {\em Geometric stochastic heat equations}.
	\\
	\newblock J. Amer. Math. Soc. (JAMS) \textbf{35}, no.~1, (2022), 1-80. 
	\burlalt{doi:10.1090/jams/977}{http://dx.doi.org/10.1090/jams/977}.

	
	\bibitem{BHS}
	{\rm I.~Bailleul, M.~Hoshino, S.~Kusuoka}.
	\newblock {\em Regularity structures for quasilinear singular SPDEs}
	\burlalt{arXiv:2209.05025}{https://arxiv.org/abs/2209.05025}.
	
	\bibitem{BaiHos}
	I.~{Bailleul}, M.~{Hoshino}.
	\newblock {\em A tourist's guide to regularity structures.}
	\newblock \burlalt{arXiv:2006.03524}{https://arxiv.org/abs/2006.03524}.
	
	\bibitem{BH23}
	I.~{Bailleul}, M.~{Hoshino}.
	\newblock {\em Random models on regularity-integrability structures.}
	\newblock \burlalt{arXiv:2310.10202}{https://arxiv.org/abs/2310.10202}.
	
	\bibitem{BHZ}
	{\rm Y.~Bruned, M.~Hairer, L.~Zambotti}.
	\newblock {\em Algebraic renormalisation of regularity structures.}
	\newblock Invent. Math. \textbf{215}, no.~3, (2019), 1039--1156.
	\burlalt{doi:10.1007/s00222-018-0841-x}{https://dx.doi.org/10.1007/s00222-018-0841-x}.
	
	\bibitem{BK23}
	Y.~Bruned, F.~Katsetsiadis.
	\newblock {\em Post-Lie algebras in Regularity Structures}.  Forum of Mathematics, Sigma \textbf{11}, e98, (2023), 1--20.
	\newblock 
	\burlalt{doi:10.1017/fms.2023.93}{http://dx.doi.org/10.1017/fms.2023.93}.
	
		\bibitem{BL23}
	Y.~Bruned, P.~Linares.
	\newblock {\textsl{A top-down approach to algebraic renormalization in regularity structures based on multi-indices.}}
	\newblock \burlalt{arXiv:2307.03036 }{http://arxiv.org/abs/2307.03036}.
	
	\bibitem{BM22}
	Y.~Bruned, D.~Manchon.
	\newblock {\em Algebraic deformation for (S)PDEs}. J. Math. Soc. Japan. \textbf{75}, no.~2, (2023), 485--526.
	\newblock 
	\burlalt{doi:10.2969/jmsj/88028802}{http://dx.doi.org/10.2969/jmsj/88028802}. 
	
	
	\bibitem{BM23}
	I.~{Bailleul}, A.~{Mouzard}.
	\newblock {\em  Paracontrolled calculus for quasilinear singular PDEs.}
	Stoch PDE: Anal Comp \textbf{11}, (2023), 599--650.
	\newblock \burlalt{doi:10.1007/s40072-022-00239-9}{http://dx.doi.org/10.1007/s40072-022-00239-9}.
	
	
	\bibitem{BN23}
	Y.~Bruned, U.~Nadeem.
	\newblock {\textsl{Diagram-free approach for convergence of tree-based models in Regularity Structure.}} J. Math. Soc. Japan, (2024), 1-31. 
	\newblock \burlalt{doi: 10.2969/jmsj/91129112
	}{http://dx.doi.org/10.2969/jmsj/91129112
	}. 

\bibitem{BOT24}
L.~Broux, F.~Otto, M.~Tempelmayr.
\newblock {\em Lecture notes on Malliavin calculus in regularity structures} 
\newblock \burlalt{arXiv:2401.05935
}{http://arxiv.org/abs/2401.05935
}. 




	
	\bibitem{BR18} 
	{ \rm Y. Bruned}.
	\newblock {\em Recursive formulae in regularity structures.}
	\newblock Stoch. Partial Differ. Equ. Anal. and Comput. \textbf{6},
	no.~4, (2018), 525--564.
	\newblock 
	\burlalt{doi:10.1007/s40072-018-0115-z}{http://dx.doi.org/10.1007/s40072-018-0115-z}.
	
		\bibitem{BR23} 
	{ \rm Y. Bruned}.
	\newblock {\em 	Composition and substitution of Regularity Structures
		B-series.}
	 \burlalt{arXiv:2310.14242}{https://arxiv.org/abs/2310.14242}.

	
	\bibitem{CH}
	{\rm A.~Chandra, M.~Hairer}.
	\newblock {\em An analytic {BPHZ} theorem for {R}egularity {S}tructures.}
	 \burlalt{arXiv:1612.08138v5}{https://arxiv.org/abs/1612.08138v5}.
	 
	 \bibitem{CMW}
	 {\rm A.~Chandra, A.~Moinat, and H.~Weber}.
	 \newblock {\em A Priori Bounds for the $\Phi^4$
 Equation in the Full Sub-critical Regime}.
 \newblock Arch. Rational Mech. Anal. \textbf{247}, (2023).
 \newblock \burlalt{doi:10.1007/s00205-023-01876-7}{https://dx.doi.org/10.1007/s00205-023-01876-7}
	
		\bibitem{DGG_STWN}
 K.~Dareiotis, M.~Gerencs{\'{e}}r, and B.~Gess.
	\newblock { \em Porous media equations with multiplicative space{\textendash}time
	white noise.}
	\newblock Ann. Inst. H. Poincaré Probab. Statist. \textbf{57}, no.~4, (2021), 2354 - 2371.
	\newblock
	\burlalt{doi:10.1214/20-aihp1139}{https://dx.doi.org/10.1214/20-aihp1139}.
	
		\bibitem{DPZ}
	G.~Da~Prato, J.~Zabczyk.
	\newblock \emph{Stochastic equations in infinite dimensions}, vol.~44 of
	\emph{Encyclopedia of Mathematics and its Applications}.
	\newblock Cambridge University Press, Cambridge, 1992,  xviii+454.
	\newblock
	\burlalt{doi:10.1017/CBO9780511666223}{https://dx.doi.org/10.1017/CBO9780511666223}.
	
			\bibitem{DDH}
	A.~Debussche, S.~de~Moor, and M.~Hofmanov{\'a}
	\newblock { \em A Regularity Result for Quasilinear Stochastic Partial Differential Equations of Parabolic Type.}
	\newblock {SIAM} Journal on Mathematical Analysis \textbf{47}, no.~2, (2015), 1590 - 1614
	\newblock
	\burlalt{doi:10.1137/130950549}{https://dx.doi.org/10.1137/130950549}.
	

		\bibitem{furlan2019}
	{\rm M.~Furlan, M.~Gubinelli}.
	\newblock {\em Paracontrolled quasilinear spdes.}
	\newblock Ann. Probab. \textbf{47}, no.~2, (2019), 1096--1135.
	\burlalt{doi:10.1214/18-AOP1280}{https://dx.doi.org/10.1214/18-AOP1280}.
	
		\bibitem{FrizHai}
	P.~K. {Friz}, M.~{Hairer}.
	\newblock \emph{{A Course on Rough Paths}}.
	\newblock Springer International Publishing, 2020.
	\newblock
	\burlalt{doi:10.1007/978-3-030-41556-3}{https://dx.doi.org/10.1007/978-3-030-41556-3}.
	
	\bibitem{GHM}
	{\rm A. Gerasimovi\v{c}s, M. Hairer, K. Matetski}
	\newblock {\em Directed mean curvature flow in noisy environment.} 
	{Comm. Pure Appl. Math.} \textbf{77}, no. 3, (2024), 1850--1939.
	\burlalt{doi:10.1002/cpa.22158}{https://dx.doi.org/10.1002/cpa.22158}.
	
	\bibitem{Mate19}
	{\rm M.~{Gerencs{\'e}r}}.
	\newblock {\em Nondivergence form quasilinear heat equations driven by space-time
		white noise}.
	\newblock Annales de l'Institut Henri Poincar\'{e}, Analyse non lin\'{e}aire \textbf{37}, no.~3. (2020), 663--682
	\newblock \burlalt{doi:10.1016/j.anihpc.2020.01.003}{https://doi.org/10.1016/j.anihpc.2020.01.003}.
	
	\bibitem{MH}
	{\rm M.~{Gerencs{\'e}r}, M.~{Hairer}}.
	\newblock {\em A solution theory for quasilinear singular SPDEs}.
	\newblock {Comm. Pure Appl. Math.} \textbf{72}, no. 9, (2019), 1983--2005. 
	\burlalt{doi:10.1002/cpa.21816}{https://dx.doi.org/10.1002/cpa.21816}.
	
	\bibitem{Gub}
	{\rm M.~Gubinelli, P.~Imkeller, N.~Perkowski}.
	\newblock{\em Paracontrolled distributions and singular {PDE}s}.
	\newblock Forum Math. Pi \textbf{3}, (2015), e6, 75.
	\burlalt{doi:10.1017/fmp.2015.2}{https://dx.doi.org/10.1017/fmp.2015.2}.
	
	\bibitem{GT}
	R. S. Gvalani, M.~Tempelmayr.
	\newblock {\em Stochastic estimates for the thin-film equation with thermal noise}. 
	\newblock \burlalt{arXiv:2309.15829
	}{https://arxiv.org/abs/2309.15829}.
	
	
	\bibitem{reg}
	{\rm M.~Hairer}.
	\newblock {\em A theory of regularity structures.}
	\newblock Invent. Math. \textbf{198}, no.~2, (2014), 269--504.
	\burlalt{doi:10.1007/s00222-014-0505-4}{https://dx.doi.org/10.1007/s00222-014-0505-4}.
	
	\bibitem{proc}
	{\rm M.~Hairer}.
	\newblock {\em The motion of a random string}.
 	\burlalt{arXiv:1605.02192}{https://arxiv.org/abs/1605.02192}.
	
	\bibitem{wong}
	{\rm M.~Hairer, {\'E}.~Pardoux}.
	\newblock { \em A {W}ong-{Z}akai theorem for stochastic {PDE}s}.
	\newblock J. Math. Soc. Japan \textbf{67}, no.~4, (2015), 1551--1604.
	\burlalt{doi:10.2969/jmsj/06741551}{https://dx.doi.org/10.2969/jmsj/06741551}.
	
	\bibitem{HR}
	{\rm M.~Hairer, T. Rosati}.
	\newblock {\em Global existence for perturbations of the 2D stochastic Navier–Stokes equations with space-time white noise}.
	\newblock Annals of PDE \textbf{10}, no.~3 (2024)
\newblock \burlalt{doi:10.1007/s40818-023-00165-6}{https://dx.doi.org/10.1007/s40818-023-00165-6}
	
	
		\bibitem{HS}
	M.~Hairer, R.~Steele.
	\newblock {\em  The BPHZ Theorem for Regularity Structures via the Spectral Gap Inequality}. 
	Arch. Rational Mech. Anal. \textbf{248}, no. 9, (2024), 1-81. 
	\newblock \burlalt{doi:10.1007/s00205-023-01946-w
	}{ttps://doi.org/10.1007/s00205-023-01946-w}.
	
	
	\bibitem{HN}
	{\rm A.~Hocquet, A.~Neam{\c t}u}.
	\newblock {\em Quasilinear rough evolution equations}
	\newblock \burlalt{arXiv:2207.04787}{https://arxiv.org/abs/2207.04787}.

	\bibitem{HZZ}
	{\rm M.~Hofmanov\'a, R.~Zhu, X.~Zhu}.
	\newblock {\em Global Existence and Non-Uniqueness for 3D Navier–Stokes Equations with Space-Time White Noise}
	\newblock Arch. Rational Mech. Anal. \textbf{247}, (2023).
	\newblock \burlalt{doi:10.1007/s00205-023-01872-x}{https://dx.doi.org/10.1007/s00205-023-01872-x}.
	
	\bibitem{JZ}
	J.-D. Jacques, L. Zambotti.
	\newblock {\em Post-Lie algebras of derivations and regularity structures}. 
	\newblock \burlalt{arXiv:2306.02484
	}{https://arxiv.org/abs/2306.02484}.
	
	\bibitem{KR}
	{\rm N.~V. Krylov, B.~L. Rozovskii}.
	\newblock {\em Stochastic evolution equations}.
	\newblock Journal of Soviet Mathematics \textbf{16}, no.~4, (1981),
	1233--1277.
	\newblock
	\burlalt{doi:10.1007/BF01084893}{https://dx.doi.org/10.1007/BF01084893}.
	
		\bibitem{LOT}
	P.~Linares, F.~Otto, M.~Tempelmayr.
	\newblock {\em The structure group for quasi-linear equations via universal enveloping algebras}. Comm. Amer. Math.  \textbf{3}, (2023), 1--64.  
	\burlalt{doi:10.1090/cams/16}{https://dx.doi.org/10.1090/cams/16}.
	
	
	
	\bibitem{Li23}
	P.~Linares.
	\newblock {\em Insertion pre-Lie products and translation of rough paths based on multi-indices}. 
	\newblock \burlalt{arXiv:2307.06769
	}{https://arxiv.org/abs/2307.06769}.
	
	
	
	
	\bibitem{LO23}
	P.~Linares F.~Otto.
	\newblock {\em A tree-free approach to regularity structures: The regular case for quasi-linear equations}. 
	\newblock \burlalt{arXiv:2207.10627 
	}{https://arxiv.org/abs/2207.10627 }.
	
	
	
	\bibitem{LOTT}
	{\rm P.~Linares, F.~Otto, M.~Tempelmayr, P.~Tsatsoulis}
	\newblock {\em A diagram-free approach to the stochastic estimates in regularity structures}
	\burlalt{arXiv:2207.09946}{https://arxiv.org/abs/2207.09946}
	
	\bibitem{Mueller}
	C.~Mueller.
	\newblock { \em On the support of solutions to the heat equation with noise.}
	\newblock Stochastics and Stochastic Reports \textbf{37}, no.~4, (1991),
	225--245.
	\newblock
	\burlalt{doi:10.1080/17442509108833738}{https://dx.doi.org/10.1080/17442509108833738}.
	
		\bibitem{Ma-Zhu}
	T.~Ma, R.~Zhu.
	\newblock {\em Wong–Zakai approximation and support theorem for SPDEs with
		locally monotone coefficients}.
	\newblock J. Math. Anal. Appl.
	\textbf{469}, no.~2, (2019), 623--660.
	\newblock
	\burlalt{doi:10.1016/j.jmaa.2018.09.031}{https://dx.doi.org/10.1016/j.jmaa.2018.09.031}.
	
	\bibitem{MW}
	J-C.~Mourrat, H.~Weber. 
	\newblock {\em The dynamic $\Phi^4_3$ model comes down from infinity}.
	\newblock Commun. Math. Phys. \textbf{356}, (2017), 673-753.
	\newblock \burlalt{doi:10.1007/s00220-017-2997-4}{https://dx.doi.org/10.1007/s00220-017-2997-4}
	
	\bibitem{OSSW18}
	{\rm F.~{Otto}, J.~{Sauer}, S.~{Smith}, H.~{Weber}}.
	\newblock {\em Parabolic equations with rough coefficients and singular forcing}.
	 \burlalt{arXiv:1803.07884}{https://arxiv.org/abs/1803.07884}.
	 
	 	\bibitem{OSSW}
	 F.~Otto, J.~Sauer, S.~Smith, H.~Weber.
	 \newblock {\em A priori bounds for quasi-linear SPDEs in the full sub-critical regime}. 
	 \newblock \burlalt{arXiv:2103.11039
	 }{https://arxiv.org/abs/2103.11039}.
	 
	 
	 \bibitem{OST}
	 F.~Otto, K.~Seong, M.~Tempelmayr
	 \newblock {\em Lecture notes on tree-free regularity structures}. 
	 \newblock \burlalt{arXiv:2301.00778
	 }{https://arxiv.org/abs/2301.00778}.
	
	\bibitem{OW-div}
	{\rm F.~Otto, H.~Weber}.
	\newblock {\em Quasi-linear spdes in divergence form.}
	\newblock Stoch. and Partial Differ. Equ. Anal. and Comput. \textbf{7},
	no.~1, (2019), 64--85.
	\burlalt{doi:10.1007/s40072-018-0122-0}{https://dx.doi.org/10.1007/s40072-018-0122-0}.
	
	\bibitem{Otto2016}
	{\rm F.~Otto, H.~Weber}.
	\newblock {\em Quasilinear spdes via rough paths.}
	\newblock Arch. Ration. Mech. Anal. \textbf{232}, no.~2, (2019),
	873--950.
	\burlalt{doi:10.1007/s00205-018-01335-8}{https://dx.doi.org/10.1007/s00205-018-01335-8}.
	
	

	

		\bibitem{Pardoux}
	{\'E}.~Pardoux.
	\newblock \emph{{Equations aux deriv\'ees partielles stochastiques non
			lineaires monotones. Etude de solutions fortes de type Ito}}.
	\newblock Ph.D. thesis, Univ. Paris Sud., 1975.
	
	\bibitem{PR}
	C.~Pr{\'e}v{\^o}t, M.~R{\"o}ckner.
	\newblock \emph{A Concise Course on Stochastic Partial Differential Equations}.
	\newblock Springer Berlin Heidelberg, 2007.
	\newblock
	\burlalt{doi:10.1007/978-3-540-70781-3}{https://dx.doi.org/10.1007/978-3-540-70781-3}.
	

	
	
	
	
		\bibitem{T}
	M.~Tempelmayr.
	\newblock {\em Characterizing models in regularity structures: a quasilinear case}. 
	\newblock \burlalt{arXiv:2303.18192
	}{https://arxiv.org/abs/2303.18192}.
	


	

	\bibitem{Twardowska}
	K.~Twardowska.
	\newblock { \em An approximation theorem of wong-zakai type for nonlinear stochastic
	partial differential equations}.
	\newblock Stoch Anal Appl. \textbf{13}, no.~5,
	(1995), 601--626.
	\newblock
	\burlalt{doi:10.1080/07362999508809419}{https://dx.doi.org/10.1080/07362999508809419}.
	
	\bibitem{Walsh}
	J.~B. Walsh.
	\newblock { \em An introduction to stochastic partial differential equations.}
	\newblock In \textsc{P.~L. Hennequin}, ed., \emph{{\'E}cole d'{\'E}t{\'e} de
		Probabilit{\'e}s de Saint Flour XIV - 1984},  265--439. Springer Berlin
	Heidelberg, Berlin, Heidelberg, 1986.
	
\end{thebibliography}
\end{document}